\def\th@plain{%
  \thm@notefont{}% same as heading font
  \itshape % body font
}
\def\th@definition{%
  \thm@notefont{}% same as heading font
  \normalfont % body font
}
\theoremstyle{plain}
\newcolumntype{C}{>{\Centering\arraybackslash}X} % centered "X" column
\newtheorem*{theorem*}{Theorem}%[section]
\newtheorem{theorem}{Theorem}[section]
\newtheorem{corollary}[theorem]{Corollary}
\newtheorem{definition}[theorem]{Definition}
\newtheorem{example}[theorem]{Example}
\newtheorem{lemma}[theorem]{Lemma}
\newtheorem*{lemma*}{Lemma}
\newtheorem*{definition*}{Definition}
\newtheorem{remark}[theorem]{Remark}
\numberwithin{equation}{section}
\numberwithin{figure}{section}
\newcommand{\beq}{\begin{equation}}
\newcommand{\eeq}{\end{equation}}
\newcommand{\cA}{{\mathcal A}}
\newcommand{\cF}{{\cal F}}
\newcommand{\cR}{\mathcal{R}}
\newcommand{\cK}{{\mathcal K}}
\newcommand{\cB}{{\mathcal B}}
\newcommand{\bx}{\mathbf{x}}
\newcommand{\bfx}{\mathbf{x}}
\newcommand{\br}{\mathbf{r}}
\newcommand{\bfr}{\mathbf{r}}
\newcommand{\bfd}{\mathbf{d}}
\newcommand{\supp}{\mathrm{supp}}
\newcommand{\Rea}{\mathbb{R}}
\definecolor{myblue}{rgb}{0,0,0.6}
\definecolor{darkgreen}{rgb}{0,0.5,0}
\definecolor{escol}{rgb}{0,0,0.6}
\definecolor{sgcol}{rgb}{0,0,0.7}
\definecolor{estcol}{rgb}{0.5,0,0}
\definecolor{esnewcol}{rgb}{0,0.5,0}
\definecolor{lightgrayl}{RGB}{198,198,198}
\newcommand{\beqs}{\begin{equation*}}
\newcommand{\eeqs}{\end{equation*}}
\newcommand{\bit}{\begin{itemize}}
\newcommand{\eit}{\end{itemize}}
\newcommand{\ben}{\begin{enumerate}}
\newcommand{\een}{\end{enumerate}}
\newcommand{\bal}{\begin{align}}
\newcommand{\eal}{\end{align}}
\newcommand{\bals}{\begin{align*}}
\newcommand{\eals}{\end{align*}}
\newcommand{\bre}{\begin{remark}}
\newcommand{\ere}{\end{remark}}
\newcommand{\bpf}{\begin{proof}}
\newcommand{\epf}{\end{proof}}
\newcommand{\ble}{\begin{lemma}}
\newcommand{\ele}{\end{lemma}}
\newcommand{\bco}{\begin{corollary}}
\newcommand{\eco}{\end{corollary}}
\newcommand{\bex}{\begin{example}}
\newcommand{\eex}{\end{example}}
\newcommand{\bth}{\begin{theorem}}
\newcommand{\enth}{\end{theorem}}
\newcommand{\tfa}{\text{ for all }}
\newcommand{\tfor}{\text{ for }}
\newcommand{\tin}{\text{ in }}
\newcommand{\ton}{\text{ on }}
\newcommand{\tas}{\text{ as }}
\newcommand{\tand}{\text{ and }}
\newcommand*{\N}[1]{\left\|#1\right\|}
\newcommand{\noi}{\noindent}
\newcommand{\hsc}{{\hbar}}
\newcommand{\mythmname}[1]{\textbf{\emph{(#1.)}}}
\newcommand{\diff}[2]{\frac{d #1}{d #2}}
\newcommand{\fPML}{f_{\newtheta}}
\newcommand{\DeltaPML}{\Delta_{\newtheta}}
\newcommand{\DeltaPMLj}{\Delta_{\newtheta, j}}
\newcommand{\coeffc}{c}
\newcommand{\newtheta}{{\rm s}}
\DeclareMathOperator{\RC}{\mathsf{RC}}
\newcommand{\Tchi}{\chi^{>}}
\newcommand{\TTchi}{\Tchi}%\chi^{\gg}}
\newcommand{\parallele}{e_+}
\newcommand{\bparallelepsilon}{\boldsymbol \epsilon_+}
\newcommand{\parallelepsilon}{\epsilon_+}
\newcommand{\parallelejn}{e_{+,j}^n}
\newcommand{\paralleleelln}{e_{+,\ell}^n}
\newcommand{\parallelejnpo}{e_{+,j}^{n+1}}
\newcommand{\parallelu}{u_+}
\newcommand{\parallelujn}{u_{j}^n}
\newcommand{\sequentialu}{u_\times}
\newcommand{\parallelepsilonjn}{\epsilon_{+,j}^n}
\newcommand{\width}{\kappa}
\newcommand{\vertiii}[1]{{\left\vert\kern-0.25ex\left\vert\kern-0.25ex\left\vert #1 
    \right\vert\kern-0.25ex\right\vert\kern-0.25ex\right\vert}}
\newcommand{\matrixC}{{\mathcal{C}}}
\title{Convergence of overlapping domain decomposition methods 
with PML transmission conditions
applied to nontrapping Helmholtz problems}
\author{J.~Galkowski\thanks{Department of Mathematics, University College London, 25 Gordon Street, London, WC1H 0AY, UK,   \tt J.Galkowski@ucl.ac.uk}
, S.~Gong\thanks{School of Science and Engineering, The Chinese University of Hong Kong, Shenzhen, Guangdong 518172, China, {\tt gongshihua@cuhk.edu.cn}}
, I.~G.~Graham\thanks{Department of Mathematical Sciences, University of Bath, UK, {\tt i.g.graham@bath.ac.uk}}
, D.~Lafontaine\thanks{CNRS and Institut de Math\'ematiques de Toulouse, UMR5219; Universit\'e de Toulouse, CNRS; UPS, F-31062 Toulouse Cedex 9, France; \tt david.lafontaine@math.univ-toulouse.fr}
, 
E.~A.~Spence\thanks{Department of Mathematical Sciences, University of Bath, UK, {\tt e.a.spence@bath.ac.uk}}
}
\date{\today}
\begin{document}
\maketitle

\begin{abstract}
We study overlapping Schwarz methods for the Helmholtz equation posed in any dimension with large, real wavenumber and smooth variable wave speed. The radiation condition is approximated by a Cartesian perfectly-matched layer (PML). The domain-decomposition subdomains are overlapping hyperrectangles with Cartesian PMLs at their boundaries. 
The overlaps of the subdomains and the widths of the PMLs are all taken to be independent of the wavenumber. 

For both parallel (i.e., additive) and sequential (i.e., multiplicative) methods, we show that after a specified number of iterations
-- depending on the behaviour of the geometric-optic rays  -- 
the error is smooth and smaller than any negative power of the wavenumber.
For the parallel method, 
the specified number of iterations is less than 
 the maximum number of subdomains, counted with their multiplicity, that a geometric-optic ray 
can intersect. 

These results, which are illustrated by numerical experiments, 
are the first wavenumber-explicit results about convergence of overlapping Schwarz methods for the Helmholtz equation, and the first wavenumber-explicit results about convergence of \emph{any} domain-decomposition method for the Helmholtz equation with a non-trivial scatterer (here a variable wave speed). 
\end{abstract}

\section{Introduction}

\subsection{Informal description of the problem}

We consider the following Helmholtz problem:~for arbitrary $d\geq 1$, given $f\in L^2_{\rm comp}(\Rea^d)$, (normalised) strictly-positive wavespeed $c\in C^\infty(\Rea^d)$ with $\supp (1-\coeffc)$ compact, and wavenumber $k\gg 1$, find $u\in H^1_{\rm loc}(\Rea^d)$ satisfying the 
Helmholtz equation and Sommerfeld radiation condition:
\beq\label{eq:Helmholtz}
k^{-2}\Delta u + \coeffc^{-2} u =f  \quad \tin \Rea^d \quad\tand\quad (\partial_r - ik)u=o(r^{\frac{1-d}{2}}) \tas r:=|x|\to \infty.
\eeq
Since $\mathbb{R}^d$ is unbounded, a standard approximation to this Helmholtz problem is to truncate the domain at an artificial boundary chosen so that the truncated domain contains both 
$\supp (1-\coeffc)$ and $\supp f$ (i.e., the scatterer and the data) and approximate the radiation condition by a \emph{perfectly-matched layer} (PML) \cite{Be:94}. In this paper we assume that the artificial boundary is a hyperrectangle, and that the PML is a Cartesian PML (i.e., the scaling is active in Cartesian coordinate directions; see \S\ref{sec:scaled} below for a precise definition).

Helmholtz solutions oscillate on a length scale of $k^{-1}$, and approximating an arbitrary function oscillating on this scale requires $\sim(kL)^d$ degrees of freedom, where $L$ is the length scale of the domain. For piecewise polynomials of fixed degree, the number of degrees of freedom required is $\gg (kL)^d$ because of the \emph{pollution effect} \cite{BaSa:00, GS3}. The linear systems resulting from finite-element discretisations of \eqref{eq:Helmholtz} are therefore very large. 
Furthermore, since the standard variational formulation of the Helmholtz problem above is not self-adjoint and not coercive, methods that work well for self-adjoint coercive problems, such as Poisson's equation, usually perform very badly for Helmholtz problems (see, e.g., the review \cite{ErGa:12}).

Domain-decomposition (DD) methods approximate the solution of the Helmholtz problem on the computational domain by solving Helmholtz problems on subdomains (either overlapping or non-overlapping) of the original domain, with each subdomain problem involving fewer degrees of freedom than the original problem.
Each iteration of a parallel (a.k.a.~additive) method involves solving decoupled problems on each subdomain (i.e., subdomains do not communicate with each other at this stage). In contrast, each iteration of a sequential (a.k.a.~multiplicative) method involves communication between subdomains at each solve phase.

\subsection{Context:~DD methods using PML}\label{sec:context}

The design and analysis of DD methods for solving the Helmholtz equation 
is a very active area; see, e.g., the reviews \cite{Er:08, ErGa:12, LaTaVu:17, GaZh:19, GaZh:22}. 

A key question is:~what boundary conditions should be imposed on the DD subdomains? It is known that the optimal boundary condition on a particular DD subdomain 
is the Dirichlet-to-Neumann map for the Helmholtz equation posed in the exterior of that subdomain (as a subset of the whole domain)
 \cite[\S2]{NaRoSt:94} \cite[\S2]{EnZh:98}, \cite[\S2.4]{DoJoNa:15}. However, complete knowledge of these maps is 
equivalent to knowing the solution operator for the original problem.
 The design of good, practical subdomain boundary conditions is then the goal of \emph{optimised Schwarz methods}; see, e.g., \cite{Ga:06}.

Using PML as a subdomain boundary condition for Helmholtz DD was first advocated for in \cite{To:98}, and there are now many DD methods for Helmholtz using PML on subdomain boundaries with impressive empirical performance; see, e.g.,  
\cite{EnYi:11c, St:13, ChXi:13, ZeDe:16, TaZeHeDe:19, LeJu:19, RoGeBeMo:22, BoNaTo:23} and the reviews \cite{GaZh:19, GaZh:22}.

It is now understood in a $k$-explicit way how PML approximates the Dirichlet-to-Neumann map associated with the Sommerfeld radiation condition. Indeed, when $c \equiv 1$ and $d=2$, the error between the true Helmholtz solution and the Cartesian PML approximation decreases exponentially in $k$, the width of the PML, and the strength of the scaling by \cite[Lemma 3.4]{ChXi:13} (this result also holds for general scatterers in $d\geq 2$ with a radial PML by \cite{GLS2}). For general smooth $c$ and any $d\geq 2$, this error is smaller than any negative power of $k$ by Theorem \ref{thm:outgoing_approx} below.

However, there is no rigorous understanding of how well PML 
approximates the (more complicated) Dirichlet-to-Neumann maps corresponding to the optimal DD boundary conditions for general decompositions and non-trivial scatterers; i.e., there are no rigorous $k$-explicit results about the convergence of DD methods using PML applied to Helmholtz problems with $k\gg 1$ and non-trivial scatterers.

The only existing $k$-explicit rigorous convergence results are for the sequential ``source transfer" DD methods (which \cite{GaZh:19} showed can be considered as a particular type of optimised Schwartz method) applied to \eqref{eq:Helmholtz} with $c\equiv 1$ (i.e., no scattering). 
These methods involve subdomains that only overlap via the perfectly-matched layers (i.e., the physically-relevant parts of the subdomains do not overlap). For these methods applied to \eqref{eq:Helmholtz} with $c\equiv 1$ (i.e., no scattering), $k$-explicit convergence of the DD method at the continuous level can be obtained from a $k$-explicit result about how well (Cartesian) PML approximates the Sommerfeld radiation condition -- this is precisely because the optimal boundary conditions on the subdomains in this case are the Dirichlet-to-Neumann maps associated with the Sommerfeld radiation condition. Therefore, accuracy of Cartesian PML  when $c\equiv 1$ \cite[Lemma 3.4]{ChXi:13} is then the heart of the $k$-explicit convergence proofs of the source-transfer-type methods in \cite{ChXi:13, LeJu:19, DuWu:20, LeJu:21, LeJu:22} for $c\equiv 1$.

\subsection{Informal description of the main results}

The main results of the present paper, Theorems \ref{thm:strip}-\ref{thm:sweep} and \ref{thm:gen} below, 
concern both parallel and sequential overlapping Schwarz methods at the PDE level (i.e., before discretisation).

\begin{theorem*}[Informal summary of Theorems \ref{thm:strip}-\ref{thm:sweep} and \ref{thm:gen}]
For both parallel and sequential overlapping Schwarz methods applied 
to the Cartesian PML approximation of \eqref{eq:Helmholtz}, where the subdomains are hyperrectangles with Cartesian PMLs at their boundaries, the following is true.
After a number of iterations depending on the behaviour of the geometric-optic rays, 
given any $M>0$ there exists $C>0$ such that the error is smooth and smaller than $C k^{-M}$ for all $k>0$. 
\end{theorem*}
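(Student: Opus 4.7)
My plan is to analyse the Schwarz iteration at the continuous PDE level via semiclassical propagation of singularities, using Theorem \ref{thm:outgoing_approx} as the bridge between the PML subdomain solvers and the true outgoing Helmholtz solution operator. First I would set up the error recursion. Let $u^n_\ell$ denote the approximation on the subdomain $\Omega_\ell$ at iteration $n$ and let $e^n_\ell := u|_{\Omega_\ell} - u^n_\ell$ be the error, where $u$ is the Cartesian-PML solution on the full computational domain. Because the right-hand side $f$ cancels, each $e^n_\ell$ satisfies the \emph{homogeneous} PML Helmholtz equation on $\Omega_\ell$, with boundary (or interface) data inherited from restrictions of the errors at iteration $n-1$ in the parallel case, or from the just-updated errors on neighbouring subdomains in the sequential case. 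Unrolling the recursion expresses $e^n$ as a composition of $n$ subdomain PML solution operators applied to the initial error.

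Second, I would invoke Theorem \ref{thm:outgoing_approx} subdomain-by-subdomain to rewrite each PML subdomain solver as the true outgoing Helmholtz solver (viewing $\Omega_\ell$ as a subset of free space beyond the PML) plus a smoothing remainder whose norm, from any Sobolev space into any other, is bounded by every negative power of $k$. Composing $n$ such decompositions and expanding, the dominant term in $e^n$ is a product of $n$ true outgoing solution operators, while every other term is already $O(k^{-M})$ for all $M$; so it suffices to control the dominant product.

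Third, I would analyse this product microlocally. The outgoing Helmholtz solution propagates subdomain boundary data into the interior along forward geometric-optic rays in the semiclassical sense. Consequently, the semiclassical wavefront set of $e^n_\ell$ is contained in the set of points reachable by a ray that, traced backwards from $\Omega_\ell$, has crossed at most $n$ subdomains. For the parallel case I would argue that once $n$ exceeds the maximum number of subdomains (counted with multiplicity) that any geometric-optic ray can intersect, every ray that could carry mass to $\Omega_\ell$ has already been absorbed by a PML, so the wavefront set is empty and $e^n_\ell$ is smooth; semiclassical elliptic regularity then upgrades smoothness into the claimed $Ck^{-M}$ decay for every $M$. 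The sequential case is analogous, but the order of the sweep dictates which product of subdomain operators appears, and the ``effective'' ray-crossing count differs accordingly, matching the statements of Theorems \ref{thm:strip}--\ref{thm:sweep} and \ref{thm:gen}.

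The main obstacle is the microlocal treatment at PML subdomain boundaries:~one must verify that when a ray exits through a PML, the complex-scaled problem absorbs it (i.e.\ no microlocal reflection back into the subdomain), and that the boundary/interface data transferred between overlapping subdomains is faithfully captured in the wavefront-set framework. This is precisely where Theorem \ref{thm:outgoing_approx} and the complex-scaling machinery underpinning it are essential, since they identify the Cartesian-PML solution with a complex-scaled outgoing solution to which standard semiclassical propagation theory applies. A secondary, more bookkeeping-type obstacle is matching rays to the hyperrectangle geometry of the subdomains and overlaps, so that the iteration count above is the sharp one appearing in the formal theorems.
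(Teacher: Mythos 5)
Your strategy hinges on the claim that each PML subdomain solver equals the true outgoing Helmholtz solution operator for $P$ on $\mathbb R^d$, up to a remainder that is $O(k^{-\infty})$ from any Sobolev space into any other, and that Theorem~\ref{thm:outgoing_approx} supplies this decomposition. This is the gap, and it is a serious one: the paper explicitly notes (in \S\ref{ss:outgoing_approx}) that Theorem~\ref{thm:outgoing_approx} is \emph{not} used in the proofs of Theorems~\ref{thm:strip}--\ref{thm:gen}, and the claimed decomposition fails in general. When $c\not\equiv 1$, a geometric-optic ray that exits $\Omega_j$ through its PML layer can bend, scatter off the part of $\operatorname{supp}(1-c)$ lying outside $\Omega_j$, and re-enter $\Omega_j$. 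The true outgoing solution operator for $P$ carries this returning mass; the PML subdomain operator does not, because the PML layer of $\Omega_j$ absorbs the ray on exit. That mismatch is $O(1)$, not $O(k^{-\infty})$. Indeed, the remark following Theorem~\ref{thm:gen} stresses precisely this point: $\mathcal N$ can be strictly larger than $N$ for a strip when $c\not\equiv 1$, which would be impossible if your decomposition were valid, since composing $N$ outgoing solvers plus negligible remainders would always terminate after $N$ iterations. Your reduction therefore proves too much.

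There is also a secondary obstruction even in the regime where the remainder could plausibly be small ($c\equiv 1$): Theorem~\ref{thm:outgoing_approx} applies only to data $f\in H^{-1}(\Omega_{\rm int})$, i.e.\ supported in the physical region where the PML scaling is inactive, but the right-hand side fed into the subdomain solver $\mathcal T_j$ is $(P_\newtheta^j-P_\newtheta)\Tchi_\ell v$, which is supported inside the PML layer of $\Omega_j$ (in $\operatorname{supp}(P_\newtheta^j - P_\newtheta)$), not in $\Omega_{{\rm int},j}$. The hypotheses of the theorem are simply not met for the subdomain problems arising in the iteration. The paper sidesteps both difficulties by working \emph{directly} with the scaled operators $P_\newtheta^j$: it proves a propagation-of-singularities result for PML solutions themselves (Lemma~\ref{lem:key_prop}), built from ellipticity of $p_\newtheta^j$ in the scaling directions (Lemma~\ref{lem:ell_unidri}), the fact that the $P_\newtheta^j$-flow coincides with the $P$-flow on the characteristic set (Lemma~\ref{lem:traj_energy_surf}), and an escape-to-ellipticity argument (Lemma~\ref{lem:go_to_elliptic}). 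This is combined with a purely algebraic ``word'' combinatorics (Definitions~\ref{def:follow}, \ref{def:allowed}, and Lemma~\ref{lem:notallowed}) to track which compositions of subdomain maps can carry mass, which is the structure that produces the sharp iteration count $\mathcal N$ (and $N_1+\cdots+N_{\mathfrak d}-(\mathfrak d-1)$ for checkerboards). Your sketch has the right microlocal intuition, but it is missing both the correct propagation lemma adapted to the truncated PML operator and the word-combinatorics that turn the propagation statement into a bound on powers of the error-propagation matrix.
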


In particular, this implies that the fixed-point iterations converge exponentially quickly in the number of iterations  for sufficiently-large $k$.

These results are the first $k$-explicit results about convergence of overlapping Schwarz methods for the Helmholtz equation, and the first $k$-explicit results about convergence of \emph{any} DD method for the Helmholtz equation with a non-trivial scatterer 
(here a variable wave speed).
We highlight the following.
\bit
\item These results are valid on fixed domains for sufficiently-large $k$, i.e., the PML widths and DD overlaps are arbitrary, but independent of $k$. Obtaining results that are also explicit in these geometric parameters of the decomposition will require more technical arguments than those used here. 
\item We make clear exactly what properties of PML are required to obtain these results, and for what other complex absorption operators these results also hold (see Appendix \ref{sec:general}).
\item These results are the 
PML analogues of the results in  \cite{GGGLS1,LS1} 
for parallel overlapping Schwarz methods with impedance boundary conditions, 
although the results of the present paper are much stronger because of the superiority of PML over impedance boundary conditions in this context.
\eit

\subsection{Definition of the overlapping Schwarz methods considered in this paper}\label{sec:def}

\subsubsection{Definition of the subdomains}
Let the $d$-dimensional hyperrectangular domain $\Omega_{\rm int}$ be given by the Cartesian product
$$
\Omega_{\rm int} := \prod_{1 \leq \ell \leq d} (0, L_\ell),
$$
Let $\{\Omega_{{\rm int}, j}\}_{j=1}^N$ be an overlapping decomposition of $\Omega_{\rm int}$ in hyperrectangular subdomains:~
\beq\label{eq:Omegaintj}
\Omega_{{\rm int}, j}  := \prod_{1 \leq \ell \leq d} (a^j_\ell, b^j_\ell)
\eeq

We extend $\Omega_{\rm int}$ and each $\Omega_{{\rm int}, j}$, $1\leq j \leq N$, by adding a PML layer to each, to form the domains $\Omega$ and $\Omega_{j}$.
Namely, let $\width > 0$ and $\width_0 > 0$ denote respectively the PML width on $\Omega$ and the interior PML width and let
$$
\Omega:= \prod_{1 \leq \ell \leq d} (-\width, L_\ell+\width),
\quad
\Omega_{j}  := \prod_{1 \leq \ell \leq d} (a^j_\ell - \width^{j}_{\ell}, b^j_\ell + \width^{j}_{\ell}),
$$
where $\width^{j}_{\ell} = \width_0$ if the corresponding edge of $\Omega_{{\rm int},j}$ belongs to the interior of $\Omega$, and $\width$ otherwise; i.e., edges of subdomains that touch the boundary have the same PML width as $\Omega$ (i.e., $\width$), and interior subdomain edges have PML width $\width_0$ which can be different than $\width$.

\subsubsection{The scaled operators}\label{sec:scaled}

We now define standard Cartesian PMLs of width $\width$ at the boundary of $\Omega$ and width $\width_0$ at the boundary of each 
$\Omega_j$. For simplicity, we assume that the same PML scaling function is used inside every PML; however,  
this assumption can be easily removed, with, say, one function used in the PMLs of width $\width$ and another used in the PMLs of width $\width_0$, at the cost of introducing more notation.

Let $\fPML \in C^\infty(\mathbb R)$ (with subscript $\newtheta$ standing for ``scaling") be such that
$$
\{ x:\fPML(x) = 0 \} = \{ x:\fPML'(x) = 0 \} = \{x: x \leq 0\},
\,\, \fPML'(x) > 0 \tfor x>0, 
\tand \fPML''(x)= 0 \tfor x\geq \width_{\rm lin}
$$
for some $\width_{\rm lin}< \width$ (observe that $\fPML$ is then linear for $x\geq \width_{\rm lin}$). 
(This assumption is to avoid technical issues about propagation of singularities -- see Remark \ref{rem:PoS} below.)

For any $1\leq \ell \leq d$, 
we define the following scaling functions in the $\ell$-direction $g_{\ell}\in C^\infty(\mathbb R^d)$  by
\beqs%\label{eq:g}
g_{\ell} (x_\ell) :=
\begin{cases}
\fPML(x_\ell - L_{\ell}) &\text{if }x_\ell \geq L_\ell, \\
0&\text{if }x_\ell \in (0, L_\ell), \\
- \fPML( - x_\ell) &\text{if }x_\ell \leq 0,
\end{cases}
\eeqs
and similarly, the subdomain scaling functions in the $\ell$-direction $g_{\ell, j}\in C^\infty(\mathbb R^d)$ for $1 \leq j \leq N$ by
\beqs%\label{eq:gj}
g_{\ell, j} (x_\ell) :=
\begin{cases}
\fPML(x_\ell - b_{\ell}^j) &\text{if }x_\ell \geq b_{\ell}^j, \\
0&\text{if }x_\ell \in (a_{\ell}^j, b_{\ell}^j), \\
- \fPML( a_{\ell}^j - x_\ell) &\text{if }x_\ell \leq a_{\ell}^j.
\end{cases}
\eeqs
We now define the scaled operators $\DeltaPML$ and $\DeltaPMLj$ by
\beqs%\label{eq:DeltaPML}
\DeltaPML := \sum_{\ell=1}^d \Big(\frac{1}{1+ig_{\ell}'(x_\ell)} \partial_{x_\ell} \Big)^2,
\eeqs
and
\beq\label{eq:DeltaPMLj}
\DeltaPMLj := \sum_{\ell=1}^d \Big(\frac{1}{1+ig'_{\ell,j}(x_\ell)} \partial_{x_\ell} \Big)^2 \hspace{0.3cm}\tfa 1 \leq j \leq N.
\eeq
Given $c\in C^\infty(\Rea^d)$ that is strictly positive and such that $\supp(1-c) \subset \Omega_{\rm int}$, let 
\beq\label{eq:PPj}
P_{\newtheta} := - k^{-2} \DeltaPML - \coeffc^{-2}, \qquad P_{\newtheta}^j := - k^{-2} \DeltaPMLj - \coeffc^{-2} \; \tfa 1 \leq j \leq N.
\eeq
These operators are defined on $H^1(\Rea^d)$, but we consider $P_\newtheta$ as a operator $H^1(\Omega)\to H^{-1}(\Omega)(= (H^1_0(\Omega))^*)$, %and $H^1(\Omega_j)$, 
and $P^j_{\newtheta}$ as an operator $H^1(\Omega_j)\to H^{-1}(\Omega_j)(= (H^1_0(\Omega_j))^*)$. If Dirichlet boundary data in $H^{1/2}$ is prescribed on the corresponding boundaries, these operators are then invertible.

In the proofs of our main results, a key region is the following 
subset of $\Omega_j$: 
\beqs%\label{eq:supp_functions}
\supp(P^j_\newtheta-P_\newtheta):=\Omega_j \cap \bigg(\bigcup_{\ell=1}^d \overline{\big\{ x \in \Rea^d\,:\, g_{\ell,j}(x_\ell) \neq g_{\ell}(x_\ell)\big\}}\bigg);
\eeqs
$\supp(P^j_\newtheta-P_\newtheta)$ is the part of the PML region of $\Omega_j$ where the PML in $\Omega_j$ differs from the PML in $\Omega$ (i.e., where $P^j_\newtheta \neq P_\newtheta$). 
Note that when $\Omega_j$ is an interior subdomain, $\supp(P^j_\newtheta-P_\newtheta) = \Omega_j\setminus\Omega_{\rm int, j}$.

\subsubsection{The Helmholtz problem}
Given $f\in H^{-1}(\Omega)$, 
find $u\in H^1_0(\Omega)$ such that 
\beq\label{eq:PDE}
P_{\newtheta} u=f.
\eeq
The solution $u$ exists and is unique \emph{either} for fixed $k$ when the PML width $\kappa$ is sufficiently large  \cite[Theorem 5.5]{KiPa:10}, \cite[Theorem 5.7]{BrPa:13} \emph{or} for fixed $\kappa$ and scaling function $\fPML$ when $k$ is sufficiently large (by Lemma \ref{lem:res_est} below). 
When $\supp f \subset \Omega_{\rm int}$ and $c\equiv 1$, \cite[Lemma 3.4]{ChXi:13} shows that the error between $u$ and the solution of the true Helmholtz problem in $\Rea^d$ satisfying the Sommerfeld radiation condition decays exponentially in $k$ and $\width$. Theorem \ref{thm:outgoing_approx} below shows that, for 
$f\in H^{-1}(\Omega_{\rm int})$ and $c\not\equiv 1$, for fixed $\width$, this error is $O(k^{-\infty})$; i.e., smaller than any negative power of $k$.

\subsubsection{The partition of unity}

Let $\{\chi_j\}_{j=1}^{N}$ be a partition of unity subordinate to 
$\{ \Omega_{j}\setminus \supp(P_\newtheta^j - P_\newtheta)\}_{1\leq j \leq N}$, i.e.,
 $\{\chi_j\}_{1\leq j \leq N}$ is a family of non-negative elements of $C^\infty(\mathbb R^d)$ such that
\begin{align}
&\tfa\,  x \in \mathbb R^d, \quad \sum_{j=1}^{N} \chi_j(x) = 1,\qquad
\operatorname{supp}\chi_j \cap \Omega \subset \big(\Omega_{ j}\setminus {\supp(P_\newtheta^j - P_\newtheta)}\big)  \text{ for all $j$},\label{eq:PoU}
\end{align}
and, additionally, in a neighbourhood of $\partial\Omega$, each $\chi_j$ does not vary in the normal direction to the boundary of $\Omega$ (this last assumption is for technical reasons, and can easily be achieved in practice).

\subsubsection{The parallel overlapping Schwarz method}\label{sec:parallel}

Given $\parallelu^n \in H^1_0(\Omega)$, for $n\geq 0$ and any $1\leq j \leq N$, let $u_j^{n+1}\in H^1(\Omega_j)$ be the solution to
\begin{equation}\label{eq:localprob}
\begin{cases}
P_{\newtheta}^j
u_j^{n+1} = 
P_{\newtheta}^j (\parallelu^n|_{\Omega_j})
-(P_{\newtheta}\parallelu^n )|_{\Omega_j}
+ f|_{\Omega_j}\in H^{-1}(\Omega_j)\\
u_j^{n+1} = \parallelu^n \in H^{1/2}(\partial \Omega_j),
\end{cases}
\end{equation}
and then set
\beq\label{eq:parallel}
\parallelu^{n+1} := \sum_{j=1}^N \chi_j u_j^{n+1}
\eeq
(where the subscript $+$ indicates that these are the iterates for the parallel/additive method). 

Another way to write this is to introduce the \emph{local corrector}
$\mathfrak c_j^n :=u_j^{n+1}-\parallelu^n|_{\Omega_j}\in H^1_0(\Omega_j)$, which satisfies
\beq\label{eq:local_corrector}
P_{\newtheta}^j
\mathfrak c_j^n = 
(f- P_{\newtheta}\parallelu^n )|_{\Omega_j} \in H^{-1}(\Omega_j)
\quad\tand\quad
\parallelu^{n+1} = \parallelu^n + \sum_{j=1}^N \chi_j \mathfrak c_j^n.
\eeq
From this we see that the solution $u$ of \eqref{eq:PDE} is a fixed point of \eqref{eq:localprob}-\eqref{eq:parallel}. Indeed, if $\parallelu^n=u$ then $\mathfrak c^n_j=0$ by uniqueness of the PML problem on $\Omega_j$, and thus $\parallelu^{n+1}=\parallelu^n=u$.
(Note that the subdomain problems for $\mathfrak c_j^n$ are wellposed by the same results that ensure wellposedness of \eqref{eq:PDE}.)

In \S\ref{sec:num1} below, we show that, on the discrete level, this parallel method can be understood as a natural PML-variant of the well-known RAS (restricted additive Schwarz) method.

\subsubsection{The sequential overlapping Schwarz methods}\label{sec:sequential}

The following sequential methods are designed to be used when the subdomains have certain additional structure (see the definitions of ``strips" and ``checkerboards" in \S\ref{sec:strip_checker} below), but can be written abstractly for any set of subdomains as follows.

\

\emph{Forward-backward sweeping.} 
 Given $\sequentialu^0 \in H^1_0(\Omega)$, let $u^0_j:= \sequentialu^0|_{\Omega_j}.$ 
  Then, for $n\geq 0$, 

\begin{enumerate}
\item \label{it:for}(Forward sweeping) For $j=1,\ldots,N$, let $u_j^{2n+1}\in H^1(\Omega_j)$ be the solution to
\beq\label{eq:forward_sweeping}
\begin{cases}
P_{\newtheta}^j
u_j^{2n+1} = 
P_{\newtheta}^j(u_{j,n}^{\rightarrow}|_{\Omega_j})
-(P_{\newtheta}
u_{j,n}^{\rightarrow})|_{\Omega_j} + f|_{\Omega_j} \in H^{-1}(\Omega_j)
\\
u_j^{2n+1} = u_{j,n}^{\rightarrow}\in H^{1/2}(\partial \Omega_j),
\end{cases}
\eeq
where
$$
u_{j,n}^{\rightarrow} := \sum_{\ell<j} \chi_\ell u_\ell^{2n+1} + \sum_{j\leq \ell} \chi_\ell u_\ell^{2n}\,\in H^1_0(\Omega).
$$
Then set
\beq\label{eq:combine1}
\sequentialu^{2n+1} := u^{\rightarrow}_{N+1,n} = \sum_{\ell=1}^N \chi_\ell u_\ell^{2n+1}\,\in H^1_0(\Omega)
\eeq
(where the subscript $\times$ indicates that these are the iterates for a sequential/multiplicative method). 
\item \label{it:back} (Backward sweeping)
For $j=N,\ldots,1$, let $u_j^{2n+2} \in H^1(\Omega_j)$ be the solution to
\beqs%\label{eq:backward_sweeping}
\begin{cases}
P_{\newtheta}^j
u_j^{2n+2} = 
P_{\newtheta}^j(u_{j,n}^{\leftarrow}|_{\Omega_j} ) 
-(P_{\newtheta}
u_{j,n}^{\leftarrow} )|_{\Omega_j} + f|_{\Omega_j}\in H^{-1}(\Omega_j)
\\
u_j^{2n+2} = u_{j,n}^{\leftarrow}\in H^{1/2}(\partial\Omega_j), 
\end{cases}
\eeqs
where
$$
u_{j,n}^{\leftarrow} := \sum_{\ell\leq j} \chi_\ell u_\ell^{2n+1} + \sum_{j< \ell} \chi_\ell u_\ell^{2n + 2}\, \in H^1_0(\Omega).
$$
Then set
\beq\label{eq:combine2}
\sequentialu^{2n+2} := u^{\leftarrow}_{N+1,n} = \sum_{\ell=1}^N \chi_\ell u_\ell^{2n+ 2}\, \in H^1_0(\Omega).
\eeq
\end{enumerate}
Observe that 
the algorithm only uses the local approximations $\{ u^{m}_j\}_{j=1}^N$; i.e., $\sequentialu^{m-1}$ is not directly used to define $\{u^m_j\}_{j=1}^N$.
However, the approximation to the solution of \eqref{eq:PDE} at each step is $\sequentialu^m$, obtained by combining the local approximations $u^m_j$ via the partition of unity in \eqref{eq:combine1}/\eqref{eq:combine2}.

\

In forward-backward sweeping, the subdomains are always visited in the same order and its reverse. 
Some of our results about the sequential method involve changing the order in which the subdomains are visited from iteration to iteration, 
as described as follows.

\

\emph{General sequential method.}
Let $\{\sigma_n\}_n$ be a sequence of permutations $\{ 1, \dots, N \} \mapsto \{ 1, \dots, N \}$.
For each $n$, 
$\sigma_n$ corresponds to the order that that subdomains are visited in the $n$th sweep; e.g., 
$\sigma_n(1)$ is the first subdomain visited on the $n$th sweep, $\sigma_n(2)$ is the second subdomain visited on the $n$th sweep, etc. 
We therefore call $\{\sigma_n\}_n$ a sequence of orderings.

We highlight immediately that forward-backward sweeping is defined by $\sigma_{2n+1}(j) = j, \sigma_{2n}(j) = N-j+1$ (i.e., odd-numbered sweeps visit the subdomains in order, and then even-numbered sweeps visit the subdomains in reverse order).

Given $\sequentialu^0 \in H^1_0(\Omega)$, 
 let $u^0_j:= \sequentialu^0|_{\Omega_j}$. 
Then for $n\geq 1$, 
for $j = 1, \dots, N$, let $u_{\sigma_n(j)}^{n+ 1}\in H^1(\Omega_{\sigma_n(j)})$ be the solution to
\beq\label{eq:general_sweeping}
\begin{cases}
P_{\newtheta}^{\sigma_{n}(j)}
u_{\sigma_n(j)}^{n+1} = 
P_{\newtheta}^{\sigma_n(j)}(u_{\sigma_n(j),n}^{*}|_{\Omega_{\sigma_n(j)}}) 
-(P_{\newtheta}u_{\sigma_n(j),n}^{*})|_{\Omega_{\sigma_n(j)}} 
+ f|_{\Omega_{\sigma_n(j)}} \in H^{-1}(\Omega_{\sigma_n(j)}),
\\
u_{\sigma_n(j)}^{n+ 1} = u_{\sigma_n(j),n}^{*} \in H^{1/2}(\partial\Omega_j),
\end{cases}
\eeq
where
$$
u^*_{\sigma_n(j),n} := \sum_{\ell<j}
 \chi_{\sigma_n(\ell)} u_{\sigma_n(\ell)}^{n+1} + \sum _{\ell \geq j} 
 \chi_{\sigma_n(\ell)} u_{\sigma_n(\ell)}^n\, \in H^1_0(\Omega).
$$
Then set
$$
\sequentialu^{n+1} := u^*_{N+1,n} = \sum_{\ell=1}^N \chi_\ell u_\ell^{n+1} \,\in H^1_0(\Omega).
$$

\subsubsection{Strips and checkerboards}\label{sec:strip_checker}

For $N_\ell \in \mathbb{Z}^+$, $\ell=1,\ldots, d$, 
we say that $\{\Omega_{j}\}_{j=1}^N$ is a \emph{checkerboard} of size $N_1\times \ldots \times N_\ell$ if the following hold.

\bit
\item[(i)] 
For $\ell=1,\ldots, d$, there exist 
$$
0=y^\ell_0 < \ldots < y^\ell_{N_\ell}=L_\ell
$$
such that 
\beqs
\overline{\Omega_{\rm int}}= \bigcup_{j=1}^N \overline{ U_j}, \quad U_j \cap U_i =\emptyset \text{ if } j\neq i,
\eeqs
with each $U_j$ of the form $\prod_{\ell=1}^d (y_{m_\ell-1}^\ell, y_{m_{\ell}}^\ell)$ 
for some $m_\ell\in \{1,\ldots, N_\ell\}$, $\ell=1,\ldots, d$. 

\item[(ii)] The overlapping decomposition $\{\Omega_{{\rm int}, j}\}_{j=1}^N$ of $\Omega_{\rm int}$ comes from extending each $U_j$ 
in each coordinate direction (apart from at $\partial \Omega$).
\item[(iii)] The extensions in (ii) are such that, for all $j$, $\Omega_j$ overlaps only with $\Omega_{j'}$, where $\Omega_{j}$ and $\Omega_{j'}$ are extensions of adjacent nonoverlapping subdomains.
\eit

We say that  $\{\Omega_j\}$ is a $\mathfrak d$-checkerboard if $\mathfrak d := |\{ \ell, \; N_\ell \neq 1 \}|$ i.e. $\mathfrak d$ is the effective dimensionality of the checkerboard.

We say that $\{\Omega_{j}\}_{j=1}^N$ is a \emph{strip} if it is a $1$-checkerboard. To simplify notation, we then assume that the subdomains of a strip are ordered monotonically (i.e., so that $\Omega_i$ only overlaps $\Omega_{i-1}$ and $\Omega_{i+1}$).

Figure \ref{fig:AI} shows an example of the grid formed by the points $\{y_m^\ell\}$ that is used in the definition of a particular $2$-checkerboard of size $4\times 5$, along with a particular subdomain $\Omega_{{\rm int},j}$.

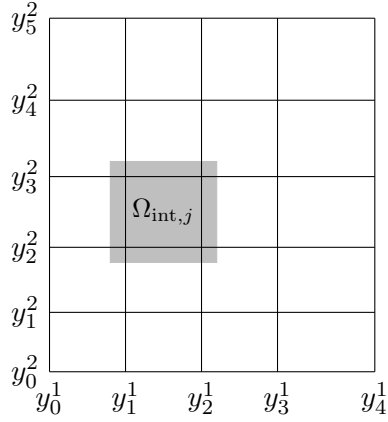
\begin{figure}[h!]
\begin{center}
\begin{tikzpicture}[scale=1]
\def \w{.2};
\def \j{2};
\def \i{1};
   \draw[fill=lightgray, lightgray] ({\i-\w},{(1+\j*(0.05))*(0.75)*\j-\w})  rectangle    ({\i+1+\w},{(1+(\j+1)*(0.05))*(0.75)*(\j+1)+\w});
   \node  at({\i+.5}, {((1+\j*(0.05))*(0.75)*\j+(1+(\j+1)*(0.05))*(0.75)*(\j+1) )/2}){\small{$\Omega_{{\rm int},j}$}};
    \foreach \y in {0,1,2,3,4,5} {
        \draw (0,{(1+\y*(0.05))*(0.75)*\y}) -- (4*1.07,{(1+\y*.05)*.75*\y});
    }

        % Draw vertical lines
    \foreach \x in {0,1,2,3} {
        \draw (\x,0) -- (\x,{5*.75*(1+5*.05)});
    }

    \draw (4*1.07,0) -- (4*1.07,{5*.75*(1+5*.05)});

    % Labels on the left and bottom edges
    \foreach \x [count=\xi] in {0,1,2,3,4,5} {
        \node[left] at (0,{.75*(\xi-1)*(1+(\xi-1)*.05)}) {$y^2_{{\x}}$};
      %  \node[below] at (\xi-1,0) {$y^1_{_{\x}}$};
    }
        % Labels on the left and bottom edges
    \foreach \x [count=\xi] in {0,1,2,3} {
     %   \node[left] at (0,\xi-1) {$y^2_{_{\x}}$};
        \node[below] at (\xi-1,0) {$y^1_{{\x}}$};
    }
    \node[below] at (4*1.07,0) {$y^1_{{4}}$};
     
\end{tikzpicture}
\end{center}
\caption{The grid formed by the points $\{y_m^\ell\}$ that is used in the definition of a particular $2$-checkerboard of size $4\times 5$, and the subdomain $\Omega_{{\rm int}, j}$ formed by extending $(y_1^1,y_2^1)\times (y^2_2, y^2_3)$}
\label{fig:AI}
\end{figure}

\subsection{Statement of the results about strips with $c\equiv 1$}

\begin{theorem}[$\coeffc\equiv 1$, strip, parallel method] \label{thm:strip}
Assume that $\coeffc\equiv 1$ and that $\{\Omega_{j}\}_{j=1}^N$ is a strip. Then, for any $k_0, M,s>0$, there exists $C >0$
such that the following is true for any
 $f \in H^{-1}(\Omega)$ and $k \geq k_0$.
If $u$ is the solution to (\ref{eq:PDE}), and $\{\parallelu^n\}_{n \geq 0}$
is a sequence of iterates for the  parallel overlapping Schwarz method, then
$$
\Vert u - \parallelu^N \Vert_{H^s_k(\Omega)} \leq C k^{-M} \Vert u - \parallelu^0 \Vert_{H^1_k(\Omega)}.
$$
\end{theorem}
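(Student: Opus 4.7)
The plan is to study the error $e^n := u - \parallelu^n$ and show that each iteration of the parallel method advances the microlocal support of $e^n$ by at most one subdomain along geometric-optic rays (which, because $c\equiv 1$, are straight lines in $\Omega_{\rm int}$), so that after $N$ iterations no such ray remains. I would begin by rewriting the iteration as an error recursion: using the partition-of-unity identity $\sum_j \chi_j = 1$ together with the local-corrector formulation \eqref{eq:local_corrector}, one obtains
$$
e^{n+1} = \sum_{j=1}^N \chi_j\, w_j^n,
\qquad
w_j^n := e^n|_{\Omega_j} - \mathfrak{c}_j^n,
$$
where $w_j^n\in H^1(\Omega_j)$ satisfies $P_{\newtheta}^j w_j^n = (P_{\newtheta}^j - P_{\newtheta})\,e^n$ with Dirichlet data $e^n|_{\partial\Omega_j}$. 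The crucial feature is that the right-hand side is supported in $\supp(P_{\newtheta}^j - P_{\newtheta}) \subset \Omega_j \setminus \Omega_{{\rm int},j}$, i.e., entirely inside the interior PML of $\Omega_j$.

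Next I would analyse one iteration in isolation. Since $c\equiv 1$, the Cartesian PML on $\Omega_j$ reproduces the outgoing resolvent acting on $\Omega_{{\rm int},j}$ up to an $O(k^{-\infty})$ smoothing remainder; this is the content of Theorem~\ref{thm:outgoing_approx} applied on the subdomain. Hence $w_j^n$ is microlocally determined by $e^n|_{\partial\Omega_j}$ together with outgoing propagation inside $\Omega_{{\rm int},j}$. Bicharacteristics exiting $\Omega_{{\rm int},j}$ into the outer PML are absorbed modulo $O(k^{-\infty})$, so the only pieces of $\chi_j w_j^n$ that survive in $e^{n+1}$ correspond to rays whose foot points lie in the overlap of $\Omega_{{\rm int},j}$ with a neighbour $\Omega_{j\pm 1}$. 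This yields the key bookkeeping identity: the semiclassical wavefront set of $e^{n+1}$ is contained in the geometric-optic flow-out, by at most one subdomain, of the wavefront set of $e^n$, plus an $O(k^{-\infty})$ smoothing residual.

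The strip hypothesis then enters decisively: the subdomains are ordered so that $\Omega_{{\rm int},j}$ meets only $\Omega_{{\rm int},j\pm 1}$, and a straight-line ray can therefore cross at most $N$ of them. Iterating the microlocal bookkeeping $N$ times, and controlling the accumulated $O(k^{-\infty})$ remainders --- each of which maps $H^1_k$ boundedly into every $H^s_k$ with norm $\lesssim k^{-M}$ --- shows that after $N$ iterations every ray in the wavefront set of $e^0$ has been flushed into the outer PML at $\partial\Omega$ and absorbed. Combining this with the uniform PML resolvent bound of Lemma~\ref{lem:res_est} and the quantitative PML error estimate of Theorem~\ref{thm:outgoing_approx} yields the bound $\Vert e^N \Vert_{H^s_k(\Omega)} \leq C k^{-M} \Vert e^0 \Vert_{H^1_k(\Omega)}$.

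The chief obstacle will be making the single-step microlocal argument rigorous: the local corrector $\mathfrak{c}_j^n$ has zero Dirichlet data on $\partial\Omega_j$ whereas $e^n$ does not, so $w_j^n$ may be large on $\partial\Omega_j$, and it is not immediate that subtracting $\mathfrak{c}_j^n$ from $e^n$ only advances the wavefront by a single subdomain. The resolution is to use the support property $\supp(P_{\newtheta}^j - P_{\newtheta}) \subset \Omega_j\setminus \Omega_{{\rm int},j}$ --- so the forcing driving $w_j^n$ lives far from $\Omega_{{\rm int}}$ --- together with an outgoing/incoming ray decomposition of the boundary trace $e^n|_{\partial\Omega_j}$. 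A secondary issue is book-keeping the $N$-fold composition of near-smoothing operators so that the constants remain bounded uniformly in $k$; because $N$ is fixed independently of $k$, all multiplicative constants may depend on $N$ and the geometry without spoiling the $k^{-M}$ decay.
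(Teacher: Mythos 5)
Your high-level outline---decompose the error via the partition of unity, observe that the forcing driving $w_j^n$ is $(P_{\newtheta}^j - P_{\newtheta})\,e^n$ and lives in the interior PML layer of $\Omega_j$, and track how mass moves one subdomain per iteration along straight rays---is the route the paper takes, and your error recursion is correct. The genuine gap is in the tool you invoke to make the one-step microlocal claim rigorous. You appeal to Theorem~\ref{thm:outgoing_approx} to assert that the local PML solution reproduces the outgoing free resolvent up to $O(k^{-\infty})$, but that theorem requires the forcing to lie in $\Omega_{{\rm int},j}$, whereas here the forcing is supported in $\supp(P_{\newtheta}^j - P_{\newtheta})\subset\Omega_j\setminus\Omega_{{\rm int},j}$, i.e.\ exactly in the region where the complex scaling is active and $P_{\newtheta}^j$ genuinely differs from the Helmholtz operator. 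There is no reason a PML resolvent should agree with the free outgoing resolvent for forcing placed \emph{inside} the scaling layer, and the paper explicitly states (after Theorem~\ref{thm:outgoing_approx}) that this theorem is not used in the proofs of Theorems~\ref{thm:strip}--\ref{thm:gen}. What the argument actually needs is a pure wavefront-set propagation statement for the scaled operator itself: Lemma~\ref{lem:key_prop}, which asserts that $\operatorname{WF}_\hbar$ of the cut-off local solution is contained in the forward flow-out of $\operatorname{WF}_\hbar$ of the data, and whose proof relies on forward propagation of regularity (Lemma~\ref{lem:FPR}), escape to ellipticity of $P_{\newtheta}^j$ in the scaled directions (Lemmas~\ref{lem:ell_unidri} and~\ref{lem:go_to_elliptic}), and the coincidence of the $P_{\newtheta}^j$- and $P$-flows on $\{p_{\newtheta}^j=0\}$ (Lemma~\ref{lem:traj_energy_surf}).

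Two further points. First, tracking only $\operatorname{WF}_\hbar(e^n)$ is not quite enough bookkeeping. A backward chain $\rho_0\in\operatorname{WF}_\hbar(e^0),\dots,\rho_N\in\operatorname{WF}_\hbar(e^N)$ with one hop per iteration gives a word of only $N$ subdomain indices $(j_1,\dots,j_N)$, which for a strip can be the full monotone sequence $(1,\dots,N)$ and is then allowed (a straight ray really can cross all $N$ subdomains). The extra label you need---that $\rho_0$ already sits in $\supp\chi_{j_0}$ for some $j_0$, because $e^0=\sum_j\chi_j e_{+,j}^0$ with each summand cut off by $\chi_j$---is precisely what forces the word up to length $N+1$, which \emph{is} not allowed. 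The paper encodes this by working with the $N$-vector of localized physical errors $(\chi_j e_{+,j}^n)_j$ and the propagation matrix $\mathbf T$, rather than with the global error alone. Second, the obstacle you flag (that $w_j^n$ has nonzero Dirichlet data on $\partial\Omega_j$) is a red herring: $\chi_j$ vanishes near $\partial\Omega_j\setminus\partial\Omega$ (by the support condition on the partition of unity), and the trace of $e^n$ on $\partial\Omega_j\cap\partial\Omega$ is zero, so $\chi_j w_j^n$ is insensitive to the data on $\partial\Omega_j\setminus\partial\Omega$. The genuine boundary subtlety is that propagation of singularities for operators whose principal symbol has non-single-signed imaginary part is not available up to the boundary in the literature; the paper sidesteps this via odd-symmetric extension past $\partial\Omega$ (Lemma~\ref{lem:ext_h}, exploiting the assumption that $\fPML$ is linear near the PML boundary, Remark~\ref{rem:PoS}), not via an ``outgoing/incoming ray decomposition of the boundary trace.''
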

The norm $\|\cdot\|_{H^s_k(\Omega)}$ appearing in Theorem \ref{thm:strip} is defined for $s\in \mathbb{N}$ by
\beq\label{eq:weighted_norm}
\N{v}^2_{H^s_k(\Omega)} := \sum_{|\alpha|\leq s} \N{ (k^{-1}\partial)^\alpha v}^2_{L^2(\Omega)},
\eeq
and for $s\in (0,\infty)\setminus \mathbb{Z}^+$ by interpolation; i.e., $\|\cdot\|_{H^s_k(\Omega)}$
is the standard norm on $H^s(\Omega)$ except that every derivative is weighted by $k^{-1}$ (using this norm is motivated by the fact that Helmholtz solutions oscillate at frequency approximately $k$).

\begin{theorem}[$\coeffc\equiv 1$, strip, forward-backward sweeping] \label{thm:sweep_strip}
Assume that $\coeffc\equiv 1$ and that $\{\Omega_{j}\}_{j=1}^N$ is a strip. Then, 
for any $k_0, M,s>0$, there exists $C >0$
such that the following is true for any
 $f \in H^{-1}(\Omega)$ and $k \geq k_0$.
If $u$ is the solution to (\ref{eq:PDE}), and $\{\sequentialu^n\}_{n \geq 0}$
is a sequence of iterates for the  forward-backward sweeping overlapping Schwarz method, then
$$
\Vert u - \sequentialu^2 \Vert_{H^s_k(\Omega)} \leq C k^{-M} \Vert u - \sequentialu^0 \Vert_{H^1_k(\Omega)}
$$
(i.e., after one forward and one backward sweep, the error is $O(k^{-\infty})$).
\end{theorem}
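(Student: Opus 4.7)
The proof is a microlocal/propagation-of-singularities argument exploiting two features of a strip with $\coeffc\equiv 1$: (i) the geometric-optic rays are straight lines parallel to the strip axis, so the error splits cleanly into a rightgoing and a leftgoing component, and (ii) each Cartesian PML annihilates its outgoing components to $O(k^{-\infty})$, which is the same mechanism underlying Theorem~\ref{thm:outgoing_approx}. One forward sweep removes the rightgoing part of the error and one backward sweep then removes the leftgoing part, so after $\sequentialu^2$ the total error is $O(k^{-\infty})$.

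First I would set up the error recursion. Letting $E_j^n := u - u_j^n$ and using $P_{\newtheta} u = f$, the forward-sweep equation \eqref{eq:forward_sweeping} gives
\begin{equation*}
 P_{\newtheta}^j E_j^{2n+1} = (P_{\newtheta}^j - P_{\newtheta})\bigl(u - u_{j,n}^{\rightarrow}\bigr) \text{ in } \Omega_j, \qquad E_j^{2n+1}\big|_{\partial \Omega_j} = \bigl(u - u_{j,n}^{\rightarrow}\bigr)\big|_{\partial \Omega_j},
\end{equation*}
with the source supported in $\supp(P_{\newtheta}^j - P_{\newtheta})$, i.e.~in the interior PML of $\Omega_j$; the analogous identity holds for the backward sweep.

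Next I would introduce a semiclassical microlocal splitting $\mathrm{Id} = \Pi_+ + \Pi_-$ modulo $O(k^{-\infty})$, where $\Pi_\pm$ are pseudodifferential cutoffs microlocalising to $\pm\xi_1>0$ (with $x_1$ the strip axis). Because the strip's interior subdomain faces are all perpendicular to $x_1$ and because the assumption $\fPML''=0$ beyond $\width_{\rm lin}$ removes the grazing/propagation-of-singularities subtlety flagged in Remark~\ref{rem:PoS}, any semiclassical Helmholtz solution on $\Omega_j$ decomposes, away from $\xi_1=0$, into purely rightgoing and leftgoing parts. I would then prove the key \emph{PML absorption lemma}: if $v$ solves $P_{\newtheta}^j v = g$ in $\Omega_j$ with $g$ and Dirichlet data whose wavefront set on the right face of $\Omega_j$ is contained in $\{\xi_1>0\}$ (outgoing through the right PML), then $\Pi_+ v$ is $O(k^{-\infty})$ on $\Omega_{{\rm int}, j}$; and symmetrically with $\pm$ reversed. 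This is a local restatement of the fact that Cartesian PML is a perfect absorber for outgoing waves, and its proof combines the complex-scaling resolvent estimates used for Theorem~\ref{thm:outgoing_approx} with propagation of semiclassical singularities.

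The core of the proof is then a left-to-right induction on $j$ during the forward sweep, with induction hypothesis: after solving on $\Omega_j$, $\Pi_+ E_j^{2n+1}$ is $O(k^{-\infty})$ microlocally on a neighbourhood of the left face of $\Omega_{j+1}$. The base case $j=1$ is immediate because the left face of $\Omega_1$ lies on $\partial\Omega$, where the Dirichlet-zero condition leaves no rightgoing component, so the absorption lemma applies. For the inductive step, the boundary data on the left face of $\Omega_j$ is, via \eqref{eq:PoU}, a $\chi_\ell$-weighted combination of the updated $u_\ell^{2n+1}$ with $\ell<j$, whose rightgoing component is $O(k^{-\infty})$ by the inductive hypothesis; the source on the right interior PML of $\Omega_j$ carries only rightgoing wavefront and is absorbed; and the data on the right face, whatever its rightgoing content, is outgoing through the right PML and absorbed. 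Commutator terms $[P_{\newtheta}^j,\chi_\ell]$ are tangential and lower order and are swept into the $O(k^{-\infty})$ remainder. Summing via the partition of unity yields $\Pi_+(u - \sequentialu^1) = O(k^{-\infty})$ in $\Omega$. A symmetric right-to-left induction during the backward sweep gives $\Pi_-(u - \sequentialu^2) = O(k^{-\infty})$, while the rightgoing part of the error is not re-introduced by the backward sweep because its local solves only generate leftgoing outgoing waves and are fed data whose rightgoing part is already $O(k^{-\infty})$. Combining,
\begin{equation*}
u - \sequentialu^2 = \Pi_+(u - \sequentialu^2) + \Pi_-(u - \sequentialu^2) + O(k^{-\infty}) = O(k^{-\infty}),
\end{equation*}
and standard semiclassical elliptic regularity upgrades the $L^2$-$O(k^{-\infty})$ bound to the $H^s_k(\Omega)$ bound asserted, with linear dependence on $\N{u-\sequentialu^0}_{H^1_k(\Omega)}$.

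The main obstacle is the rigorous execution of the microlocal induction: one must track how the partition-of-unity cutoffs and the PML solution operator transform the wavefront sets on each interior face uniformly in $k$, handle the commutator and corner contributions where two Cartesian PMLs meet, and secure the absorption lemma with constants depending only on the fixed geometry and scaling profile $\fPML$. Establishing the absorption lemma at the precision needed for the induction to close is the technical heart of the proof and requires the complex-scaling/defect-measure machinery already deployed for Theorem~\ref{thm:outgoing_approx}.
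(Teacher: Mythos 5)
You take a genuinely different route from the paper, though both rest on the same microlocal propagation fact (the paper's Lemma~\ref{lem:key_prop}). The paper's proof is algebraic once that lemma is in hand: Lemmas~\ref{lem:model_seq_err_prop} and~\ref{lem:banded} show that the physical error after one forward--backward sweep is a sum of operators $\mathscr T_w$ over words $w$ that all contain the back-and-forth subword $(j+1,j,j+1)$; a straight line cannot visit the right PML of $\Omega_j$, then the left PML of $\Omega_{j+1}$, then re-enter $\Omega_{\rm int,j+1}$ (the $x_1$-coordinate would have to decrease and then increase), so that word is never allowed, and Lemma~\ref{lem:notallowed} kills every term. Your proposal instead splits the error into rightgoing/leftgoing parts $\Pi_\pm$ and runs a double induction. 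This is physically transparent, but the paper's word bookkeeping passes verbatim to checkerboards and to variable $c$ (only the list of allowed words changes, not the combinatorics), whereas a $\Pi_\pm$ split is tailored to the one-dimensional geometry of a strip.

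Several steps as you wrote them are not correct. First, $\operatorname{Id}\ne\Pi_++\Pi_-$ modulo $O(k^{-\infty})$: the characteristic variety $\{|\xi|=1\}$ has an equator $\xi_1=0$ (grazing waves propagating parallel to the strip faces), and arbitrary initial data can concentrate there. What you actually need --- and what the paper's analysis gives for free, because the only allowed word in $\sum_\ell\mathbf L^\ell\mathbf U$ is $(j+1,j)$, whose following trajectories have $\xi_1<0$ strictly --- is that after one forward sweep the physical error has $O(k^{-\infty})$ wavefront on all of $\{\xi_1\geq0\}$, including the equator; your induction should prove this rather than presuppose a clean two-piece decomposition of arbitrary data. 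Second, the statement that "the source on the right interior PML of $\Omega_j$ carries only rightgoing wavefront'' is false: the source inherits whatever wavefront the not-yet-updated error has, including leftgoing parts, and those do propagate into $\Omega_{\rm int,j}$. The claim that closes your induction is the different one, namely that any rightgoing ray launched from the right PML goes further right and is absorbed, hence contributes nothing to $\Pi_+(\chi_j E^{2n+1}_j)$. Third, the commutators $[P_\newtheta^j,\chi_\ell]$ are $O(\hbar)$, not $O(\hbar^\infty)$, as operators between semiclassical Sobolev spaces, so they cannot be "swept into the $O(k^{-\infty})$ remainder'' on the grounds of being lower order. They are harmless in a wavefront-set formulation --- multiplication by a smooth cutoff does not enlarge wavefront sets, which is what the paper uses --- but that is a different argument than the one you give.
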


\subsection{Statement of the results about checkerboards with $c\equiv 1$}\label{sec:statement_check}

\begin{theorem}[$\coeffc\equiv 1$, checkerboard, parallel method] \label{thm:check}
Assume that $\coeffc\equiv 1$ and that $\{\Omega_{j}\}_{j=1}^N$ is a $\mathfrak d$-checkerboard of size $N_1 \times \cdots \times N_{\mathfrak d}$ (with $N_i \geq 2$ for all $1\leq i\leq \mathfrak d$). Then, 
for any $k_0, M,s>0$, there exists $C >0$
 such that the following is true for any
 $f \in H^{-1}(\Omega)$ and $k \geq k_0$.  
 If $u$ is the solution to (\ref{eq:PDE}), and $\{\parallelu^n\}_n \geq 0$
is a sequence of iterates for the  parallel overlapping Schwarz method, then
$$
\Vert u - \parallelu^{N_1 + \cdots + N_{\mathfrak d} - ({\mathfrak d}-1)} \Vert_{H^s_k(\Omega)} \leq Ck^{-M} \Vert u - \parallelu^0 \Vert_{H^1_k(\Omega)}.
$$
\end{theorem}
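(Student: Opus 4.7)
The strategy is to follow the same microlocal propagation-of-singularities approach used for the strip (Theorem \ref{thm:strip}) but adapted to the higher-dimensional geometry, combining a per-iteration microlocal transport bound with a ray-counting argument tailored to the $\mathfrak d$-checkerboard. Since $c\equiv 1$, the underlying bicharacteristic rays are straight lines in physical space, which is what makes the combinatorics of Step 3 below clean.

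First I would derive the error recursion. Let $e^n := u - \parallelu^n$. Using $P_\newtheta u = f$ together with \eqref{eq:local_corrector}, the correctors $\mathfrak c_j^n$ satisfy
\begin{equation*}
P_\newtheta^j \mathfrak c_j^n = -\,(P_\newtheta e^n)\big|_{\Omega_j}\quad\text{in }\Omega_j,\qquad \mathfrak c_j^n\in H^1_0(\Omega_j),
\end{equation*}
and hence $e^{n+1} = e^n - \sum_{j=1}^N \chi_j \mathfrak c_j^n$. Writing $r_j^n := e^n|_{\Omega_j} - \mathfrak c_j^n$, one checks that $r_j^n$ satisfies $P_\newtheta^j r_j^n = (P_\newtheta^j - P_\newtheta)e^n|_{\Omega_j}$, whose right-hand side is supported only in $\supp(P_\newtheta^j - P_\newtheta)$, i.e., in the part of the subdomain PML that differs from the global PML. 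Therefore $r_j^n$ is a ``PML-outgoing" solution on $\Omega_j$ generated purely from data in the subdomain PML region, and this is precisely the regime in which the PML accuracy results (Theorem \ref{thm:outgoing_approx} applied on subdomains) give a microlocal picture.

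Second I would establish the core per-iteration microlocal lemma: \emph{one parallel iteration advances the wavefront set of the error by at most one subdomain along each geometric-optic ray}. Using the partition-of-unity identity $e^{n+1} = \sum_j \chi_j r_j^n + \sum_j \chi_j(e^n|_{\Omega_j} - e^n)\big|_{\text{trivial}} = \sum_j \chi_j r_j^n$ (since $\sum_j\chi_j = 1$ on $\Omega$), plus the fact that $\supp\chi_j \cap \Omega\subset \Omega_j\setminus\supp(P_\newtheta^j - P_\newtheta)$, one sees that each summand $\chi_j r_j^n$ is an outgoing solution of the PML problem on $\Omega_j$ with source supported in the subdomain PML, microlocalised by $\chi_j$. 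The wavefront set of $\chi_j r_j^n$ is then, up to an $O(k^{-\infty})$ remainder, the forward flow-out (under the Helmholtz Hamiltonian flow, i.e., along straight lines) of $WF(e^n)\cap T^*\supp(P_\newtheta^j - P_\newtheta)$, intersected with $\supp\chi_j$. Summing over $j$ and using the compatibility of the $\chi_j$'s with the subdomain geometry shows that $WF(e^{n+1})$ is obtained from $WF(e^n)$ by flowing each ray forward exactly until it crosses into the next subdomain along its direction of propagation. This step, and in particular the verification that no spurious ``backward'' propagation is introduced by the partition of unity at the interior corners where several subdomains of the checkerboard meet, is where the main technical work lies; for the strip this issue does not arise since corners are always on $\partial\Omega$.

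Third I would invoke the ray count. In a $\mathfrak d$-checkerboard of size $N_1\times\cdots\times N_\mathfrak d$, the interior subdomain walls perpendicular to the $i$-th coordinate direction number $N_i - 1$, and a straight line crosses each such family of parallel walls at most $N_i - 1$ times. Hence a straight line in $\Omega_{\rm int}$ traverses at most $\sum_{i=1}^{\mathfrak d}(N_i - 1)+1 = \sum_{i=1}^{\mathfrak d} N_i - (\mathfrak d - 1)$ subdomains before exiting through the outer PML. Combined with the per-iteration lemma, this shows that after $N_1+\cdots+N_{\mathfrak d} - (\mathfrak d - 1)$ iterations, every bicharacteristic ray starting in $\Omega$ has exited into the outer PML, so $WF(e^n) = \emptyset$. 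A standard quantitative elliptic/semiclassical regularity argument, together with the PML resolvent bound (Lemma \ref{lem:res_est}), then converts wavefront-set emptiness into the claimed $O(k^{-M})$ bound in $H^s_k(\Omega)$, with constant depending only on $k_0, M, s$, and the fixed geometric parameters. The main obstacle throughout is the careful microlocal bookkeeping at corners in Step 2; the rest is a clean combinatorial count (Step 3) plus a standard conversion (Step 4).
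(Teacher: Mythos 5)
Your high-level route matches the paper's: set up the error recursion via local correctors, establish a microlocal propagation result for subdomain PML problems, bound the number of subdomains a straight-line ray can traverse in a $\mathfrak d$-checkerboard by $N_1+\cdots+N_{\mathfrak d}-(\mathfrak d-1)$, and convert to a weighted Sobolev bound. The paper organizes this as a corollary of the general Theorem \ref{thm:gen} (for arbitrary $c$ and subdomains), plus Lemma \ref{lem:traj_check}, which shows that allowed words in a checkerboard have monotone coordinate maps; your ``wall-crossing'' count is a correct restatement of that lemma's conclusion.

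A few points where your proposal diverges or glosses over the paper's actual mechanism. First, you attribute the microlocal picture to ``Theorem \ref{thm:outgoing_approx} applied on subdomains,'' but that theorem is about the PML approximation of the Sommerfeld-radiation solution and the paper explicitly states it is \emph{not} used in the proofs of Theorems \ref{thm:strip}--\ref{thm:gen}; the input you actually need is the propagation result Lemma \ref{lem:key_prop}, which bounds $\operatorname{WF}_\hbar$ of the subdomain PML solution by the forward flow-out of $\operatorname{WF}_\hbar$ of the data. Second, your ``per-iteration'' wavefront statement (``advances by at most one subdomain'') is the right heuristic, but the rigorous multi-iteration bookkeeping is exactly what the error-propagation matrix $\mathbf T$ and the word formalism make precise: the entries of $\mathbf T^m$ are sums of $\mathscr T_w$ over words $w$ of length $m+1$, and Lemma \ref{lem:notallowed} shows $\mathscr T_w = O(k^{-\infty})$ when $w$ is not allowed. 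Expressed only as a statement about $\operatorname{WF}_\hbar(e^{n+1})$ vs.\ $\operatorname{WF}_\hbar(e^n)$ you lose the information about which $\chi_j$-localised piece each singularity lives in, which is what forces the ray to pass through the \emph{disjoint} sets $\supp\Tchi_{w_i}\cap\supp(P^{w_{i+1}}_\newtheta-P_\newtheta)$; that disjointness is what makes the monotonicity argument of Lemma \ref{lem:traj_check} go through. Third, you flag interior checkerboard corners as the main technical obstacle, but the paper's real technical work is elsewhere: escape to ellipticity across the PML faces/edges/corners of $\Omega_j$ (Lemma \ref{lem:go_to_elliptic}), and the odd-reflection extension across $\partial\Omega_j\cap\partial\Omega$ combined with the assumption that $\fPML$ is linear near the boundary (to sidestep propagation of singularities up to $\partial\Omega$). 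Interior corners where several subdomains overlap are handled automatically because $\chi_j$ is supported away from $\supp(P^j_\newtheta-P_\newtheta)$; no special corner argument is needed there.
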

\noindent Theorem \ref{thm:check} contains Theorem \ref{thm:strip} by 
recalling that a strip is a $1$-checkerboard (with $N_1$ subdomains), and setting ${\mathfrak d}=1$ 
in Theorem \ref{thm:check}.

\

To state the result for the sequential method on the checkerboard, we need the following notion.
Informally, we say that a sequence of orderings $\{\sigma_n\}$ is \emph{exhaustive} if 
(i) given any ordering in the sequence, the sequence also contains the reverse ordering (i.e., where the subdomains are visited in reverse order), and (ii)
given any directed straight line, there exists $n_*$ such that, in  the order of subdomains corresponding to the ordering $\sigma_{n^*}$, the subdomains intersected by the straight line are visited in that order (but not necessarily one after the other) in that sweep.
For the precise definition of exhaustive, see Definition \ref{def:exhaust} below.

For example, one forward sweep and one backward sweep on a strip corresponds to the ordering 
$\{\sigma_1, \sigma_2 \}$ with $\sigma_1(j) = j$, $\sigma_2(j) = N- j +1$; this ordering is exhaustive, since 
(i) the backward sweep is the reverse of the forward sweep (and vice versa), and (ii) any straight line that intersects more than one subdomain intersects them in the order they appear \emph{either} in the forward sweep \emph{or} in the backward sweep.

Examples \ref{ex:exhaust2} and \ref{ex:exhaust4} below describe how to easily construct exhaustive orderings 
of size $2^{\mathfrak d}$
for a $\mathfrak d$-checkerboard. 
 In particular, one can take $\{\sigma_n\}_{1\leq n \leq 2^{\mathfrak d}}$ corresponding to lexicographic order with respect to every vertex; see Figure \ref{fig:lex}.
Lemma \ref{lem:ex_size} shows that an exhaustive sequence of orderings for a $2$-checkerboard must contain at least $2^2=4$ orderings.

\begin{figure}[h]
\begin{center}
\begin{tabular}{c|c|c}
7 & 8 & 9 \\ \hline
4 & 5 & 6 \\ \hline
1 & 2 & 3
\end{tabular} 
\hspace{0.5cm}
\begin{tabular}{c|c|c}
9 & 8 & 7 \\ \hline
6 & 5 & 4 \\ \hline
3 & 2 & 1
\end{tabular}\hspace{0.5cm}
\begin{tabular}{c|c|c}
1 & 2 & 3 \\ \hline
4 & 5 & 6 \\ \hline
7 & 8 & 9
\end{tabular}\hspace{0.5cm}
\begin{tabular}{c|c|c}
3 & 2 & 1 \\ \hline
6 & 5 & 4 \\ \hline
9 & 8 & 7
\end{tabular}
\caption{A sequence of lexicographic orderings of a $3 \times 3$ checkerboard that is exhaustive in the sense of Definition \ref{def:exhaust}.}\label{fig:lex} 
\end{center}
\end{figure}

\begin{theorem}[$\coeffc\equiv 1$, checkerboard, sequential method] \label{thm:sweep}
Assume that $\coeffc \equiv 1$ and that $\{\Omega_{j}\}_{j=1}^N$ 
is a $\mathfrak d$-checkerboard. Then,
for any $k_0, M,s>0$, there exists $C >0$ such that the following is true for any
 $f \in H^{-1}(\Omega)$ and $k \geq k_0$.
 If $u$ is the solution to (\ref{eq:PDE}), and $\{\sequentialu^n\}_{n \geq 0}$ is
a sequence of iterates for the  general sequential overlapping Schwarz method following an exhaustive sequence of orderings of size $S$, 
then
$$
\Vert u - \sequentialu^{S}\Vert_{H^s_k(\Omega)} \leq Ck^{-M} \Vert u - \sequentialu^0 \Vert_{H^1_k(\Omega)}.
$$
In particular, we can construct orderings (via Examples \ref{ex:exhaust2} and \ref{ex:exhaust4}) so that
$$
\Vert u - \sequentialu^{2^{\mathfrak d} }\Vert_{H^s_k(\Omega)} \leq Ck^{-M} \Vert u - \sequentialu^0 \Vert_{H^1_k(\Omega)}.
$$
\end{theorem}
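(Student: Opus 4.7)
The plan is to combine the microlocal strategy used for the sequential method on a strip (Theorem \ref{thm:sweep_strip}) with the ray-counting argument behind the parallel checkerboard result (Theorem \ref{thm:check}). First write down the error equation: let $e^n := u - \sequentialu^n$, and for each $n\geq 0$ and $j=1,\ldots,N$, set $\mathfrak c^n_{\sigma_n(j)} := u_{\sigma_n(j)}^{n+1} - u^*_{\sigma_n(j),n}|_{\Omega_{\sigma_n(j)}}$. Using the fact that $u$ is a fixed point, $\mathfrak c^n_{\sigma_n(j)}$ solves $P_\newtheta^{\sigma_n(j)} \mathfrak c^n_{\sigma_n(j)} = -(P_\newtheta e^*_{\sigma_n(j),n})|_{\Omega_{\sigma_n(j)}}$ with zero Dirichlet data on $\partial\Omega_{\sigma_n(j)}$, where $e^*_{\sigma_n(j),n} := u - u^*_{\sigma_n(j),n}$ is the error using the partially-updated iterate; a short computation gives $e^{n+1} = e^n - \sum_\ell \chi_{\sigma_n(\ell)}\mathfrak c^n_{\sigma_n(\ell)}$.

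The key microlocal ingredient (already central to Theorem \ref{thm:sweep_strip} and to Theorem \ref{thm:check}) is the following \emph{absorption principle}: if a function $v$ supported in $\Omega_j\setminus \supp(P^j_\newtheta - P_\newtheta)$ has semiclassical wavefront set contained in a directional cone that points outward through some face of $\Omega_j$, then the solution $w$ of the local PML problem with source $P_\newtheta^j v - P_\newtheta v$ satisfies $w - v = O(k^{-\infty})$ away from that face. This is the microlocal content of Theorem \ref{thm:outgoing_approx}, combined with propagation of singularities for $\DeltaPML$; here the assumption that $\fPML$ is linear outside $[0,\width_{\rm lin}]$ enters so that the bicharacteristic flow in the PML region behaves as expected (see Remark \ref{rem:PoS}).

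The main step is to track the semiclassical wavefront set of $e^n$ under each sweep. Fix a direction $\xi$ in the cosphere bundle. By condition (ii) of Definition \ref{def:exhaust}, there exists $n^*\in\{1,\dots,S\}$ such that the ordering $\sigma_{n^*}$ visits the subdomains crossed by any straight line oriented in direction $\xi$ in the same order as that line crosses them. The claim, to be established by induction on $j$ inside the sweep, is that after the first $j$ subdomains of sweep $n^*$ have been processed, the $\xi$-component of the semiclassical wavefront set of $u^*_{\sigma_{n^*}(j+1),n^*}$ is contained in the union of the remaining subdomains (modulo $O(k^{-\infty})$ remainders). Each inductive step uses the absorption principle with outgoing cone pointing in direction $\xi$: the $\xi$-incoming wavefront from subdomains visited earlier in the sweep has already been cleaned up by the inductive hypothesis, and the interior $\xi$-directed contribution that is still present is absorbed by the PML of $\Omega_{\sigma_{n^*}(j)}$ on the outgoing face. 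Condition (i) of Definition \ref{def:exhaust} supplies the analogous $n^*$ for the reversed direction $-\xi$. Since every direction is covered by some sweep, the semiclassical wavefront set of $e^S$ is empty, hence $e^S$ is smooth. Quantifying the argument using the $k$-explicit regularity estimates for $P_\newtheta$ already used in the earlier proofs yields the stated $H^s_k$ bound, and the second assertion then follows from Examples \ref{ex:exhaust2}--\ref{ex:exhaust4}.

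The hard part will be the within-sweep bookkeeping. Unlike the parallel method, where a single iteration propagates information only across one subdomain so that one can track things subdomain-by-subdomain at the level of iterations, here a single sweep should clear an entire direction $\xi$ at once; this forces an inductive statement internal to a single sweep, and its $O(k^{-\infty})$ remainders must be controlled quantitatively so that they do not accumulate into a non-trivial contribution over the $S$ sweeps. The delicate point is to verify that the wavefront component that the PML of $\Omega_{\sigma_{n^*}(j)}$ is supposed to absorb does not flow backwards through the partition of unity and re-enter already-processed subdomains in a non-negligible way; this is where the exhaustiveness condition, and in particular the requirement that the ordering be consistent with the line crossings, is truly used. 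Once this inductive step is established, the rest of the proof follows the template of Theorems \ref{thm:sweep_strip} and \ref{thm:check}.
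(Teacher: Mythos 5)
Your proposed proof takes a genuinely different route from the paper and, as you yourself flag, it leaves the central step unproved. The paper's proof of Theorem~\ref{thm:sweep} does not track the wavefront set direction-by-direction inside a single sweep; instead it reuses the same three-ingredient framework as the parallel case (\S\ref{sec:3_ingredients}): it writes down the sequential error-propagation relation (Lemma~\ref{lem:model_seq_err_prop_gen}), solves the implicit system using $\mathbf A_n^N=0$ to express $\boldsymbol\epsilon_\times^{S}$ as a sum of products ${\mathbf A}_S^{\ell_S}{\mathbf B}_S\cdots{\mathbf A}_1^{\ell_1}{\mathbf B}_1$ applied to $\boldsymbol\epsilon_\times^0$, and then shows (Lemmas~\ref{lem:key_gen_sweep} and~\ref{lem:crux_gen}) that every word $w$ appearing in those products satisfies $w_1\succeq_{j_1} w_2\succeq_{j_2}\cdots$ with $(j_i)$ surjective onto $\{1,\dots,S\}$, so by exhaustiveness and Lemma~\ref{lem:traj_check} such $w$ is never allowed; the microlocal content is then fully delegated to the \emph{same} lemma used for the parallel method (Lemma~\ref{lem:notallowed}). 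This factorisation --- combinatorics of words for the algebra, one propagation lemma for the analysis --- is precisely what you are trying to replicate in ad hoc fashion inside a single sweep, and it is why the paper never needs the within-sweep bookkeeping you call ``the hard part.''

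Two concrete problems with your sketch. First, your ``absorption principle'' as stated is vacuous: if $v$ is supported in $\Omega_j\setminus\supp(P_\newtheta^j-P_\newtheta)$, then $(P_\newtheta^j-P_\newtheta)v\equiv 0$, so the local problem you write down has zero source. The source in the actual iteration is $(P_\newtheta^j-P_\newtheta)$ applied to the partially-updated iterate, whose partition-of-unity components from \emph{neighbouring} subdomains do have support in $\supp(P_\newtheta^j-P_\newtheta)$; the correct propagation statement is Lemma~\ref{lem:key_prop}, and it cannot be paraphrased as ``$w-v=O(k^{-\infty})$.'' Second, your step ``by condition (ii) of Definition~\ref{def:exhaust}, there exists a \emph{single} $n^*$ such that $\sigma_{n^*}$ is compatible with \emph{every} straight line in direction $\xi$'' is stronger than the definition of exhaustive gives: condition (ii) produces one $s$ per monotonic sequence, not one per direction, and Theorem~\ref{thm:sweep} is stated for arbitrary exhaustive orderings (not only the lexicographic ones of Example~\ref{ex:exhaust2}, where your stronger claim does happen to hold). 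Together with the unresolved within-sweep induction (and the need to rule out re-generation of $\xi$-directed wavefront in sweeps $n^*+1,\dots,S$ before you can conclude $\operatorname{WF}_\hbar(e^S)=\emptyset$), this leaves a genuine gap in the argument.
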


\bre[Comparing the number of solves for the parallel and sequential methods on a $N_1\times \dots\times N_1$ checkerboard]
For a  $N_1^{d}=N_1\times \dots\times N_1$ checkerboard ($N_1\geq 2$)
Theorem \ref{thm:check} ensures smooth and $O(k^{-\infty})$ error after 
$ (d N_1 - 1)$ iterations each containing 
$N_1^{d}$ parallel subdomain solves, whereas 
Theorem \ref{thm:sweep} ensures smooth and $O(k^{-\infty})$ error after 
$2^d$ iterations each containing 
$N_1^{d}$ sequential subdomain solves.
\ere

\subsection{Statement of the most general result about the parallel method}\label{sec:general_statement}

We first recall the definition of the geometric-optic rays associated with the Helmholtz equation \eqref{eq:Helmholtz}. 
Let $P:= -k^{-2}\Delta - c^{-2}$ and let  
\beqs
p(x,\xi) := |\xi|^2 - \coeffc(x)^{-2}, \quad x,\xi\in \Rea^d.
\eeqs
Given initial data $x_0, \xi_0\in \Rea^d$, let $\Phi_t (x_0,\xi_0):= (x(t),\xi(t))$ be the solution of Hamilton's equations
\beq\label{eq:Hamilton}
\diff{x}{t}(t) = \nabla_\xi p\big(x(t),\xi(t)\big),
\quad
\diff{\xi}{t}(t) = -\nabla_x p\big(x(t),\xi(t)\big)
\eeq
with initial conditions $x(0)= x_0$ and $\xi(0)=\xi_0$;
we call such solutions \emph{trajectories of the flow associated to $P$}. The geometric-optic rays are then the spatial projections of the trajectories, i.e., their projections in the $x$-variable. 
Observe that, when $\coeffc\equiv 1$, the solution of \eqref{eq:Hamilton} is just straight line motion $x(t) = x_0 + 2t \xi_0$, $\xi(t) = \xi_0$.
Observe also the \emph{time reversibility} property that if $(x(t),\xi(t))$ is a solution to \eqref{eq:Hamilton} then so is $(x(-t),-\xi(-t))$.

The operator $P$ is \emph{non-trapping} if, given $R>0$ there exists $T>0$ such that, the $x$-projection of any trajectory starting in $\{ x : |x|<R\} $ has left this set by time $T(R)$; see, e.g., \cite[Page 278]{LaPh:89}.
(Strictly this is the definition of being non-trapping forward in time, but, due to time reversibility, this is equivalent to being non-trapping backward in time.) 

Theorem \ref{thm:gen} below depends on a quantity $\mathcal N$ that depends on the 
properties of the trajectories. 
A precise definition of $\mathcal{N}$ is given in \S\ref{ss:proof_gen} below (see also the informal discussion in \S\ref{sec:powers}), but, informally, 
we consider trajectories for the flow associated to $P$ travelling from the PML region of one subdomain to the PML region of another subdomain, and so on. 
$\mathcal{N}$ is then the maximum number of subdomains, counted with their multiplicity, that any such trajectory can travel between.
Importantly, $\mathcal N<\infty$ when $P$ is nontrapping. 

\begin{theorem}[General $c$, arbitrary hyperrectangular subdomains, parallel method]\label{thm:gen}
Assume 
that $P$ is non-trapping.
Then, for any $k_0,M,s > 0$, there exists $C>0$ 
such that the following is true for any
 $f \in H^{-1}(\Omega)$ and $k \geq k_0$.
If $u$ is the solution to (\ref{eq:PDE}), and $\{\parallelu^n\}_{n \geq 0}$
is a sequence of iterates for the  parallel overlapping Schwarz method, then 
$$
\Vert u - \parallelu^{\mathcal N} \Vert_{H^s_k(\Omega)} \leq C k^{-M} \Vert u - \parallelu^0 \Vert_{H^1_k(\Omega)}.
$$
\end{theorem}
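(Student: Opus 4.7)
The plan is to set up an error recursion for $e^n_+ := u - \parallelu^n$ and then use semiclassical propagation of singularities, together with the absorbing property of PML, to show that after $\mathcal N$ iterations the semiclassical wavefront set of $e^{\mathcal N}_+$ is empty, so that $e^{\mathcal N}_+$ is smooth and $O(k^{-\infty})$ in every $H^s_k$-norm.

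First I would recast the parallel iteration in a form suited to microlocal analysis. Setting $v_j^n := e^n_+\big|_{\Omega_j} - \mathfrak c_j^n$, where $\mathfrak c_j^n$ is the local corrector of \eqref{eq:local_corrector}, and using $P_\newtheta u = f$, a short calculation yields
\begin{equation*}
P_\newtheta^j v_j^n = (P_\newtheta^j - P_\newtheta)\, e^n_+ \text{ in } \Omega_j, \qquad v_j^n\big|_{\partial \Omega_j} = e^n_+\big|_{\partial \Omega_j},\qquad e^{n+1}_+ = \sum_{j=1}^N \chi_j v_j^n.
\end{equation*}
The right-hand side is supported in $\supp(P_\newtheta^j - P_\newtheta) \subset \Omega_j \setminus \Omega_{{\rm int},j}$, while by construction \eqref{eq:PoU} the partition of unity satisfies $\supp \chi_j \cap \supp(P_\newtheta^j - P_\newtheta) = \emptyset$; hence multiplication by $\chi_j$ excises precisely the subdomain PML region where $P_\newtheta^j$ and $P_\newtheta$ disagree.

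Next I would establish a one-step propagation lemma: combining semiclassical propagation of singularities for $P_\newtheta^j$ inside $\Omega_{{\rm int},j}$ (where it agrees with $P$, so the bicharacteristic flow is that of $P$) with the PML absorption statement of Theorem \ref{thm:outgoing_approx} applied on each subdomain, one can show that the semiclassical wavefront set of $\chi_j v_j^n$ is contained in the image, under the forward Hamiltonian flow $\Phi_t$ of $P$ restricted to $\supp\chi_j$, of the wavefront set of $e^n_+$ on $\partial \Omega_j$ and on $\supp(P_\newtheta^j - P_\newtheta)$, modulo $O(k^{-\infty})$ errors that are controlled using the polynomial resolvent bound of Lemma \ref{lem:res_est}. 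Iterating this across $n$ steps, and following the definition of $\mathcal N$ in \S\ref{ss:proof_gen}, the wavefront set of $e^n_+$ is confined to bicharacteristics which still have at least $\mathcal N - n$ subdomain crossings to make. Non-trapping guarantees $\mathcal N < \infty$, so at $n = \mathcal N$ the wavefront set is empty, $e^\mathcal N_+$ is smooth, and the qualitative statement is upgraded to the quantitative $H^s_k$-bound by a standard semiclassical defect-measure contradiction argument, again using Lemma \ref{lem:res_est} to handle the $h$-tempered/low-frequency component.

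The main obstacle is carrying out the one-step microlocal propagation rigorously across the interfaces between the physical region of $\Omega$ and the hyperrectangular subdomain PMLs, where $P_\newtheta$ and $P_\newtheta^j$ differ and the complex scaling is only $C^\infty$. This is precisely why $\fPML$ is assumed smooth and eventually linear in \S\ref{sec:scaled}: linearity past $\width_{\rm lin}$ avoids pathologies in propagation of singularities through a variable-coefficient complex-scaled region (cf.~Remark \ref{rem:PoS}). A secondary difficulty is the bookkeeping for $\mathcal N$ at the corners of the hyperrectangular subdomains, where bicharacteristics can graze several subdomain boundaries simultaneously; this requires a careful choice of compactly-supported semiclassical quantizations that respect the subdomain geometry so that the induction counter decrements uniformly at each iteration.
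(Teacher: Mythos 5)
Your proposal correctly identifies the overall strategy -- set up an error recursion, prove a one-step microlocal propagation lemma showing that the wavefront set of each local error comes from the source term under the Hamiltonian flow of $P$, iterate, and invoke non-trapping so that after $\mathcal N$ iterations the wavefront set is empty. This matches the paper's three-ingredient structure (error algebra / propagation lemma / allowed trajectories) and you have correctly located the role of the linearity-near-the-boundary assumption and Lemma \ref{lem:res_est} for temperedness. However, there are three substantive gaps.

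First, the one-step propagation claim is stated too weakly, in a way that would break the argument. You allow the wavefront set of $\chi_j v_j^n$ to be sourced both from $\supp(P^j_\newtheta - P_\newtheta)$ \emph{and} from the boundary data on $\partial\Omega_j$. If the boundary data genuinely contributed wavefront, singularities could bounce off $\partial\Omega_j$ and re-enter $\Omega_{{\rm int},j}$ at each iteration, so the ``remaining subdomain crossings'' counter would not decrement and the induction would stall. The whole content of the paper's Lemma \ref{lem:key_prop} is that the boundary contribution vanishes: escape to ellipticity (Lemma \ref{lem:go_to_elliptic}) shows that backward trajectories from $\supp\psi$ reach an elliptic point of $p^j_\newtheta$ while remaining inside $\widetilde\Omega_j$, so propagation never needs to reach $\partial\Omega_j$ at all. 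You would need to sharpen your claim to: the wavefront set of $\chi_j v_j^n$ comes only from $\operatorname{WF}_\hbar\bigl((P^j_\newtheta - P_\newtheta)\,e^n_+\bigr)$ under the forward flow, and prove this via ellipticity rather than via a boundary argument.

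Second, you invoke Theorem \ref{thm:outgoing_approx} as the ``PML absorption statement'' powering the one-step lemma, but the paper explicitly states that Theorem \ref{thm:outgoing_approx} is not used in the proofs of Theorems \ref{thm:strip}--\ref{thm:gen}; it is an independent corollary. The actual ingredients are the ellipticity computations of Lemma \ref{lem:comp_symbol}--\ref{lem:ell_unidri}, escape to ellipticity, the flow-matching Lemma \ref{lem:traj_energy_surf}, and forward propagation of regularity from an elliptic point (Lemma \ref{lem:FPR}), all applied through the odd extension of Lemma \ref{lem:ext_h}.

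Third, the iteration step (``iterating this across $n$ steps, ... the wavefront set is confined to bicharacteristics with at least $\mathcal N - n$ crossings left'') is too informal to carry the argument. The issue is that $e^n_+$ is a sum over subdomains and the one-step lemma is applied subdomain-by-subdomain, so you need an explicit algebraic bookkeeping device. The paper introduces the error-propagation matrix $\mathbf T = (\chi_i\,\mathcal T_i\,\Tchi_j)$ and shows $\bparallelepsilon^n = \mathbf T^n \bparallelepsilon^0$; expanding $\mathbf T^n$ into the composite maps $\mathscr T_w$ indexed by words $w$ of length $n+1$, and then proving $\|\mathscr T_w\| = O(k^{-\infty})$ when $w$ is not allowed (Lemma \ref{lem:notallowed}), is precisely what turns the ``crossing count'' heuristic into a proof. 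Without some such device, the assertion that the counter decrements uniformly (a difficulty you yourself flag at corners) is left open. Filling in these three points, especially the first, is necessary before the argument closes.
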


When $c\equiv 1$, for a strip, Theorem \ref{thm:strip} follows from Theorem \ref{thm:gen} by showing that $\mathcal{N}= N$, and, for a checkerboard, Theorem \ref{thm:check} follows from Theorem \ref{thm:gen} by showing that $\mathcal{N}= N_1 + \ldots +N_{\mathfrak d} -({\mathfrak d}-1)$; see \S\ref{sec:allowed_constant_coeff} below.

When $c \not \equiv 1$, the quantity $\mathcal{N}$ can, in principle, be calculated from its definition (Definition \ref{def:Ncurly}) using ray tracing.

\bre[The relationship of PML to the optimal subdomain boundary conditions]
Recall from \S\ref{sec:context} that the optimal boundary condition on a particular DD subdomain is the Dirichlet-to-Neumann map for the Helmholtz equation posed in the exterior of that subdomain (as a subset of the whole domain); in particular, the parallel overlapping Schwarz method with a strip decomposition with $N$ subdomains converges in $N$ iterations with these boundary conditions by \cite[Proposition 2.4]{NaRoSt:94} and \cite[Lemma 2.2]{EnZh:98}.

Theorem \ref{thm:strip} shows that when $c\equiv 1$ the parallel method with a strip decomposition with PML at the subdomain boundaries has $O(k^{-\infty})$ error after $N$ iterations; i.e., PML approximates well the optimal boundary condition in this case. 

When $c\not\equiv 1$, Theorem \ref{thm:gen} proves that the parallel method with a strip decomposition has $O(k^{-\infty})$ error after $\mathcal N$ iterations; we highlight that $\mathcal N$ can be much larger than $N$ when $c \not\equiv 1$ and the rays are not straight lines (e.g., if the rays ``turn around" at one end of the domain), and PML does not approximate well the optimal boundary condition in this case. The fact that PML at a subdomain boundary condition cannot take into account scattering occurring outside that subdomain was highlighted in 
\cite[\S4]{GaZh:18} and \cite[\S10]{GaZh:19}.
\ere

\section{Outline of how the main results are proved}\label{sec:ideas} 

\S\ref{sec:idea_error}-\S\ref{sec:3_ingredients}
explain the ideas behind the results for the parallel method (i.e., Theorems \ref{thm:strip}, \ref{thm:check}, and \ref{thm:gen}). \S\ref{sec:idea_sequential} and \S\ref{sec:idea_sequential2} then explain the results for the sequential methods (Theorem \ref{thm:sweep_strip} and \ref{thm:sweep}).

\subsection{The error propagation matrix $\mathbf{T}$ for the parallel method}\label{sec:idea_error}

For any $n \geq 1$ and $1 \leq j \leq N$,  we define the global and local errors
\beq\label{eq:errors}
\parallele ^n := u -  \parallelu^n \in H^1_0 (\Omega)\qquad\text{and } \qquad \parallelejn := u|_{\Omega_j} - \parallelujn \in H^1(\Omega_j).
\eeq
These definitions, the definition of the iterate $\parallelu^n$ \eqref{eq:parallel}, and the fact that $\{\chi_j\}_{j=1}^N$ is a partition of unity \eqref{eq:PoU} imply that
\beq\label{eq:error_decomp}
\parallele^n = \sum_{j=1}^N   \chi_j  \parallelejn.
\eeq
We introduce the notation 
\beq\label{eq:local_phys_error}
\parallelepsilonjn := \chi_j  \parallelejn
\quad\text{ so that }\quad
 \parallele^n = \sum_{j=1}^N  \parallelepsilonjn.
\eeq
We can interpret $\parallelepsilonjn =\chi_j \parallelejn$, after extension by zero from $\Omega_j$, as an element of $H^1_0(\Omega)$, 
since $\parallelejn$ is zero on $\partial \Omega_j\cap \partial \Omega$ and $\chi_j = 0$ on $\partial \Omega_j\setminus \partial \Omega$. 
We call $\parallelepsilonjn $ the \emph{local physical error}. We highlight that 
the fact that $\parallele^n$ is expressed in \eqref{eq:local_phys_error} in terms of $\parallelepsilonjn=\chi_j  \parallelejn$ and not the whole local error $\parallelejn$ is expected, since $\parallelejn$ contains contributions from the PML layers of $\Omega_j$ that are not PML layers of $\Omega$, and hence not part of the original PDE problem \eqref{eq:PDE} (and hence ``unphysical'').

For $1 \leq i  \leq N$ and $v \in H^1_0(\Omega)$ let $\mathcal T_{i} v \in H^1(\Omega_j)$ be the solution to
\beq\label{eq:Tjl}
\begin{cases}
P^i_{\newtheta} \mathcal T_{i} v = P^i_{\newtheta}(v|_{\Omega_i}) - (P_{\newtheta}v)|_{\Omega_i}\qquad \text{in }{\Omega_i}, \\
\mathcal T_{i} v =v \qquad \text{on }\partial\Omega_i.
\end{cases}
\eeq
For technical reasons, we need a set of cut-off functions that have bigger support than the $\chi_j$s. 
Let $\{\Tchi_j\}_{j=1}^N$ be such that, for each $j$, 
$\Tchi_j\in C^\infty(\Rea^d)$, $\supp \Tchi_j\cap \Omega \subset (\Omega_j\setminus \supp (P_{\newtheta}^j- P_{\newtheta}))$ and $\Tchi_j \equiv 1$ on $\supp \chi_j$
(recall that $\Omega_j\setminus \supp (P_{\newtheta}^j- P_{\newtheta})= \Omega_{\rm int, j}$ for an interior subdomain); such $\{\Tchi_j\}_{j=1}^N$ exist since the distance between $\supp \chi_j$ and $\Omega_j\setminus \supp (P_{\newtheta}^j- P_{\newtheta})$ is $>0$ as a consequence of $\supp \chi_j$ and $\supp (P_{\newtheta}^j- P_{\newtheta})$ begin closed.
The functions $\{\Tchi_j\}_{j=1}^N$ and $\{\chi_j\}_{j=1}^N$ are illustrated in the simple case of two subdomains (i.e, $N=2$) in Figure \ref{fig:pou}.

\begin{figure}[h!]
\begin{center}
\includegraphics[scale=0.55]{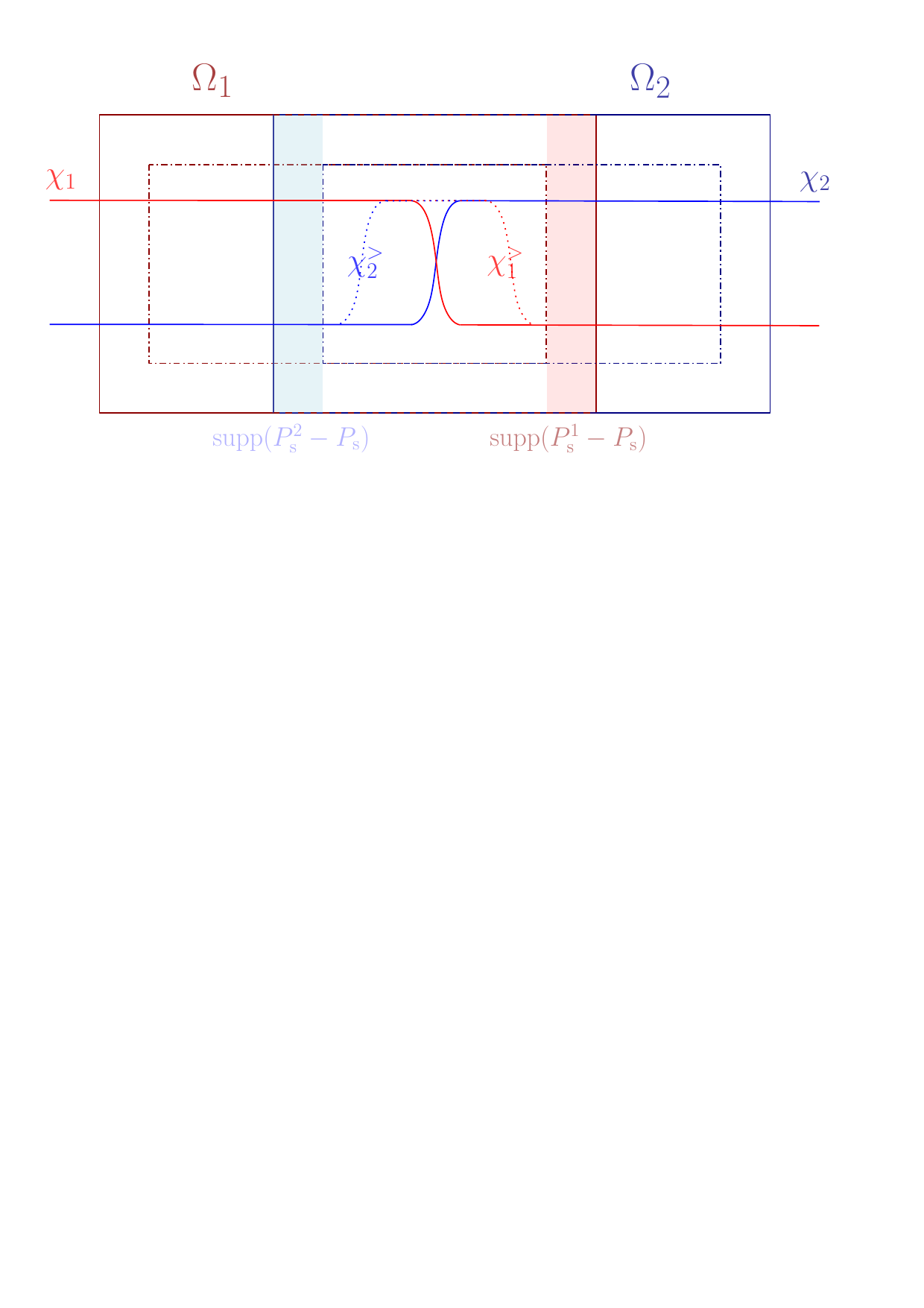}
\caption{The functions $\{\Tchi_j\}$ and $\{\chi_j\}$ for two subdomains}\label{fig:pou}
\end{center}
\end{figure}

We define the \emph{physical error propagation matrix} $\mathbf T$ as
\beq\label{eq:bold_T}
\mathbf T := (\chi_i \mathcal T_{i}\Tchi_j)_{1\leq i,j \leq N} : (H^1_0(\Omega))^N \mapsto  (H^1(\Omega))^N
\eeq
and, for any $n\geq 0$, the physical errors vector ${\bparallelepsilon^n}\in (H^1_0(\Omega))^N$ as

$$
(\bparallelepsilon^n)_j :=\parallelepsilonjn, \, 1\leq j\leq N.
$$

\begin{lemma}[Physical error propagation for the parallel method]\label{lem:model_err_prop}
$\mathbf T:(H^1_0(\Omega))^N \mapsto  (H^1_0(\Omega))^N$ and 
\beq\label{eq:Monday1}
\bparallelepsilon^{n+1} = \mathbf T \bparallelepsilon^{n} \quad\tfa n\geq 0.
\eeq
\end{lemma}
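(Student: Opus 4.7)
The plan is to first verify the mapping property, then derive a pointwise (on subdomain index) formula relating $\parallelejnpo$ to the full global error $\parallele^n$, and finally use the partition of unity and the cutoffs $\Tchi_\ell$ to rewrite this in terms of the physical errors $\parallelepsilonelln$.

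For the mapping property $\mathbf T:(H^1_0(\Omega))^N \to (H^1_0(\Omega))^N$, the essential observation is that, for $v \in H^1_0(\Omega)$, the function $\chi_i \mathcal T_i \Tchi_j v$ (a priori only an element of $H^1(\Omega_i)$) extends by zero to an element of $H^1_0(\Omega)$. Indeed, the boundary $\partial \Omega_i$ splits into $\partial\Omega_i \cap \partial\Omega$ and $\partial\Omega_i \setminus \partial\Omega$: on the first piece, $\mathcal T_i (\Tchi_j v)=\Tchi_j v$ vanishes because $v\in H^1_0(\Omega)$; on the second piece, the PoU condition \eqref{eq:PoU} together with the construction of the extended subdomains forces $\supp \chi_i \cap \Omega \subset \Omega_i \setminus \supp(P^i_\newtheta - P_\newtheta)$, so that $\chi_i$ vanishes in a neighbourhood of $\partial\Omega_i \setminus \partial\Omega$. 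Multiplying the two factors therefore produces a function whose zero extension is in $H^1_0(\Omega)$.

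For the error recursion, I would first show the single-index identity $\parallelejnpo = \mathcal T_j \parallele^n$. Subtracting \eqref{eq:localprob} from $P_\newtheta u = f$, one obtains
\[
P^j_\newtheta \parallelejnpo = P^j_\newtheta(u|_{\Omega_j}) - P^j_\newtheta(\parallelu^n|_{\Omega_j}) + (P_\newtheta \parallelu^n)|_{\Omega_j} - (P_\newtheta u)|_{\Omega_j} = P^j_\newtheta(\parallele^n|_{\Omega_j}) - (P_\newtheta \parallele^n)|_{\Omega_j},
\]
while the boundary condition in \eqref{eq:localprob} gives $\parallelejnpo = u - \parallelu^n = \parallele^n$ on $\partial\Omega_j$. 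Comparing with \eqref{eq:Tjl} applied to $v = \parallele^n \in H^1_0(\Omega)$, uniqueness of the subdomain PML problem yields $\parallelejnpo = \mathcal T_j \parallele^n$.

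Next, I would insert the partition-of-unity decomposition $\parallele^n = \sum_{\ell=1}^N \paralleleelln$ from \eqref{eq:local_phys_error} and use linearity of $\mathcal T_j$ to write $\parallelejnpo = \sum_\ell \mathcal T_j \paralleleelln$. Here the role of $\Tchi_\ell$ becomes visible: since $\Tchi_\ell \equiv 1$ on $\supp \chi_\ell \supset \supp \paralleleelln$, we have $\paralleleelln = \Tchi_\ell \paralleleelln$, and therefore $\mathcal T_j \paralleleelln = \mathcal T_j (\Tchi_\ell \paralleleelln)$. Multiplying by $\chi_j$ and using the definition \eqref{eq:local_phys_error} of the physical errors then gives
\[
\parallelejnpo_{+,j} \;=\; \chi_j \parallelejnpo \;=\; \sum_{\ell=1}^N \chi_j \mathcal T_j \Tchi_\ell \paralleleelln \;=\; (\mathbf T \bparallelepsilon^n)_j,
\]
which is precisely \eqref{eq:Monday1}. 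The only delicate point in the argument is the mapping property discussed above; the rest is a direct computation, so I do not anticipate any substantial obstacle.
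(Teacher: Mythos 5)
Your proof is correct and follows essentially the same route as the paper's: verify the mapping property from the boundary-vanishing of $\mathcal T_i v$ and the support condition on $\chi_i$, then derive the recursion by computing $P^j_\newtheta e_{+,j}^{n+1}$ and matching boundary data, decomposing $\parallele^n$ via the partition of unity, and inserting $\Tchi_\ell\chi_\ell = \chi_\ell$ at the end. One small notational slip to fix: you write ``$\parallele^n = \sum_{\ell=1}^N e_{+,\ell}^n$'' and then argue that $\supp\chi_\ell \supset \supp e_{+,\ell}^n$, but the decomposition in \eqref{eq:local_phys_error} is in terms of the \emph{physical} errors $\epsilon_{+,\ell}^n = \chi_\ell e_{+,\ell}^n$, not the bare local errors $e_{+,\ell}^n$ (which are not supported inside $\supp\chi_\ell$) -- the rest of your argument shows you meant $\epsilon_{+,\ell}^n$ throughout, so only the symbols need correcting.
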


Lemma \ref{lem:model_err_prop} is proved in \S\ref{sec:5error} below.
The analogue of $\mathbf T$ when impedance boundary conditions are imposed on the subdomains was first studied in \cite[\S3]{GGGLS1}.

\subsection{Relating powers of $\mathbf{T}$ to trajectories of the flow defined by $P$}\label{sec:powers}

By \eqref{eq:Monday1}, $\bparallelepsilon^{n}= \mathbf{T}^n \bparallelepsilon^{0}$.
By the definition of $\mathbf T$ \eqref{eq:bold_T}, powers of $\mathbf T$ involve compositions of the maps $\chi_i \mathcal T_{i}\Tchi_j$, and these compositions 
applied to $\parallelepsilon^{0}$
can be interpreted as the error travelling from subdomain to subdomain through the iterations.

The key ingredient to the proof of Theorem \ref{thm:gen} is Lemma \ref{lem:notallowed} below, which relates compositions of $\chi_i \mathcal T_{i}\Tchi_j$ to properties of the 
trajectories of the flow 
travelling between the relevant subdomains. 

We now introduce notation to describe trajectories travelling from subdomain to subdomain. 
Let $\mathcal W$ be the set of finite sequences of elements of $\{ 1, \ldots, N \}$ (i.e., the indices of subdomains) such that, for any $w = \{w_i\}_{1\leq i \leq n} \in \mathcal W$, $w_{i} \neq w_{i+1}$ for all $1\leq i \leq n$ (i.e., the $(i+1)$th component of $w$ corresponds to a different subdomain than the $i$th component). We call the elements of $\mathcal W$ \emph{words}.

\begin{definition*}[Informal statement of Definition \ref{def:follow} (following a word)]
A trajectory $\gamma$ for the flow associated to $P$ \emph{follows} a word $w \in \mathcal W$ of length $\geq 2$ if it passes through, in order, $\Omega_{w_2}, \ldots, \Omega_{w_n}$ by going through the following sets 
\beq\label{eq:follow_domains}
\supp \Tchi_{w_1} \cap \supp(P^{w_2}_s-P_s), 
\quad
\ldots
\quad, 
\quad
\supp \Tchi_{w_{n-1}} \cap \supp(P^{w_{n}}_s-P_s),
\quad
 \supp \Tchi_{w_n}.
\eeq
\end{definition*}

We make the following immediate remarks:
\bit
\item Figure \ref{drw:word} illustrates a trajectory following a word involving three subdomains of $\Omega$ that do not touch the boundary.
\item To see where the sets in \eqref{eq:follow_domains} come from, observe that, 
by the definition of $\mathcal T_i$ \eqref{eq:Tjl}, 
the functions $\mathcal T_{i}\Tchi_j v$ appearing in the action of $\mathbf T$ \eqref{eq:bold_T} satisfy 
a PDE on $\Omega_i$ with the operator $P_\newtheta^i$ 
and right-hand side supported in $\supp \Tchi_{j} \cap \supp(P^{i}_\newtheta-P_\newtheta)$. 
\item $\supp \chi_{w_i} \cap \supp(P^{w_i+1}_s-P_s)$ is the part of the PML of $\Omega_{w_{i+1}}$ that intersects $\supp \chi_{w_i}$; because $\chi_j$ is zero on the support of $P^j_\newtheta-P_\newtheta$, the sets in \eqref{eq:follow_domains} are disjoint.
\item A trajectory $\gamma$ can follow more than one word because of the overlap of the subdomains. 
\eit

The flow associated to $P$ only allows passage between certain sequences of subdomains. For example, if the $\coeffc\equiv 1$, the trajectories of the flow are straight lines, and thus, since the subdomains are hyperrectangles, in a strip domain there is no trajectory following the word $(1,2,1)$. To see this, consider the analogue of Figure \ref{drw:word} with the subdomain $\Omega_1$ in the same horizontal strip as $\Omega_2$ and $\Omega_3$; in this modified diagram, a straight-line trajectory cannot leave the blue hatched area, go to the purple hatched area, and then come back to the blue hatched area.

\begin{definition} \label{def:allowed}
A word $w\in \mathcal{W}$ is \emph{allowed} if there exists a trajectory for $P$ that follows $w$.
\end{definition}

\begin{figure}[h!]
\begin{center}
\includegraphics[scale=0.9]{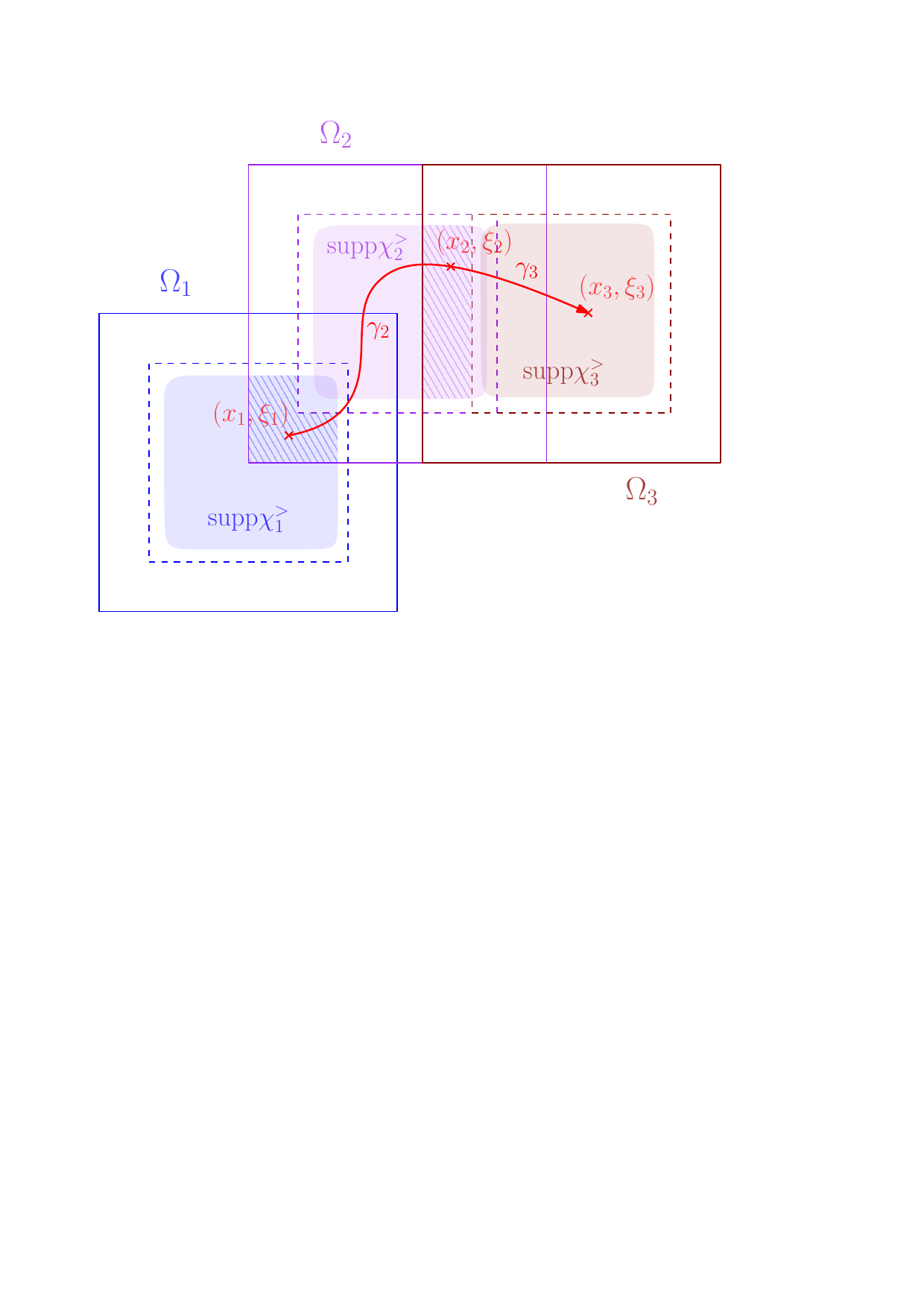}
\caption{A trajectory (in red) following the word $(1,2,3)$ (here $\Omega_1, \Omega_2, \Omega_3$ are interior subdomains of $\Omega$).
The blue hatched shading indicates the domain $\supp \TTchi_1 \cap \supp(P^2_\newtheta-P_\newtheta)$, and the purple hatched shading 
indicates the domain $\supp \TTchi_2 \cap \supp(P^3_\newtheta-P_\newtheta)$. 
(The points $(x_j,\xi_j)$, $j=1,2,3$, and sub-trajectories $\gamma_2,\gamma_3$ are used in the precise definition of  \emph{follow} in Definition \ref{def:follow} below.)
}\label{drw:word}
\end{center}
\end{figure}

\begin{definition} \label{def:Ncurly}
\beq\label{eq:mathcalN}
\mathcal N := \sup_{w \text{ allowed}} |w|.
\eeq
\end{definition}

Since the sets in \eqref{eq:follow_domains} are compact and disjoint, the assumption that $P$ is nontrapping implies that the length of allowed words is bounded above, ie $\mathcal N < \infty$.

For any $w \in \mathcal W$ of length $|w| :=n\geq 2$, we define the composite map
\beq\label{eq:Tw}
\mathscr T_{w} := 
(\mathbf T)_{w_{n},w_{n-1}} (\mathbf T)_{w_{n-1}, w_{n-2}} \cdots (\mathbf T)_{w_2,w_1}.
\eeq
The presence of the cut-offs in the definition of $(\mathbf T)_{i,j}$ \eqref{eq:bold_T} means that 
$\mathscr T_{w}$ is zero unless $\Omega_{w_j}$ and $\Omega_{w_{j+1}}$ overlap for all $j$.

The heart of the proof of Theorem \ref{thm:gen} is the following lemma.

\ble
\label{lem:notallowed}
\beq\label{eq:Thursday1}
\text{ If }\,\, 
w\in \mathcal W \,\,\text{ is not allowed then, for any $s\geq 1$, } 
 \,\,
\|\mathscr T_w\|_{H^1_k(\Omega) \rightarrow H^s_k(\Omega)}= O(k^{-\infty}).
\eeq
\ele

\bpf[Proof of Theorem \ref{thm:gen} using Lemma \ref{lem:notallowed}]
By the definition of the error propagation matrix $\mathbf{T}$ \eqref{eq:bold_T} and the composite map $\mathscr{T}_w$ \eqref{eq:Tw}, for any $m\in \mathbb{Z}^+$, 
all entries of $\mathbf{T}^{m}$ are sums of operators $\mathscr{T}_w$ over words $w$ of size $m+1$. 
Since words of size $\mathcal{N}+1$ are not allowed by the definition of $\mathcal{N}$ \eqref{eq:mathcalN}, the result that $\|\mathbf{T}^{\mathcal N}\|_{(H^1_k(\Omega))^N \rightarrow (H^s_k(\Omega))^N} = O(k^{-\infty})$ follows from Lemma \ref{lem:notallowed}.
\epf

We show in \S\ref{ss:key_prop2} that Lemma \ref{lem:notallowed} reduces to the following result, which 
describes precisely how, in the $k\to \infty$ limit, the error travels from subdomain to subdomain through the iterations. The lemma is stated in phase space $(x,\xi)$, where $x$ is position and $\xi$ is momentum; note that $\xi$ can be thought of as a Fourier variable measuring the direction of oscillation.

\begin{lemma*}[Informal statement of Lemma \ref{lem:key_prop2}]
For any $v\in H^1_0(\Omega)$, in the $k\to \infty$ limit,
$\mathscr T_w v$ is non-negligible (i.e., not $O(k^{-\infty})$) only at points in phase space that are end points of trajectories that follow the word $w$.
\end{lemma*}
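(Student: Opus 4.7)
The plan is to prove Lemma \ref{lem:key_prop2} by semiclassical microlocal analysis, interpreting ``non-negligible'' as control of the semiclassical wavefront set $\mathrm{WF}_h(\mathscr{T}_w v)$: the precise target is that $\mathrm{WF}_h(\mathscr{T}_w v)$ is contained in the set of phase-space endpoints of $P$-trajectories following $w$ in the sense of Definition \ref{def:follow}. Lemma \ref{lem:notallowed} then follows, since ``$w$ not allowed'' means this endpoint set is empty, and empty semiclassical wavefront set combined with the polynomial-in-$k$ boundedness of $\mathscr{T}_w$ on semiclassical Sobolev spaces yields the asserted $O(k^{-\infty})$ bounds in every $H^s_k$-norm.

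The argument is by induction on $\ell = 1,\dots,n$, applying the factors of
$$\mathscr{T}_w = \chi_{w_n}\, \mathcal{T}_{w_n}\, \TTchi_{w_{n-1}} \,\cdots\, \chi_{w_2}\, \mathcal{T}_{w_2}\, \TTchi_{w_1}$$
one at a time. The core one-step statement is: for $u \in H^1_0(\Omega)$, $\mathrm{WF}_h(\chi_i \mathcal{T}_i \TTchi_j u)$ is contained in the forward-$P$-flowout through $\Omega_i$, intersected with $\supp\chi_i \times \mathbb{R}^d$, of the part of $\mathrm{WF}_h(u)$ that lies spatially over $\supp \TTchi_j \cap \supp(P^i_\newtheta - P_\newtheta)$. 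To prove this, write $\mathcal{T}_i u = u|_{\Omega_i} - (P^i_\newtheta)^{-1}_D(P_\newtheta u|_{\Omega_i})$ (with $(P^i_\newtheta)^{-1}_D$ the Dirichlet inverse on $\Omega_i$), then rewrite $P_\newtheta = P^i_\newtheta - (P^i_\newtheta - P_\newtheta)$ and use the partition-of-unity condition \eqref{eq:PoU} to conclude that $\chi_i$ vanishes on $\supp(P^i_\newtheta - P_\newtheta)$; the non-local contribution reduces to $\chi_i (P^i_\newtheta)^{-1}_D[(P^i_\newtheta - P_\newtheta)(\TTchi_j u)]$, whose right-hand side is spatially supported exactly in $\supp \TTchi_j \cap \supp(P^i_\newtheta - P_\newtheta)$. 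Applying semiclassical propagation of singularities for the Dirichlet problem for the complex-scaled operator $P^i_\newtheta$ then transports $\mathrm{WF}_h$ of this source along bicharacteristics of $P^i_\newtheta$, which in the physical part of $\Omega_i$ coincide with $P$-trajectories because $P^i_\newtheta = P$ there; trajectories that exit into the PML are absorbed by the complex scaling and do not reflect off $\partial\Omega_i$. Iterating along $w$ attaches one new letter to the followed initial segment at each step, matching Definition \ref{def:follow} exactly.

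The main obstacle is supplying the two semiclassical ingredients for the Dirichlet problem on $\Omega_i$ with complex-scaled operator $P^i_\newtheta$: propagation of $\mathrm{WF}_h$ along $P$-bicharacteristics in the physical region, and absorption (no reflection off $\partial\Omega_i$) in the PML region. The first is classical once one observes that the real principal symbol of $P^i_\newtheta$ agrees with that of $P$ on $\Omega_{\mathrm{int},i}$; the second relies on the complex-scaling machinery underlying Theorem \ref{thm:outgoing_approx} and on the structural assumption that $f_\newtheta$ is linear past $\width_{\mathrm{lin}}$, which is precisely what eliminates the glancing pathologies flagged in Remark \ref{rem:PoS}. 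A secondary technical point is the polynomial-in-$k$ boundedness of $(P^i_\newtheta)^{-1}_D$ needed to close the induction over the finite word $w$; this is supplied by the resolvent estimate in Lemma \ref{lem:res_est}.
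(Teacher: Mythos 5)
Your high-level plan (induction on $|w|$, one $\mathbf T$-factor at a time, converting the problem into wavefront-set containments and then microlocal propagation) matches the paper's structure, and your algebraic identity $\mathcal T_i v = v|_{\Omega_i} - (P^i_\newtheta)^{-1}_D\big((P_\newtheta v)|_{\Omega_i}\big)$ is essentially correct. But there is a genuine gap at the heart of your one-step propagation lemma.

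You invoke ``semiclassical propagation of singularities for the Dirichlet problem for the complex-scaled operator $P^i_\newtheta$'' on $\Omega_i$, with absorption taking the place of a reflection law in the PML region. This is precisely the ingredient the paper says is \emph{not} available: Remark \ref{rem:PoS} states explicitly that boundary propagation results for operators whose symbol has a signed imaginary part have not been written down in the semiclassical literature, and indeed the paper is at pains to avoid ever propagating wavefront set up to $\partial\Omega_j$. Your appeal to ``the complex-scaling machinery underlying Theorem~\ref{thm:outgoing_approx}'' is circular in this setting, since that theorem is itself a downstream consequence of the very propagation result (Lemma~\ref{lem:key_prop}) you are trying to build.

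The missing idea is the odd-symmetric extension of \S\ref{sec:extension}. Because $\fPML$ is linear past $\width_{\rm lin}$, the coefficients of $P^j_\newtheta$ are \emph{constant} near $\partial\Omega_j\cap\partial\Omega$, so one can extend $w$ and $f$ by odd reflection from $\Omega_j$ to the slightly larger open set $\widetilde\Omega_j$; by Lemma~\ref{lem:ext_h}, the extended functions still solve $P^j_\newtheta\widetilde w=\widetilde f$ on the \emph{interior} of $\widetilde\Omega_j$. All propagation is then done on a manifold without boundary (Lemma~\ref{lem:FPR_app}/\cite[Theorem~E.47]{DyZw:19}), combined with escape to ellipticity (Lemma~\ref{lem:go_to_elliptic}), the identity of the $P^j_\newtheta$- and $P$-flows on $\{p^j_\newtheta=0\}$ (Lemma~\ref{lem:traj_energy_surf}), and ellipticity in the scaled directions (Lemma~\ref{lem:ell_unidri}). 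This is essential exactly when $\Omega_{w_n}$ touches $\partial\Omega$, since then $\chi_{w_n}$ is supported up to $\partial\Omega_{w_n}$ and interior propagation on $\Omega_{w_n}$ alone cannot control $\mathscr T_w v$ there (see Remark~\ref{rem:extend}). Your claim that ``trajectories that exit into the PML are absorbed and do not reflect'' is a heuristic for which the paper supplies a proof precisely by this extension device; without it the argument does not close.

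A secondary, smaller inaccuracy: the operator $\mathbf T$ is built from the cut-offs $\Tchi_j$ of \eqref{eq:bold_T}, not the larger $\TTchi_j=\chi^>$ introduced later in \S\ref{ss:proof_gen}; the distinction between the three nested families $\chi_j$, $\Tchi_j$, $\TTchi_j$ is used to change cut-offs between steps of the induction (compare the passage from $\chi\in C^\infty_c(\widetilde\Omega_{w_{n+1}})$ to $\psi\in C^\infty_c(\widetilde\Omega_{w_n})$ in \S\ref{ss:proof_key_prop2}), so conflating them will cause trouble when you try to apply the induction hypothesis.
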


Lemma \ref{lem:key_prop2} is in turn proved using the following propagation result.

\begin{lemma*}[Informal statement of Lemma \ref{lem:key_prop}]
Let $v \in H( \Omega_j)$ be the solution to
$$
P^j_{\newtheta} v = f \hspace{0.3cm} \text{in }{\Omega_j}, \qquad
v = g \hspace{0.3cm} \text{on }\partial\Omega_j.
$$
If $P$ is nontrapping, then,  
in the $k\to \infty$ limit, $v$ is non-negligible only at points in phase space that come from the data $f$ under the flow associated to $P$.
\end{lemma*}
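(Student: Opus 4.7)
The plan is to recast Lemma \ref{lem:key_prop} as a statement about the semiclassical wavefront set $WF_h$ with $h = k^{-1}$. Writing $p(x,\xi) = |\xi|^2 - c(x)^{-2}$ for the semiclassical principal symbol of $P$ and $\Phi_t$ for its Hamiltonian flow as in \eqref{eq:Hamilton}, the assertion becomes
\begin{equation*}
WF_h(v) \cap T^*\Omega_{{\rm int},j} \subset \Phi_{[0,\infty)}\bigl(WF_h(f)\bigr),
\end{equation*}
where $\Omega_{{\rm int},j}$ is the physical part of $\Omega_j$ (on which $g_{\ell,j} \equiv 0$ for all $\ell$, so that $P^j_\newtheta = P$). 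Off the characteristic set $\{p=0\}$ the operator $P$ is semiclassically elliptic on $\Omega_{{\rm int},j}$, so a standard elliptic parametrix reduces the problem to understanding $WF_h(v) \cap \{p=0\} \cap T^*\Omega_{{\rm int},j}$.

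Next, I would invoke semiclassical propagation of singularities (H\"ormander's theorem in the $h$-calculus, as in Dyatlov--Zworski) on $T^*\Omega_{{\rm int},j}$: because $P^j_\newtheta = P$ there, $WF_h(v) \cap \{p=0\}$ is invariant under $\Phi_t$ on $\Omega_{{\rm int},j}$ away from $WF_h(f)$. In particular, for any $(x_0,\xi_0) \in WF_h(v) \cap T^*\Omega_{{\rm int},j}$, the backward $\Phi$-trajectory through $(x_0,\xi_0)$ either meets $WF_h(f)$ or exits $\Omega_{{\rm int},j}$ (this exit happens in finite time by the non-trapping hypothesis on $P$).

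The crux is then to rule out the ``exits $\Omega_{{\rm int},j}$'' alternative. Here I would exploit the absorbing property of the Cartesian PML: in $\supp(P^j_\newtheta - P)$, the imaginary part of the principal symbol of $P^j_\newtheta$ has a definite sign vanishing exactly along outgoing rays, so a positive-commutator (Lax--Phillips-type) estimate, of the sort developed for complex-scaled operators by Sj\"ostrand--Zworski and adapted to PML in Theorem \ref{thm:outgoing_approx} and related work of the present authors, shows that any semiclassical mass inside the PML propagates strictly outward toward $\partial \Omega_j$ and is damped by $O(h^\infty)$. Consequently, no singularity can enter $\Omega_{{\rm int},j}$ from the PML: in particular the Dirichlet data $g$ on $\partial\Omega_j$ contributes only to $WF_h(v)$ confined to the PML layer, and trajectories leaving $\Omega_{{\rm int},j}$ backward into the PML are not continued back inward. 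Combined with the interior propagation step, this closes the proof.

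The main obstacle is the third step: establishing the ``no re-entry from the PML'' statement with the required $O(h^\infty)$ precision. The Cartesian PML is non-smooth across the physical-PML interface (it kicks in along each coordinate direction independently, giving corner-type regions), and the complex-scaled symbol there must be analyzed carefully to verify that its imaginary part has the right sign on incoming microlocal directions. If an off-the-shelf semiclassical propagation result for the Cartesian PML Helmholtz operator is not directly applicable, one would redo the positive-commutator argument locally, piecing together estimates across the interface and the corner regions; the non-trapping assumption on $P$ enters precisely to guarantee that the forward $\Phi$-flowout considered in Step 2 escapes to the PML in bounded time so that the absorption argument of Step 3 applies uniformly.
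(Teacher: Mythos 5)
Your proposal takes a genuinely different route from the paper and, in its current form, has two real gaps at precisely the step you identify as the crux.

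The paper's proof does not use a positive-commutator/damping argument in the PML. Instead it uses (i) \emph{escape to ellipticity} (Lemma \ref{lem:go_to_elliptic}): any backward non-trapped trajectory starting in $\widetilde\Omega_j$ reaches, while still inside $\widetilde\Omega_j$, a point where $p^j_\newtheta$ is elliptic; combined with (ii) \emph{forward propagation of regularity from an elliptic point} (Lemma \ref{lem:FPR}), which is the standard one-sided propagation-of-singularities theorem for operators with $\operatorname{Im}\sigma\leq 0$ \cite[Theorem E.47]{DyZw:19} packaged with elliptic regularity, and with (iii) the observation (Lemma \ref{lem:traj_energy_surf}) that on the characteristic set $\{p^j_\newtheta=0\}$ the $P^j_\newtheta$-flow coincides with the $P$-flow, which is what lets the lemma be phrased in terms of the physically-relevant flow $\Phi_t$ of $P$.

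The first gap is your claim that ``any semiclassical mass inside the PML propagates strictly outward toward $\partial\Omega_j$ and is damped by $O(h^\infty)$.'' This is false as stated: by Lemma \ref{lem:ell_unidri}, $p^j_\newtheta$ vanishes at points $(x,\xi)$ in the PML with $\xi$ parallel to every edge whose extended neighbourhood contains $x$. Rays travelling parallel to a PML face (perpendicular to the scaling direction) carry wavefront set with no damping at all, and do \emph{not} head toward $\partial\Omega_j$. Your positive-commutator argument would have no sign to work with on these directions. This is exactly why the paper proves the separate escape-to-ellipticity lemma: it exploits the hyperrectangular geometry to show that a ray parallel to one face must eventually enter the extended neighbourhood of a perpendicular edge (where the PML is active in a direction not parallel to $\xi$), i.e., an elliptic point, and the induction over edge dimensions there is the nontrivial content. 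Without this, your ``no re-entry'' claim is unsupported.

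The second gap is the boundary $\partial\Omega_j$. Your argument propagates singularities up to and across $\partial\Omega_j$, but as Remark \ref{rem:PoS} explains, semiclassical propagation of singularities up to the boundary has not been established for operators like $P^j_\newtheta$ whose principal symbol has a signed imaginary part. The paper sidesteps this entirely by first odd-reflecting the solution across $\partial\Omega_j\cap\partial\Omega$ (Definitions \ref{def:os_ext}--\ref{def:ext}, Lemma \ref{lem:ext_h}) — this uses the assumption that the PML scaling is linear near $\partial\Omega$ — and doing all propagation in the interior of the extended domain $\widetilde\Omega_j$. Your plan needs a comparable device; one cannot simply invoke interior propagation of singularities near $\partial\Omega_j$. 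Relatedly, you restrict the conclusion to $T^*\Omega_{{\rm int},j}$, but the partition-of-unity functions $\chi_j$ are supported up to $\partial\Omega$ for non-interior subdomains, and the paper's Lemma \ref{lem:key_prop} must (and does) give wavefront-set information on all of $\supp\psi\subset\widetilde\Omega_j$, not just the physical region.

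Your high-level intuition — propagation of singularities in the physical region plus absorption in the PML — is the right picture, but the mechanism that makes it rigorous in the paper is ellipticity of $p^j_\newtheta$ in the scaling directions combined with forward propagation of \emph{regularity}, not outward propagation of singularities plus damping, and the escape-to-ellipticity geometry and the boundary extension are both essential and missing from your sketch.
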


Lemma \ref{lem:key_prop} is stated rigorously and proved  in \S\ref{s:sec_prop}, and we give the main ideas behind its proof in \S\ref{sec:sketch_prop} below. Lemma \ref{lem:key_prop2} is stated in \S\ref{ss:key_prop2}, and proved in \S\ref{ss:proof_key_prop2}. The idea is the following.

\bpf[Sketch proof of Lemma \ref{lem:key_prop2} using Lemma \ref{lem:key_prop}]
Given $v\in H^1_0(\Omega)$, by the definitions of $(\mathbf T)_{i,j}$ \eqref{eq:bold_T} and $\mathcal{T}_{j}$ \eqref{eq:Tjl},
$(\mathbf T)_{w_2, w_1}v$ equals $\chi_{w_2}$ multiplied by a Helmholtz solution 
on $\Omega_{w_2}$ with data $(P_\newtheta^{w_2}-P_\newtheta)\Tchi_{w_1}v$. 
Therefore, Lemma \ref{lem:key_prop} shows that, in the $k\to \infty$ limit, the mass of 
$(\mathbf T)_{w_2, w_1}v$ in phase space 
comes only from $\supp \chi_{w_1} \cap \supp (P_\newtheta^{w_2}-P_\newtheta)$, i.e., the first domain in \eqref{eq:follow_domains}. (This concept of ``mass in phase space" is made precise by the notion of \emph{wavefront set}; see Definition \ref{def:WF} below.)

Similarly, $(\mathbf T)_{w_3, w_2}(\mathbf T)_{w_2, w_1}v$ satisfies a  Helmholtz problem on $\Omega_{w_w}$ with data 

\noi $(P_\newtheta^{w_3}-P_\newtheta)\chi_{w_2}(\mathbf T)_{w_2, w_1}v$. 
Therefore, Lemma \ref{lem:key_prop} shows that, in the $k\to \infty$ limit, the mass of 
$(\mathbf T)_{w_3, w_2}(\mathbf T)_{w_2, w_1}v$ comes only from the mass in 
$\supp \chi_{w_2} \cap \supp (P_\newtheta^{w_3}-P_\newtheta)$ that in turn came from
$\supp \chi_{w_1} \cap \supp (P_\newtheta^{w_2}-P_\newtheta)$.

In the same way, $\mathscr T_w v$ only contains mass that propagates between subdomains following the word $w$, i.e., the subdomains listed in \eqref{eq:follow_domains}.
\epf

\subsection{Sketch proof of the propagation result (Lemma \ref{lem:key_prop})}\label{sec:sketch_prop}

The two  main concepts from semiclassical analysis used in the proof of Lemma \ref{lem:key_prop} are the following.
\ben
\item Propagation of singularities, used in the form that the absence of mass (in phase space) of the solution of $Q u=f$ propagates forwards along the flow defined by $Q$ as long as the trajectory does not intersect the data $f$ (see Lemma \ref{lem:FPR_app} below) and the imaginary part of the principal symbol of $Q$ is non-positive. 
\item Semiclassical ellipticity:~an operator that is elliptic (i.e., has non-zero principal symbol, see \S\ref{sec:elliptic} below)
in some region of phase space can be inverted in that region, up to some error that is 
$O(k^{-\infty})$ 
(see Lemma \ref{lem:ell_wf} below).
\een
Since $P^j_{\newtheta}$ is elliptic in the directions that the scaling of the PML takes place (Lemma \ref{lem:ell_unidri}),
\ben
\item[3. ] A trajectory cannot exit $\Omega_j$ without passing through a point where $P^j_{\newtheta}$ is elliptic (Lemma \ref{lem:go_to_elliptic}).
\een
We now face the issue that 
the assumptions and conclusions of Lemma \ref{lem:key_prop} involve the flow for $P$ (since this is the flow with physical relevance), but to prove the lemma we need to use 
propagation of singularities in the flow defined by $P_\newtheta^j$ (since Lemma \ref{lem:key_prop} is all about solutions of $P_\newtheta^j u= f$), and these two flows are different.
The resolution of this apparent difficulty is that 
\ben
\item[4.] The flow defined by $P_{\newtheta}^j$ equals the flow defined by $P$ 
as long as it doesn't reach a point where $P^j_\newtheta$ is elliptic 
(Lemma \ref{lem:traj_energy_surf}).
\een
By the assumption that $P$ is nontrapping, any trajectory exits the domain; combining this with Points 3 and 4 we obtain
\ben
\item[5.] Any point in $\Omega_j$ is the endpoint of a trajectory of $P_\newtheta^j$ coming \emph{either} from an elliptic point without intersecting the data, \emph{or} comes from the data. 
\een
Point 5 combined with Point 2 implies that trajectories of $P_\newtheta^j$ coming from an elliptic point without intersecting the data carry no mass. Therefore Point 5 combined with Point 1 implies the result;
i.e., that the solution contains mass only on points in phase space that come from the data under the flow associated to $P$ 
(this is illustrated graphically in Figure \ref{drw:key_prop}, next to the proof of Lemma \ref{lem:key_prop}).

\bre[Why we assume that $\fPML$ is linear near the PML boundary]
\label{rem:PoS}
The principal symbol of the scaled operator $P_\newtheta$ has a negative imaginary part; see \eqref{eq:imag_symb} below.
For such an operator, the relevant propagation of singularities results in the semiclassical calculus for domains with a boundary have not yet been written down in the literature. Indeed, the relevant propagation of singularities results in the semiclassical calculus has been proved (i) on manifolds without boundary for operators with symbols whose imaginary parts are single-signed \cite[Theorem E.47]{DyZw:19} (see Lemma \ref{lem:FPE} below) and (ii)
on manifolds with boundary for $P$ \cite{Va:04}. 

The assumption that $\fPML$ is linear near the PML boundary allows us to use a reflection argument (similar to that used for the Cartesian PML analyses in 2-d in \cite[Proof of Theorem 5.5]{BrPa:13}, \cite[Lemma 3.4]{ChXi:13}) to extend the solution past $\partial\Omega$ and use the propagation results of (i) above, i.e., bypassing the issue of propagation up to the boundary (exactly where in the proofs of the main results we do this is highlighted in Remark \ref{rem:extend} below). Once the relevant propagation results are written down in the literature, this assumption can be removed.
\ere

\bre\mythmname{Why we do not have a variable coefficient in the highest-order term of the PDE}\label{rem:A}
There is also large interest in solving the Helmholtz equation with a variable coefficient in the highest-order term; i.e., the PDE
\beq\label{eq:APDE}
k^{-2}\nabla \cdot( A \nabla u) + c^{-2}u = -f
\eeq
where $A$ is a symmetric-positive-definite-matrix-valued function with $\supp(I-A)\subset \Omega_{\rm int}$.
When $f\in L^2(\Omega_{\rm int})$ and $u$ satisfies the Sommerfeld radiation condition, 
the solution of this problem is well-approximated by Cartesian PML. Indeed, Theorem \ref{thm:outgoing_approx} below holds verbatim:~since $A\equiv I$ when the PML is active, the imaginary part of the principal symbol is unaffected and propagation of singularities (as described informally in Point 1 above) still holds.

However, the analogue of the domain-decomposition methods defined in \S\ref{sec:def} for the PDE \eqref{eq:APDE} involve local problems with the operator 
\beq\label{eq:APDEj}
P_{\newtheta}^j u:= k^{-2}\nabla_{\newtheta,j} \cdot( A \nabla_{\newtheta,j} u) + c^{-2}u,
\eeq
where
\beqs%\label{eq:nablas}
\nabla_{\newtheta,j} := 
\left(
\begin{array}{c}
{\displaystyle\frac{1}{1+ig_{1,j}'(x_1)} \partial_{x_1} }\\
\vdots
\\
{\displaystyle\frac{1}{1+ig_{d,j}'(x_d)} \partial_{x_d}}
\end{array}
\right)
\eeqs
(compare to \eqref{eq:PPj} and \eqref{eq:DeltaPMLj}). 
For general $A$, the imaginary part of the principal symbol of $P_\newtheta^j$ \eqref{eq:APDEj} no longer has a sign, and propagation of singularities does not hold. 
This is more than just a technical problem with the proofs:~the numerical experiments in \S\ref{sec:num_A} give an example of a simple $A$ 
for which the PDE \eqref{eq:APDE} is well posed, but the parallel Schwarz method applied to \eqref{eq:APDE} with local problems involving \eqref{eq:APDEj} diverges.
\ere

\subsection{Summary of the three ingredients used in the proof of Theorem \ref{thm:gen} (the result about the parallel method)}\label{sec:3_ingredients}

\begin{enumerate}
\item Algebra of the error propagation, showing which words appear in the products $\mathscr T_w$ (discussed in \S\ref{sec:idea_error} and \S\ref{sec:powers}).
\item Semiclassical analysis, showing that 
$\|\mathscr T_w\|_{H^1_k(\Omega)\to H^s_k(\Omega)} = O(k^{-\infty})$
 if $w$ is not allowed -- Lemma \ref{lem:notallowed}.
\item Properties of the trajectories of the flow associated with $P$, 
dictating which $w$ are allowed.
\end{enumerate}
All the discussion in this section has so far been about the parallel method. However, 
the three ingredients above show that to prove results about different methods, such as the sequential methods in \S\ref{sec:sequential}, we only need to understand Point 1, i.e., the algebra of the error propagation, and then check whether the words appearing in the products $\mathcal T_w$ are allowed or not. 
In the next subsection we illustrate this process for forward-backward sweeping on a strip when $\coeffc \equiv 1$ (i.e., the geometric-optic rays are straight lines).

\subsection{Error propagation for forward-backward sweeping on a strip (i.e., the idea behind the proof of Theorem \ref{thm:sweep_strip})}\label{sec:idea_sequential}

We now seek the sweeping analogue of the parallel error propagation result of Lemma \ref{lem:model_err_prop}. To do this, we set up notation for the error analogous to that used for the parallel method in \S\ref{sec:idea_error}. 

Let $\{\sequentialu^n\}_{n \geq 0}$ be a sequence of iterates for 
the forward-backward sweeping defined in \S\ref{sec:sequential}. 
Let 
\beq\label{eq:errors_mult}
 e_{\times} ^n := u -  \sequentialu^n \in H^1_0 (\Omega),\quad\text{ and }\quad e_{{\times}, j}^n := u_{} -  u^n_j, \quad 1\leq j\leq N
\eeq
(with the subscript $\times$ indicating that this is the error for the multiplicative/sequential method).
We define the localised physical error by 
\beq\label{eq:error_sequential}
 \epsilon^n_{{\times}, j} := \chi_j  e_{{\times}, j}^n \in H^1_0(\Omega),
\quad\text{ so that } \quad
 e_{\times}^n = \sum_{j=1}^N  \epsilon^n_{{\times}, j}
\eeq
(in exact analogue with \eqref{eq:local_phys_error}), 
Finally, we define the physical errors vector ${\boldsymbol{\epsilon}}_{\times}^n \in (H^1_0(\Omega))^N$ by
$
({\boldsymbol{\epsilon}}_{\times}^n)_j :=\epsilon^n_{{\times}, j}.
$

For any set of subdomains, $({\mathbf T})_{j,j}=0$ for all $1\leq j\leq N$ -- this follows from the definitions of $\mathbf T$ \eqref{eq:bold_T} and $\mathcal{T}_j$ \eqref{eq:Tjl} and the fact that $\Tchi_j = 0$ where $P_{\newtheta}^j - P_{\newtheta}$ is supported. 
Therefore 
\beq\label{eq:stripLU}
\mathbf T = \mathbf L + \mathbf U,
\eeq
where $\mathbf L$ is lower triangular and $\mathbf U$ is upper triangular, both with zero on the diagonal.

Using the structure \eqref{eq:stripLU}, we arrive at the analogue of Lemma \ref{lem:model_err_prop} (proved in \S\ref{sec:error_sweeping} below).

\begin{lemma}[Error propagation for forward-backward sweeping]\label{lem:model_seq_err_prop}
For forward-backward sweeping, for  $n\geq 0$, 
\beq\label{eq:sweep_fwbw_error}
{\boldsymbol{ \epsilon}_{\times}^{2n+1}} = \mathbf L {\boldsymbol{ \epsilon}_{\times}^{2n+1}} + \mathbf U {\boldsymbol{ \epsilon}_{\times}^{2n}}\quad\tand\quad {\boldsymbol{ \epsilon}_{\times}^{2n+2}} = \mathbf L {\boldsymbol{ \epsilon}_{\times}^{2n+1}} + \mathbf U {\boldsymbol{ \epsilon}_{\times}^{2n+2}}.
\eeq
\end{lemma}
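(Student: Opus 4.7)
The plan is to mirror the derivation of Lemma \ref{lem:model_err_prop} for the parallel method, while keeping careful track at each stage of a sweep of which subdomains have already been updated in the current sweep and which still hold the value from the previous iterate. First, by subtracting $P_\newtheta u = f$ from the subdomain problem \eqref{eq:forward_sweeping} and simplifying exactly as in the parallel case, one shows that the forward-sweep local error $e_{\times,j}^{2n+1} := u - u_j^{2n+1}$ satisfies
\begin{equation*}
\begin{cases}
P^j_\newtheta e_{\times,j}^{2n+1} = (P^j_\newtheta - P_\newtheta)(u - u^{\rightarrow}_{j,n})|_{\Omega_j}, \\
e_{\times,j}^{2n+1} = u - u^{\rightarrow}_{j,n} \text{ on } \partial\Omega_j,
\end{cases}
\end{equation*}
which by the definition \eqref{eq:Tjl} of $\mathcal T_j$ is exactly $e_{\times,j}^{2n+1} = \mathcal T_j(u - u^{\rightarrow}_{j,n})$.

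Second, I would use $\sum_\ell \chi_\ell = 1$ together with the definition of $u^{\rightarrow}_{j,n}$ to split the global mismatch into localised errors,
\begin{equation*}
u - u^{\rightarrow}_{j,n} = \sum_{\ell < j}\chi_\ell(u - u_\ell^{2n+1}) + \sum_{\ell \geq j}\chi_\ell(u - u_\ell^{2n}) = \sum_{\ell < j}\epsilon^{2n+1}_{\times,\ell} + \sum_{\ell \geq j}\epsilon^{2n}_{\times,\ell}.
\end{equation*}
Applying $\chi_j \mathcal T_j$ and using the identity $\Tchi_\ell \epsilon^{\bullet}_{\times,\ell} = \epsilon^{\bullet}_{\times,\ell}$ — which holds because $\Tchi_\ell \equiv 1$ on $\supp \chi_\ell$ — rewrites each summand as the matrix entry $(\mathbf T)_{j,\ell} = \chi_j \mathcal T_j \Tchi_\ell$ of \eqref{eq:bold_T} acting on the corresponding $\epsilon$, yielding
\begin{equation*}
\epsilon^{2n+1}_{\times,j} = \sum_{\ell < j}(\mathbf T)_{j,\ell}\,\epsilon^{2n+1}_{\times,\ell} + \sum_{\ell \geq j}(\mathbf T)_{j,\ell}\,\epsilon^{2n}_{\times,\ell}.
\end{equation*}

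Third, the crucial algebraic observation (used already in \eqref{eq:stripLU}) is that $(\mathbf T)_{j,j} = 0$ for all $j$, because $\Tchi_j$ vanishes where $P_\newtheta^j - P_\newtheta$ is supported. This lets me drop the $\ell = j$ term from the second sum, converting ``$\ell \geq j$'' into ``$\ell > j$'', after which the two sums assemble into $(\mathbf L \boldsymbol\epsilon_\times^{2n+1})_j$ and $(\mathbf U \boldsymbol\epsilon_\times^{2n})_j$, producing the first equation in \eqref{eq:sweep_fwbw_error}. The backward-sweep identity is proved in exactly the same way starting from the backward subdomain problem and from $u^{\leftarrow}_{j,n}$; the only bookkeeping change is that the ``already updated in this sweep'' set is now $\ell > j$ (rather than $\ell < j$), so the $\leq$/$\geq$ bounds flip, and after again using $(\mathbf T)_{j,j} = 0$ one lands on $\mathbf L \boldsymbol\epsilon_\times^{2n+1} + \mathbf U \boldsymbol\epsilon_\times^{2n+2}$. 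The argument is purely algebraic, so I do not anticipate a genuine obstacle; the only point of care is the free insertion of $\Tchi_\ell$, which is what couples the localised errors to the full entries of $\mathbf T$ rather than to a cruder unreduced operator.
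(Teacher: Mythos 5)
Your proof is correct and follows essentially the same route as the paper: subtract $P_\newtheta u = f$ to get the local error equation, decompose $u-u_{j,n}^\rightarrow$ via the partition of unity, insert $\Tchi_\ell$ using $\Tchi_\ell\chi_\ell=\chi_\ell$ to recover the entries $(\mathbf T)_{j,\ell}$, and use $(\mathbf T)_{j,j}=0$ to pass from $\ell\geq j$ to $\ell>j$. The only cosmetic difference is that you observe $e_{\times,j}^{2n+1}=\mathcal T_j(u-u_{j,n}^\rightarrow)$ first and then expand by linearity, whereas the paper expands inside the PDE and boundary data before invoking the definition of $\mathcal T_j$; the algebra is identical.
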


In the parallel case, Lemma \ref{lem:model_err_prop} gives us immediately that ${\boldsymbol \epsilon}^n = {\mathbf T}^n {\boldsymbol \epsilon}^0$.
We now seek the analogue of this for  forward-backward sweeping (see \eqref{eq:sweeping_words} below), using the additional structure of $\mathbf L$ and $\mathbf U$ when 
 $\{\Omega_j\}_{j=1}^N$ is a strip (in the sense of \S\ref{sec:strip_checker}).

\ble \label{lem:banded}
If $\{\Omega_{j}\}_{j=1}^N$ is a strip then 
\beq\label{eq:banded}
{(\mathbf L})_{i, j} = 
\begin{cases}
({\mathbf T})_{j+1, j} &\text{ if } i = j+1, \\
0 &\text{ otherwise},
\end{cases}
\hspace{0.2cm}\text{ and }\hspace{0.2cm}
({\mathbf U})_{i, j} = 
\begin{cases}
(\mathbf T)_{i, i+1} &\text{ if }j = i+1, \\
0 &\text{ otherwise}.
\end{cases}
\eeq
\ele

\bpf 
By definition of the partition of unity 
$\{\chi_j\}_{j=1}^N$ for a strip decomposition, and the ``slightly bigger" set of cut-offs $\{\Tchi_j\}_{j=1}^N$, 
if $|j - i | \neq 1$, then $\Tchi_i = 0$ on $\Omega_j$. 
Thus $({\mathbf T})_{i,j}=0$ if $|j - i | \neq 1$, and 
the result \eqref{eq:banded} follows.
\epf

\bpf[Proof of Theorem \ref{thm:sweep_strip} using Lemma \ref{lem:notallowed}]
By the first equation in \eqref{eq:sweep_fwbw_error} with $n=0$,
$$
(I - \mathbf L){\boldsymbol{ \epsilon}_{\times}^{1}} = \mathbf U {\boldsymbol{ \epsilon}_{\times}^{0}};
$$
hence, since $\mathbf L^N = 0$,
$$
{\boldsymbol{ \epsilon}_{\times}^{1}} =  \sum_{\ell=0}^{N-1} \mathbf L^\ell \mathbf U {\boldsymbol{ \epsilon}_{\times}^{0}}.
$$
Similarly, by the second equation in \eqref{eq:sweep_fwbw_error} and then the above,
\begin{align}
{\boldsymbol{ \epsilon}_{\times}^{ 2}} &=  \sum_{m=0}^{N-1} \mathbf U^m \mathbf L {\boldsymbol{\epsilon}_{\times}^{ 1 }} = \sum_{0\leq \ell, m \leq N-1} \mathbf U^m \mathbf L^{\ell+1} \mathbf U {\boldsymbol{\epsilon}_{\times}^{0}}.\label{eq:sweeping_words}
\end{align}
By Lemma \ref{lem:banded}, all the entries of $\mathbf L \mathbf U$ consist of 
$(\mathbf T)_{j+1,j} (\mathbf T)_{j,j+1}$ for some $j$.
Therefore, all the entries of any product of matrices appearing in the second sum in \eqref{eq:sweeping_words} are of the form $\mathcal T_w$, where $w = (\bar w, w_0)$ and $w_0=(j+1, j, j+1)$ (and $\bar w$ is empty if $m=0$ and $\ell=0$ in the sum).

We have now completed the analogue of Point 1 in \S\ref{sec:3_ingredients}; i.e., we have characterised the words appearing the the error propagation.

Now, since $\coeffc\equiv 1$, the trajectories are straight lines and the 
word  $(j+1,j,j+1)$ is not allowed (recall the discussion above Definition \ref{def:allowed}). If $w_0$ is not allowed, then the word $ (\bar w, w_0)$ is not allowed (regardless of $\bar w$). 
Therefore, by Lemma \ref{lem:notallowed}, for any $s\geq 1$,
\beqs
\Big\|\sum_{0\leq \ell, m \leq N-1} \mathbf U^m \mathbf L^{\ell+1} \mathbf U\Big\|_{(H^1_k(\Omega))^N\to (H^s_k(\Omega))^N} = O(k^{-\infty})
\eeqs
and the result of Theorem \ref{thm:sweep_strip} follows from \eqref{eq:sweeping_words} and \eqref{eq:error_sequential}.
\end{proof}

\subsection{Error propagation for the sequential method on a checkerboard (i.e., the idea behind the proof of Theorem \ref{thm:sweep})}\label{sec:idea_sequential2}

The idea of the proof of the analogous result for the general sequential method (i.e., Theorem \ref{thm:sweep}) is the same as for forward-backward sweeping discussed above; i.e., the goal is to show that the words appearing in the products $\mathcal{T}_w$ are not allowed. 
Theorem \ref{thm:sweep} is proved under the condition that the sequence of subdomain orderings 
is exhaustive (defined informally above Theorem \ref{thm:sweep} and defined precisely in Definition \ref{def:exhaust} below). 
This condition ensures that there are enough subdomain orderings so that the words that appear in the error propagation relation correspond
 to trajectories that come back on themselves, and so are not allowed when the geometric-optic rays are straight lines (i.e., when $c\equiv 1$). 
 
Examples \ref{ex:exhaust2} and \ref{ex:exhaust4} construct exhaustive sequences of orderings with $2^{\mathfrak d}$ orderings, where $\mathfrak d$ is the effective dimension of the checkerboard, with Lemma \ref{lem:ex_size} showing that this number of orderings is optimal when $\mathfrak d=2$.
We highlight that 
the source-transfer-type methods of \cite{LeJu:21, LeJu:22} also require $2^d$ ``sweeps" on a $d$-checkerboard for the Helmholtz equation with $c\equiv 1$.

For the general sequential method on a checkerboard when $\coeffc \not\equiv1$, the definition of exhaustive has to be modified to depend on the geometric-optic rays (which are no longer straight lines); see 
Remark \ref{rem:sequential_variable_c} below.

\subsection{Discussion on the importance (or not) of PML conditions on the subdomains}\label{sec:PMLCAP}

The discussion in \S\ref{sec:sketch_prop} above shows that the main property of PML used is that $P^j_{\newtheta}$ is elliptic  in certain directions in the PML region of $\Omega_j$. 
Appendix \ref{sec:general} describes how Theorems \ref{thm:strip}-\ref{thm:gen} hold for a much wider class of operators and subdomain boundary conditions, including when the PML is replaced by a \emph{complex absorbing potential}; i.e., the Helmholtz operator in $\Omega_j$ is replaced by 
\beqs
-k^{-2} \nabla \cdot (A\nabla ) - \coeffc^{-2} - i V_j 
\eeqs
where $V_j \in C^\infty_{\rm comp}(\Rea^d)$ is supported in what was the PML region of $\Omega_j$ and is strictly positive near $\partial\Omega_j$ (see, e.g., 
\cite{SeMi:92, RiMe:93, RiMe:95, Mu:04, St:05}); see Example \ref{ex:CAP}. 
In particular, Example \ref{ex:CAP} shows that the analogues of Theorems \ref{thm:strip}-\ref{thm:sweep} and \ref{thm:gen}
with complex absorption hold when $A\not \equiv I$, in contrast to PML (see Remark \ref{rem:A}); this will be investigated further elsewhere.
Note that complex absorbing potentials can themselves be used to approximate the radiation condition with $O(k^{-\infty})$ error -- see Theorem \ref{thm:CAP} below --  and thus can also be imposed on $\partial \Omega$. 

We highlight, however, that the propagation result Lemma \ref{lem:key_prop} (stated informally in \S\ref{sec:powers}) does not hold if the PML is replaced by a local absorbing boundary conditions, such as the impedance boundary condition.
Indeed, in the $k\to \infty$ limit, mass is reflected by these boundary conditions 
\cite{Mi:00, GLS1} and thus the reflections from $\partial \Omega_j$ create mass that comes from the data \emph{not} under the flow associated with the Helmholtz operator on $\Omega$. These additional reflections make analysing DD methods with impedance boundary conditions challenging \cite{BeCaMa:22, LS1}.

\section{Preliminary results for the PML operators} \label{s:preli}

It will be convenient to work with the semiclassical small parameter $\hbar := k^{-1}$.
We then have
\beqs
P = - \hbar^2\Delta -\coeffc^{-2}, 
\qquad
P_{\newtheta} = - \hbar^2\Delta_{\newtheta} -\coeffc^{-2},
\quad\tand\quad
 P_{\newtheta}^j = -\hbar^2\Delta_{\newtheta, j} -\coeffc^{-2}, \quad 1\leq j \leq N.
\eeqs

\subsection{Useful notation in $\Omega_j$} \label{ss:def_geo}

For any $1\leq j \leq N$, we define the following, denoting the analogous subsets of $\Omega$ by the same notation but omitting the $j$.

\begin{itemize}
\item Let $\mathcal X_\ell^j$ for $0 \leq \ell \leq d-1$ denote the set of $\ell$-dimensional generalised edges of $\Omega_j$. For example, if $d=3$, $\mathcal X_0^j$ are the vertices, $\mathcal X_1^j$ the edges, and $\mathcal X_2^j$ the faces; see Figure \ref{drw:3}.

\item Let $\mathcal Z^j:= \cup_{0 \leq \ell \leq d-1} \mathcal X_\ell^j$ be the set of generalised edges of $\Omega_j$.

\item Let $\mathcal Z_{\partial \Omega}^j:= \big\{ \mathfrak e \in \mathcal Z^j, \; \text{ s.t. } \exists {\mathfrak e}_0 \in \mathcal Z \text{ with } {\mathfrak e} \subset {\mathfrak e}_0 \big\}$, the generalised edges of $\Omega_j$ that are edges of $\Omega$.

\item For $0 \leq \ell \leq d-1$ and ${\mathfrak e} \in \mathcal X_\ell^j$, we write $\xi \parallel {\mathfrak e}$ (informally ``$\xi$ is parallel to ${\mathfrak e}$'') if $\xi \in \hat{\mathfrak e}$, where $\hat{\mathfrak e} := \operatorname{Vect}(e_1, \cdots, e_{\ell})$, where $(e_1, \cdots, e_{\ell})$ are $\ell$ linearly independent vectors tangent to ${\mathfrak e}$; see Figure \ref{drw:4}.

\item For $\mathfrak e \in \mathcal Z^j$, let $\width(\mathfrak e) := \width$ if $\mathfrak e \in \mathcal Z_{\partial \Omega}^j$ and  $\width(\mathfrak e) := \width_0$ otherwise.

\item For $0\leq \eta < \width$, $0 \leq \ell \leq d-1$, let 
\beqs
\mathcal E_{\ell, \eta}^j := \bigcup_{{\mathfrak e} \in \mathcal X_\ell^j}\big(\mathfrak e + B(0, \width(\mathfrak e) - \eta) \big) \cap \Omega_j
\eeqs
(informally, $\mathcal E_{\ell, \eta}^j$ is the region a distance $\eta$ into the $\ell$-dimensional edges of the PML layer of $\Omega_j$,
taking into account that the PML layer is width $\width$ for edges that are part of $\partial\Omega$ and $\width_0$ otherwise
).
\item  For $0\leq \eta < \width$, $0 \leq \ell \leq d-1$ and ${\mathfrak e} \in \mathcal X_\ell^j$,
let 
\beq\label{eq:mathfrake}
\widetilde{{\mathfrak e}^{\eta}} := \bigg\{ x + z + \sum_{1\leq i \leq d-\ell} t_i n_i(x), \quad x \in {\mathfrak e} , \; z \in \hat{\mathfrak e},\; t_i > - (\width(\mathfrak e) - \eta) \bigg\},
\eeq
 where, for any 
$x \in {\mathfrak e}$, $(n_i(x))_{1 \leq i \leq d-\ell}$ is a set of $d-\ell$ orthogonal outward-pointing (with respect to $\Omega_j$) normal vectors to ${\mathfrak e}$.
Informally, $\widetilde{{\mathfrak e}^{\eta}}$ is the region a distance $\eta$ into the part of the PML layer of $\Omega_j$ corresponding to edge $\mathfrak{e}$
that is then extended out of $\Omega_j$ to infinity, both normally (corresponding to $t_i n_i(x)$ in \eqref{eq:mathfrake}) and tangentially (corresponding to $z$ in \eqref{eq:mathfrake}); 
see Figure \ref{drw:4}.

Let 
\beqs%\label{eq:widetildemathcalE}
\widetilde{\mathcal E}_{\ell, \eta}^j := \bigcup_{\mathcal X \in \mathcal X_\ell^j} \widetilde{{\mathfrak e}^{\eta}}.
\eeqs

\item For $0\leq \eta < \width$, let $\mathcal L_\eta ^ j:= {\mathcal E}_{d-1, \eta} ^j$, i.e., the $\eta$-strict PML layer of $\Omega_j$, 
and $\widetilde {\mathcal L}_\eta ^ j:= \widetilde {\mathcal E}_{d-1, \eta} ^j$, i.e., the $\eta$-strict extended PML layer of $\Omega_j$.
\item For $0\leq \eta < \width$, let $\mathcal C_\eta ^ j:= {\mathcal E}_{0, \eta} ^j$, i.e., the $\eta$-strict PML corners of $\Omega_j$, 
$\widetilde {\mathcal C}_\eta ^ j:= \widetilde {\mathcal E}_{0, \eta} ^j$, i.e.,  the $\eta$-strict extended PML corners of $\Omega_j$.
\end{itemize}
Additionally, when $\eta = 0$ in the above notation we omit it;
e.g., 
$\widetilde{{\mathfrak e}^{0}}$ is written as $\widetilde{{\mathfrak e}}$  and $\mathcal E_{\ell,0}^j$ is written as $\mathcal E_{\ell}^j$.

\begin{figure}[h!]
\begin{center}
\includegraphics[scale=0.45]{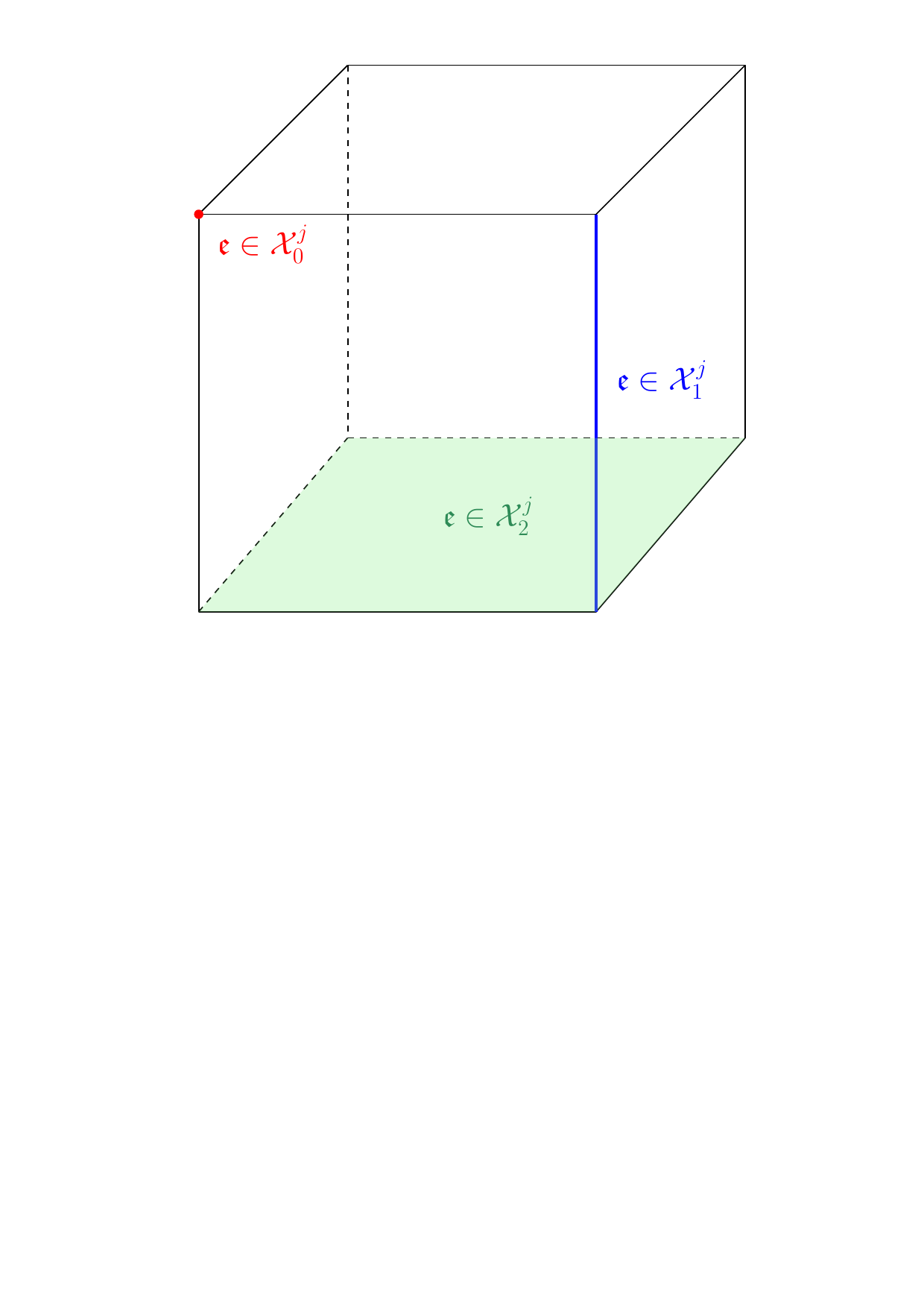}
\caption{Examples of elements of $\mathcal {X}^j_0$, $\mathcal {X}^j_1$ and $\mathcal {X}^j_2$ for a 3-d subdomain.}\label{drw:3}
\end{center}
\end{figure}

\begin{figure}[h!]
\begin{center}
\includegraphics[scale=0.5]{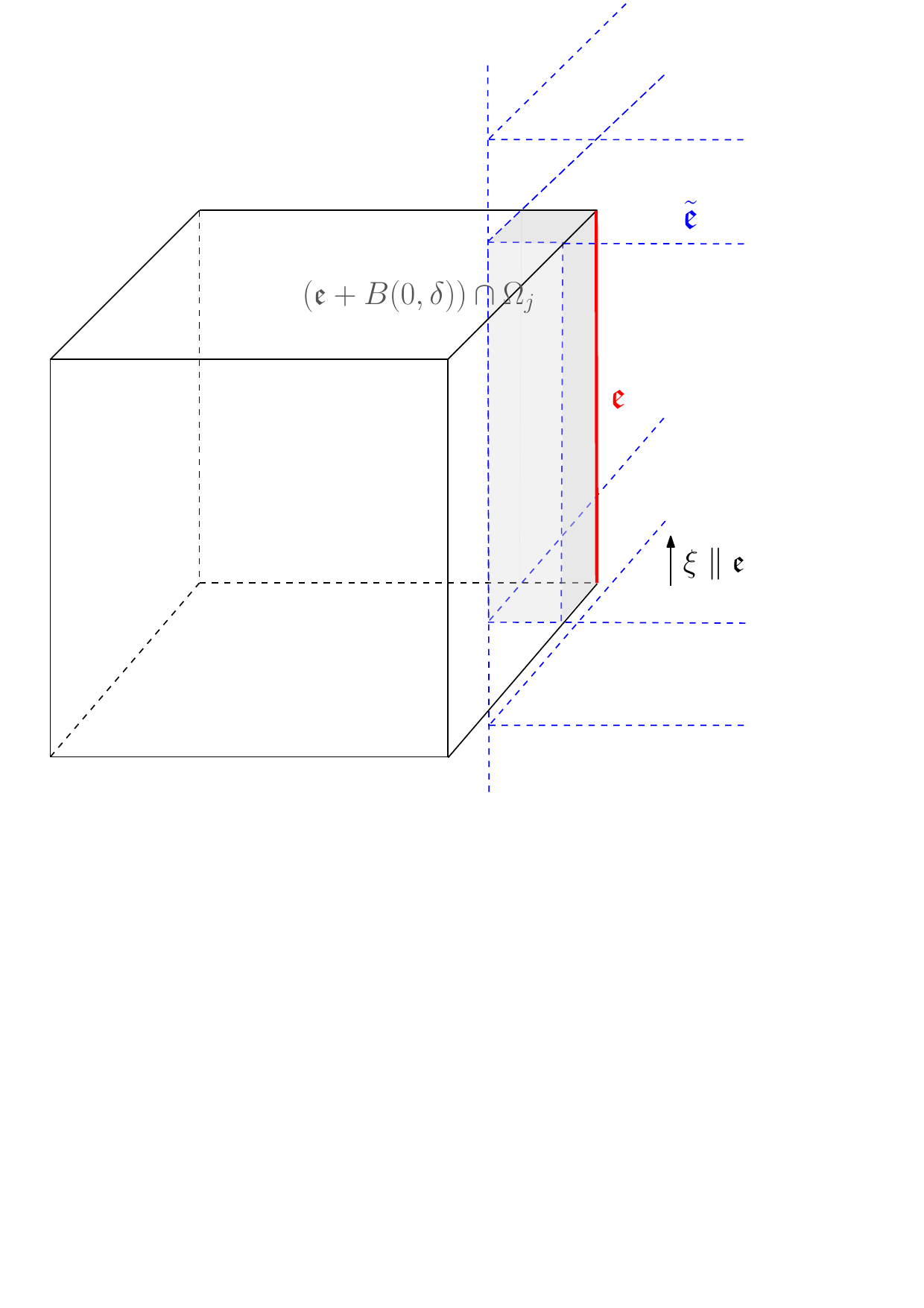}
\caption{Illustration of some elements defined in \S\ref{ss:def_geo} for a 3-d subdomain: an edge $\mathfrak{e} \in {\mathcal X}_1^j$, its associated infinitely extended region $\widetilde{\mathfrak{e}}$, the corresponding region $\big(\mathfrak e + B(0, \width(\mathfrak e) - \eta) \big) \cap \Omega_j$ in the PML of $\Omega_j$, and a vector $\xi$ such that $\xi \parallel \mathfrak e$.} \label{drw:4}
\end{center}
\end{figure}

\subsection{Extensions of PML solutions in $\Omega_j$}\label{sec:extension}

Odd-symmetric extension on parts of $\partial\Omega_j$ that intersect $\partial \Omega$.

\begin{definition}[Odd-symmetric extension] \label{def:os_ext}
Let $\mathcal V \subset \mathbb R^d$ be such that there is a unique Euclidien coordinate system $(z_1, z_2)$ so that $\mathcal V = \omega \cap \{ z_1 \geq 0\}$, where $\omega \subset \mathbb R^d$ is open. For any $u : \mathcal V \mapsto \mathbb R$, we denote $S_{\mathcal V} u :   { \mathcal V} \cup S \mathcal V \mapsto \mathbb R$ its odd-symetric extension to $ \mathcal V \cup S \mathcal V$,  $ S \mathcal V := \{ (-z_1, z_2), \; z \in \mathcal V\}$, defined by, for any $z \in \mathcal V \cup S \mathcal V$
$$
S_{\mathcal V} u (z) := 
\begin{cases}
u(z_1, z_2) &\text{ if } z\in\mathcal V, \\
-u( - z_1, z_2)&\text{ if } z \notin\mathcal V.
\end{cases}
$$
\end{definition}

Let $\epsilon_{\rm ext} > 0$ be a parameter to be fixed later. 
We define the extended domain
$$
\widetilde \Omega_j := \Omega_j \cup \bigcup_{\mathfrak e \in {\mathcal Z}_{\partial \Omega}^j} \bigg\{ x  + \sum_{1\leq i \leq d-\ell} t_i n_i(x), \quad x \in {\mathfrak e} , \; 0 \leq t_i < \epsilon_{\rm ext}\bigg\},
$$
where, for any 
$x \in {\mathfrak e}$, $(n_i(x))_{1 \leq i \leq d-\ell}$ is a set of $d-\ell$ orthogonal outward-pointing (with respect to $\Omega_j$) normal vectors to ${\mathfrak e}$;
i.e., $\widetilde \Omega_j$ is $\Omega_j$ extended normally for a distance $\epsilon_{\rm ext}$ from the edges of $\Omega_j$ that are (subset of) edges of $\Omega$.
Note that if $\partial \Omega_j \cap \partial \Omega=\emptyset$, then $\widetilde{\Omega}_j= \Omega_j$.

We now define an extension of functions from $\Omega_j$ to ${\widetilde \Omega}_j$ by symmetrising multiple times with respect to the generalised edges of $\Omega_j$ (and its partial extensions). The following definition is illustrated in Figure \ref{drw:5}.
\begin{definition}[Extension from ${\Omega}_j$ to $\widetilde{\Omega}_j$] \label{def:ext}
For $0 \leq \ell \leq d$, we define the partial extension with respect to edges of dimension $\geq \ell$:
$$
\widetilde \Omega_j^{\ell} := \Omega_j \cup \bigcup_{\substack{\mathfrak e \in {\mathcal Z}_{\partial \Omega}^j \cap {\mathcal X}_m^j \\ m \geq \ell}} \bigg\{ x  + \sum_{1\leq i \leq d-\ell} t_i n_i(x), \quad x \in {\mathfrak e} , \; 0\leq t_i < \epsilon_{\rm ext}\bigg\}.
$$
Observe in particular that $\widetilde \Omega_j^0 = \widetilde \Omega_j$ and $\widetilde \Omega_j^{d} = \Omega_j$.
For $1 \leq \ell \leq d$, define the extension $S^\ell_j$ from $\widetilde \Omega_j^{\ell}$ to $\widetilde \Omega_j^{\ell-1}$ in the following way. Let 
$$
\mathcal E_j^{\ell} := \Big(\big(\partial \widetilde \Omega_j^{\ell} \cap 
{ \widetilde \Omega^{\ell - 1}_j}\big) + B(0, \epsilon_{\rm ext}) \Big) \cap 
\Omega_j^{\ell},
\qquad
S^\ell_j := S_{\mathcal E_j^{\ell}},
$$
where $S_{\mathcal E_j^{\ell}}$ is defined piecewiese by Definition \ref{def:os_ext}. When multiples choices of symmetrisation are possible, we chose one arbitrarily. We then set
\beqs%\label{eq:Sj}
S_j := S_j^1 \circ S_2 \circ \dots \circ S_j^d.
\eeqs
\end{definition}

We now fix $\epsilon_{\rm ext} > 0$ small enough so that 
$\width - \epsilon_{\rm ext} > \width_{\rm lin}$ and $\chi_j$ does not vary normally in an $\epsilon_{\rm ext}$-neighbourhood of  $\partial\Omega$ (recall from \S\ref{sec:def} that we assume that in a neighbourhood of $\partial\Omega$, each $\chi_j$ does not vary normally). As consequence of the latter, 
\begin{equation} \label{eq:small_ext}
\tfa\,  j \in \{1,\cdots,N\}, \hspace{0.3cm} \big(S_j \chi_j v \big)
= \chi_j \big(S_j v \big) \quad\tfa v\in H_0^1(\Omega_j).
\end{equation} 

\begin{figure}[h]
\begin{center}
\includegraphics[scale=0.6]{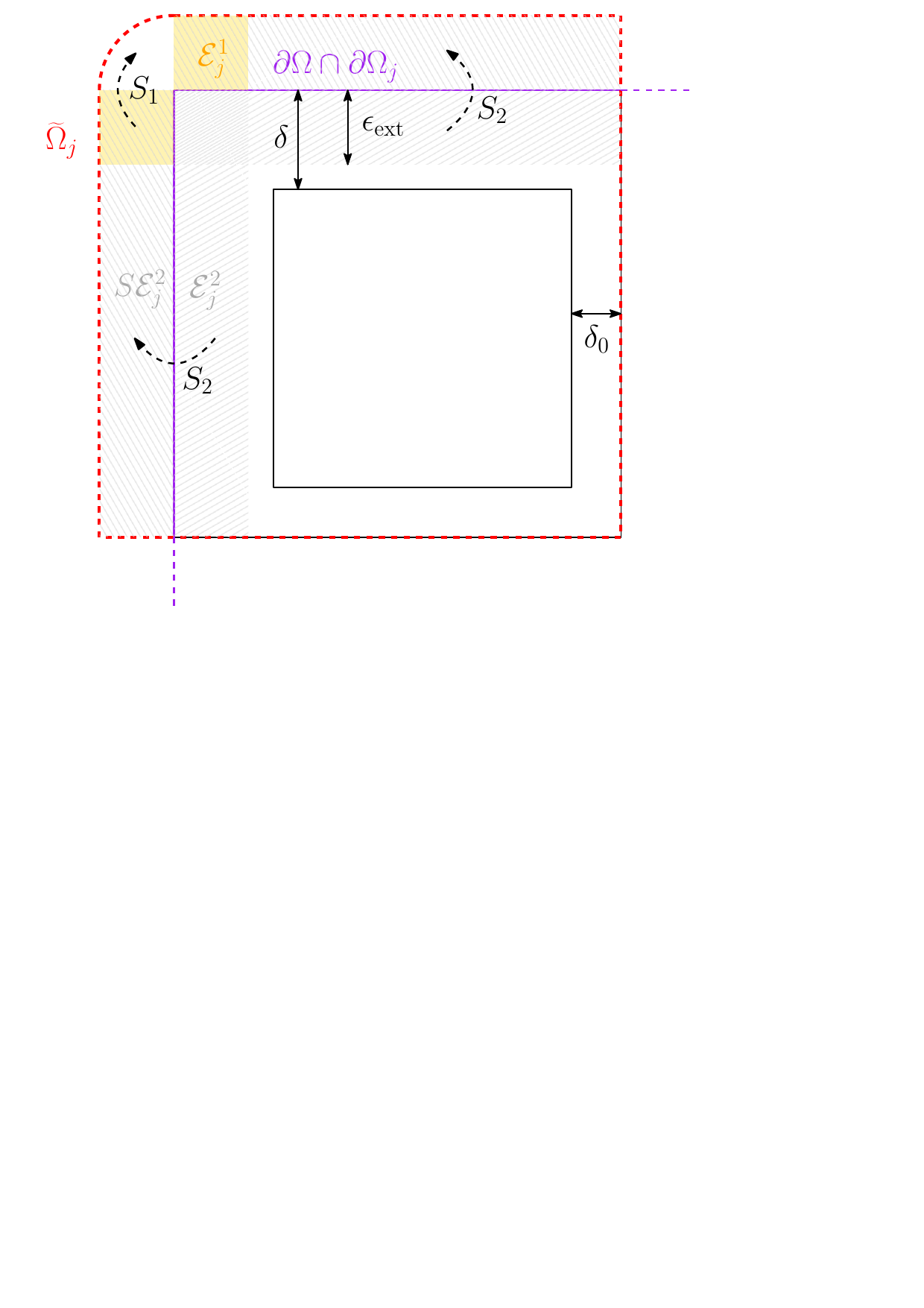}
\caption{Extension from $\Omega_j$ to $\widetilde{\Omega}_j$ for a subdomain $\Omega_j$ belonging to a corner of $\Omega$.}\label{drw:5}
\end{center}
\end{figure}

\begin{lemma}[Extension for homogeneous Dirichlet data]\label{lem:ext_h}
Suppose $u \in H^1(\Omega_j)$, $f \in H^{-1}(\Omega_j)$ and $g\in H^{1/2}(\partial\Omega_j)$ are such that 
$$
\begin{cases}
P^j_{\newtheta} u = f \text{ in } \Omega_j, \\
u = g \text{ on } \partial \Omega_j.
\end{cases}
$$
Let $\widetilde u := S_{j} u$ and $\widetilde f := S_{j} f \in H^{-1}(\widetilde\Omega_j)$.
If $g=0$ on $\partial \Omega_j \cap \partial\Omega$, then $\widetilde u\in H^1(\widetilde\Omega_j)$ and 
\begin{equation} \label{eq:ext_ver_eq}
 P^j_{\newtheta} \widetilde u = \widetilde f \text{ in }\widetilde \Omega_{j}. 
\end{equation}
\end{lemma}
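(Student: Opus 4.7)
The plan is to show that each elementary odd reflection appearing in the composition $S_j = S_j^1 \circ S_j^2 \circ \cdots \circ S_j^d$ preserves both $H^1$ regularity and the equation, so that the full extension does as well. The key structural fact that makes reflection work is that in a neighbourhood of $\partial\Omega$ all the coefficients of $P^j_\newtheta$ are symmetric about the coordinate hyperplanes containing $\partial\Omega$, as we explain next.

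First I would treat the case where $\Omega_j$ meets $\partial\Omega$ along a single $(d{-}1)$-dimensional face $\mathfrak{e}$ of dimension $d-1$, say lying in $\{x_\ell = L_\ell+\kappa\}$, so that $S_j$ reduces to a single odd reflection $R$ in the $x_\ell$ variable. The hypothesis that $g=0$ on $\partial\Omega_j\cap\partial\Omega$ means $u$ has zero trace on this hyperplane, so $\widetilde u = Ru$ lies in $H^1(\widetilde\Omega_j)$ by the standard trace/reflection argument. To see that $P^j_\newtheta \widetilde u = \widetilde f$ distributionally it suffices to check that each coefficient of $P^j_\newtheta$ is invariant under $x_\ell \mapsto 2(L_\ell+\kappa)-x_\ell$ in the reflection region. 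This rests on three ingredients: (i) $\supp(1-c)\subset\Omega_{\rm int}$, so $c\equiv 1$ in a neighbourhood of $\partial\Omega$; (ii) the linearity assumption $\fPML''\equiv 0$ on $[\width_{\rm lin},\infty)$, combined with the choice $\kappa - \epsilon_{\rm ext} > \width_{\rm lin}$, which forces $g'_{\ell,j} = g'_\ell = \fPML'$ to be \emph{constant} on the $x_\ell$-interval that contains the reflection region, so that $(1+ig'_{\ell,j}(x_\ell))^{-2}$ is constant and hence trivially even about $x_\ell = L_\ell+\kappa$; and (iii) for $m\neq\ell$, the coefficient $(1+ig'_{m,j}(x_m))^{-2}$ is independent of $x_\ell$ and so trivially symmetric. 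Then $P^j_\newtheta \circ R = R\circ P^j_\newtheta$ on test functions and the equation transfers.

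For the general case where $\Omega_j$ occupies a corner of $\Omega$ and several faces of $\partial\Omega_j$ belong to $\partial\Omega$, I would apply the single-face argument inductively through the reflections $S_j^d, S_j^{d-1},\ldots, S_j^1$ defined in Definition \ref{def:ext}. At each step the reflection is across a (piece of) coordinate hyperplane that, by construction of $\widetilde\Omega_j^\ell$ and the inequality $\kappa - \epsilon_{\rm ext} > \width_{\rm lin}$, lies in the linear region of every active PML profile; so ingredients (i)--(iii) above still apply and the step preserves the equation. The required vanishing of the trace on each successive reflection hyperplane is maintained automatically: at the first step it is the hypothesis $u=0$ on $\partial\Omega_j\cap\partial\Omega$, and at subsequent steps the already-reflected function vanishes on the lower-dimensional generalised edges of $\widetilde\Omega_j^\ell$ contained in $\partial\Omega$ because those sit inside the closure of the previous reflection hyperplanes on which the function was odd.

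The main obstacle will be bookkeeping rather than analysis: one has to verify carefully that when several reflections are composed, the reflected coefficients and the reflected distribution $\widetilde f$ are consistently defined across the interfaces between the various reflected copies (which can meet in a nontrivial way near a corner of $\Omega$). This reduces to the observation that, in the $\epsilon_{\rm ext}$-neighbourhood of $\partial\Omega$ under consideration, the coefficients of $P^j_\newtheta$ depend in each variable $x_m$ only through $g'_{m,j}(x_m)$, which is either constant (if $x_m$ is in the linear region) or untouched (if we are not reflecting in the $x_m$-direction at that step); so the operator is, throughout the reflection region, literally a \emph{constant-coefficient} Helmholtz-type operator, and any product of odd reflections in coordinate directions commutes with it.
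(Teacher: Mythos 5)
Your proposal is correct and takes essentially the same approach as the paper's own proof: both hinge on the observation that the assumption $\width - \epsilon_{\rm ext} > \width_{\rm lin}$, together with $\supp(1-c)\subset\Omega_{\rm int}$, forces every coefficient of $P^j_\newtheta$ to be constant on the reflection slab around $\partial\Omega_j\cap\partial\Omega$, so that the odd (zero-trace) reflection commutes with the operator distributionally and the extended equation holds. The only item the paper addresses explicitly that you leave implicit is the check that $\widetilde f = S_j f$ actually lands in $H^{-1}(\widetilde\Omega_j)$ (the paper does a short 1-d duality computation $\int_{-\infty}^0(-f(-x))\phi(x)\,dx + \int_0^\infty f(x)\phi(x)\,dx = \int_0^\infty f(x)(\phi(x)-\phi(-x))\,dx$, giving the bound $\|S_jf\|_{H^{-1}} \lesssim \|f\|_{(H^1_0)^*}$); you would want to include a sentence to that effect, but your structural argument is otherwise a faithful (and somewhat more detailed, especially on the corner bookkeeping) version of the intended proof.
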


Although we have so far been considering $P^j_\newtheta$ as an operator on $\Omega_j$, it is defined on all of $\Rea^d$ by \eqref{eq:DeltaPMLj} and \eqref{eq:PPj}, and so its application in \eqref{eq:ext_ver_eq} on $\widetilde \Omega_j$ makes sense.

\bpf[Proof of Lemma \ref{lem:ext_h}]
To prove that $\widetilde f \in H^{-1}(\widetilde\Omega_j)$, 
the key calculation in 1-d is that 
\beqs
\int_{-\infty}^0 \big(-f(-x)\big)\phi(x) \, dx +\int_0^\infty f(x)\phi(x) \, dx 
=\int_0^\infty f(x)\Big(\phi(x)-\phi(-x)\Big) \, dx.
\eeqs
Since $H^{-1}(\Omega_j)= (H^1_0(\Omega_j))^*$, $\|S_j f\|_{H^{-1}(\Rea)}\leq \| f\|_{(H_0^1(\Rea^+))^*}$. The result for $d\geq 2$ is then proved in an analogous way.

Since $\width - \epsilon_{\rm ext} > \width_{\rm lin}$, the extension only occurs when the PML scaling functions $g_{\ell,j}$ are either linear or zero.
Therefore, by the definitions in \S\ref{sec:scaled}, the coefficients of $P^j_\newtheta$ are constant near $\partial\Omega_j$.
The result \eqref{eq:ext_ver_eq} then follows from the definition of the weak derivative and integrating by parts. Indeed, 
in 1-d the basic calculation is that, for $\phi \in C^\infty_c(\Rea)$, if $u(0)=0$ and $u''(x) =f(x)$ for $x>0$, then 
$\int_{-\infty}^\infty \widetilde{u}(x) \phi''(x) \, dx = \int_{-\infty}^\infty \widetilde{f}(x) \phi(x) \, dx$; i.e. $\widetilde{u}''(x) = \widetilde{f}(x)$ and so 
$P(S_j u) = S_j(Pu)$ with $Pu = u''$. For $d \geq 2$, the calculation is more complicated, but the basic idea is the same, crucially using that the coefficients of $P^j_\newtheta$ are constant near $\partial\Omega_j$.
\epf

\subsection{Semiclassical ellipticity statements for $P^j_{\newtheta}$}

We begin with a computation quantifying the ellipticity of $P^j_{\newtheta}$ at the symbolic level.

\begin{lemma} \label{lem:comp_symbol} 
Let $p^j_{\newtheta}$ denote the semiclassical principal symbol associated with $P^j_{\newtheta}$.
Given a compact set $K\subset \Rea^d$, there exists $C>0$ such that for all $x\in K$ and $\xi\in \Rea^d$ the following is true.
\begin{enumerate}
\item\label{it:symb_ell_dir} \emph{(Bounding the symbol below by the PML scaling function)}
$$
|p^j_{\newtheta}(x, \xi)| \geq C \sum_{m=1}^d \xi_m^2 g_{x_m,j}'(x_m).
$$
\item\label{it:symb_ell_infnt} \emph{(Ellipticity at infinity in the $\xi$ variable)}
$$
\text{ if } \quad|\xi| \geq C\quad\text{ then }\quad \langle \xi\rangle^{-2}|p^j_{\newtheta}(x, y, \xi_1, \xi_2)| \geq C^{-1}.
$$
\end{enumerate}
Furthermore, the same properties hold for $P^j_{\newtheta}$ replaced with $P_{\newtheta}$.
\end{lemma}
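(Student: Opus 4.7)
The plan is to compute the principal symbol $p^j_\newtheta$ explicitly, split it into real and imaginary parts, and use each to control $|p^j_\newtheta|$ in complementary regions of phase space. From the definitions \eqref{eq:DeltaPMLj}--\eqref{eq:PPj}, and because the semiclassical Weyl symbol of $\hbar D_{x_\ell}$ is $\xi_\ell$,
\begin{equation*}
p^j_\newtheta(x,\xi) = \sum_{\ell=1}^d \frac{\xi_\ell^2}{(1+ig'_{\ell,j}(x_\ell))^2} - c(x)^{-2}.
\end{equation*}
Writing $g_\ell := g'_{\ell,j}(x_\ell)\geq 0$ and rationalising each summand,
\begin{equation*}
\operatorname{Re} p^j_\newtheta = \sum_\ell \xi_\ell^2 \frac{1-g_\ell^2}{(1+g_\ell^2)^2} - c^{-2},
\qquad
\operatorname{Im} p^j_\newtheta = -2\sum_\ell \xi_\ell^2 \frac{g_\ell}{(1+g_\ell^2)^2}.
\end{equation*}

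For Part \ref{it:symb_ell_dir}, I would use $|p^j_\newtheta|\geq |\operatorname{Im} p^j_\newtheta|$; since $g_\ell$ is smooth and $K$ is compact, $g_\ell$ is bounded above on $K$ by some $G>0$, so $(1+g_\ell^2)^{-2}\geq (1+G^2)^{-2}$, which yields the claimed inequality with $C = 2(1+G^2)^{-2}$.

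For Part \ref{it:symb_ell_infnt}, since $c^{-2}$ is bounded on $K$ and $\langle\xi\rangle^2\sim |\xi|^2$ as $|\xi|\to \infty$, it suffices to prove a uniform lower bound $|\sum_\ell \xi_\ell^2 z_\ell|\geq C_0|\xi|^2$ for $x\in K$, where $z_\ell := (1+ig_\ell)^{-2}$. My plan is a short case split at a small fixed $\epsilon>0$: let $I_+ := \{\ell:g_\ell>\epsilon\}$ and $I_0 := \{\ell:g_\ell\leq \epsilon\}$. Arguing as in Part \ref{it:symb_ell_dir} over the indices in $I_+$ gives $|\sum_\ell \xi_\ell^2 z_\ell|\geq C_1\sum_{\ell\in I_+}\xi_\ell^2$ for some $C_1=C_1(\epsilon,G)>0$, while an elementary calculus exercise on $g\mapsto (1-g^2)/(1+g^2)^2$ shows that its minimum on $[0,\infty)$ is $-1/8$ (attained at $g=\sqrt 3$) and that it exceeds $3/4$ once $\epsilon$ is small enough, so
\begin{equation*}
\operatorname{Re}\sum_\ell \xi_\ell^2 z_\ell \;\geq\; \tfrac{3}{4}\sum_{\ell\in I_0}\xi_\ell^2 \;-\; \tfrac{1}{8}\sum_{\ell\in I_+}\xi_\ell^2.
\end{equation*}
Whichever of $\sum_{I_0}\xi_\ell^2$ and $\sum_{I_+}\xi_\ell^2$ is at least $|\xi|^2/2$ then delivers the required bound, via either the real or the imaginary part respectively; the bounded term $c^{-2}$ is absorbed by taking $|\xi|$ sufficiently large.

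The main obstacle is Part \ref{it:symb_ell_infnt}: when some $g_\ell>1$ the real part can change sign, so neither it nor the imaginary part alone dominates $|\xi|^2$, and the interplay described above is essential. Finally, $P_\newtheta$ has exactly the same symbolic form with $g'_{\ell,j}$ replaced by $g'_\ell$, so the identical argument yields the concluding assertion.
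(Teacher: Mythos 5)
Your proof is correct, and Part~\ref{it:symb_ell_dir} is argued exactly as in the paper (lower-bounding $|p^j_{\newtheta}|$ by $|\operatorname{Im}p^j_{\newtheta}|$ and using that $g'_{\ell,j}$ is bounded on $K$). For Part~\ref{it:symb_ell_infnt} you take a genuinely different route. The paper's proof uses a rotation trick: since each value $1-s^2-2is$, $s\in[0,S]$, lies in the closed lower half-plane and equals $1$ only at $s=0$, multiplying by $e^{i(\pi/2-\epsilon)}$ for $\epsilon>0$ small (depending on $S := \sup_K g'_{\ell,j}$) pushes all these values into the open right half-plane uniformly, so $\Re\bigl(e^{i(\pi/2-\epsilon)}\sum_\ell \xi_\ell^2 z_\ell\bigr) \geq c_1 \sum_\ell \xi_\ell^2 (1+g'_{\ell,j}{}^2)^{-2}$, which is then bounded below by $c'_1|\xi|^2$ on $K$. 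Your proof instead splits the coordinate indices by the size of $g'_{\ell,j}$: on $I_+$ the imaginary part dominates, and on $I_0$ the real part is close to $1$; the global bound $(1-g^2)/(1+g^2)^2 \geq -1/8$ controls the potentially negative contribution from $I_+$ to the real part, and whichever of $\sum_{I_0}\xi_\ell^2$, $\sum_{I_+}\xi_\ell^2$ carries at least half the mass of $|\xi|^2$ produces the required lower bound. Both arguments work and require boundedness of $g'_{\ell,j}$ on $K$; the paper's rotation is shorter and avoids the case split, while yours is perhaps more elementary and makes transparent where the negative real part could arise and why it cannot win. One small remark for comparison: your computation gives $p^j_\newtheta = \sum_\ell \xi_\ell^2(1+ig'_{\ell,j})^{-2} - c^{-2}$, which has the correct sign on the $c^{-2}$ term (consistent with $p = |\xi|^2 - c^{-2}$ stated earlier in the paper); the sign in the paper's displayed formula for $p^j_\newtheta$ inside the proof of this lemma appears to be a typo, though it has no effect on either argument since $c^{-2}$ is bounded on $K$.
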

\begin{proof}
Observe that
\beqs
%\label{eq:symb_q}
p^j_{\newtheta}(x, \xi) =
\sum_{\ell=1}^d \frac{ \xi_\ell^2 \big(1- i g_{\ell,j}'(x_\ell)\big)^2}{ \big(1+ g'_{\ell,j}(x_\ell)^2\big)^2} + \coeffc^{-2}(x)\nonumber
=\sum_{\ell=1}^d \xi_\ell^2 \frac{\Big( 1- g_{\ell,j}'(x_\ell)^2 - 2 i g_{\ell,j}'(x_\ell)\Big)}{\big( 1+ g_{\ell,j}'(x_\ell)^2\big)^2} + \coeffc^{-2}(x),
\eeqs
so that
\begin{align}
\label{eq:imag_symb}
\operatorname{Im} p^j_{\newtheta}(x,\xi)
=-2\sum_{\ell=1}^d \xi_\ell^2 \frac{g_{\ell,j}'(x_\ell)}{\big( 1+ g_{\ell,j}'(x_\ell)^2\big)^2} \,\,\tand\,\,
\operatorname{Re} p^j_{\newtheta}(x,\xi) = \sum_{\ell=1}^d \xi_\ell^2  \frac{\big(1- g_{\ell,j}'(x_\ell)^2\big)}{ \big(1+ g'_{\ell,j}(x_\ell)^2\big)^2} +\coeffc^{-2}(x). 
\end{align}
On the one hand, the first equation in \eqref{eq:imag_symb} implies that, for $x \in K$
\beqs%\label{eq:lbd_imag_symb}
\big| \operatorname{Im} p^j_{\newtheta} (x,\xi)\big| \geq C
\sum_{\ell=1}^d \xi_\ell^2 g_{\ell,j}'(x_\ell),
\eeqs
from which Point \ref{it:symb_ell_dir} follows. 

Since $\Im(1-s^2-2is)\leq 0$ for $s\in [0,\infty]$ with equality only when $s=0$,  
given $S>0$ there exists $c_1>0$ and $\epsilon>0$ such that
\beqs
\Re\Big( e^{i (\pi/2-\epsilon)} \big( 1- s^2 -2 i s\big) \Big) \geq c_1 \quad\tfa s\in [0,S].
\eeqs
Therefore, with $S:= \max_{x\in K} \max_j g'_{x_\ell, j}(x_\ell)$, 
\beqs
\Re \Big( e^{i (\pi/2-\epsilon)} \Big(
p^j_{\newtheta}(x, \xi) - \coeffc^{-2}(x)\Big) \Big)
\geq c_1 
\sum_{\ell=1}^d \xi_\ell^2 \frac{1}{\big( 1+ g_{\ell,j}'(x_\ell)^2\big)^2}
\eeqs
from which Point \ref{it:symb_ell_infnt} follows. 
\end{proof}

Recall from \S\ref{sec:pseudo} that $T^*\Rea^d$ can be informally understood as $\{ (x,\xi) : x\in \Rea^d, \xi\in \Rea^d\}$. Similarly, 
$T^* D$ can be informally understood as $\{ (x,\xi) : x\in D, \xi\in \Rea^d\}$.

\begin{lemma}[Directions of ellipticity]\label{lem:ell_unidri} 
$$
\big\{ p^j_{\newtheta} = 0 \big\} \cap T^* \widetilde{\mathcal L}_j \subset \Big\{ (x, \xi):\,\, \xi \parallel \mathfrak e  \,\, \tfa \mathfrak e \in \mathcal Z^j\text{ s.t. } x \in \widetilde{\mathfrak e}\Big\}
$$
(i.e., at a point $x$ in the extended PML layer of $\Omega_j$, the symbol $p^j_{\newtheta}$ vanishes only when $\xi$ is parallel to every extended generalised edge of $\Omega_j$ that $x$ belongs to a $\width$-neighbourhood of).
\end{lemma}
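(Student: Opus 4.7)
The plan is to deduce directional ellipticity directly from the pointwise symbolic lower bound already established in Lemma \ref{lem:comp_symbol}(\ref{it:symb_ell_dir}), which says
\[
|p^j_{\newtheta}(x, \xi)| \;\geq\; C \sum_{m=1}^d \xi_m^2\, g_{m,j}'(x_m).
\]
Fix $(x, \xi) \in T^* \widetilde{\mathcal L}_j$ with $p^j_{\newtheta}(x, \xi) = 0$, and fix any $\mathfrak e \in \mathcal Z^j$ with $x \in \widetilde{\mathfrak e}$. Since every summand on the right is nonnegative, the vanishing of $p^j_{\newtheta}(x, \xi)$ immediately forces
\[
\xi_m^2\, g_{m,j}'(x_m) \;=\; 0 \qquad \tfa\, m \in \{1, \ldots, d\},
\]
so either $\xi_m = 0$ or $g_{m,j}'(x_m) = 0$ in each coordinate direction.

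The key geometric step is then to show that every \emph{normal} coordinate direction $m$ to $\mathfrak e$ is one for which $g_{m,j}'(x_m) > 0$, which, together with the dichotomy above, yields $\xi_m = 0$. Because $\mathfrak e$ is an axis-aligned generalised edge of the hyperrectangle $\Omega_j$, each normal direction $m$ corresponds to $\mathfrak e$ lying in a hyperplane $\{x_m = a_m^j - \width_m^j\}$ or $\{x_m = b_m^j + \width_m^j\}$. Writing $x = x_0 + z + \sum_i t_i n_i(x_0)$ as in \eqref{eq:mathfrake} and reading off the $m$-component, one obtains $x_m = (b_m^j + \width_m^j) + t_m$ (or the symmetric lower-boundary version), where $t_m > -\width(\mathfrak e) \geq -\width_m^j$; hence $x_m > b_m^j$ (resp.\ $x_m < a_m^j$). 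From the definition of $g_{\ell,j}$ in \S\ref{sec:scaled} together with $\fPML'(s)>0$ for $s>0$, this is precisely the region where $g_{m,j}'(x_m) > 0$, as required.

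Collecting the conclusion $\xi_m = 0$ over all normal directions $m$ to $\mathfrak e$ leaves $\xi$ supported only along tangent directions of $\mathfrak e$. Since $\hat{\mathfrak e}$ is by definition the span of those tangent coordinate vectors, this gives $\xi \in \hat{\mathfrak e}$, i.e.\ $\xi \parallel \mathfrak e$, which is the desired conclusion.

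The only nontrivial bookkeeping lies in the comparison $\width(\mathfrak e) \leq \width_m^j$ used above for each normal direction $m$ to $\mathfrak e$: when $\mathfrak e$ is a codimension-one face this is immediate since both quantities equal $\width$ for faces on $\partial\Omega$ and $\width_0$ for interior faces; for lower-dimensional generalised edges one checks on a case-by-case basis which coordinate hyperplanes contain $\mathfrak e$ and matches the resulting $\width_m^j$ against the value of $\width(\mathfrak e)$. I expect this combinatorial check (rather than anything symbolic or analytic) to be the main obstacle to writing the proof cleanly; once it is in place, the statement follows at once from Lemma \ref{lem:comp_symbol}(\ref{it:symb_ell_dir}).
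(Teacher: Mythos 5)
Your proof follows essentially the same route as the paper's: both combine the pointwise lower bound of Lemma~\ref{lem:comp_symbol}(\ref{it:symb_ell_dir}) with the geometric observation that $x \in \widetilde{\mathfrak e}$ together with $m$ normal to $\mathfrak e$ forces $g'_{m,j}(x_m) > 0$ (hence $\xi_m = 0$); the only difference is that the paper phrases this contrapositively (pick $\mathfrak e$ with $\xi \not\parallel \mathfrak e$, produce a strictly positive term in the lower bound) while you argue directly from the vanishing of each nonnegative summand. You have correctly located the crux of the matter: the comparison $\width(\mathfrak e) \le \width^j_m$ that you flag as the essential bookkeeping is precisely the geometric fact the paper's proof asserts in a single sentence without spelling out the case analysis. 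Your caution is warranted: the comparison is an equality for codimension-one faces, but for a lower-dimensional edge $\mathfrak e \in \mathcal Z^j_{\partial\Omega}$ that has both a boundary normal direction (where $\width^j_m = \width$) and an interior normal direction (where $\width^j_m = \width_0$), one has $\width(\mathfrak e) = \width$ and the comparison reduces to $\width \le \width_0$ in the interior direction; so the step does hinge on the relative sizes of the two PML widths rather than being a purely combinatorial check, a subtlety the paper's one-line assertion also leaves implicit.
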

\begin{proof}
Let $(x,\xi) \in T^* \widetilde{\mathcal L}_j$ be such that 
$$
(x,\xi) \notin \Big\{ (x', \xi') : \, \,\xi' \parallel \mathfrak e \,\, \tfa\,\, \mathfrak e \in \mathcal Z^j\text{ s.t. } x' \in \widetilde{\mathfrak e}  \Big\}.
$$
Then, there exists an edge ${\mathfrak e} \in \mathcal Z^j$ so that $x \in \widetilde {\mathfrak e}$ and $\xi \not\parallel {\mathfrak e}$. Since $x \in \widetilde {\mathfrak e}$, 
if $m$ is such that the Cartesian-coordinates vector $e_m$ is normal to ${\mathfrak e}$ then 
$g'_{x_m, j}(x_m) > 0$ (i.e., the PML is active in this coordinate direction).
On the other hand, since $\xi \not\parallel {\mathfrak e}$, we have  $\xi_m \neq 0$ for at least one such $m$. Therefore, by Lemma \ref{lem:comp_symbol} (\ref{it:symb_ell_dir}), $|p^j_{\newtheta}(x, \xi)| \geq C \xi_m ^2 g'_{x_m, j}(x_m) >0$.
\end{proof}

\section{A propagation result for the PML solutions and its consequences} \label{s:sec_prop}

The main purpose of this section is to prove the following propagation result (stated informally in \S\ref{sec:powers}, with the main ideas behind the proof described in \S\ref{sec:sketch_prop}). 

\begin{lemma} \label{lem:key_prop} 

Let $1 \leq j \leq N$, $f \in H^{-1}(\Omega_j)$, $g \in H^{1/2}(\partial \Omega_j)$, $\eta >0$, $\widetilde \Omega_j$ the extension defined by Definition \ref{def:ext}, and $\psi\in C^\infty_c(\widetilde \Omega_j)$. We assume that 
\begin{enumerate}
\item \label{it:kp1}  any trajectory from $T^* (\operatorname{supp} \psi\cap \Omega_j)\cap \big\{ p_\newtheta^j =0\big\}$ goes to infinity under the backward flow.
\item \label{it:kp2} $g=0$ on $\partial\Omega_j \cap \partial\Omega$.
\end{enumerate}
Let $w \in H^1( \Omega_j)$ be the solution to
$$
\begin{cases}
P_{\newtheta}^j w = f \hspace{0.3cm} \text{in }{\Omega_j}, \\
w = g \hspace{0.3cm} \text{on }\partial\Omega_j,
\end{cases}
$$
and assume that $f$ and $w$ are tempered (in the sense of Definition \ref{def:tempered}). Then, with $\widetilde w := S_{j}w$, $\widetilde f := S_{j}f$, 
$$
\operatorname{WF}_\hbar (\psi\widetilde w) \subset \bigcup_{\chi \in C^\infty_c(\widetilde \Omega_j)}
\Big\{ \rho \in T^* \big(\operatorname{supp} \psi\big)\,: \,\exists t \leq 0 \,\,\text{s.t.}\,\,\gamma_{[t, 0]}(\rho) \subset T^* \widetilde \Omega_j \text{ and }  \Phi_t(\rho) \in \operatorname{WF}_\hbar (\chi \widetilde f) \Big\}.
$$
\end{lemma}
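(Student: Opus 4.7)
The plan is to argue by contraposition. Given $\rho \in T^*(\supp\psi)$ not in the claimed right-hand side, I will show $\rho \notin \operatorname{WF}_\hbar(\psi\widetilde w)$, which suffices since multiplication by $\psi$ cannot enlarge the wavefront set. First, hypothesis (\ref{it:kp2}) and Lemma \ref{lem:ext_h} give $\widetilde w = S_j w \in H^1(\widetilde\Omega_j)$ satisfying $P^j_\newtheta \widetilde w = \widetilde f$ on $\widetilde\Omega_j$. Because $\fPML$ is linear near $\partial\Omega$, the PDE is preserved across $\partial\Omega_j \cap \partial\Omega$; together with $\psi$ being compactly supported in $\widetilde\Omega_j$, this lets us treat the problem as one of propagation of singularities on a manifold without boundary, bypassing the boundary issue flagged in Remark \ref{rem:PoS}.

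\textbf{Elliptic case.} If $p^j_\newtheta(\rho) \neq 0$, then $\rho \notin \operatorname{WF}_\hbar(\widetilde f)$: otherwise, choosing $\chi \in C^\infty_c(\widetilde\Omega_j)$ equal to $1$ near the base point of $\rho$ would give $\rho \in \operatorname{WF}_\hbar(\chi\widetilde f)$, and then $t=0$ (for which $\gamma_{[0,0]}(\rho) = \{\rho\} \subset T^*\widetilde\Omega_j$ trivially) would place $\rho$ in the right-hand side. Semiclassical elliptic regularity (Lemma \ref{lem:ell_wf}) applied to $P^j_\newtheta \widetilde w = \widetilde f$ then yields $\rho \notin \operatorname{WF}_\hbar(\widetilde w)$.

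\textbf{Characteristic case.} Suppose $p^j_\newtheta(\rho) = 0$. The strategy is to run the backward $P^j_\newtheta$-flow from $\rho$ until reaching a point at which the absence of wavefront can be established, and then propagate forward. Three ingredients combine. Hypothesis (\ref{it:kp1}) ensures the backward $P$-trajectory from $\rho$ leaves every compact set. Lemma \ref{lem:traj_energy_surf} ensures the $P^j_\newtheta$- and $P$-flows coincide until the trajectory first meets a point elliptic for $P^j_\newtheta$. Lemma \ref{lem:go_to_elliptic} ensures a $P^j_\newtheta$-trajectory cannot exit $\Omega_j$ without first passing through such an elliptic point. Together with the boundedness of $\widetilde\Omega_j$, this produces a largest $t_0 \leq 0$ such that either $p^j_\newtheta(\Phi_{t_0}(\rho)) \neq 0$ or $\Phi_{t_0}(\rho) \notin T^* \widetilde\Omega_j$; on $(t_0, 0]$ the two flows agree and the trajectory remains in $T^* \widetilde\Omega_j$. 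Set $\rho_0 := \Phi_{t_0}(\rho)$.

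\textbf{Propagation and conclusion.} Since $\rho$ is not in the right-hand side, by taking $\chi$ equal to $1$ on a neighbourhood of the compact set $\gamma_{[t_0, 0]}(\rho) \subset T^*\widetilde\Omega_j$, we see that this trajectory avoids $\operatorname{WF}_\hbar(\widetilde f)$. At $\rho_0$, either $\rho_0$ is elliptic so the previous case gives $\rho_0 \notin \operatorname{WF}_\hbar(\widetilde w)$, or $\rho_0 \notin T^*\widetilde\Omega_j$, in which case the extension makes $\widetilde w$ trivially regular at $\rho_0$ (its data $\widetilde f$ are supported in $\widetilde\Omega_j$). The sign $\Im p^j_\newtheta \leq 0$ computed in Lemma \ref{lem:comp_symbol} (see \eqref{eq:imag_symb}) legitimises applying the propagation-of-singularities result Lemma \ref{lem:FPR_app} along the $P^j_\newtheta$-trajectory forward from $\rho_0$ to $\rho$, yielding $\rho \notin \operatorname{WF}_\hbar(\widetilde w)$. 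I expect the main obstacle to be the simultaneous bookkeeping on $[t_0, 0]$ that the two flows agree, that the trajectory stays in $T^*\widetilde\Omega_j$ (so the extended equation and the boundary-free propagation result are both available), and that $\operatorname{WF}_\hbar(\widetilde f)$ is avoided throughout; this is exactly where the directional ellipticity of $P^j_\newtheta$ established in Lemma \ref{lem:ell_unidri} does its work.
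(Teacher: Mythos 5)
Your proposal captures the essential strategy of the paper's proof: extend via Lemma \ref{lem:ext_h} so that $P^j_\newtheta\widetilde w = \widetilde f$ on $\widetilde\Omega_j$; use the escape-to-ellipticity Lemma \ref{lem:go_to_elliptic}; establish regularity at the elliptic point via Lemma \ref{lem:ell_wf}; and propagate forward. However, there is a genuine gap in the bookkeeping of your characteristic case, and it is precisely the gap that the paper's Lemma \ref{lem:FPR} was designed to close.

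The problem is your definition of $t_0$ as a supremum. Set
$A_1 := \{t \leq 0 : p^j_\newtheta(\Phi_t(\rho)) \neq 0\}$ and $A_2 := \{t \leq 0 : \Phi_t(\rho) \notin T^*\widetilde\Omega_j\}$,
so $t_0 = \sup(A_1 \cup A_2)$. Since $A_1$ is open, if $t_0 \in A_1$ then a neighbourhood of $t_0$ (including times $> t_0$) lies in $A_1$, contradicting the definition of $t_0$; so $p^j_\newtheta(\Phi_{t_0}(\rho)) = 0$. On the other hand, Lemma \ref{lem:go_to_elliptic} supplies a $\tau \in A_1$ with $\gamma_{[\tau,0]}(\rho) \subset T^*\widetilde\Omega_j$; since $\tau \leq t_0 \leq 0$ this forces $\Phi_{t_0}(\rho) \in T^*\widetilde\Omega_j$, so $t_0 \notin A_2$ either. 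In other words, neither of your two alternatives at $\rho_0 := \Phi_{t_0}(\rho)$ actually occurs: the supremum is not attained, $\rho_0$ is characteristic, and it sits inside $T^*\widetilde\Omega_j$. Your case analysis therefore covers no case at all. (The second alternative, $\rho_0 \notin T^*\widetilde\Omega_j$, is in addition nonsensical as handled, since $\widetilde w$ is not defined there; but since it never arises this is a lesser concern.)

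The correct repair is exactly what the paper does. Apply Lemma \ref{lem:go_to_elliptic} directly to produce $\tau$ with $\gamma_{[\tau,0]}(\rho) \subset T^*\widetilde\Omega_j$ and $\Phi_\tau(\rho)$ elliptic, verify that $\gamma_{[\tau,0]}(\rho)$ avoids $\operatorname{WF}_\hbar(\chi\widetilde f)$ for all $\chi$, and then invoke Lemma \ref{lem:FPR} (forward propagation of regularity from an elliptic point), not Lemma \ref{lem:FPR_app}. Lemma \ref{lem:FPR} is not just a convenience: its proof contains the delicate step you are trying to improvise. It defines $\bar t$ as the first (backward) time the $P$-trajectory leaves the characteristic set, notes that on $[\bar t, 0]$ the $\Phi$- and $\Phi^{\newtheta,j}$-flows agree (Lemma \ref{lem:traj_energy_surf}), and then, using that $\operatorname{WF}^{s-1,M}_\hbar(\hbar^{-1}P^j_\newtheta v)$ is closed and hence at positive distance from the compact trajectory segment, backs off to a nearby time $t_1 < \bar t$ where the $\Phi^{\newtheta,j}$-trajectory is elliptic and still avoids the data. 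Only then does it apply Lemma \ref{lem:FPR_app}. Your proposal skips this back-off, which is what makes the sup-based $t_0$ unworkable: the two flows need not agree past $\bar t$, and the first elliptic point along the $\Phi^{\newtheta,j}$-flow is what you actually need, not the first elliptic point along $\Phi$.
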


The interpretation of Lemma \ref{lem:key_prop} is that, in the ``measurement location" of $\supp \psi$, the wavefront set of the (extended) solution consists at most of points $\rho =(x,\xi)$ coming from the wavefront set of the (extended) source $f$ under the forward flow; see Figure \ref{drw:key_prop}. 
The assumption in Point \ref{it:kp1} is a non-trapping assumption. 

Lemma \ref{lem:key_prop} is proved in \S\ref{ss:proof_key} below, with \S\ref{sec:forward} and \S\ref{sec:escape} containing intermediate results.
\S\ref{ss:res} and \S\ref{ss:outgoing_approx} contain other consequences of the propagation results of this section, namely 
Lemma \ref{lem:res_est} (a priori bound on the PML solution operator) and Theorem \ref{thm:outgoing_approx} (error incurred by Cartesian PML approximation of outgoing Helmholtz solutions), respectively.

\subsection{Forward propagation of regularity}\label{sec:forward}

Recall that $\Phi_t$ is the Hamilton flow for $P$ defined by \eqref{eq:Hamilton}.
Let $\Phi^\theta_t$ denote the Hamilton flow for $P_{\newtheta}$, that is, defined by \eqref{eq:Hamilton} with $p$ replaced by $\operatorname{Re}p_{\newtheta}$. For any $1\leq j \leq N$, let $\Phi_t^{\theta, j}$ denote the Hamilton flow for  $P^j_{\newtheta}$, that is, defined by \eqref{eq:Hamilton} with $p$ replaced by $\operatorname{Re} p^j_{\newtheta}$.
Finally, given an interval $I$, let 
\beq\label{eq:gamma_I}
\gamma_I(x,\xi):= \Big\{ \Phi_t(x,\xi), \; t \in I\Big\}.
\eeq 

\begin{lemma} \label{lem:traj_energy_surf}
On $\{ p_{\newtheta}^j = 0\}$, $\Phi_t^{\theta,j} = \Phi_t$ for any $t\in \mathbb R$; 
and on $\{ p_{\newtheta} = 0\}$, $\Phi^{\theta}_t = \Phi_t $ for any $t\in \mathbb R$.
\end{lemma}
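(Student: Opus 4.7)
The two assertions are parallel, so it suffices to prove the first; the second follows by the same argument with $g_{\ell,j}$ replaced by $g_\ell$ throughout. The plan is to show that the Hamilton vector fields of $p$ and $\Re p_\newtheta^j$ coincide on the set $\{p_\newtheta^j = 0\}$, that this set is invariant under each of the two flows, and then to invoke uniqueness of ODE trajectories.

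First I would extract the pointwise structure of $\{p_\newtheta^j = 0\}$. From \eqref{eq:imag_symb}, $\Im p_\newtheta^j$ is a sum of non-negative terms $\xi_\ell^2 g'_{\ell,j}(x_\ell)/(1+g'_{\ell,j}(x_\ell)^2)^2$, so its vanishing forces, for each coordinate $\ell$, either $\xi_\ell = 0$ or $g'_{\ell,j}(x_\ell) = 0$. A direct expansion then yields
\[
p - \Re p_\newtheta^j \;=\; \sum_{\ell=1}^d \xi_\ell^2\, g'_{\ell,j}(x_\ell)^2\, h_\ell\!\bigl(g'_{\ell,j}(x_\ell)\bigr),
\]
for a smooth positive function $h_\ell$. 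Using the dichotomy above, this difference vanishes on $\{p_\newtheta^j = 0\}$, but also so do its partial derivatives in $x_m$ and $\xi_m$ for every $m$. Here a crucial point is the assumption that $\fPML \in C^\infty(\mathbb{R})$ is flat on $\{x \leq 0\}$: this guarantees that wherever $g'_{\ell,j} = 0$ one also has $g''_{\ell,j} = 0$, so differentiating the product $\xi_m^2 g'_{m,j}(x_m)^2 h_m$ produces only terms containing a factor $\xi_m g'_{m,j}(x_m)$, all of which vanish on the set. It follows that $H_p = H_{\Re p_\newtheta^j}$ on $\{p_\newtheta^j = 0\}$.

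Next I would show that $\{p_\newtheta^j = 0\}$ is invariant under the flow generated by $\Re p_\newtheta^j$ (and hence, by equality of the vector fields on the set, also under $\Phi_t$). Since $\Re p_\newtheta^j$ is automatically preserved along $\Phi_t^{\theta,j}$, only the vanishing of $\Im p_\newtheta^j$ needs to be propagated. A case-by-case computation of the Poisson bracket $\{\Re p_\newtheta^j, \Im p_\newtheta^j\}$ — using once more that $\xi_\ell = 0$ or $g'_{\ell,j}(x_\ell) = 0$, together with the vanishing of $c'_\ell$ and $G'_\ell$ wherever $g'_{\ell,j} = 0$ — shows that this bracket vanishes identically on $\{p_\newtheta^j = 0\}$. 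Since the bracket is smooth and vanishes on the set, along any trajectory that already lies in $\{\Re p_\newtheta^j = 0\}$ it is $O(|\Im p_\newtheta^j|)$; a Gronwall argument with initial value $\Im p_\newtheta^j = 0$ then forces $\Im p_\newtheta^j \equiv 0$ along the trajectory.

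Having established both the agreement of the vector fields on $\{p_\newtheta^j = 0\}$ and the invariance of this set under each flow, uniqueness for \eqref{eq:Hamilton} immediately gives $\Phi_t^{\theta,j}(\rho) = \Phi_t(\rho)$ for every $\rho \in \{p_\newtheta^j = 0\}$ and every $t \in \mathbb{R}$, completing the proof. The main delicate point is the invariance step: without the Gronwall control supplied by the bracket computation, one only gets agreement of the two flows for infinitesimal time, and the global statement could fail. All other ingredients are routine algebraic manipulations of the explicit symbols in \eqref{eq:imag_symb}.
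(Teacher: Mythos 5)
Your first half, establishing that the Hamilton vector fields of $p$ and $\operatorname{Re} p_\newtheta^j$ agree on $\{p_\newtheta^j = 0\}$, is correct and is essentially the paper's argument in a lightly repackaged form: the paper inserts the dichotomy ($\xi_\ell = 0$ or $g'_{\ell,j}(x_\ell) = 0$) directly into the explicit formulas \eqref{eq:FriMoB2}--\eqref{eq:FriMoB3}, while you verify that $p - \operatorname{Re} p_\newtheta^j$ and its first-order derivatives vanish on the set. (One small inaccuracy: you call the flatness of $\fPML$ at $0$ ``crucial'' for this step, but it is not --- the $x_m$-derivative of the $m$-th summand already carries the factor $g'_{m,j}g''_{m,j}$, which vanishes as soon as $g'_{m,j}=0$, regardless of $g''_{m,j}$.)

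The second half --- the claimed invariance of $\{p_\newtheta^j = 0\}$ under each flow --- is where there is a genuine gap, and in fact the invariance is \emph{false}. Take $d=1$ with $c\equiv 1$: then $\{p_\newtheta^j = 0\} = \{(x,\xi):\ x\in[a^j_1,b^j_1],\ \xi^2 = 1\}$, and a trajectory starting at $(b^j_1,1)$, under either flow, enters $\{x>b^j_1\}$ with $\xi\neq 0$, so $\operatorname{Im} p_\newtheta^j$ becomes strictly negative and the trajectory leaves the set immediately. The precise place your Gronwall step fails is the assertion that the bracket is $O(|\operatorname{Im} p_\newtheta^j|)$. Along a trajectory crossing the edge of the scaling region, $\operatorname{Im} p_\newtheta^j$ is of size $\fPML'$ while $\{\operatorname{Re} p_\newtheta^j, \operatorname{Im} p_\newtheta^j\}$ is of size $\fPML''$; since $\fPML$ is flat at $0$, the ratio $\fPML''/\fPML'$ is unbounded, and ``smooth and vanishing on the set'' does not yield the claimed quotient bound because $d(\operatorname{Im} p_\newtheta^j)$ also vanishes there. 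The effective Gronwall coefficient then behaves like $1/|t-t_0|$ near the exit time $t_0$, and the uniqueness conclusion fails (e.g.\ $\dot F = CF/(t-t_0)$, $F(t_0)=0$, admits $F=(t-t_0)^C$). The good news is that the stronger conclusion is never needed: the paper uses Lemma~\ref{lem:traj_energy_surf} only to deduce that the two trajectories coincide for as long as they remain in $\{p_\newtheta^j=0\}$ (this is exactly how it is applied in the proof of Lemma~\ref{lem:FPR}, up to the first exit time $\bar t$), and that weaker statement follows from your vector-field agreement together with ODE uniqueness, with no appeal to invariance.
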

\begin{proof}
We prove the first assertion; the proof of the second one is similar.
By the expression for $\operatorname{Re} p^j_{\newtheta}$ in \eqref{eq:imag_symb}, 
\beq \label{eq:FriMoB2}
\partial_{\xi_\ell} \operatorname{Re} p^j_{\newtheta} = 2 \xi_m  \frac{\big(1- g_{\ell,j}'(x_\ell)^2\big)}{ \big(1+ g'_{x_\ell,j}(x_\ell)^2\big)^2},
\eeq
and
\beq \label{eq:FriMoB3}
\partial_{x_\ell} \operatorname{Re} p^j_{\newtheta} = - \xi_\ell^2  \frac{2g_{\ell,j}''(x_\ell) g_{\ell,j}'(x_\ell)}{ \big(1+ g'_{x_\ell,j}(x_\ell)^2\big)^2} - 4 \xi_\ell^2  \frac{g''_{x_\ell,j}(x_\ell) g'_{x_\ell,j}(x_\ell) \big(1- g_{\ell,j}'(x_\ell)^2\big)}{ \big(1+ g'_{x_\ell,j}(x_\ell)^2\big)^3} + \partial_{x_\ell} (\coeffc^{-2}).
\eeq
Let $(x,\xi)$ be such that $p_{\theta, j}(x, \xi) = 0$. Since also $\operatorname{Im}p_{\theta, j}(x, \xi) = 0$, by \eqref{eq:imag_symb}, 
$$
\tfa \ell, \quad \text{ either } \xi_\ell = 0 \text{ or } g'_{x_\ell,j}(x_\ell) = 0.
$$
In both these two cases, \eqref{eq:FriMoB2} and \eqref{eq:FriMoB3} above simplify to
$$
\partial_{\xi_\ell} \operatorname{Re} p^j_{\newtheta}(x, \xi) = 2 \xi_m, \quad  \partial_{x_\ell}\operatorname{Re} p^j_{\newtheta} (x, \xi)  = \partial_{x_\ell} (\coeffc^{-2}),
$$
and these are the equation defining the flow associated with $p$ (by \eqref{eq:Hamilton}).
\end{proof}

The following lemma uses the notion of wavefront set defined in Definition \ref{def:WF} below.

\begin{lemma}[Forward propagation of regularity from an elliptic point] \label{lem:FPR} 
Let ${v_\hbar}$ be a family of $\hbar$-tempered distributions in the sense of Definition \ref{def:tempered}. 
Assume there exists 
 $(x_0, \xi_0) \in T^* \mathbb R^d$, $t_0\leq 0$, and $(s, M) \in [0, +\infty] \times [0, +\infty]$ such that 
 \beq\label{eq:FPRass}
\Phi_{t_0}(x_0,\xi_0) \in \big\{ (x,\xi) \,:\, |p^j_{\newtheta}(x,\xi)|>0\big\} \quad
\tand
\quad
\gamma_{[t_0,0]}(x_0, \xi_0) \cap \operatorname{WF}^{s-1, M}_\hbar (\hbar^{-1}P^{j}_{\newtheta}  v) = \emptyset.
\eeq
Then $(x_0, \xi_0) \notin \operatorname{WF}^{s,M}_\hbar ( v)$.
The same property holds with $P^{j}_{\newtheta}$ replaced with $P_{\newtheta}$. 
\end{lemma}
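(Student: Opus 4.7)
The plan is to combine semiclassical elliptic regularity at the starting point $\Phi_{t_0}(x_0,\xi_0)$ with a forward propagation-of-singularities argument for $P^j_\theta$ in order to transport that regularity along the $\Phi$-trajectory $\gamma_{[t_0,0]}(x_0,\xi_0)$ all the way to $(x_0,\xi_0)$.

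First, I would use that the hypothesis $|p^j_\theta(\Phi_{t_0}(x_0,\xi_0))|>0$ means $P^j_\theta$ is semiclassically elliptic in a neighbourhood of $\Phi_{t_0}(x_0,\xi_0)$. Combined with the second hypothesis, which in particular implies $\Phi_{t_0}(x_0,\xi_0)\notin \operatorname{WF}^{s-1,M}_\hbar(\hbar^{-1}P^j_\theta v)$, and with the assumed temperedness of $v$, the elliptic regularity statement Lemma \ref{lem:ell_wf} will yield $\Phi_{t_0}(x_0,\xi_0)\notin \operatorname{WF}^{s,M}_\hbar(v)$. Next, I would propagate this regularity forward along $\gamma$ by introducing $T := \sup\bigl\{ t \in [t_0, 0]\,:\, \Phi_{t'}(x_0,\xi_0)\notin \operatorname{WF}^{s,M}_\hbar(v) \text{ for all } t'\in[t_0,t]\bigr\}$, and arguing by contradiction that the scenario $\Phi_T(x_0,\xi_0)\in \operatorname{WF}^{s,M}_\hbar(v)$ (for some $T\leq 0$) is impossible. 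The contradiction argument splits into two cases according to the value of $p^j_\theta$ at $\Phi_T(x_0,\xi_0)$. In the elliptic case $p^j_\theta(\Phi_T(x_0,\xi_0))\neq 0$, a second application of Lemma \ref{lem:ell_wf} (using the WF hypothesis at that point along $\gamma$) will deliver $\Phi_T(x_0,\xi_0)\notin \operatorname{WF}^{s,M}_\hbar(v)$, contradicting the definition of $T$. In the characteristic case $p^j_\theta(\Phi_T(x_0,\xi_0))=0$, I would invoke the forward propagation-of-singularities result Lemma \ref{lem:FPE} for the dissipative operator $P^j_\theta$, using that $\operatorname{Im} p^j_\theta\leq 0$ by \eqref{eq:imag_symb}, to propagate the absence of wavefront set of $v$ forward along the Hamilton flow $\Phi^{\theta,j}$ of $\operatorname{Re} p^j_\theta$.

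The hard part will be reconciling the two flows: the hypotheses and conclusion of the lemma are stated in terms of $\Phi$, while propagation of singularities is naturally stated along $\Phi^{\theta,j}$. Lemma \ref{lem:traj_energy_surf} will be the bridge: on the characteristic set $\{p^j_\theta = 0\}$ the two flows coincide, so a small backward $\Phi^{\theta,j}$-segment ending at $\Phi_T(x_0,\xi_0)$ will agree with the backward $\Phi$-segment $\{\Phi_{T-\varepsilon}(x_0,\xi_0)\}_{\varepsilon>0 \text{ small}}$, which lies in the wavefront-set-free zone by the definition of $T$. Combined with the hypothesis on $\hbar^{-1}P^j_\theta v$ along all of $\gamma$, Lemma \ref{lem:FPE} will then transport regularity forward to $\Phi_T(x_0,\xi_0)$, yielding the desired contradiction; hence $T=0$ and $(x_0,\xi_0)\notin \operatorname{WF}^{s,M}_\hbar(v)$. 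The statement for $P_\theta$ in place of $P^j_\theta$ follows by the identical argument, using the corresponding clauses of Lemmas \ref{lem:comp_symbol} and \ref{lem:traj_energy_surf}.
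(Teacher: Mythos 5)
Your high-level plan — elliptic regularity at the starting point $\Phi_{t_0}(x_0,\xi_0)$, then forward propagation along the $\Phi^{\newtheta,j}$-flow with Lemma \ref{lem:traj_energy_surf} reconciling the two flows — matches the paper's, and you correctly identify Lemmas \ref{lem:ell_wf}, \ref{lem:FPE}/\ref{lem:FPR_app}, and \ref{lem:traj_energy_surf} as the key inputs. However, the contradiction argument via the supremum $T$ of wavefront-free times has a genuine gap in your characteristic case. You claim that because $p^j_{\newtheta}(\Phi_T(x_0,\xi_0))=0$, a small backward $\Phi^{\newtheta,j}$-segment ending at $\Phi_T(x_0,\xi_0)$ coincides with the backward $\Phi$-segment $\{\Phi_{T-\varepsilon}(x_0,\xi_0)\}$ and hence lies in the wavefront-free zone given by the definition of $T$. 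But Lemma \ref{lem:traj_energy_surf} gives equality of the Hamiltonian vector fields only \emph{on} $\{p^j_{\newtheta}=0\}$, and hence equality of the flows only for as long as the trajectory \emph{stays} in $\{p^j_{\newtheta}=0\}$. Nothing forces the backward $\Phi$-trajectory from $\Phi_T(x_0,\xi_0)$ to stay in $\{p^j_{\newtheta}=0\}$ for any positive time: if the ray crosses $\partial\{g'_{\ell,j}=0\}$ transversally at time $T$ (the generic way a ray enters the PML region), then $\Phi_t(x_0,\xi_0)$ is elliptic for all $t<T$ near $T$, the two flows diverge immediately backward from $\Phi_T(x_0,\xi_0)$, and the wavefront-freeness you have on the $\Phi$-side at $\Phi_{T-\varepsilon}(x_0,\xi_0)$ cannot be transferred to the point on the $\Phi^{\newtheta,j}$-trajectory that Lemma \ref{lem:FPR_app} actually needs as its starting point.

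The paper avoids this by not tracking a supremum of wavefront-free times at all. After reducing WLOG to $p^j_{\newtheta}(x_0,\xi_0)=0$, it sets $\bar t := \sup\{t\leq 0:|p^j_{\newtheta}(\Phi_t(x_0,\xi_0))|>0\}$, so that $\gamma_{[\bar t,0]}(x_0,\xi_0)\subset\{p^j_{\newtheta}=0\}$ and hence $\gamma_{[\bar t,0]}=\gamma^{\newtheta,j}_{[\bar t,0]}$, and then picks $t_1<\bar t$ close enough to $\bar t$ that $\Phi^{\newtheta,j}_{t_1}(x_0,\xi_0)$ is elliptic and $\gamma^{\newtheta,j}_{[t_1,0]}$ still misses the closed set $\operatorname{WF}^{s-1,M}_\hbar(\hbar^{-1}P^j_{\newtheta}v)$. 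Wavefront-freeness of $v$ at $\Phi^{\newtheta,j}_{t_1}(x_0,\xi_0)$ then comes from ellipticity alone (Lemma \ref{lem:ell_wf}) — crucially, this anchor point lies on the $\Phi^{\newtheta,j}$-trajectory and need not lie on the original curve $\gamma_{[t_0,0]}$ at all — and a single application of Lemma \ref{lem:FPR_app} along $\gamma^{\newtheta,j}_{[t_1,0]}$ finishes. You should replace the continuity-in-$T$ step by this one-shot ellipticity-plus-propagation argument, which is exactly what your troublesome case requires.
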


\begin{proof}
We do the proof for $P^{j}_{\newtheta}$, the proof for $P_{\newtheta}$ is similar. 
The objects involved in the proof are illustrated in Figure \ref{drw:6}. 
We can assume that $p^j_{\newtheta}(x_0, \xi_0) = 0$, otherwise the result follows directly from Lemma \ref{lem:ell_wf}. 

The idea of the proof is the following:~we seek to use forward propagation of regularity given by Lemma \ref{lem:FPR_app}. 
By the second assumption in \eqref{eq:FPRass}, the trajectory under the flow $\Phi_t$ (illustrated in red in Figure \ref{drw:6}) doesn't intersect $\operatorname{WF}^{s-1, M}_\hbar (\hbar^{-1}P^{j}_{\newtheta}  v)$ until at least $t_0$. However, to use 
Lemma \ref{lem:FPR_app}, we need that the trajectory under the flow $\Phi^{\newtheta,j}_t$
doesn't intersect $\operatorname{WF}^{s-1, M}_\hbar (\hbar^{-1}P^{j}_{\newtheta}  v)$, and this flow is not necessarily equal to $\Phi_t$ once $p^j_\newtheta \neq 0$. A solution to this issue is the following. When travelling backwards from $(x_0,\xi_0)$ under $\Phi_t$ (which starts off equal to $\Phi^{\newtheta,j}_t$ by Lemma \ref{lem:traj_energy_surf} and the fact that $p^j_{\newtheta}(x_0, \xi_0) = 0$), 
by continuity, the two flows must be close to each other in a short time interval after $p^j_\newtheta$ stops being zero, and $p^j_\newtheta >0$ in this time interval. 
That is, we know that under the backward $\Phi^{\newtheta,j}_t$ flow, $(x_0,\xi_0)$ reaches an elliptic point of $p^j_\newtheta$ without hitting the data, and the result then follows from Lemma \ref{lem:FPR_app}.

In more detail:~by Lemma \ref{lem:traj_energy_surf},
$$
\sup \big\{t\leq 0, \; |p^j_{\newtheta}(\Phi_{t}(x_0,\xi_0))|>0 \big\} =
\sup \big\{t\leq 0, \; |p^j_{\newtheta}(\Phi^{\newtheta,j}_{t}(x_0,\xi_0))|>0 \big\}=: \bar t
$$
(since the two flows are the same when $p^j_{\newtheta}=0$). 
By assumption (i.e., the existence of $t_0$), $\bar t > - \infty$. By Lemma \ref{lem:traj_energy_surf},
\beq \label{eq:new_fpr1}
\gamma_{[\bar t, 0]}(x_0,\xi_0) = \gamma^{\newtheta,j}_{[\bar t, 0]}(x_0,\xi_0),
\eeq
(where $\gamma^{\newtheta,j}_{[\bar t, 0]}(x_0,\xi_0)$ is defined analogously to (\ref{eq:gamma_I}) with $\Phi_t$ replaced by $\Phi^{\newtheta,j}_t$).
By the definition of $\bar t$, there exists $T < \bar t$ such that
\beq \label{eq:new_fpr2}
|p^j_{\newtheta}(\Phi^{\newtheta,j}_{t}(x_0,\xi_0))| > 0 \quad \tfa t \in ( T, \bar t).
\eeq
By (\ref{eq:new_fpr1}) and the second assumption in \eqref{eq:FPRass}, since $t_0 \leq \bar t$ and $ \operatorname{WF}^{s-1, M}_\hbar (\hbar^{-1}P^{j}_{\newtheta}  v)$ is a closed set, 
$$
d\Big(\gamma^{\newtheta,j}_{[\bar t, 0]}(x_0,\xi_0) ), \operatorname{WF}^{s-1, M}_\hbar (\hbar^{-1}P^{j}_{\newtheta}  v) \Big) > 0.
$$
Hence, since $t \mapsto \Phi^{\newtheta,j}_t(x_0,\xi_0)$ is continuous, 
there exists $T<  t_1 < \bar t$ close enough to $\bar t$ so that
\beq \label{eq:new_fpr3}
\gamma^{\newtheta,j}_{[ t_1,0]}(x_0, \xi_0) \cap \operatorname{WF}^{s-1, M}_\hbar (\hbar^{-1}P^{j}_{\newtheta}  v) = \emptyset.
\eeq
In particular, (\ref{eq:new_fpr2}) and (\ref{eq:new_fpr3}) imply by Lemma \ref{lem:ell_wf} that
\beq \label{eq:new_fpr4}
\Phi^{\newtheta,j}_{ t_1} (x_0, \xi_0) \notin \operatorname{WF}^{s,M}_\hbar ( v).
\eeq
Forward propagation of regularity given by Lemma \ref{lem:FPR_app} (the sign condition on $\operatorname{Im} p_{\newtheta}^j$ being satisfied thanks to the first equation in (\ref{eq:imag_symb})) allows us to conclude, by (\ref{eq:new_fpr4}) and (\ref{eq:new_fpr3}), that $ (x_0, \xi_0) \notin \operatorname{WF}^{s,M}_\hbar ( v)$.
\end{proof}

\begin{figure}
\begin{center}
\includegraphics{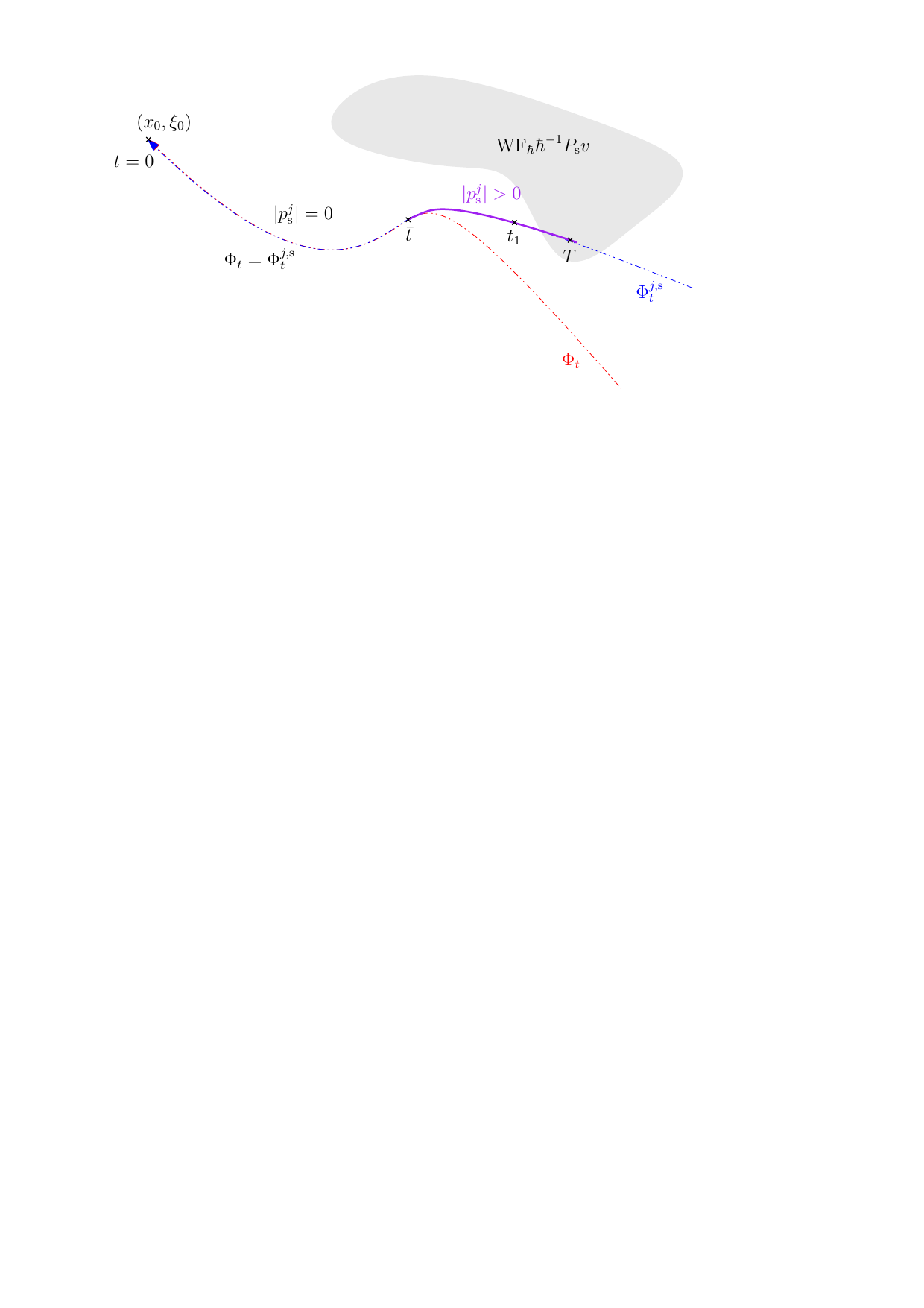}
\caption{The objects involved in the proof of Lemma \ref{lem:FPR}.}
\label{drw:6}
\end{center}
\end{figure}

\subsection{Escape to ellipticity}\label{sec:escape}

\begin{lemma}[Escape to ellipticity]\label{lem:go_to_elliptic}
For any $\rho \in T^*\widetilde{\Omega}_j$ such that the backward trajectory from $\rho$ in the flow $\Phi_t$ goes to infinity (i.e. $\rho$ is not trapped backward in time), there exists $\tau \leq 0$ such that
$$
\gamma_{[\tau, 0]}(\rho) \subset T^*\widetilde{\Omega}_j, \quad \text{ and } \quad  \Phi_{\tau}(\rho) \in \big\{ (x,\xi) \,:\, |p_{\newtheta}^j(x,\xi)|>0 \big\} .
$$
\end{lemma}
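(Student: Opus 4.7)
My plan is to argue by contradiction on the natural backward exit time from $\widetilde\Omega_j$. Let $\tau^{*} := \inf\{\tau \le 0 \,:\, \gamma_{[\tau,0]}(\rho) \subset T^{*}\widetilde\Omega_j\}$. Since $\widetilde{\Omega}_j$ is bounded and the backward trajectory from $\rho$ goes to infinity in the $x$-variable by hypothesis, $\tau^{*} > -\infty$; and since $\widetilde{\Omega}_j$ is open with $\rho \in T^{*} \widetilde{\Omega}_j$, continuity of the flow gives $\tau^{*} < 0$ and $x(\tau^{*}) \in \partial \widetilde{\Omega}_j$. Any $\tau \in (\tau^{*}, 0]$ automatically satisfies $\gamma_{[\tau,0]}(\rho) \subset T^{*}\widetilde\Omega_j$ by definition of $\tau^{*}$, so it suffices to exhibit one such $\tau$ with $|p^j_\newtheta(\Phi_\tau(\rho))| > 0$. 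I will suppose for contradiction that $p^j_\newtheta$ vanishes identically on $\gamma_{(\tau^{*}, 0]}(\rho)$, hence also at $\tau^{*}$ by continuity.

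Because $\widetilde{\Omega}_j$ is an axis-aligned (extended) hyperrectangle, the boundary point $x(\tau^{*})$ lies on a face with outward normal $\pm e_m$ for some coordinate direction $m$, so $x_m(\tau^{*})$ sits at the extreme value of the corresponding interval: either $b_m^j + \kappa_0$ or $b_m^j + \kappa + \epsilon_{\rm ext}$ (or the analogous lower values). In all cases $|x_m(\tau^{*}) - b_m^j| \geq \min(\kappa_0,\, \kappa+\epsilon_{\rm ext}) > 0$, so by the definition of the PML scaling function in \S\ref{sec:scaled}, $g'_{m,j}(x_m(\tau^{*})) > 0$. Applying Lemma \ref{lem:comp_symbol}(\ref{it:symb_ell_dir}) to the vanishing of $p^j_\newtheta$ at every point of $\gamma_{[\tau^{*}, 0]}(\rho)$ yields $\xi_\ell(t)^2 \, g'_{\ell,j}(x_\ell(t)) = 0$ for every $\ell$ and every $t \in [\tau^{*}, 0]$; in particular, $\xi_m(\tau^{*}) = 0$.

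The key step is to propagate this constraint in direction $m$ along the whole interval. On the set $A := \{t \in [\tau^{*}, 0] : g'_{m,j}(x_m(t)) > 0\}$, the previous identity forces $\xi_m(t) = 0$. Since $\Phi_t$ is the Hamilton flow of $p(x,\xi) = |\xi|^2 - c(x)^{-2}$, we have $\dot x_m = \partial_{\xi_m} p = 2\xi_m$, so $\dot x_m \equiv 0$ on $A$, and $x_m$ is locally constant on each connected component of $A$. Because $x_m(\tau^{*})$ is bounded away from $[a_m^j, b_m^j]$, continuity of $x_m(\cdot)$ puts a right neighbourhood of $\tau^{*}$ inside $A$. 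Letting $T := \sup\{t \in [\tau^{*}, 0] : x_m(s) = x_m(\tau^{*}) \text{ for all } s \in [\tau^{*}, t]\}$, I have $T > \tau^{*}$; if $T < 0$, then continuity gives $x_m(T) = x_m(\tau^{*})$, so $g'_{m,j}(x_m(T)) > 0$ and $T$ lies in the interior of $A$, and the same reasoning extends constancy slightly past $T$, contradicting maximality. Hence $T = 0$ and $x_m(0) = x_m(\tau^{*})$.

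This places $x(\rho) = x(0)$ on the extreme $m$-face of $\partial \widetilde{\Omega}_j$, contradicting $\rho \in T^{*}\widetilde\Omega_j$ (an open set). Consequently some $\tau \in (\tau^{*}, 0]$ must satisfy $|p^j_\newtheta(\Phi_\tau(\rho))| > 0$, which is what is required. I expect the only delicate part to be the open/closed continuity argument extending constancy of $x_m$ from a neighbourhood of $\tau^{*}$ to all of $[\tau^{*},0]$; this rests on the fact that $\{x_m : g'_{m,j}(x_m) > 0\}$ is an open set containing $x_m(\tau^{*})$ strictly in its interior, so small perturbations of $x_m$ stay inside.
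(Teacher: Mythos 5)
Your argument is correct and takes a genuinely different route from the paper. The paper first invokes Lemma \ref{lem:ell_unidri} to reduce the claim to producing a point of $T^*\widetilde{\mathcal L}_j$ where $\xi$ is not parallel to some incident generalised edge, and then runs a descent through edge dimensions: the backward trajectory eventually lives forever in an extended face; if $\xi$ ever stops being parallel to that face one is done, else the trajectory drops into an extended edge forever, and so on, terminating because $\Omega_j$ is a hyperrectangle. Your proof instead fixes the backward exit time $\tau^*$ from $T^*\widetilde\Omega_j$, assumes for contradiction that the principal symbol vanishes along the whole segment $\gamma_{(\tau^*,0]}(\rho)$, and uses the ellipticity lower bound of Lemma \ref{lem:comp_symbol}(\ref{it:symb_ell_dir}) together with $\dot x_m = 2\xi_m$ and an open-closed argument to show the boundary coordinate $x_m$ is frozen at its extremal value on $[\tau^*,0]$, forcing $\rho\in\partial\widetilde\Omega_j$, a contradiction. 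Both proofs use the same two ingredients (the symbolic ellipticity bound and the hyperrectangular geometry), but yours applies them from the exit endpoint backward into the domain rather than from $\rho$ outward, and it bypasses Lemma \ref{lem:ell_unidri} entirely; the case analysis over edge dimensions that the paper carries out explicitly is subsumed in your argument by the single observation that $\dot x_m$ vanishes wherever $p^j_\newtheta=0$ and $g'_{m,j}>0$. The trade-off is that the paper's version makes the geometric-optics picture (trajectories sliding along faces of decreasing dimension) explicit, which is the intuition it uses elsewhere, whereas yours is shorter and more mechanical. One small caveat worth spelling out: the conclusion $x_m(0)=x_m(\tau^*)$ only yields $x(0)\notin\widetilde\Omega_j$, not necessarily $x(0)\in\partial\widetilde\Omega_j$ (the other coordinates could drift), but the contradiction with $\rho\in T^*\widetilde\Omega_j$ needs only the former, so the argument stands.
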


Although Lemma \ref{lem:go_to_elliptic} is stated (and then used below) with the flow $\Phi_t$ associated to $P$, we note that the result holds for a general continuous flow.

\bpf[Proof of Lemma \ref{lem:go_to_elliptic}]
By Lemma \ref{lem:ell_unidri}, it suffices to show the existence of $\tau \leq 0$ such that 
$$
\gamma_{[\tau, 0]}(\rho) \subset T^*\widetilde{\Omega}_j \quad \text{ and } \quad  \Phi_{\tau}(\rho) \in  T^* \widetilde{\mathcal L}_j \backslash 
\Big\{ (x, \xi)\,:\, \xi \parallel \mathfrak e 
\tfa \mathfrak e \in \mathcal Z^j\text{ s.t. }
x \in \widetilde{\mathfrak e}   \Big\}.
$$

For simplicity we give the proof in dimension 3; we then indicate how to generalise it to any dimension. 

By the non-trapping assumption, there exists $t_0 \geq 0$ such that the backward trajectory from $\rho$ enters $T^* \widetilde{\mathcal L}^j$ forever  after time $t_0$; i.e., $\gamma_{(-\infty, t_0]}(\rho) \subset T^* \widetilde{\mathcal L}^j$.
Reducing $t_0$ if necessary, we have $\gamma_{[t_0,0]} (\rho) \subset T^* \widetilde \Omega^j$. 

Observe that the trajectory from $\rho$ is in an extended PML-face for all times before time $t_0$, i.e. 
$\gamma_{(-\infty, t_0]} (\rho) \in T^* \widetilde { {\mathfrak e}_1}$ for some ${\mathfrak e}_1 \in {\mathcal X}^j_2$. If there exists $\tau\leq t_0$ such that $\pi_\xi \Phi_{\tau} (\rho) \not\parallel {\mathfrak e}_1 $ and $\gamma_{[\tau, t_0]}(\rho) \subset T^* \widetilde{\Omega}^j$, then we are done. Otherwise, 
the backward trajectory from $\Phi_{t_0} (\rho)$ stays parallel to the face ${\mathfrak e}_1$ it entered before leaving $\widetilde \Omega_j$, and therefore there exists $t_1\leq t_0$ such that it enters an extended PML-edge forever; i.e.,~$\gamma_{(-\infty, t_1]}(\rho) \subset T^* \widetilde {{\mathfrak e}_2}$ for some ${\mathfrak e}_2 \in \mathcal X^j_1$. Reducing $t_1$ if necessary, we have again $\gamma_{[t_1, t_0]}(\rho )\subset T^* \widetilde{\Omega}^j$.

If there exists $\tau\leq t_1$ such that $\pi_\xi \Phi_{\tau} (\rho) \not\parallel {\mathfrak e}_2$ and $\gamma_{[\tau, t_1]}(\rho) \subset T^* \widetilde{\Omega}^j$, then we are done. Otherwise, 
the backward trajectory from $\Phi_{t_1} (\rho)$ stays parallel to the edge ${\mathfrak e}_2$ it entered before leaving $\widetilde \Omega_j$, and therefore there is $\tau\leq t_1$ with $\gamma_{[\tau, t_1]}(\rho) \subset T^* \widetilde{\Omega}_j$ such that it enters a perpendicular PML-edge (because $\Omega_j$ is a  hyperrectangle). We therefore have $\pi_x \Phi_{\tau} (\rho) \in \widetilde {\mathfrak e}_3$ and $\pi_\xi \Phi_{\tau} (\rho) \not\parallel {\mathfrak e}_3$ for some ${\mathfrak e}_3 \in \mathcal X_1^j$ and we are done.

The proof generalises to any dimension $d\geq 2$ by observing that, if $\gamma_{[T,0)} (\rho) \subset T^* \widetilde{\Omega_j}$ and $\gamma_{(-\infty, T]} (\rho) \subset T^* \widetilde{\mathfrak e}$ for some $\mathfrak e \in {\mathcal X}^j_\ell$ with $2 \leq \ell \leq d-1$, then \emph{either} there exists $\tau\leq T$ such that $\pi_\xi \Phi_{\tau} (\rho) \not\parallel {\mathfrak e}$ and $\gamma_{(\tau, T]} \subset T^* \widetilde{\Omega}^j$, \emph{or} the corresponding trajectory enters a $\ell-1$ dimensional edge forever:~i.e., there exists $T' \leq T$ such that $\gamma_{[T', T]} (\rho) \subset T^* \widetilde{\Omega_j}$ and $\gamma_{(-\infty, T']} (\rho) \subset T^* \widetilde{\mathfrak e'}$ with $\mathfrak e' \in {\mathcal X}^j_{\ell-1}$. The initialisation of this process (entering $T^* \widetilde{\mathcal L}^j$)  and the end-case ($\ell=1$) are the same as in dimension $3$.
\epf

\subsection{Proof of Lemma \ref{lem:key_prop}} \label{ss:proof_key}

\begin{figure}[h]
\begin{center}
\includegraphics[scale=0.65]{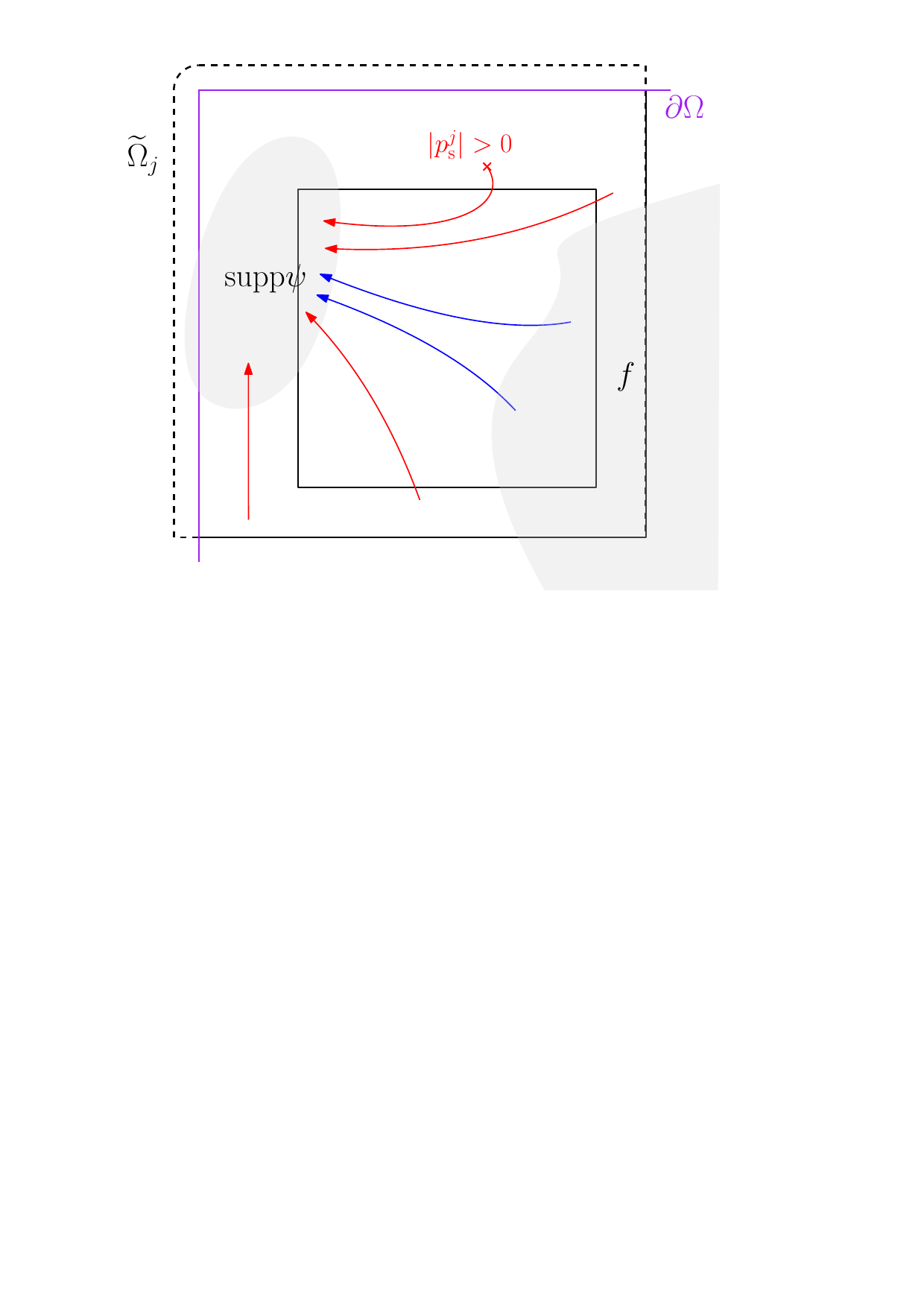}
\caption{Idea behind the proof of the propagation Lemma \ref{lem:key_prop} illustrated for a subdomain $\Omega_j$ belonging to a corner of $\Omega$, extended as $\widetilde{\Omega}_j$ (dashed). Any point of $T^*\operatorname{supp} \psi$ is the end-point of a trajectory coming from an elliptic point in the PML layer. If such a trajectory doesn't intersect $\operatorname{WF}_\hbar f$, it carries no mass by propagation of singularities (red trajectories). Therefore, the solution in the observation region, $\psi v$, has only wavefront corresponding to trajectories coming from $\operatorname{WF}_\hbar f$ (blue trajectories).}\label{drw:key_prop}
\end{center}
\end{figure}

By the assumption in Point \ref{it:kp2} and Lemma \ref{lem:ext_h},
$$
P_{\newtheta}^j \widetilde w = \widetilde f \hspace{0.3cm} \text{in }{\widetilde \Omega_j}.
$$
Let $\rho \in T^*\operatorname{supp} \widetilde \chi$ be such that
$$
\rho \notin \bigcup_{\chi \in C^\infty_c(\widetilde \Omega_j)} \Big\{\exists t <0, \; \gamma_{[t, 0]}(\rho) \subset T^* \widetilde \Omega_j \text{ and }  \Phi_t(\rho) \in \operatorname{WF}_\hbar ( \chi \widetilde f) \Big\};
$$
in other words, the backwards trajectory  for the Hamilton flow of $P$ starting at $\rho$ \emph{either} doesn't intersect $\operatorname{WF}_\hbar ( \chi \widetilde f)$ for any $\chi \in C^\infty_c(\widetilde \Omega_j)$ \emph{or} leaves $\widetilde \Omega_j$ before intersecting such a subset. 
By Lemma \ref{lem:go_to_elliptic}, there exists $\tau \leq 0$ such that
\beq \label{eq:29ev1}
\gamma_{[\tau, 0]}(\rho) \subset T^*\widetilde{\Omega}_j, \quad \text{ and } \quad  \Phi_{\tau}(\rho) \in \big\{ (x,\xi) \,:\,|p_{\newtheta}^j(x,\xi)| > 0\big\}
\eeq
(where we take $\tau=0$ if $\rho \in \{ p_\newtheta^j=0\}$).
From the first part of (\ref{eq:29ev1}), by assumption on $\rho$, we get that for any $\chi \in C^\infty_c(\widetilde \Omega_j)$
\beq \label{eq:29ev2}
\gamma_{[\tau, 0]}(\rho) \cap \operatorname{WF}_\hbar ( \chi \widetilde f) = \emptyset.
\eeq
We conclude from (\ref{eq:29ev1}) and (\ref{eq:29ev2}), by forward propagation of regularity from an elliptic point, given by Lemma \ref{lem:FPR},  that $\rho \notin \operatorname{WF}_\hbar(\psi\widetilde w)$.

\subsection{A priori bound on the local PML solution on the subdomains} \label{ss:res}

\begin{lemma}[Bound on PML solution operator]\label{lem:res_est}
Assume that the flow associated with $P$ is nontrapping. 
There exists $C >0$ and $\hbar_0 >0$ such that the following is true. Given $f \in H^{-1}(\Omega_j)$ and $g \in H^{1/2}(\partial\Omega_j)$, if $u$ satisfies
\begin{equation}\label{eq:PMLbounded}
\begin{cases}
P^j_{\newtheta}u = f \text{ in } \Omega_j, \\
u = g \text{ on } \partial \Omega_j,
\end{cases}
\end{equation}
then if $0<\hbar<\hbar_0$ then
\beq\label{eq:resolvent}
\Vert u \Vert_{H^1_\hbar(\Omega_j)} \leq C  \Big(\hbar^{-1} \Vert f \Vert_{
H^{-1}_\hbar(\Omega_j)}
 + \hbar^{-3/2} \Vert g \Vert_{H_\hbar^{1/2}(\partial\Omega_j)} \Big).
\eeq
The same holds with $\Omega_j$ replaced with $\Omega$.
\end{lemma}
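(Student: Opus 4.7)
The plan is an argument by contradiction after first reducing to homogeneous Dirichlet data. For the reduction, a semiclassical trace lifting supplies $G\in H^1(\Omega_j)$ with $G|_{\partial\Omega_j}=g$ and $\|G\|_{H^1_\hbar(\Omega_j)}\leq C\hbar^{-1/2}\|g\|_{H^{1/2}_\hbar(\partial\Omega_j)}$; then $v:=u-G\in H^1_0(\Omega_j)$ solves $P^j_\newtheta v=f-P^j_\newtheta G$ with zero Dirichlet data, and integration by parts against test functions gives $\|P^j_\newtheta G\|_{H^{-1}_\hbar(\Omega_j)}\leq C\|G\|_{H^1_\hbar(\Omega_j)}$. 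Combining the $g=0$ case $\|v\|_{H^1_\hbar}\leq C\hbar^{-1}\|f-P^j_\newtheta G\|_{H^{-1}_\hbar}$ with $\|u\|_{H^1_\hbar}\leq\|v\|_{H^1_\hbar}+\|G\|_{H^1_\hbar}$ produces the $\hbar^{-3/2}\|g\|_{H^{1/2}_\hbar}$ term in \eqref{eq:resolvent}, so it suffices to prove the bound for $g=0$.

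Assume $g=0$ and suppose no such $C$ exists; then there exist sequences $\hbar_n\to 0$ and $(u_n,f_n)$ with $P^j_\newtheta u_n=f_n$, $u_n|_{\partial\Omega_j}=0$, $\|u_n\|_{H^1_{\hbar_n}(\Omega_j)}=1$, and $\|f_n\|_{H^{-1}_{\hbar_n}(\Omega_j)}=o(\hbar_n)$. Apply Lemma \ref{lem:ext_h} to extend to $(\widetilde u_n,\widetilde f_n)$ on $\widetilde\Omega_j$ still satisfying the PDE, with norms of the same order; this moves the problem past $\partial\Omega_j\cap\partial\Omega$ via the reflection enabled by the linearity of $\fPML$ at the PML boundary (cf.~Remark \ref{rem:PoS}). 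Passing to a subsequence, extract a semiclassical defect measure $\mu$ on $T^*\widetilde\Omega_j$ associated to $(\widetilde u_n)$; since $\|P^j_\newtheta\widetilde u_n\|_{H^{-1}_{\hbar_n}}=o(\hbar_n)$, standard arguments give $\operatorname{supp}\mu\subset\{p^j_\newtheta=0\}$.

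To show $\mu\equiv 0$, fix any $\rho\in T^*\Omega_j$. By the nontrapping hypothesis on $P$ together with Lemma \ref{lem:go_to_elliptic}, there exists $\tau\leq 0$ with $\gamma_{[\tau,0]}(\rho)\subset T^*\widetilde\Omega_j$ and $\Phi_\tau(\rho)\in\{|p^j_\newtheta|>0\}$. Since $\|\hbar_n^{-1}\widetilde f_n\|_{H^{-1}_{\hbar_n}}=o(1)$, any defect measure of $(\hbar_n^{-1}\widetilde f_n)$ vanishes; Lemma \ref{lem:FPR} (whose sign condition on $\Im p^j_\newtheta$ holds by \eqref{eq:imag_symb}), applied at $\rho$ with backward trajectory reaching the elliptic point $\Phi_\tau(\rho)$, then forces $\mu$ to vanish at $\rho$. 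Hence $\mu\equiv 0$ on $T^*\Omega_j$; on $T^*(\widetilde\Omega_j\setminus\Omega_j)\subset T^*\widetilde{\mathcal L}^j$, directional ellipticity of $p^j_\newtheta$ (Lemma \ref{lem:ell_unidri}) together with $\operatorname{supp}\mu\subset\{p^j_\newtheta=0\}$ and flow-invariance of $\mu$ rules out contributions there.

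Vanishing of $\mu$ on $T^*\widetilde\Omega_j$ yields $u_n\to 0$ in $L^2(\Omega_j)$. By the ellipticity of $p^j_\newtheta$ at infinity (Lemma \ref{lem:comp_symbol} (\ref{it:symb_ell_infnt})), the semiclassical elliptic regularity estimate $\|u\|_{H^1_\hbar(\Omega_j)}\leq C(\|P^j_\newtheta u\|_{H^{-1}_\hbar(\Omega_j)}+\|u\|_{L^2(\Omega_j)})$ holds for any $u\in H^1_0(\Omega_j)$, so $\|u_n\|_{H^1_{\hbar_n}(\Omega_j)}\leq C(o(\hbar_n)+o(1))\to 0$, contradicting the normalisation $\|u_n\|_{H^1_{\hbar_n}}=1$. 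The identical argument with $\Omega_j$ replaced by $\Omega$ proves the bound on $\Omega$. The main obstacle is the consistent semiclassical treatment of the subdomain boundary $\partial\Omega_j\setminus\partial\Omega$ (which is all of $\partial\Omega_j$ when $\Omega_j$ is interior, and which is not removed by the extension of Lemma \ref{lem:ext_h}): the propagation results underlying Lemma \ref{lem:FPR} are stated on manifolds without boundary, so one must exploit the strong ellipticity of $P^j_\newtheta$ in a neighbourhood of $\partial\Omega_j$ to preclude concentration of $\mu$ there.
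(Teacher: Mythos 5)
Your proposal follows essentially the same strategy as the paper's proof: reduce to homogeneous Dirichlet data using a semiclassical trace lifting with operator norm $O(\hbar^{-1/2})$, then argue by contradiction using the symmetric extension of Lemma \ref{lem:ext_h}, escape to ellipticity (Lemma \ref{lem:go_to_elliptic}), forward propagation from an elliptic point (Lemma \ref{lem:FPR}), and ellipticity at infinity (Lemma \ref{lem:comp_symbol}). The substantive difference is stylistic: you phrase the contradiction step through semiclassical defect measures, whereas the paper runs the same propagation argument directly in terms of the graded wavefront sets $\operatorname{WF}^{s,M}_\hbar$ of Definition \ref{def:WF}, then uses Lemma \ref{lem:osci} to pass from empty wavefront set plus ellipticity at infinity to smallness in $L^2$ and then in $H^1_{\hbar}$. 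These are interchangeable formalisms for this purpose, but note that your step ``vanishing of $\mu$ on $T^*\widetilde\Omega_j$ yields $u_n\to 0$ in $L^2(\Omega_j)$'' already needs the ellipticity at infinity (Lemma \ref{lem:comp_symbol}, Part \ref{it:symb_ell_infnt}) to exclude mass escaping to $\lvert\xi\rvert=\infty$; you cite it only for the later $H^1_{\hbar}$-upgrade, where the paper invokes it at both stages via Lemma \ref{lem:osci}, Points \ref{it:osci1} and \ref{it:osci2}.

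Two smaller remarks. First, your separate treatment of $T^*(\widetilde\Omega_j\setminus\Omega_j)$ using ``flow-invariance of $\mu$'' is unnecessary and not self-contained: Lemma \ref{lem:go_to_elliptic} is stated for every $\rho\in T^*\widetilde\Omega_j$, so the escape-to-ellipticity argument already covers the PML layer of the extended domain exactly as in the paper, and no flow-invariance input is required (such invariance would itself need a commutator/Egorov argument, and over the complex-valued symbol $p^j_\newtheta$ it only holds in a weakened, dissipative form). Second, your closing observation is correct and worth stressing: the extension $S_j$ only pushes $\Omega_j$ past $\partial\Omega_j\cap\partial\Omega$, so $\partial\Omega_j\setminus\partial\Omega$ (and hence, for an interior subdomain, all of $\partial\Omega_j$) remains part of $\partial\widetilde\Omega_j$, and cut-offs $\chi\in C^\infty_c(\widetilde\Omega_j)$ cannot reach it. Ruling out concentration of $H^1_{\hbar_n}$-mass in a shrinking neighbourhood of that interior boundary needs a further boundary-layer argument exploiting that the PML scaling is fully active there (so $\operatorname{Im} p^j_\newtheta$ is bounded away from zero in the non-tangential directions) together with the Dirichlet condition. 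The paper's written proof states the contradiction with the normalisation $\|v_n\|_{H^1_{\hbar_n}(\Omega_j)}=1$ from $\|\chi\widetilde v_n\|_{H^1_{\hbar_n}}\to 0$ for compactly supported $\chi$ without making this step explicit, so this is a subtlety shared by both arguments rather than a flaw specific to yours; but as it stands neither account closes it.
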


We emphasise that the $\hbar^{-3/2}$ in front of the norm of $g$ on the right-hand side of \eqref{eq:resolvent} is not sharp. However,
the only way in which \eqref{eq:resolvent} is used in the proofs of Theorems \ref{thm:strip}-\ref{thm:gen} is to show that the solution to \eqref{eq:PMLbounded} is tempered (in the sense of Definition \ref{def:tempered}) if $f$ and $g$ are tempered, and so this non-sharpness is not important for the results of the paper.

\begin{proof}
We first claim that it it sufficient to prove that
there exists $C,\hbar_0>0$, such that 
 if $v \in H^1_0(\Omega)$ satisfies $P_\newtheta^j v= f\in H^{-1}(\Omega)$ then, if $0<\hbar<\hbar_0$, 
\beq\label{eq:resolvent2}
\Vert v \Vert_{H^1_\hbar(\Omega_j)} \leq C \hbar^{-1} \Vert f \Vert_{
H^{-1}_\hbar(\Omega_j)}.
\eeq
Indeed, by \cite[Theorem 5.6.4]{Ne:01}, for any Lipschitz domain $D$ 
there exists $E_D: H^{1/2}(\partial D)\to H^1(D)$ such that $\gamma E_D \phi=\phi$ for all $\phi \in H^{1/2}(\partial D)$, where $\gamma: 
H^1(D) \to H^{1/2}(\partial D)$ is the trace operator. Furthermore, 
\beq\label{eq:Nedelec}
\|E_D\|_{H^{1/2}_\hbar(\partial D) \to H^1_\hsc(D)}\leq C \hbar^{-1/2},
\eeq
noting that the weighted $H^{1/2}$ norm on $\partial D$ used in \cite{Ne:01} is equivalent to $h^{-1/2}\| \cdot\|_{H^{1/2}_\hsc(\partial D)}$ and the weighted $H^1$ norm on $D$ used in \cite{Ne:01} is equivalent to $h^{-1}\| \cdot\|_{H^{1}_\hsc(D)}$.
Therefore, with $u$ the solution to \eqref{eq:PMLbounded}, let $v:= u - E_D g \in H^1_0(\Omega_j)$. Then, by \eqref{eq:resolvent2},
\begin{align*}
\N{u}_{H^1_\hbar(\Omega_j)} 
&\leq \N{v}_{H^1_\hbar(\Omega_j)} + \N{E_D g}_{H^1_\hbar(\Omega_j)}\\
&\leq C \hbar^{-1} \Big( \N{f}_{H^{-1}_\hbar(\Omega_j)} + \N{P_\newtheta^j E_D g}_{H^{-1}_\hbar(\Omega_j)}\Big) +\N{E_D g}_{H^1_\hbar(\Omega_j)}\\
&\leq C \hbar^{-1} \Big( \N{f}_{H^{-1}_\hbar(\Omega_j)} + C\N{E_D g}_{H^{1}_\hbar(\Omega_j)} \Big)+\N{E_D g}_{H^1_\hbar(\Omega_j)}
\end{align*}
and the bound \eqref{eq:resolvent} then follows from \eqref{eq:Nedelec}.

We now prove the bound \eqref{eq:resolvent2}.
Seeking a contradiction, we assume that \eqref{eq:resolvent2} does not hold. Then, there exists $\hbar_n \rightarrow 0$, $v_n \in H^1_0(\Omega_j)$, $f_n \in L^2(\Omega_j)$, 
such that
$P^j_{\newtheta}(\hbar_n)v_n = f_n \text{ in } \Omega_j,$
and
$$
\Vert v_n \Vert_{H^1_{\hbar_n}(\Omega_j)} \geq n  
\hbar_n^{-1} \Vert f_n \Vert_{
(H_{\hbar_n}^{1}(\Omega_j))^*}.
$$
Renormalising, we can assume that
\begin{equation}\label{eq:res:ren}
\Vert v_n \Vert_{H^1_{\hbar_n}(\Omega_j)} = 1.
\end{equation}
Then
\begin{equation}\label{eq:res_dec}
 \hbar_n^{-1} \Vert f_n \Vert_{ (H_{\hbar_n}^{1}(\Omega_j))^*}
 \rightarrow 0.
\end{equation}
We now extend $v_n$ and $f_n$ to $\widetilde{\Omega}_j$ using the extension defined by Lemma \ref{def:ext};
i.e., we let $\widetilde v_n := S_j v_n$ and $\widetilde f_n := S_j f_n$. By Lemma \ref{lem:ext_h}, $P_\newtheta^j \widetilde v_n = \widetilde f_n$. 
By \eqref{eq:res:ren} $\widetilde v_n$ is tempered, and so propagation of singularities applies. 
For any $\chi \in C^\infty_c(\widetilde{\Omega}_j)$, 
by \eqref{eq:ext_ver_eq} 
 and the fact that  $S_j: H^{-1}(\Omega_j) \to H^{-1}(\widetilde{\Omega}_j)$ (by Lemma \ref{lem:ext_h}), 
\beqs
\big\|\chi P^j_{\newtheta}(\hbar_n) \widetilde v_n\big\|_{
H^{-1}_\hbar(\widetilde{\Omega}_j)}
=\big\|\chi \widetilde f_n\big\|_{
H^{-1}_\hbar(\widetilde{\Omega}_j)}
\leq C \big\| f_n\big\|_{H^{-1}_\hbar(\Omega_j)}.
\eeqs
Therefore, by (\ref{eq:res_dec}) and the definition of $\operatorname{WF}^{-1,1}$ (Definition \ref{def:WF}), 
\begin{equation}\label{eq:res:dt_osc}
\operatorname{WF}^{-1,1}(\chi P^j_{\newtheta}(\hbar_n) \widetilde v_n) = \emptyset, \hspace{0.5cm} \tfa\,  \chi \in C^\infty_c(\widetilde{\Omega}_j).
\end{equation}
By Lemma \ref{lem:go_to_elliptic} (escape to ellipticity),  Lemma \ref{lem:FPR} (forward propagation of regularity from an elliptic point), and  (\ref{eq:res:dt_osc}),
\beqs%\label{eq:res:0mc}
\operatorname{WF}^{0,1}(\chi \widetilde v_n)= \emptyset \hspace{0.5cm} \tfa\,  \chi \in C^\infty_c(\widetilde{\Omega}_j).
\eeqs
Therefore, by Lemma \ref{lem:osci}, Point \ref{it:osci1} together with 
Point \ref{it:symb_ell_infnt} of Lemma \ref{lem:comp_symbol} (ellipticity at infinity) and (\ref{eq:res_dec}), 
\begin{equation*}
\N{\chi \widetilde v_n}_{L^2} \rightarrow 0 \hspace{0.5cm} \tfa\,  \chi \in C^\infty_c(\widetilde{\Omega}_j).
\end{equation*}
Lemma \ref{lem:osci}, Point \ref{it:osci2} together with 
Point \ref{it:symb_ell_infnt} of Lemma \ref{lem:comp_symbol} 
and  (\ref{eq:res_dec}) allow us to strengthen this to 
\begin{equation*}%\label{eq:res:0}
\N{\chi \widetilde v_n}_{H^1_{\hbar_n}} \rightarrow 0 \hspace{0.5cm} \tfa\,  \chi \in C^\infty_c(\widetilde{\Omega}_j)
\end{equation*}
which contradicts (\ref{eq:res:ren}).
\end{proof}

\subsection{Error incurred by Cartesian PML approximation of outgoing Helmholtz solutions}
\label{ss:outgoing_approx}

\begin{theorem}
\mythmname{The error in Cartesian PML approximation of outgoing Helmholtz solutions $O(k^{-\infty})$}
 \label{thm:outgoing_approx}
Suppose \emph{either} (i) $P$ is nontrapping, \emph{or} (ii) with $P_\newtheta$ considered as an operator either $H^1_0(\Omega) \to H^{-1}(\Omega)$ or $H^1(\Rea^d)\to H^{-1}(\Rea^d)$, its solution operator 
 is bounded polynomially in $\hbar^{-1}$. Then for any $k_0,M,s>0$, there exists $C>0$ such that the following holds for all $k\geq k_0$. 
Let $f \in H^{-1}(\Omega_{\rm int})$  and let $u$ and $v$ be the solutions to
$$
\begin{cases}
P_{\rm s} u = f \text{ in }\Omega, \\
u = 0  \text{ on }\partial\Omega,
\end{cases}
\hspace{1.5cm}
\begin{cases}
P v = f \text{ in }\mathbb R^d, \\
v \text{ satisfies the Sommerfeld radiation condition}.
\end{cases}
$$
Then, 
$$
\Vert u - v \Vert_{H^s_\hbar(\Omega_{\rm int})}\leq C k^{-M}\|f\|_{H_\hbar^{-1}(\Omega_{\rm int})} \quad\tfa k\geq k_0.
$$
\end{theorem}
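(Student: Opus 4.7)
The plan is to construct an auxiliary function $\tilde v$ on $\Omega$ that (i) solves $P_\newtheta \tilde v = f$ exactly, (ii) agrees with $v$ on $\Omega_{\rm int}$, and (iii) has Dirichlet trace on $\partial \Omega$ of size $O(k^{-\infty})$. Then $w := u - \tilde v|_\Omega$ satisfies $P_\newtheta w = 0$ in $\Omega$ with $w|_{\partial\Omega} = -\tilde v|_{\partial\Omega}$, so the a priori bound \eqref{eq:resolvent} of Lemma \ref{lem:res_est} (whose polynomial-in-$\hbar^{-1}$ conclusion is available under either hypothesis of the theorem) yields $\|w\|_{H^1_\hbar(\Omega)} = O(k^{-\infty})$; since $u-v = w$ on $\Omega_{\rm int}$, this gives the $H^1_\hbar$ version of the conclusion, and Step~4 below upgrades to $H^s_\hbar$.

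\textbf{Construction of $\tilde v$ via complex scaling.} Since $\supp f \cup \supp(1-c) \subset \Omega_{\rm int}$, the outgoing solution $v$ satisfies the constant-coefficient equation $(-\hbar^2\Delta - 1)v = 0$ on $\mathbb R^d \setminus \overline{\Omega_{\rm int}}$ and therefore admits an analytic continuation along Cartesian complex scalings. I would set, for $x \in \Omega$,
\[
\tilde v(x_1,\ldots,x_d) := v\bigl(x_1 + i g_1(x_1),\,\ldots,\, x_d + i g_d(x_d)\bigr),
\]
interpreted via analytic continuation. Because $\fPML$ and all its derivatives vanish on $(-\infty,0]$, the function $\tilde v$ is $C^\infty$ across $\partial\Omega_{\rm int}$ and satisfies $\tilde v = v$ on $\Omega_{\rm int}$. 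A direct chain-rule computation, using the definitions of $g_\ell$ and $\Delta_\newtheta$ in \S\ref{sec:scaled}, gives $P_\newtheta \tilde v = f$ on $\Omega$: on $\Omega_{\rm int}$ because $P_\newtheta = P$ and $\tilde v = v$; on the PML region because complex contour deformation preserves the annihilation property of the Helmholtz operator.

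\textbf{Exponential smallness of the boundary trace.} The estimate $\|\tilde v\|_{H^{1/2}_\hbar(\partial\Omega)} = O(k^{-\infty})$ is the crux. One route is to use the outgoing Green's-function representation of $v$ on $\mathbb R^d\setminus\overline{\Omega_{\rm int}}$ in terms of its Cauchy data on $\partial\Omega_{\rm int}$: each constituent outgoing wave is of the form $e^{ik\phi(x,y)}a(x,y;\hbar)$ with real phase $\phi$ whose gradient in $x$ points outward from $\Omega_{\rm int}$; under the substitution $x_\ell \mapsto x_\ell + ig_\ell(x_\ell)$ with $g_\ell(x_\ell) \geq 0$, the phase acquires a negative imaginary part bounded below by $c\,k\,\sum_\ell g_\ell(x_\ell)\partial_{x_\ell}\phi$, yielding $\exp(-ck)$ pointwise on $\partial\Omega$. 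Polynomial bounds on the Cauchy data of $v$ (available under either hypothesis, via Lemma \ref{lem:res_est} and standard trace theorems) then deliver $k^{-\infty}$ decay. An alternative, fully microlocal route uses that $v$ is outgoing, so $\operatorname{WF}_\hbar v$ in a neighbourhood of the PML region lies in outward-pointing directions, exactly the directions in which $P_\newtheta$ is elliptic by Lemma \ref{lem:ell_unidri}; this ellipticity can then be used to establish the required complex contour deformation estimate.

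\textbf{Upgrade to $H^s_\hbar$ and the principal obstacle.} Since $w$ is an $H^1_\hbar$ solution of $P_\newtheta w = 0$ on $\Omega$ with $O(k^{-\infty})$ boundary trace, interior semiclassical elliptic regularity for $P_\newtheta$ in a neighbourhood of $\overline{\Omega_{\rm int}}$---which in $\Omega_{\rm int}$ coincides with $P$ and is elliptic in the classical (high-frequency) sense by Lemma \ref{lem:comp_symbol}(\ref{it:symb_ell_infnt})---combined with propagation of singularities (Lemma \ref{lem:FPR}), yields $\|w\|_{H^s_\hbar(\Omega_{\rm int})} = O(k^{-\infty})$ for every $s$. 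The main obstacle I anticipate is making the exponential-smallness step rigorous: the explicit constant-coefficient argument sketched above is essentially \cite[Lemma 3.4]{ChXi:13} (written there for $d=2$), and extending it to general $d$ with $c \not\equiv 1$ requires either a careful Green's-function/stationary-phase analysis of the free outgoing waves emitted by Cauchy data on $\partial\Omega_{\rm int}$, or a microlocal argument identifying $\operatorname{WF}_\hbar v$ in the PML region with outgoing directions and exploiting Lemma \ref{lem:ell_unidri} to justify the contour deformation.
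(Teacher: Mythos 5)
Your proposal follows essentially the same strategy as the paper: your $\tilde v$ coincides with the paper's $u_\infty$, defined as the global solution of $P_\newtheta u_\infty = f$ on $\mathbb{R}^d$ and identified with $v$ on $\Omega_{\rm int}$ via \cite[Theorem 4.37]{DyZw:19} (which sidesteps having to verify the analytic-continuation and chain-rule claims directly), and the decomposition $u = u_\infty + u_\epsilon$ with the resolvent bound of Lemma \ref{lem:res_est} and the $H^s$ upgrade via semiclassical elliptic regularity (Lemma \ref{lem:osci}) is exactly yours. The crux you correctly flag---$O(k^{-\infty})$-smallness of the boundary trace of the scaled solution---is resolved in the paper by precisely the microlocal route you sketch as the alternative: apply the propagation result Lemma \ref{lem:key_prop} to $u_\infty$ with a cutoff near $\partial\Omega$, use Lemma \ref{lem:ell_unidri} to see that characteristic directions in the PML layer are parallel to the boundary and hence that backward trajectories from $\supp\psi$ either go to infinity or never reach $\supp f$, and then conclude $\operatorname{WF}_\hbar(\psi u_\infty)=\emptyset$ by ellipticity (Lemma \ref{lem:ell_wf}); your Route~1 (explicit Green's-function/phase analysis) is the \cite[Lemma 3.4]{ChXi:13} approach and, as you note, does not extend readily past $c\equiv 1$.
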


Theorem \ref{thm:outgoing_approx} is not used in the proofs of Theorems \ref{thm:strip}-\ref{thm:gen}, but we include it in the paper because of its independent interest. Indeed, as noted in \S\ref{sec:context}, the only other $k$-explicit Cartesian-PML convergence result in the literature is that of \cite[Lemma 3.4]{ChXi:13} for the case $c\equiv 1$ (i.e., no scatterer), with this proof based on the explicit expression for the Helmholtz fundamental solution in this case. $k$-explicit convergence results for radial PMLs applied to general Helmholtz problems are in \cite{GLS2}.

\begin{proof}
By dividing $u$ and $v$ by $\|f\|_{H_\hbar^{-1}(\Omega_{\rm int})}$, we can assume, without loss of generality, that $\|f\|_{H_\hbar^{-1}(\Omega_{\rm int})}=1$. 
It is then sufficient to prove that 
$$
\Vert u - v \Vert_{H^s_\hbar(\Omega_{\rm int})}\leq C k^{-M}\quad\tfa k\geq k_0.
$$
Let $u_\infty$ be the solution to
$$
P_{\rm s} u_\infty = f \text{ in }\mathbb R^d. 
$$
By \cite[Theorem 4.37]{DyZw:19}, 
\beq\label{eq:agreement}
u_{\infty}|_{\Omega_{\rm int}} = v;
\eeq
indeed, the Cartesian PML fits into the framework of \cite[\S 4.5.1]{DyZw:19} by setting
\beqs
F_\theta(x) = \sum_{\ell=1}^d \int^{x_\ell} g_{\ell}(\widetilde{x}_\ell)\, d \widetilde{x}_\ell,
\eeqs
and observing that $(\nabla F(x))_m = g_{x_m}(x_m)$ and $D^2 F\geq 0$ in the sense of quadratic forms (i.e., $F_\theta$ is convex) since $g_{x_m}'\geq 0$.
The relation \eqref{eq:agreement} implies that it is sufficient to prove that
\beq\label{eq:STP1}
\tfa s>0, \quad \Vert u - u_\infty \Vert_{H^s_\hbar(\Omega_{\rm int})} = O(\hbar^\infty).
\eeq
We claim that 
\beq\label{eq:claim}
\text{ for any } s>0, \quad \Vert u_\infty \Vert_{H^{s}_\hbar(\partial \Omega)} = O(\hbar^\infty)
\eeq
and postpone the proof for now. 

We now let $u_{\epsilon}\in H^1(\Omega)$ be the solution to
$$
\begin{cases}
P_{\rm s} u_{\epsilon}= 0 \text{ in }\Omega, \\
 u_{\epsilon} = u_{\infty}  \text{ on }\partial\Omega.
\end{cases}
$$
By \eqref{eq:claim} and \emph{either} Lemma \ref{lem:res_est} under Assumption (i), \emph{or} polynomial boundedness of the solution operator on $H^1(\Rea^d)$ under Assumption (ii), 
\beq\label{eq:peanut1}
\Vert u_{\epsilon} \Vert_{H^1_\hsc(\Omega)} = O(\hbar^\infty).
\eeq
By Lemma \ref{lem:osci}, Part \ref{it:osci2}, the bound (\ref{eq:peanut1}) can be improved to
\beq\label{eq:peanut1bis}
\tfa s >0\text{ and }\chi \in C^\infty_c(\Omega),\quad  \Vert \chi u_{\epsilon} \Vert_{H^s_\hsc(\Omega)} = O(\hbar^\infty).
\eeq
The whole point of the definition of $u_\epsilon$ is that, 
by uniqueness of $u$,
$$
u = u_{\infty} + u_{\epsilon}.
$$
Thus \eqref{eq:STP1}, and hence also the result, follows from \eqref{eq:peanut1bis}.

We now prove \eqref{eq:claim}. 
Under Assumption (ii), $u_\infty$ is immediately tempered in the sense of Definition \ref{def:tempered}. Under Assumption (i), 
repeating the proof of Lemma \ref{lem:res_est} (without the extensions), we see that $\chi u_\infty$ is polynomially bounded in $\hbar^{-1}$ in terms of the data $f$ for any $\chi \in C^\infty_c(\Rea^d)$, and hence $u_\infty$ is tempered.  

Let $\psi \in C^\infty_c(\mathbb R^d)$ be such that $\psi \equiv 1$ near $\partial \Omega$ and
$\operatorname{supp} \psi \subset \mathbb R^d \backslash \Omega_{\rm int}$.
We now seek to apply Lemma \ref{lem:key_prop}. 
By Lemma 
\ref{lem:ell_unidri}, any $\rho \in T^*(\supp \psi) \cap \{ p_\newtheta=0\}$ is perpendicular to the scaling direction.
Therefore, since $c\equiv 1$ in the PML region, any such $\rho$ goes to infinity backwards in time under the flow associated to $P$. Therefore Point \ref{it:kp1} in Lemma \ref{lem:key_prop} is satisfied, and the result of this lemma is that
$$
\operatorname{WF}_\hbar ({\psi}u_\infty) \subset 
\Big\{ \rho \in T^*\operatorname{supp}\psi, \hspace{0.3cm}\exists t<0, \; \Phi_t(\rho) \in \operatorname{WF}_\hbar (f) \Big\}.
$$
By similar arguments, Lemma \ref{lem:ell_unidri} implies that
$$
\Big\{ \rho \in T^*\operatorname{supp}\psi, \hspace{0.3cm}\exists t<0, \; \Phi_t(\rho) \in \operatorname{WF}_\hbar (  f) \Big\} \subset \Big\{ |p_{\rm s}| > 0 \Big\}.
$$
By  Lemma \ref{lem:ell_wf}, 
$$
\operatorname{WF}_\hbar ({\psi}u_\infty) = \emptyset.
$$
By Lemma \ref{lem:osci}, Part \ref{it:osci1}, and Lemma \ref{lem:comp_symbol}, Part \ref{it:symb_ell_infnt}, for any $s>0$, 
\beqs
\N{\psi u_\infty}_{H^s_\hbar} = O(\hbar^\infty)
\eeqs
(where Lemma \ref{lem:osci} is applied with $\supp\widetilde{\chi} \subset \Rea^d\setminus \Omega_{\rm int}$, so that $\widetilde{\chi} P_\newtheta u_\infty= 
\widetilde \chi f =0$).
The claim \eqref{eq:claim} then follows from the trace theorem and the fact that $\psi \equiv 1$ near $\partial \Omega$, and the proof is complete.
\end{proof}

\section{The propagation of physical error}

\subsection{The propagation of physical error for the parallel method}\label{sec:5error}

\begin{proof}[Proof of Lemma \ref{lem:model_err_prop}]
By the definition of $\mathcal T_{i}$ \eqref{eq:Tjl}, $\mathcal{T}_{i} v =0$ on $\partial \Omega_i \cap\partial\Omega$ for any $v \in H^1_0(\Omega)$. Thus, since $\supp \chi_i
\cap \Omega
 \subset \Omega_i$ (by \eqref{eq:PoU}),
$\chi_i\mathcal{T}_{i} v =0$ on $\partial\Omega_i\setminus \partial\Omega$ and so $\mathbf T$ defined by \eqref{eq:bold_T} maps $(H^1_0(\Omega))^N \mapsto  (H^1_0(\Omega))^N$.
Now, in $\Omega_j$, by the definition of $e^{n+1}_j$ \eqref{eq:errors} and the iteration \eqref{eq:localprob},
\begin{align*}
P^j_{\newtheta} \parallelejnpo =
P_\newtheta^j \big( u|_{\Omega_j} - u_j^{n+1}\big)
&= P_\newtheta^j( u|_{\Omega_j}) - P^j_\newtheta(\parallelu^n|_{\Omega_j}) + (P_\newtheta \parallelu^n)|_{\Omega_j} - f|_{\Omega_j}\\
&= P_\newtheta^j \big( (u-\parallelu^n)|_{\Omega_j}\big) - \big( P_\newtheta(u-\parallelu^n)\big)|_{\Omega_j} \\
&= P_\newtheta^j \big( \parallele^n|_{\Omega_j}\big) - \big( P_\newtheta \parallele^n\big)|_{\Omega_j}.
\end{align*}
Therefore, by \eqref{eq:error_decomp}, 
\beqs
P^j_{\newtheta} \parallelejnpo = \sum_{\ell=1}^N
\Big(P_\newtheta^j \big( (\chi_\ell \paralleleelln)|_{\Omega_j}\big) - \big( P_\newtheta (\chi_\ell \paralleleelln)\big)|_{\Omega_j}\Big).
\eeqs
Furthermore, by \eqref{eq:localprob}, $u_j^{n+1}=\parallelu^n$ on $\partial \Omega_j$, and thus, on $\partial\Omega_j$,
 $$
\parallelejnpo = u - u_j^{n+1} = u - \parallelu^n =  \parallele^n=\sum_{\ell=1}^N \chi_\ell  \paralleleelln.
 $$
Therefore, by the definition of $\mathcal T_{j}$ \eqref{eq:Tjl}, and the fact (noted after \eqref{eq:local_phys_error}) that $\chi_\ell \paralleleelln\in H^1_0(\Omega)$,
 $$
\parallelejnpo 
  = \sum_{\ell=1}^N \mathcal T_{j}  \chi_\ell \paralleleelln.
 $$
Therefore, since $\Tchi_\ell \chi_\ell=\chi_\ell$, 
$$
\chi_j  \parallelejnpo 
    = \sum_{\ell=1}^N \Big( \chi_j \mathcal T_{j}  \Tchi_\ell \Big)\chi_\ell \paralleleelln
 $$
and the result follows by definitions of $\parallelepsilonjn$ \eqref{eq:local_phys_error} and $\mathbf T$ \eqref{eq:bold_T}.
\end{proof}

\bre[The rationale for putting $\Tchi_j$ in the definition of $\mathbf T$]\label{rem:Tchi}
The end of the proof of Lemma \ref{lem:model_err_prop} shows that if 
$\Tchi_j$ is omitted from the definition of $\mathbf T$, then the result \eqref{eq:Monday1} still holds. The advantage of including 
$\Tchi_j$ is that this builds into the operator the localisation property of the physical error. 
Indeed, we do not expect the power contractivity proved for $\mathbf T$ (which is a result about $\mathbf T$ applied to arbitrary functions in $(H^1_0(\Omega))^N$) to hold if $\Tchi_j$ is omitted from the definition of $\mathbf T$.
\ere

\subsection{The propagation of physical error for the sequential method}\label{sec:error_sweeping}

\begin{proof}[Proof of Lemma \ref{lem:model_seq_err_prop}]
We prove  the first equation in \eqref{eq:sweep_fwbw_error}; the proof of the second equation is very similar.
To keep the notation concise, in the proof we omit the restrictions onto $\Omega_j$ so that, in particular, the PDE defining $u_j^{2n+1}$ in \eqref{eq:forward_sweeping} becomes 
\beqs
P_{\newtheta}^j
u_j^{2n+1} = 
(P_{\newtheta}^j-P_{\newtheta})
(u_{j,n}^{\rightarrow}) + f \hspace{0.5cm}\text{in } \Omega_j. 
\eeqs
By the definition of $e_{\times,j}^{2n+1}$ \eqref{eq:errors_mult} and the definition  of $u^{2n+1}_j$ \eqref{eq:forward_sweeping}, in $\Omega_j$,
\begin{align*}
P_{\newtheta}^j
 e_{\times,j}^{2n+1} &= 
P_{\newtheta}^j u -  P_{\newtheta}^j u^{2n+1}_j =
(P_{\newtheta}^j - P_{\newtheta}) u + f - (P_{\newtheta}^j-P_{\newtheta})
(u_{j,n}^{\rightarrow}) - f \\
&= (P_{\newtheta}^j -  P_\newtheta) \Big(u - \sum_{\ell<j} \chi_\ell u_\ell^{2n+1} - \sum_{j\leq \ell} \chi_\ell u_\ell^{2n}\Big) \\
&= (P_{\newtheta}^j -  P_\newtheta) \Big(\sum_{\ell=1}^N \chi_\ell u - \sum_{\ell<j} \chi_\ell u_\ell^{2n+1} - \sum_{j\leq \ell} \chi_\ell u_\ell^{2n}\Big) \\
&= (P_{\newtheta}^j -  P_\newtheta) \Big(\sum_{\ell<j} \chi_\ell ( u - u_\ell^{2n+1}) +\sum_{j\leq \ell} \chi_\ell ( u - u_\ell^{2n} ) \Big) \\
&= (P_{\newtheta}^j -  P_\newtheta) \Big(\sum_{\ell<j} \chi_\ell e_{{\times}, \ell}^{2n+1} +\sum_{j\leq \ell} \chi_\ell e_{{\times}, \ell}^{2n} \Big).
\end{align*}
Similarly, 
on $\partial\Omega_j$,
\begin{align*}
 e_{\times,j}^{2n+1} & = u -  u^{2n+1}_j = u - \sum_{\ell<j} \chi_\ell u_\ell^{2n+1} - \sum_{j\leq \ell} \chi_\ell u_\ell^{2n} = \sum_{\ell<j} \chi_\ell e_{{\times}, \ell}^{2n+1} +\sum_{j\leq \ell} \chi_\ell e_{{\times}, \ell}^{2n},
\end{align*}
where,  in the last equality, we used the same manipulation as in the previous displayed equation. Therefore, by definition of ${\mathcal T}_{j}$ \eqref{eq:Tjl} and the definition of $\Tchi_j$ in \S\ref{sec:idea_error},
\begin{align*}
 e_{\times,j}^{2n+1} &= \sum_{\ell<j}  {\mathcal T}_{j} \chi_\ell e_{{\times}, \ell}^{2n+1} +\sum_{j\leq \ell} {\mathcal T}_{j} \chi_\ell e_{{\times}, \ell}^{2n} = \sum_{\ell<j}  {\mathcal T}_{j} {\widetilde \chi}_\ell \chi_\ell e_{{\times}, \ell}^{2n+1} +\sum_{j\leq \ell} {\mathcal T}_{j} {\widetilde \chi}_\ell \chi_\ell e_{{\times}, \ell}^{2n}.
\end{align*}
Therefore 
\begin{align*}
({\boldsymbol{\epsilon}_{\times}^{2n+1}})_j =  \chi_j  e_{\times,j}^{2n+1}
&= \sum_{\ell<j}  (\chi_j{\mathcal T}_{j} \Tchi_\ell) \chi_\ell e_{{\times}, \ell}^{2n+1} +\sum_{j\leq \ell} (\chi_j {\mathcal T}_{j} \Tchi_\ell) \chi_\ell e_{{\times}, \ell}^{2n} \\
&= \sum_{\ell<j}  (\chi_j{\mathcal T}_{j} \Tchi_\ell)  \epsilon_{{\times}, \ell}^{2n+1} +\sum_{j\leq \ell} (\chi_j {\mathcal T}_{j} \Tchi_\ell) \epsilon_{{\times}, \ell}^{2n}.
\end{align*}
The result (i.e., the first equation in \eqref{eq:sweep_fwbw_error}) follows by the definition of $\mathbf T$ \eqref{eq:bold_T} and the fact that $\mathbf L$ and $\mathbf U$ are the lower/upper triangular parts of $\mathbf T$.
\end{proof}

The following lemma is proved in an analogous way to Lemma \ref{lem:model_seq_err_prop}.

\begin{lemma} \label{lem:model_seq_err_prop_gen}
For the general sequential method with ordering $\{\sigma_n\}$, for  $n\geq 0$, 
$$
{\boldsymbol{\epsilon}_{\times}^{n+1}} = \mathbf A_{n} {\boldsymbol{\epsilon}_{\times}^{n+1}} + \mathbf B_{n} {\boldsymbol{\epsilon}_{\times}^{n}},
$$
where
$$
(\mathbf A_{n})_{i,\ell}=
\begin{cases}
\mathbf T_{i,\sigma_n(\ell)} &\text{ if }\ell < \sigma_n^{-1}(i),\\
 0 &\text{ otherwise }
\end{cases}, \quad
(\mathbf B_{n})_{i,\ell}=
\begin{cases}
\mathbf T_{i,\sigma_n(\ell)} &\text{ if }\ell > \sigma_n^{-1}(i),\\
 0 &\text{ otherwise }
\end{cases}.
$$
\end{lemma}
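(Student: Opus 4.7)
The plan is to mimic the proof of Lemma \ref{lem:model_seq_err_prop} line by line, with the subdomain index $j$ systematically replaced by $\sigma_n(j)$ and with the forward/backward dichotomy in the sum defining $u_{j,n}^{\rightarrow}$ replaced by the ``already-updated at step $j$'' versus ``not-yet-updated at step $j$'' dichotomy that appears in the definition of $u^*_{\sigma_n(j),n}$. No new analytic ingredient is needed; only careful bookkeeping with the permutation $\sigma_n$.

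Concretely, I would proceed as follows. First, for fixed $n \geq 0$ and $1 \leq j \leq N$, start from the definition $e_{\times, \sigma_n(j)}^{n+1} := u - u_{\sigma_n(j)}^{n+1}$ and use \eqref{eq:general_sweeping} together with $P_\newtheta u = f$ (restricted to $\Omega_{\sigma_n(j)}$) to obtain, exactly as in the proof of Lemma \ref{lem:model_seq_err_prop},
\[
P_\newtheta^{\sigma_n(j)} e_{\times, \sigma_n(j)}^{n+1} = (P_\newtheta^{\sigma_n(j)} - P_\newtheta)\bigl(u - u^*_{\sigma_n(j),n}\bigr) \quad \text{in } \Omega_{\sigma_n(j)},
\]
with the analogous identity $e_{\times, \sigma_n(j)}^{n+1} = u - u^*_{\sigma_n(j),n}$ holding on $\partial \Omega_{\sigma_n(j)}$. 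Because $\sigma_n$ is a permutation of $\{1,\dots,N\}$, the partition of unity gives $u = \sum_{\ell=1}^N \chi_{\sigma_n(\ell)} u$, and subtracting the definition of $u^*_{\sigma_n(j),n}$ yields
\[
u - u^*_{\sigma_n(j),n} = \sum_{\ell < j} \chi_{\sigma_n(\ell)} e_{\times, \sigma_n(\ell)}^{n+1} + \sum_{\ell \geq j} \chi_{\sigma_n(\ell)} e_{\times, \sigma_n(\ell)}^{n}.
\]

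Second, applying the local solution operator $\mathcal T_{\sigma_n(j)}$ from \eqref{eq:Tjl} and using the identity $\widetilde\chi_m \chi_m = \chi_m$ (valid because $\widetilde\chi_m \equiv 1$ on $\operatorname{supp}\chi_m$), I would obtain
\[
e_{\times, \sigma_n(j)}^{n+1} = \sum_{\ell < j} \mathcal T_{\sigma_n(j)} \widetilde\chi_{\sigma_n(\ell)} \bigl(\chi_{\sigma_n(\ell)} e_{\times, \sigma_n(\ell)}^{n+1}\bigr) + \sum_{\ell \geq j} \mathcal T_{\sigma_n(j)} \widetilde\chi_{\sigma_n(\ell)} \bigl(\chi_{\sigma_n(\ell)} e_{\times, \sigma_n(\ell)}^{n}\bigr).
\]
Multiplying through by $\chi_{\sigma_n(j)}$, recognising $\epsilon_{\times,m}^{\bullet} = \chi_m e_{\times,m}^{\bullet}$, and inserting the definition \eqref{eq:bold_T} of $\mathbf T$ then delivers
\[
\epsilon_{\times, \sigma_n(j)}^{n+1} = \sum_{\ell < j} \mathbf T_{\sigma_n(j), \sigma_n(\ell)}\, \epsilon_{\times, \sigma_n(\ell)}^{n+1} + \sum_{\ell \geq j} \mathbf T_{\sigma_n(j), \sigma_n(\ell)}\, \epsilon_{\times, \sigma_n(\ell)}^{n}.
\]

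Third, I would set $i := \sigma_n(j)$, so that $j = \sigma_n^{-1}(i)$, and reindex both sums by the column variable (either leaving them in the form $\sigma_n(\ell)$, or substituting $m = \sigma_n(\ell)$ so that the summation conditions become $\sigma_n^{-1}(m) < \sigma_n^{-1}(i)$ and $\sigma_n^{-1}(m) \geq \sigma_n^{-1}(i)$ respectively). Reading off the coefficients of the components of $\boldsymbol{\epsilon}_\times^{n+1}$ and $\boldsymbol{\epsilon}_\times^{n}$ then identifies the entries of $\mathbf A_n$ and $\mathbf B_n$ stated in the lemma, and a sanity check against the two cases $\sigma_n = \operatorname{id}$ and $\sigma_n(j) = N - j + 1$ recovers the splitting $\mathbf L + \mathbf U$ of Lemma \ref{lem:model_seq_err_prop}. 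There is no substantive obstacle here: the entire argument is algebraic, and the only thing to be careful with is making sure that the row/column permutation bookkeeping in Step 3 matches precisely the convention used to define $\mathbf A_n$ and $\mathbf B_n$ in the lemma statement.
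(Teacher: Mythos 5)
Your proposal is correct and follows exactly the same route as the paper's own proof: carry out the algebra of the forward-backward case (Lemma~\ref{lem:model_seq_err_prop}) with $j$ replaced by $\sigma_n(j)$, then substitute $i=\sigma_n(j)$ to reindex and read off the entries of $\mathbf A_n$ and $\mathbf B_n$. The one small point you gloss over is that the derived sum runs over $\ell \geq \sigma_n^{-1}(i)$ whereas the stated $\mathbf B_n$ has strict inequality $\ell > \sigma_n^{-1}(i)$; this is reconciled by $(\mathbf T)_{i,i}=0$, which kills the $\ell=\sigma_n^{-1}(i)$ term, as the paper notes implicitly via ``the result follows by the definition of $\mathbf T$''.
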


Comparing Lemmas \ref{lem:model_seq_err_prop} and \ref{lem:model_seq_err_prop_gen}, we see that, for the forward-backward method, both ${\bf A}_n$ and ${\bf B}_n$ alternate between equalling $\bf L$ and $\bf U$.

By their definitions, we see that 
$({\mathbf A}_n)_{i,\ell}$ is non-zero if $\sigma_n(\ell)$ appears before $i$ in the ordering of subdomains $\{ \sigma_n(1), \ldots, \sigma_n(N)\}$ and $({\mathbf B}_n)_{i ,\ell}$ is non-zero if $\sigma_n(\ell)$ appears after $i$ in this ordering.
Compare this to the fact that $({\mathbf L}_n)_{i ,\ell}$ is non-zero if $\ell <i$, i.e., $\ell$ appears before $i$ in the ordering $\{1, 2, \ldots, N\}$ or after $i$ in the ordering $\{N, N-1, \ldots, 1\}$, and 
$({\mathbf U}_n)_{i ,\ell}$ is non-zero if $\ell >i$, i.e., $\ell$ appears after $i$ in the ordering $\{1, 2, \ldots, N\}$ or before $i$ in the ordering $\{N, N-1, \ldots, 1\}$.

\begin{proof}[Proof of Lemma \ref{lem:model_seq_err_prop_gen}]
Arguing exactly as in the proof of Lemma \ref{lem:model_seq_err_prop}, we obtain that
$$
({\boldsymbol{\epsilon}_{\times}^{n+1}})_{\sigma(j)} =
\sum_{\ell< j} 
 (\chi_{\sigma_n(j)}{\mathcal T}_{\sigma_n(j)} {\widetilde \chi}_{\sigma_n(\ell)})  \epsilon_{{\times}, {\sigma_n(\ell)}}^{n+1} +
 \sum_{j\leq \ell } 
 (\chi_{\sigma_n(j)} {\mathcal T}_{\sigma(j)} {\widetilde \chi}_{\sigma_n(\ell)}) \epsilon_{{\times}, {\sigma_n(\ell)}}^n.
$$
Therefore, with $i=\sigma_n(j)$, 
$$
({\boldsymbol{\epsilon}_{\times}^{n+1}})_i =
\sum_{\ell< \sigma_n^{-1}(i)} 
 (\chi_i{\mathcal T}_{i} {\widetilde \chi}_{\sigma_n(\ell)})  \epsilon_{{\times}, {\sigma_n(\ell)}}^{n+1} +
 \sum_{\sigma_n^{-1}(i)\leq \ell } 
 (\chi_i{\mathcal T}_{i} {\widetilde \chi}_{\sigma_n(\ell)}) \epsilon_{{\times}, {\sigma_n(\ell)}}^n
$$
and the result follows by the definition of $\mathbf T$ \eqref{eq:bold_T}.
\end{proof}

\section{Proof of Theorem \ref{thm:gen}} \label{s:mainproof}

\subsection{Precise definition of following a word}
 \label{ss:proof_gen}

Recall from \S\ref{sec:powers} the definitions of the set of words $\mathcal W$ and the composite map $\mathscr{T}_w$ \eqref{eq:Tw}.
Recall the definition of the trajectory staring at $(x,\xi)$ for a time interval $I$, $\gamma_I(x,\xi)$ \eqref{eq:gamma_I}.

We now define a set of cutoffs that are bigger than the $\Tchi_j$s. Let $\{\TTchi_j\}_{j=1}^N$ be such that, for each $j$, $\TTchi_j\in C^\infty(\Rea^d)$, $\supp \TTchi_j\cap \Omega \subset \Omega_j$, $\TTchi_j \equiv 1$ on $\supp \Tchi_j$, and $\TTchi_j$ is zero 
on $\supp(P^j_\newtheta-P_\newtheta)$.

\begin{definition}\label{def:follow}
A trajectory $\gamma$ for $P$ \emph{follows} a word $w \in \mathcal W$ of length $n\geq 2$ if 
\beqs%\label{eq:word_decomp}
\gamma  = \prod_{2 \leq j \leq n} \gamma_j,
\eeqs
where the product stands for concatenation, 
$\gamma_j = \gamma_{(0, T_j]}(x_{j-1},\xi_{j-1})$ for some $T_j >0$ and $(x_j,\xi_j) \in T^* \mathbb R^d $ with
\bit
\item
for $1 \leq j \leq n-1$,
$x_j \in \operatorname{supp} \TTchi_{w_j} \cap \operatorname{supp} \big(P^{w_{j+1}}_{\rm s} - P_{\rm s} \big)$,
\item $\gamma_j \subset T^*\widetilde \Omega_{w_{j}}$ for $2 \leq j \leq n$ (recall Definition \ref{def:ext} of the extended domains),
\item 
$(x_n, \xi_n) := \Phi_{ T_{n}}(x_{n-1},\xi_{n-1}) \in T^* (\operatorname{supp} \TTchi_{w_n})$.
\eit
\end{definition}

Figure \ref{drw:word} illustrates a simple example of a trajectory following a word.

\subsection{Reduction to a propagation result for $\mathcal T_w$ (Lemma \ref{lem:key_prop2})} \label{ss:key_prop2}

Recall that in \S\ref{sec:ideas} we showed how Theorem \ref{thm:gen} follows from Lemma \ref{lem:notallowed}; i.e., the result that
\beqs
\text{ If }\,\, 
w\in \mathcal W \,\,\text{ is not allowed then } 
 \,\,
\|\mathscr T_w\|_{H^1_\hsc(\Omega) \rightarrow H^1_\hsc(\Omega)}= O(\hbar^{\infty}).
\eeqs
The heart of the proof of Lemma \ref{lem:notallowed} is the following result, described informally in \S\ref{sec:powers}, and proved in the next subsection using the propagation result of Lemma \ref{lem:key_prop}.

\begin{lemma}\label{lem:key_prop2} 
Suppose that $\coeffc$ is nontrapping.
Let $w \in \mathcal W$ of length $n := |w| \geq 1$. 
For any $\chi_{\rm meas} \in C^\infty_c(\widetilde \Omega_{w_n})$ and any tempered $\hbar$-family $v$ of elements of $H_0^1(\Omega)$, $\chi_{\rm meas} \mathscr T_w v$ is tempered and
\beq\label{eq:induction}
\operatorname{WF}_\hbar(\chi_{\rm meas} \widetilde{\mathscr T_w v}) \subset \Big \{ \rho \in T^* \operatorname{supp} \chi_{\rm meas}\,:\,\exists t_0 \leq 0 \text{ s.t. } \gamma_{(t_0, 0]}(\rho) \text{ follows } w \Big\},
\eeq
where $\widetilde{\mathscr T_w v} := S_{w_{n}} \mathscr T_w v$.
\end{lemma}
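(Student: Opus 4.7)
The plan is to induct on the length $n := |w|$ of the word, using Lemma \ref{lem:key_prop} as the key propagation tool at each stage. For the base case $n=2$, write $\mathscr{T}_w v = \chi_{w_2}\mathcal{T}_{w_2}(\widetilde\chi_{w_1} v)$, so that $\mathcal{T}_{w_2}(\widetilde\chi_{w_1} v)$ solves a PDE on $\Omega_{w_2}$ with source $(P^{w_2}_\newtheta - P_\newtheta)(\widetilde\chi_{w_1} v)$ supported in $\supp(P^{w_2}_\newtheta - P_\newtheta)\cap \supp\widetilde\chi_{w_1}$ and boundary data $\widetilde\chi_{w_1} v$ vanishing on $\partial\Omega_{w_2}\cap\partial\Omega$ (since $v\in H^1_0(\Omega)$). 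Temperedness follows from Lemma \ref{lem:res_est} and non-trapping. Applying Lemma \ref{lem:key_prop} with $\psi = \chi_{\rm meas}\chi_{w_2}$ confines $\operatorname{WF}_\hbar(\chi_{\rm meas}\widetilde{\mathscr{T}_w v})$ to points whose backward trajectory stays in $T^*\widetilde\Omega_{w_2}$ until reaching a point of $T^*[\supp\widetilde\chi_{w_1}\cap\supp(P^{w_2}_\newtheta-P_\newtheta)]$, which is precisely the condition to follow $(w_1,w_2)$.

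For the inductive step with $n\geq 3$, set $w' := (w_1,\ldots,w_{n-1})$ and write
\[
\mathscr{T}_w v = \chi_{w_n}\mathcal{T}_{w_n}(u), \qquad u := \widetilde\chi_{w_{n-1}}\mathscr{T}_{w'} v.
\]
Then $\mathcal{T}_{w_n} u$ solves a PDE on $\Omega_{w_n}$ with source $f := (P^{w_n}_\newtheta - P_\newtheta) u$ supported in $\supp(P^{w_n}_\newtheta - P_\newtheta)\cap\supp\widetilde\chi_{w_{n-1}}$, and with boundary data vanishing on $\partial\Omega_{w_n}\cap\partial\Omega$ (because $\mathbf{T}$ preserves $H^1_0(\Omega)$ by Lemma \ref{lem:model_err_prop}); temperedness propagates through Lemma \ref{lem:res_est}. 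Applying Lemma \ref{lem:key_prop} with $\psi=\chi_{\rm meas}\chi_{w_n}$ shows that any $\rho\in\operatorname{WF}_\hbar(\chi_{\rm meas}\widetilde{\mathscr{T}_w v})$ admits $t\leq 0$ and $\chi\in C^\infty_c(\widetilde\Omega_{w_n})$ with $\gamma_{[t,0]}(\rho)\subset T^*\widetilde\Omega_{w_n}$ and $\Phi_t(\rho)\in\operatorname{WF}_\hbar(\chi\widetilde f)$. Using the pseudolocal property of the differential operator $P^{w_n}_\newtheta - P_\newtheta$, the wavefront of $\chi\widetilde f$ is contained in that of a cut-off of $\mathscr{T}_{w'}v$ supported in $\supp(P^{w_n}_\newtheta-P_\newtheta)\cap\supp\widetilde\chi_{w_{n-1}}\subset\widetilde\Omega_{w_{n-1}}$. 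The inductive hypothesis then places this wavefront at points whose backward trajectory follows $w'$. Concatenating this $w'$-trajectory with the segment $\gamma_{[t,0]}(\rho)\subset T^*\widetilde\Omega_{w_n}$ at the common point $\Phi_t(\rho)\in\supp\widetilde\chi_{w_{n-1}}\cap\supp(P^{w_n}_\newtheta-P_\newtheta)$ --- which is exactly the transition region in Definition \ref{def:follow} --- produces a trajectory following $w$, completing the induction.

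The main obstacle I anticipate is the bookkeeping between the two different extensions $S_{w_{n-1}}$ and $S_{w_n}$: the inductive hypothesis controls the wavefront of the $w_{n-1}$-symmetrised iterate $\widetilde{\mathscr{T}_{w'}v}$, whereas Lemma \ref{lem:key_prop} applied at step $n$ delivers information phrased in terms of the $w_n$-symmetrisation $\widetilde f$ of the source. The key observation reconciling the two is that $\widetilde\chi_{j}\equiv 0$ on $\supp(P^j_\newtheta-P_\newtheta)$ for every $j$ by construction of the cut-offs, so $f$ is supported strictly inside $\Omega_{w_n}$, well away from both symmetrisation regions; on this common interior set both extensions act as the identity and the wavefront sets agree, so transferring wavefront information back to $\mathscr{T}_{w'}v$ (without any extension) and then forward to the $w_{n-1}$-extension --- as required to invoke the inductive hypothesis --- is legitimate.
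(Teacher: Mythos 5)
Your overall structure — induction on $|w|$, the decomposition $\mathscr T_w v = \chi_{w_n}\mathcal T_{w_n}(\Tchi_{w_{n-1}}\mathscr T_{w'}v)$, the application of Lemma \ref{lem:key_prop} with $\psi = \chi_{\rm meas}\chi_{w_n}$, and the subsequent appeal to the inductive hypothesis and pseudolocality — matches the paper's proof in \S\ref{ss:proof_key_prop2}. Temperedness via Lemma \ref{lem:res_est} is also handled the same way, and the $n=2$ base case is treated analogously.

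However, your justification for the extension bookkeeping is not correct. You invoke ``$\Tchi_j\equiv 0$ on $\supp(P^j_\newtheta-P_\newtheta)$ for every $j$'' to conclude that $f := (P^{w_n}_\newtheta - P_\newtheta)(\Tchi_{w_{n-1}}\mathscr T_{w'}v)$ is supported ``well away from both symmetrisation regions'', so that $S_{w_{n-1}}$ and $S_{w_n}$ both act as the identity on $\supp f$. This does not follow: the property $\Tchi_{w_{n-1}}\equiv 0$ concerns $\supp(P^{w_{n-1}}_\newtheta - P_\newtheta)$, whereas $\supp f$ lives in $\supp(P^{w_n}_\newtheta - P_\newtheta)\cap\supp\Tchi_{w_{n-1}}$, which involves the \emph{next} subdomain's transition region. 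Moreover, the stated conclusion is actually false in general: $\supp(P^{w_n}_\newtheta - P_\newtheta)$ contains the PML layers of $\Omega_{w_n}$ belonging to its interior generalised edges, and for a subdomain touching $\partial\Omega$ these layers extend all the way to $\partial\Omega$ in the tangential directions (corner zones where an interior-edge PML strip meets a boundary-edge PML strip). Hence $\supp f$ can indeed lie within the $\epsilon_{\rm ext}$-neighbourhood of $\partial\Omega$ where both symmetrisations act nontrivially. What actually closes the gap — and what the paper uses through the commutation identity \eqref{eq:rhsweird} together with \eqref{eq:small_ext} — is that (a) in that neighbourhood the coefficients of $P^{w_n}_\newtheta - P_\newtheta$ are constant (because $\fPML$ is linear past $\width_{\rm lin}$ and $\width-\epsilon_{\rm ext}>\width_{\rm lin}$), so $S_j$ passes through the operator, and (b) since $\supp f\subset\Omega_{w_{n-1}}\cap\Omega_{w_n}$, the only faces of $\partial\Omega$ reachable from a neighbourhood of $\supp f$ are those shared by both subdomains, on which $S_{w_{n-1}}$ and $S_{w_n}$ act by the same odd reflection. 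Replace your support argument with this reasoning and your proof is sound.
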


\bpf[Proof of Lemma \ref{lem:notallowed} using Lemma \ref{lem:key_prop2}]
The fact that  $\chi_{\rm meas} \mathscr T_w v$ is tempered is a consequence of the resolvent estimate given by Lemma \ref{lem:res_est}.
We fix a word $w \in \mathcal W$ not allowed.
Seeking a contradiction, we assume that \eqref{eq:Thursday1} fails; 
that is, there exists $\hbar_n \rightarrow 0$, $v_n \in H^1_0(\Omega)$ with $\Vert v_n \Vert_{H_{\hbar_n}^1} = 1$ and $N_0 \in \mathbb N$ such that
\beqs%\label{eq:sunny_contradict}
\tfa n \geq 1, \hspace{0.3cm}\Vert \mathscr T_w(\hbar_n) v_n \Vert_{H^s_{\hbar_n}(\Omega)} \geq \hbar_n^{N_0}.
\eeqs
We show that 
\beqs%\label{eq:contradict} 
 \Vert \mathscr T_w(\hbar_n) v_n \Vert_{H^{s}_{\hbar_n}(\Omega)} = O(\hbar_n^\infty),
 \eeqs
  leading to a contradiction. To lighten the notation, from now on we drop the subscripts $n$ from $v_n$ and $\hbar_n$. 
By the definition of a word being allowed,
Lemma \ref{lem:key_prop2} implies that, for any $\chi \in C^\infty_c(\widetilde \Omega_{w_n})$, 
\beqs
\operatorname{WF}_{\hbar}(\chi \widetilde{\mathscr T_w v}) =\emptyset.
\eeqs
Therefore, by Lemma \ref{lem:osci}, Part \ref{it:osci1} together with 
 Lemma \ref{lem:comp_symbol}, Part \ref{it:symb_ell_infnt}, 
\beq\label{eq:almost1}
\tfa \chi \in C^\infty_c(\widetilde \Omega_{w_n})
\hspace{0.3cm} \Vert \chi \widetilde{\mathscr T_w v} \Vert_{H^s_\hbar(\widetilde{\Omega}_{w_n})} = O(\hbar_n^\infty).
\eeq
Recall that $\mathscr T_w v$ has a $\chi_{w_n}$ at the front by the definitions of $\mathscr T_w$ \eqref{eq:Tw} and $\mathbf T$ \eqref{eq:bold_T}.
We now choose $\chi$ in \eqref{eq:almost1} to be one on $\supp \chi_{w_n} \cap \Omega_{w_n}$ and supported in $\widetilde{\Omega}_{w_n}$, 
and obtain that 
\beqs
\Vert \mathscr T_w v \Vert_{H^s_{\hbar}(\Omega)} =
\Vert \mathscr T_w v \Vert_{H^s_{\hbar}(\Omega_{w_n})} = O(\hbar_n^\infty).
\eeqs
\end{proof}

\bre[Why we don't extend interior subdomains]\label{rem:extend}
Consider the end of the proof of Lemma \ref{lem:notallowed} using Lemma \ref{lem:key_prop2}. If $\Omega_{w_n}$ is an interior subdomain, i.e., $\partial \Omega_{w_n} \cap \partial\Omega =\emptyset$, then $\chi_{w_n}$ is compactly supported in $\Omega_{w_n}$ (indeed, its support does not touch the PML layer of $\Omega_{w_n}$), and we can choose $\chi$ to be compactly supported in $\Omega_{w_n}$; i.e., we do not need information about $\mathscr T_w v$ up to $\partial \Omega_{w_n}$. However, if $\Omega_{w_n}$ is not an interior subdomain, then $\chi_{w_n}$ can be supported up to the boundary of $\Omega_{w_n}$ (by the partition of unity property) and  
we therefore need information about $\mathscr T_w v$ up to $\partial \Omega_{w_n}$. This is why we extend $\Omega_{w_n}$ to $\widetilde{\Omega}_{w_n}$, choose 
$\chi$ to have support in $\widetilde{\Omega}_{w_n}\setminus \Omega_{w_n}$, and do the propagation in the interior of $\widetilde{\Omega}_{w_n}$ (to bypass the issue of propagating up to $\partial \Omega_{w_n}$, as discussed in Remark \ref{rem:PoS}).
\ere

\subsection{Proof of Lemma \ref{lem:key_prop2} (ending the proof of Theorem \ref{thm:gen})} \label{ss:proof_key_prop2}
It therefore remains to prove Lemma \ref{lem:key_prop2}.
The idea of the proof is to iteratively apply Lemma \ref{lem:key_prop}. 
We proceed by induction on the length $n$ of $w$. 
We first assume that the result of the lemma holds for any word of length $n$ and we show it for any word of length $n+1$. 
At the end of the proof we show that the result holds when $n=2$.

If $|w| = n+1$, we decompose $w$ as
$$
w = (\bar w, w_{n+1}),
$$
and write (by the definitions of $\mathscr T_w$ \eqref{eq:Tw} and $\mathcal{T}_{w_{n+1}}$ \eqref{eq:Tjl})
\beqs
\mathscr T_w v = \chi_{w_{n+1}} \mathcal T_{w_{n+1} } \Tchi_{w_n} \mathscr T_{\bar w} v.
\eeqs
We now apply Lemma \ref{lem:key_prop}. Let 
\beqs
W := \mathcal T_{w_{n+1}}  \Tchi_{w_n} \mathscr T_{\bar w} v
\eeqs
so that $\mathscr T_w v =\chi_{w_{n+1}} W$. 
By \eqref{eq:small_ext},
\beq\label{eq:imp2}
\chi_{\rm meas} \chi_{w_{n+1}} \widetilde W
=\chi_{\rm meas} \widetilde{(\chi_{w_{n+1}}  W)}
 =  \chi_{\rm meas} \widetilde{\mathscr T_w v}.
\eeq
 By the definition of $\mathcal{T}_{w_{n+1}}$ \eqref{eq:Tjl},
$$
\begin{cases}
P_{\newtheta}^{w_{n+1}} W = (P_{\newtheta}^{w_{n+1}} - P_{\newtheta})(\Tchi_{w_n} \mathscr T_{\bar w} v) \hspace{0.3cm} \text{in }{\Omega_{w_{n+1}}}, \\
W = \Tchi_{w_n} \mathscr T_{\bar w} v \hspace{0.3cm} \text{on }\partial\Omega_{w_{n+1}},
\end{cases}
$$
where we omit the restriction operators on $\Omega_{w_{n+1}}$ to keep the notation concise (as in the proof of Lemma \ref{lem:model_seq_err_prop} above).

We therefore apply Lemma \ref{lem:key_prop} with $j := w_{n+1}$, $f := (P_{\newtheta}^{w_{n+1}} - P_{\newtheta})(\Tchi_{w_n} \mathscr T_{\bar w} v)$, $g := \Tchi_{w_n} \mathscr T_{\bar w} v$, and $\psi :=\chi_{\rm meas}\chi_{w_{n+1}}$. 

We need to check that the assumptions in Points \ref{it:kp1}-\ref{it:kp2} are satisfied.
The assumption in Point \ref{it:kp1} is satisfied 
because $\coeffc$ is nontrapping.
The assumption in Point \ref{it:kp2} is satisfied since $\mathscr T_{\bar w} v \in H^1_0(\Omega)$ by Lemma \ref{lem:model_err_prop}.

The result of Lemma \ref{lem:key_prop} is that
\begin{multline*}
\operatorname{WF}_\hbar (\chi_{\rm meas}\chi_{w_{n+1}} \widetilde W) \subset \\ \bigcup_{\chi \in C^\infty_c(\widetilde \Omega_{w_{n+1}})}
\Big\{ \rho \in T^* \operatorname{supp} \chi_{\rm meas}\,:\, \hspace{0.3cm} \exists t <0\, \text{ s.t. }\, \gamma_{[t, 0]}(\rho) \subset T^* \widetilde \Omega_{w_{n+1}} \text{ and }  \Phi_t(\rho) \in \operatorname{WF}_\hbar (\chi \widetilde f) \Big\},
\end{multline*}
where $\widetilde f = S_{w_{n+1}} f$, $\widetilde W = S_{w_{n+1}}W$. 
Recall that in the extended region the PML is linear and the coefficients of both $P_\newtheta^j$ and $P_\newtheta$ are constant.
The arguments used in the proof of Lemma \ref{lem:ext_h} therefore show that  
the extension operator $S_j$ of Definition \ref{def:ext} is such that 
\beq\label{eq:rhsweird}
S_j\Big( (P_{\newtheta}^{j+1} - P_{\newtheta})(\chi_j w_{j})\Big)
=(P_{\newtheta}^{j+1} - P_{\newtheta})S_j(\chi_j  w_{j})
=(P_{\newtheta}^{j+1} - P_{\newtheta})(\chi_j \widetilde w_{j}).
\eeq
where we have used \eqref{eq:small_ext} in the second step.

Therefore
\begin{multline}
\operatorname{WF}_\hbar (\chi_{\rm meas}\chi_{w_{n+1}} \widetilde W) \subset  \bigcup_{\chi \in C^\infty_c(\widetilde \Omega_{w_{n+1}})}
\Big\{ \rho \in T^* \operatorname{supp} \chi_{\rm meas}\,:\, \exists t <0 \, \text{ s.t. }\, \gamma_{[t, 0]}(\rho) \subset T^* \widetilde \Omega_{w_{n+1}} \\\text{ and }  \Phi_t(\rho) \in \operatorname{WF}_\hbar (\chi (P_{\newtheta}^{w_{n+1}} - P_{\newtheta})(\Tchi_{w_n} \widetilde{\mathbf T_{\bar w} v})) \Big\}.
\label{eq:window1}
\end{multline}
We ultimately use that, by \eqref{eq:mult_WF} and \eqref{eq:mult_WF1},
\beq\label{eq:sunny3}
\operatorname{WF}_\hbar \chi(P_{\newtheta}^{w_{n+1}} - P_{\newtheta})(\Tchi_{w_n} \widetilde{\mathbf T_{\bar w} v}) \subset 
 \operatorname{WF}_\hbar (\chi^{>}\Tchi_{w_n} \widetilde{\mathbf T_{\bar w} v})
\cap  T^* (\operatorname{supp} \Tchi_{w_n}) \cap T^* (\operatorname{supp} (P_{\newtheta}^{w_{n+1}} - P_{\newtheta})),
\eeq
where $\chi^{>}\in C^\infty_c(\widetilde \Omega_{w_{n+1}})$ is such that $\chi^{>}\equiv 1$ on $\supp \chi$. 
However, we first change the cut-off $\chi \in C^\infty_c(\widetilde \Omega_{w_{n+1}})$ into a cut off in $C^\infty_c(\widetilde \Omega_{w_n})$ so that we can use 
the induction hypothesis, which gives information about $ \operatorname{WF}_\hbar (\chi \Tchi_{w_n} \mathbf T_{\bar w} v)$ with $\chi \in C^\infty_c(\widetilde \Omega_{w_n})$.

Observe that 
\beqs
\tfa \chi \in C^\infty_c(\widetilde{\Omega}_{w_{n+1}})\text{ there exists } \psi \in C^\infty_c(\widetilde{\Omega}_{w_n}) \text{ s.t. }
\chi (P_{\newtheta}^{w_{n+1}}- P_{\newtheta})\Tchi_{w_n} = \psi (P_{\newtheta}^{w_{n+1}}- P_{\newtheta})\Tchi_{w_n}
\eeqs
(note that this crucially relies on the presence of $\Tchi_{w_n}$ on the right of both sides of the equality).

Using this in \eqref{eq:window1}, we have 
\begin{align}\nonumber
&\operatorname{WF}_\hbar (\chi_{\rm meas}\chi_{w_{n+1}} \widetilde W) \subset  \bigcup_{\psi \in C^\infty_c(\widetilde \Omega_{w_{n}})}
\Big\{ \rho \in T^* \operatorname{supp} \chi_{\rm meas}\,:\, \exists t <0\,\text{ s.t. }\, \gamma_{[t, 0]}(\rho) \subset T^* \widetilde \Omega_{w_{n+1}} \\
&\hspace{6.6cm}
\text{ and }  \Phi_t(\rho) \in \operatorname{WF}_\hbar (\psi (P_{\newtheta}^{w_{n+1}} - P_{\newtheta})(\Tchi_{w_n} \widetilde{\mathbf T_{\bar w} v})) \Big\}.\label{eq:Friday17}
\end{align}
Therefore, by  \eqref{eq:sunny3} with $\chi$ replaced by $\psi$, 
\begin{align*}
&\operatorname{WF}_\hbar (\chi_{\rm meas}\chi_{w_{n+1}} \widetilde W)
\\&\hspace{1.5cm} \subset  \bigcup_{\psi \in C^\infty_c(\widetilde \Omega_{w_{n}})}
\Big\{ \rho \in T^* \operatorname{supp} \chi_{\rm meas} \,:\, \exists t <0\,\text{ s.t. }\, \gamma_{[t, 0]}(\rho) \subset T^* \widetilde \Omega_{w_{n+1}} \\
&\hspace{3.5cm}\text{ and }  \Phi_t(\rho) \in \operatorname{WF}_\hbar (\psi^{>} \Tchi_{w_{n}} \widetilde{\mathbf T_{\bar w} v})\cap  T^* (\operatorname{supp} \TTchi_{w_n} )\cap T^*\big(\operatorname{supp}(P_{\newtheta}^{w_{n+1}} - P_{\newtheta})\big) \Big\}.
\\&\hspace{1.5cm} =  \bigcup_{\psi \in C^\infty_c(\widetilde \Omega_{w_{n}})}
\Big\{ \rho \in T^* \operatorname{supp} \chi_{\rm meas} \,:\, \exists t <0\,\text{ s.t. }\, \gamma_{[t, 0]}(\rho) \subset T^* \widetilde \Omega_{w_{n+1}} \\
&\hspace{3.5cm}\text{ and }  \Phi_t(\rho) \in \operatorname{WF}_\hbar (\psi \Tchi_{w_{n}} \widetilde{\mathbf T_{\bar w} v})\cap  T^* (\operatorname{supp} \TTchi_{w_n} )\cap T^*\big(\operatorname{supp}(P_{\newtheta}^{w_{n+1}} - P_{\newtheta})\big) \Big\}.
\end{align*}
Therefore, by the induction hypothesis that \eqref{eq:induction} holds with $w$ replaced by $\overline{w}$ and then the Definition \ref{def:follow} of following a word,
\begin{align*}
\operatorname{WF}_\hbar (\chi_{\rm meas} \chi_{w_{n+1}} \widetilde W) &\subset 
\Big\{ \rho \in T^* \operatorname{supp}  \chi_{\rm meas} \,:\,\exists t <0\,\text{ s.t. }\, \gamma_{[t, 0]}(\rho) \subset T^* \widetilde \Omega_{w_{n+1}} \text{ and } \\
&\hspace{4cm} \Phi_t(\rho) \in   T^* (\operatorname{supp} \TTchi_{w_n}) \cap T^* \big(\operatorname{supp}(P_{\newtheta}^{w_{n+1}} - P_{\newtheta})\big), \\
&\hspace{4cm}\text{ and }  \exists t_0 \leq 0 \text{ s.t. } \gamma_{(t_0, 0]}(\Phi_t(\rho)) \text{ follows } \bar w  \Big\}\\
&\subset\Big\{ \rho \in T^* \operatorname{supp}  \chi_{\rm meas} \,:\,  \exists t_1 \leq 0 \,\text{ s.t. }\, \gamma_{(t_1, 0]}(\rho) \text{ follows }  w  \Big\},
\end{align*}
and the result for words of length $n+1$, assuming the result for words of length $n$, follows by \eqref{eq:imp2}. 
To complete the induction, we prove the result for words of length $2$. If $w=(w_1,w_2)$, then the inclusion \eqref{eq:Friday17} simplifies to 
\begin{align*}\nonumber
&\operatorname{WF}_\hbar (\chi_{\rm meas}\chi_{w_{2}} \widetilde W) \subset  \bigcup_{\psi \in C^\infty_c(\widetilde \Omega_{w_{1}})}
\Big\{ \rho \in T^* \operatorname{supp} \chi_{\rm meas}\,:\, \exists t <0\,\text{ s.t. }\, \gamma_{[t, 0]}(\rho) \subset T^* \widetilde \Omega_{w_{2}} \\
&\hspace{6.6cm}
\text{ and }  \Phi_t(\rho) \in \operatorname{WF}_\hbar (\psi (P_{\newtheta}^{w_{2}} - P_{\newtheta})\Tchi_{w_1}) \Big\}.
\end{align*}
Arguing as in \eqref{eq:sunny3} and using the definition of following a word (Definition \ref{def:follow}), we obtain
\begin{align*}
&\operatorname{WF}_\hbar (\chi_{\rm meas}\chi_{w_{2}} \widetilde W) 
\subset\Big\{ \rho \in T^* \operatorname{supp}  \chi_{\rm meas} \,:\,  \exists t< 0 \,\text{ s.t. }\, \gamma_{(t, 0]}(\rho) \text{ follows }  w  \Big\},
\end{align*}
and the proof is complete. 

\section{Proofs of Theorems \ref{thm:strip}, \ref{thm:sweep_strip}, \ref{thm:check}, \ref{thm:sweep}}
\label{sec:checkerboard}

\subsection{Checkerboard coordinates}

The proofs of Theorems \ref{thm:strip}, \ref{thm:sweep_strip}, \ref{thm:check}, \ref{thm:sweep} use a set of checkerboard coordinates $\{c_\ell\}_{1\leq \ell \leq \mathfrak d}$. 

For concreteness we use the natural coordinates coming from the checkerboard construction in \S\ref{sec:strip_checker}.
For a $d$-checkerboard, the coordinates 
(with origin $(0,\ldots, 0)$) of the subdomain $\Omega_j$ are $(c_1(j),\ldots,c_{d}(j))$ where 
$c_\ell(j) = m_\ell$ if the $U_j$ from which $\Omega_j$ is formed equals $\prod_{\ell=1}^d (y_{m_\ell-1}^\ell, y_{m_{\ell}}^\ell)$. 

For a $\mathfrak d$-checkerboard, we omit the $(d-\mathfrak d)$ directions that have $N_\ell = 1$ to obtain coordinates $\{c_\ell\}_{1\leq \ell \leq \mathfrak d}$.  

Examples \ref{ex:exhaust2} and \ref{ex:exhaust4} below use coordinates with respect to other checkerboard vertices; these are defined in an analogous way by first rotating the checkerboard to place that particular vertex at the origin and relabelling all the points $y^{\ell}_m$, $\ell =1,\dots,d$, $m=0,\ldots, N_\ell$.

\subsection{Allowed words in a checkerboard when $c\equiv 1$}

\begin{lemma} \label{lem:traj_check}
Assume that $\{ \Omega_j\}_{j=1}^N$ is a checkerboard and $\coeffc = 1$. Then, if $w \in \mathcal W$ is allowed, for any $1 \leq \ell \leq \mathfrak d$, the map
$j \mapsto c_\ell(w_j)$ is monotonic.
\end{lemma}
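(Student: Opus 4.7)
The plan is to fix $\ell\in\{1,\ldots,\mathfrak d\}$ and an allowed word $w=(w_1,\ldots,w_n)$ with an associated trajectory $\gamma$ for $P$, and to argue by contradiction. Since $c\equiv 1$, Hamilton's equations~\eqref{eq:Hamilton} reduce to straight-line motion at constant momentum $\xi$, so each coordinate $x^\ell(s)$ is affine in the arc-length parameter. In a checkerboard, the nonemptiness of $\supp\TTchi_{w_j}\cap\supp(P^{w_{j+1}}_\newtheta-P_\newtheta)$ required by Definition~\ref{def:follow} forces $\Omega_{w_j}$ and $\Omega_{w_{j+1}}$ to be adjacent, so $s(j):=c_\ell(w_{j+1})-c_\ell(w_j)\in\{-1,0,+1\}$, and monotonicity of $j\mapsto c_\ell(w_j)$ is equivalent to $s$ not mixing the values $+1$ and $-1$.

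The forcing step then goes as follows. If $s(j)=+1$, the checkerboard structure (specifically, $\Omega_{w_j}$ and $\Omega_{w_{j+1}}$ share $c_{\ell'}$ for every $\ell'\neq\ell$, hence share the same interior $\ell'$-range) pins the relevant part of the intersection to the interior left-$\ell$ PML face of $\Omega_{w_{j+1}}$, giving $x^\ell_j\leq a^{w_{j+1}}_\ell$. The endpoint of the next segment $\gamma_{j+1}$, either $x_{j+1}$ if $j+1<n$ or $x_n$ if $j+1=n$, sits in $\supp\TTchi_{w_{j+1}}\subset\Omega_{w_{j+1}}\setminus\supp(P^{w_{j+1}}_\newtheta-P_\newtheta)$; because $c_\ell(w_{j+1})=c_\ell(w_j)+1\geq 2$, the left $\ell$-face of $\Omega_{w_{j+1}}$ is an interior edge (not on $\partial\Omega$), so the interior left-$\ell$ PML is genuinely excluded and this endpoint's $\ell$-coordinate is $\geq a^{w_{j+1}}_\ell$. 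Straight-line motion with positive duration $T_{j+1}$ therefore yields $\xi^\ell\geq 0$. The mirror argument at $s(j)=-1$, using $c_\ell(w_{j+1})\leq N_\ell-1$ so that the right $\ell$-face is interior, gives $\xi^\ell\leq 0$. A non-monotone word would thus force $\xi^\ell=0$, with $x^\ell$ pinned to a constant value $x^\ell_*$ that would have to belong simultaneously to the closed left-$\ell$ PML of $\Omega_{w_{j_1+1}}$ and to the closed right-$\ell$ PML of $\Omega_{w_{j_2+1}}$; checkerboard hypothesis~(iii) separates these two closed intervals (after a short case analysis on the column indices $c_\ell(w_{j_1+1})$ and $c_\ell(w_{j_2+1})$), producing the contradiction.

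The hard part will be the geometric bookkeeping behind the claim that $\supp\TTchi_{w_j}\cap\supp(P^{w_{j+1}}_\newtheta-P_\newtheta)$ is confined to the single interior left-$\ell$ PML face used above: one must check that transverse-$\ell'$ PML faces ($\ell'\neq\ell$) and the various corner strata of $\Omega_{w_{j+1}}$ contribute only lower-dimensional, boundary-type pieces. My plan is to do this one $\ell'$ at a time by comparing $\ell'$-ranges: $\supp\TTchi_{w_j}$ has $\ell'$-range $[a^{w_j}_{\ell'},b^{w_j}_{\ell'}]$, while interior $\ell'$-PMLs of $\Omega_{w_{j+1}}$ live at $\ell'$-coordinates in $[a^{w_{j+1}}_{\ell'}-\width_0,a^{w_{j+1}}_{\ell'}]\cup[b^{w_{j+1}}_{\ell'},b^{w_{j+1}}_{\ell'}+\width_0]$, and because $c_{\ell'}(w_j)=c_{\ell'}(w_{j+1})$ these ranges meet only at the boundary points $\{a^{w_{j+1}}_{\ell'}\}$ and $\{b^{w_{j+1}}_{\ell'}\}$. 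Once this is verified, the straight-line/constant-$\xi$ sign analysis outlined above closes the proof.
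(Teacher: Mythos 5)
Your proof takes a different route from the paper's. The paper argues globally: from a straight-line trajectory following $w$ it extracts points $x_1,\ldots,x_n$ lying on the line, observes that for each $\ell$ the Cartesian coordinate $j\mapsto(x_j)_\ell$ is monotone (since the $x_j$ lie on a line with constant $\xi$), and then uses the checkerboard geometry to transfer this monotonicity to the coordinate map $j\mapsto c_\ell(w_j)$. You instead argue step-by-step, trying to extract a sign constraint on the conserved momentum $\xi^\ell$ from each individual transition $w_j\to w_{j+1}$ with $s(j)\neq 0$ and then obtain a contradiction when both signs occur.

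There is a genuine gap in the step-by-step argument. When $s(j)=+1$ you assert that ``$\Omega_{w_j}$ and $\Omega_{w_{j+1}}$ share $c_{\ell'}$ for every $\ell'\neq\ell$, hence share the same interior $\ell'$-range'', i.e.\ that consecutive subdomains in an allowed word are face-adjacent on the checkerboard; the same assumption reappears in the final verification plan (``because $c_{\ell'}(w_j)=c_{\ell'}(w_{j+1})$ these ranges meet only at the boundary points''). This is not implied by the definitions. The overlapping extensions in a checkerboard necessarily overlap near every interior vertex of the underlying $\{y^\ell_m\}$-grid, so diagonal neighbours (e.g.\ $\Omega_{(1,1)}$ and $\Omega_{(2,2)}$ on a $2$-checkerboard) always overlap, and consecutive letters of an allowed word may differ in several checkerboard coordinates simultaneously. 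For such a diagonal transition, $\supp\TTchi_{w_j}\cap\supp(P^{w_{j+1}}_\newtheta-P_\newtheta)$ is \emph{not} confined to the left-$\ell$ PML face: the point $x_j$ may lie in a transverse-$\ell'$ PML face of $\Omega_{w_{j+1}}$ (with $\ell'\neq\ell$ and $c_{\ell'}(w_j)\neq c_{\ell'}(w_{j+1})$), in which case $x^\ell_j$ can exceed $a^{w_{j+1}}_\ell$ and the sign constraint on $\xi^\ell$ you want from that step simply does not follow. Handling these diagonal transitions is precisely the subtle content of the lemma; the ``geometric bookkeeping'' you flag as the hard part cannot be discharged by comparing $\ell'$-ranges, since for a diagonal step those ranges are different. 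To close the gap you would need an argument that tracks the $\ell$-coordinate across the whole word (as the paper does via the monotone Cartesian coordinates of points on the line), not one that tries to localise the sign information to a single PML face per step.
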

\begin{proof}
The underlying idea of the proof is that a straight line through a non-overlapping checkerboard
(i.e., the decomposition of $\Omega_{\rm int}$ described in Point (i) of \S\ref{sec:strip_checker}) necessarily has monotonic coordinate maps. The subtlety here is that 
the subdomains $\{ \Omega_j\}_{j=1}^N$ overlap; however, the fact that an allowed word must pass through the (disjoint) subdomains in \eqref{eq:follow_domains} essentially puts us back in the non-overlapping case.

Assume that $n := |w| \geq 2$, otherwise the statement is void.  
By the definition of allowed, there exists a trajectory following $w$. Since the coefficients are constant, this trajectory is a straight line, and by the definitions of allowed (Definition \ref{def:allowed}) and follow (Definition \ref{def:follow}), the trajectory intersects 
$$
\bigg[ \bigcup_{1 \leq j \leq n-1} T^* \widetilde{\Omega}_{w_{j+1}} \cap T^* \Big(\operatorname{supp} \TTchi_{w_j} \cap \operatorname{supp} \big(P^{w_{j+1}}_{\rm s} - P_{\rm s} \big)\Big)\bigg] \cup \big(T^* \operatorname{supp} \TTchi_{w_n} \cap  T^* \widetilde{\Omega}_{w_n} \big) =: \bigcup_{1 \leq j \leq n} X_j
$$
Since $\TTchi_\ell$ is supported in 
$T^* \widetilde{\Omega}_{\ell}$ and away from $\operatorname{supp} \big(P^{{\ell}}_{\rm s} - P_{\rm s} \big)$, this implies that 
$X_j \cap X_{j+1} = \emptyset$ for $1 \leq j \leq n-1$ and $X_j \subset T^*\widetilde{\Omega}_{w_j}$ for $1 \leq j \leq n$.
It follows that there exists $(x_1, \dots, x_n)$ on a line with $x_j \in \widetilde{\Omega}_{w_j} \backslash \widetilde{\Omega}_{w_{j+1}}$, $j = 1, \dots, n-1$ and $x_n \in \widetilde{\Omega}_{w_n}$. 
Because $x_j \in \widetilde\Omega_j\setminus \widetilde\Omega_{j+1}$ and 
$\{ \Omega_j\}_{j=1}^N$ is a checkerboard, it follows that 
for any $\ell$ such that $c_\ell(j) \neq c_\ell(j+1)$,
$c_\ell(j) \leq c_\ell(j+1)$ if and only if $(x_j)_\ell \leq (x_{j+1})_\ell$ (where subscript $\ell$ denotes the $\ell$th coordinate of $x$ in $\Rea^{\mathfrak d}$);
note that for this to be true it is key that the $x_j$ is not in the overlap of $\widetilde\Omega_j$ and $\widetilde\Omega_{j+1}$ (which we have ensured above). The result follows.
\end{proof}

\subsection{Proofs of Theorems \ref{thm:strip} and \ref{thm:check} (i.e., the results about the parallel method with $c\equiv 1$)}
\label{sec:allowed_constant_coeff}

By Lemma \ref{lem:traj_check}, if $\{ \Omega_j\}$ is a $\mathfrak d$-checkerboard,
then $\mathcal N = 1+(N_1-1) + \cdots + (N_{\mathfrak d} -1) =  N_1 + \cdots + N_{ d} - ({ d}-1)$. Indeed, this is the length of the longest component-wise monotonic sequence with values in $\{1, \dots, N_1 \} \times \dots \times \{1, \dots, N_d \}$. Thus
Theorem \ref{thm:check} is a consequence of Theorem \ref{thm:gen}. 
Theorem \ref{thm:strip} is then a consequence of Theorem \ref{thm:check} with ${\mathfrak d}=1$. 

\subsection{Exhaustive sequence of orderings} \label{ss:exhaust}

Given a sequence of orderings $\{\sigma_n\}_{1\leq n \leq S}$, for every $n$ we define the order relation $\preceq_n$ on $\{1, \cdots, N \}$ by
\beqs%\label{eq:preceq}
i \preceq_n j \iff \sigma_n^{-1}(i) \leq \sigma_n^{-1}(j);
\eeqs
i.e., $i \preceq_n j$ if at step $n$ of the method, $\Omega_i$ comes before $\Omega_j$.

Recall that $\{c_\ell\}_{1\leq \ell \leq \mathfrak d}$ is a fixed set of checkerboard coordinates, i.e., $c_\ell : \{ 1, \ldots, N\} \mapsto \{ 1, \dots, N_\ell\}$ is such that $c_\ell(j)$ is the $\ell$th coordinate of $\Omega_j$ on the checkerboard.

\begin{definition}[Exhaustive ordering] \label{def:exhaust}
We say that an ordering $\{\sigma_n\}_{1\leq n \leq S}$ is \emph{exhaustive} if 
(i) for any $1\leq m \leq S$, there is $1\leq \overline m \leq S$ such that 
$\sigma_m(i) = \sigma_{\overline m}(N-i+1)$, and (ii)
for any sequence $\{j_i\}_{1\leq i \leq n}$ of elements of $\{1, \cdots, N \}$
such that
\beq\label{eq:mcm}
\tfa 1\leq\ell\leq \mathfrak d, \quad i \mapsto c_\ell(j_i) \text{ is monotonic},
\eeq
there exists an $1\leq s \leq n$ so that
\beq\label{eq:visitinorder}
j_1 \preceq_s \cdots \preceq_s j_{n}.
\eeq
\end{definition} 

Recall the informal definition of exhaustive in \S\ref{sec:statement_check} that 
(i) given any ordering in the sequence, the sequence also contains the reverse ordering (i.e., where the subdomains are visited in reverse order), and (ii) given any directed straight line (here a sequence of subdomains with monotonic coordinate maps \eqref{eq:mcm}) the sequence of sweeps has to contain at least one sweep such that the subdomains in the straight line are visited in order, but not necessarily sequentially, in that sweep; i.e., \eqref{eq:visitinorder}.

Observe that, if $m, \overline m$ are the indices of reverse orderings, as in Definition \ref{def:exhaust}, then $\preceq_m \; = \; \succeq_{\overline m}$.

In what follows, we assume that the Cartesian directions on the checkerboard are always ordered in the same way, i.e., if $(c^n_\ell)_{1\leq n \leq {\mathfrak d}}$ and $(c^m_\ell)_{1\leq n \leq {\mathfrak d}}$ are two set of checkerboard coordinates, then $c^n_\ell$ and $c^m_\ell$ represent coordinates in the same $\ell$th direction.

\begin{example}[Lexicographic orderings]  \label{ex:exhaust2}
Let $(\mathfrak v_1, \dots, \mathfrak v_{2^{\mathfrak d}})$
be the vertices of the checkerboard. For ${1\leq n \leq 2^{\mathfrak d}}$, let $(c^n_\ell)_{1\leq \ell \leq \mathfrak d}$ 
 be the set 
  of checkerboard coordinates with $\mathfrak v_n$ as origin. Define $\sigma_n$, for ${1\leq n \leq 2^{\mathfrak d}}$,  to be the lexicographic order with respect to $(c^n_\ell)_{1\leq \ell \leq \mathfrak d}$, ie the lexicographic order with vertex $\mathfrak v_n$ as origin.
Then $\{ \sigma_n \}_{1\leq n \leq 2^{\mathfrak d}}$ is exhaustive. 
One such ordering is given by 
defining 
  $$
  \sigma_n(j) := 1 + \sum_{\ell = 1}^{\mathfrak d} \big(\prod_{m \leq \ell-1}N_m \big) (c^n_\ell(j) - 1);
  $$
this is shown in the case $\mathfrak d = d=2$ and $N_1=N_2=3$ in Figure \ref{fig:lex}.
\end{example}

\begin{example}[An algorithm for generating exhaustive sequences of orderings]  \label{ex:exhaust4}
$\;$\newline
Let $(\mathfrak v_1, \dots, \mathfrak v_{2^{\mathfrak d}})$
be the vertices of the checkerboard.
For any ${1\leq n \leq 2^{\mathfrak d}}$, 
let $(x^n_\ell)_{1\leq \ell \leq \mathfrak d}$ be 
the set 
  of checkerboard coordinates with $\mathfrak v_n$ as origin. For  $n=1,\ldots, 2^{\mathfrak d}$, 
  if $\sigma_n$ is not already defined, define $\sigma_n$ in such a way that all the maps $j \mapsto x^n_\ell(\sigma_n(j))$ are non-decreasing, 
  then,  for $\bar n \in \{ 1, \dots, 2^{\mathfrak d} \}$ such that ${\mathfrak v}_{\bar n}$ has for coordinates $(c^n_1(\sigma_n(N)), \dots, c^n_\ell(\sigma_n(N))$, define $\sigma_{ \bar n}(j) :=  \sigma_n(N - j +1)$. Then $\{ \sigma_n \}_{1\leq n \leq 2^{\mathfrak d}}$ is exhaustive.
\end{example}
Figure \ref{fig:checker2} illustrates an exhaustive sequence of orderings for a 3$\times$3 checkerboard following the algorithm in Example \ref{ex:exhaust4}.

\begin{figure}
\begin{center}
\begin{tabular}{c|c|c}
6 & 8 & 9  \\
\hline
4 & 5 & 7  \\
\hline
1 & 2 & 3
\end{tabular}\hspace{0.5cm}
\begin{tabular}{c|c|c}
4 & 2 & 1  \\
\hline
6 & 5 & 3  \\
\hline
9 & 8 & 7
\end{tabular}\hspace{0.5cm}
\begin{tabular}{c|c|c}
9 & 6 & 5 \\
\hline
8 & 4 & 2 \\
\hline
7 & 3 & 1
\end{tabular}\hspace{0.5cm}
\begin{tabular}{c|c|c}
1 & 4 & 5 \\
\hline
2 & 6 & 8 \\
\hline
3 & 7 & 9
\end{tabular}
\end{center}
\caption{Example of an exhaustive ordering of a $3 \times 3$ checkerboard obtained by following the construction in Example \ref{ex:exhaust4};
the key point is that, moving through the subdomains in order, their coordinates on the checkerboard with respect to the first subdomain are non-decreasing.
}\label{fig:checker2}
\end{figure}

\ble \label{lem:ex_size}
On a $2$-checkerboard, if  $\{ \sigma_n \}_{1\leq n \leq S}$ is exhaustive then $S\geq 2^2$. 
\ele

\bpf
Suppose we have an exhaustive sequence of orderings on a $2\times 2$ $2$-checkerboard. Label the subdomains 
\begin{center}
\begin{tabular}{c|c}
$C$ & $D$\\
\hline
$A$ & $B$
\end{tabular}\hspace{0.5cm}
\end{center}
By the definition of exhaustive, there exist an ordering $\sigma_n$ that visits (in order) $C,A,B$ 
and an ordering $\sigma_m$ that visits (in order) $C, D,B$, with these subdomain orderings corresponding to diagonal lines from the top left to bottom right passing, respectively, below and above the centre.

If $\sigma_n=\sigma_m$, then $\sigma_n$ equals either $(C,A,D,B)$ or $(C,D,A,B)$.
In either case, neither ordering, nor its reverse, visits $A,B,D$ or $A,C,D$ in order 
(with these subdomain orderings corresponding to diagonal lines from the bottom left to top right passing, respectively, below and above the centre). Therefore 
the sequence of orderings consisting of $\sigma_n$ and its reverse is not exhaustive, and so $S\geq 4$. 

If $\sigma_n\neq \sigma_m$ 
then the reverse ordering to $\sigma_n$ (which visits $B,A,C$ in order, and hence $B$ before $C$) is neither $\sigma_m$ (which visits $C,D,B$ in order, and hence $B$ after $C$) or the reverse ordering to $\sigma_m$ (since 
$\sigma_n\neq \sigma_m$). Therefore $S\geq 4$.

For an arbitrary 2-checkerboard, we repeat the argument above in the top left $2\times 2$ corner. 
\epf

\subsection{Proof of Theorem \ref{thm:sweep}} \label{ss:proof_gen_sweep}

Arguing as in the proof of Theorem \ref{thm:sweep_strip} outlined in \S\ref{sec:idea_sequential}, and using  Lemma \ref{lem:model_seq_err_prop_gen} instead of Lemma \ref{lem:model_seq_err_prop}, we obtain
$$
{\boldsymbol{\epsilon}_{\times}^{n+1}} =  \sum_{l=0}^{N-1} \mathbf A_n^l \mathbf B_n {\boldsymbol{\epsilon}_{\times}^{n}},
$$
where we used that $\mathbf A_n ^N = 0$ since $\mathbf A_n$ is the rearrangment of an lower-triangular matrix. It follows that
$$
{\boldsymbol{\epsilon}_{\times}^{S}} =  \sum_{0 \leq \ell_1, \dots, \ell_S\leq N-1} \mathbf {\mathbf A}_S^{\ell_S} {\mathbf B}_S \dots {\mathbf A}_1^{\ell_1}{\mathbf B}_1 {\boldsymbol{\epsilon}_{\times}^{0}},
$$
and Theorem \ref{thm:sweep} therefore follows immediately from the following lemma.

\begin{lemma} \label{lem:key_gen_sweep}
Assume that $\{\sigma_n\}_{1\leq n \leq S}$ is exhaustive (in the sense of Definition \ref{def:exhaust}). Then, for any
$\ell_1, \dots, \ell_S \in \mathbb{N}$,
$$
\Vert   {\mathbf A}_S^{\ell_S} {\mathbf B}_S \dots {\mathbf A}_1^{\ell_1}{\mathbf B}_1 \Vert_{(H^1(\Omega))^N \mapsto (H^s(\Omega))^N } = O(\hbar^\infty).
$$
\end{lemma}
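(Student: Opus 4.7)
The plan is to reduce Lemma \ref{lem:key_gen_sweep} to Lemma \ref{lem:notallowed} by showing that every composite $\mathscr{T}_w$ that appears in the expansion of $\mathbf{A}_S^{\ell_S} \mathbf{B}_S \cdots \mathbf{A}_1^{\ell_1} \mathbf{B}_1$ corresponds to a word $w$ that is \emph{not} allowed. This mirrors the strategy used in the proof of Theorem \ref{thm:sweep_strip} in \S\ref{sec:idea_sequential}, but with the structural information coming from the exhaustive assumption rather than from the $\mathbf{L}$/$\mathbf{U}$ decomposition of $\mathbf{T}$ on a strip.

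The first step would be to expand each entry of the product as a finite sum of operators $\mathscr{T}_w$, $w\in \mathcal{W}$. Rewriting the definition of $\mathbf{A}_n$ (and analogously $\mathbf{B}_n$) purely in terms of subdomain indices, $(\mathbf{A}_n)_{i,j} = \mathbf{T}_{i,j}$ when $j \prec_n i$ (and $0$ otherwise) and $(\mathbf{B}_n)_{i,j} = \mathbf{T}_{i,j}$ when $i \prec_n j$. Reading the product right-to-left, any nonzero term produces a word $w = (w_1, \dots, w_{K+1})\in \mathcal W$ with $K = S + \sum_n \ell_n$, having a block decomposition matching the factor pattern: block $n$ (the factor $\mathbf{A}_n^{\ell_n}\mathbf{B}_n$) contributes exactly one B-transition (from the unique $\mathbf{B}_n$ factor) giving a \emph{strictly-decreasing} step in the ordering $\sigma_n$, followed by $\ell_n$ A-transitions giving strictly-increasing steps in $\sigma_n$. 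In particular, for every $n$, the word contains at least one strict decrease with respect to $\preceq_n$.

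Next I would argue by contradiction that any such word is not allowed. If $w$ were allowed, Lemma \ref{lem:traj_check} (using $c \equiv 1$ and the checkerboard structure, which is the setting of Theorem \ref{thm:sweep}) would force every coordinate map $i \mapsto c_\ell(w_i)$ to be monotonic. Then part~(ii) of the exhaustive assumption (Definition \ref{def:exhaust}) applied to the sequence $(w_1, \dots, w_{K+1})$ yields an index $s\in\{1, \dots, S\}$ for which
\[
w_1 \preceq_s w_2 \preceq_s \cdots \preceq_s w_{K+1}.
\]
But block $s$ of the product already forces a strict decrease with respect to $\preceq_s$ at its B-transition, contradicting the above chain. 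Hence no $w$ arising from the expansion can be allowed.

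Finally, Lemma \ref{lem:notallowed} gives $\|\mathscr{T}_w\|_{H^1_\hbar(\Omega) \to H^s_\hbar(\Omega)} = O(\hbar^\infty)$ for each such $w$; since each entry of the product is a sum of at most $N^K$ terms $\mathscr{T}_w$ (a number depending only on $N$, $S$, and the $\ell_n$, but not on $\hbar$), the $O(\hbar^\infty)$ bound transfers to the full matrix norm. The main obstacle is purely bookkeeping — carefully aligning the direction of multiplication used in the definition \eqref{eq:Tw} of $\mathscr{T}_w$ with the orientation of the B/A-transitions under $\preceq_n$, and making sure each block contributes exactly one strict decrease. Note that part~(i) of Definition \ref{def:exhaust} (closure of the sequence of orderings under reversal) is not directly used in this argument; it plays a role elsewhere in constructing small exhaustive sequences such as those in Examples \ref{ex:exhaust2} and \ref{ex:exhaust4}.
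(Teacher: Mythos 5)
Your proposal is correct and follows the paper's overall strategy: expand the product into a sum of composites $\mathscr{T}_w$, read off the block structure that the $\mathbf{A}/\mathbf{B}$ factors impose on each word, show these words are not allowed, and invoke Lemma \ref{lem:notallowed}. Where you diverge is in the final non-allowability step. The paper routes through the intermediate Lemma \ref{lem:crux_gen}, which requires casting the word as a uniform chain $w_1 \succeq_{j_1} w_2 \succeq_{j_2} \cdots$ with $(j_i)$ surjective onto $\{1,\dots,S\}$; to put the $\mathbf{B}_m$-transition (a $\preceq_m$ step) and the $\mathbf{A}_m$-transitions (all $\succeq_m$ steps) into a single $\succeq$-chain, the paper invokes part (i) of Definition \ref{def:exhaust} (reversal closure) to replace $\preceq_m$ by $\succeq_{\bar m}$. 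You short-circuit this by observing that block $m$ already contributes a strict decrease with respect to $\preceq_m$ at its unique $\mathbf{B}_m$-step, which contradicts the $\preceq_s$-non-decreasing chain furnished by part (ii) of exhaustivity once Lemma \ref{lem:traj_check} supplies the monotonic coordinate maps. Your observation that part (i) of Definition \ref{def:exhaust} is unused in your argument is correct, and in fact the paper's own invocation of it here is avoidable: it is needed only to fit the hypotheses of Lemma \ref{lem:crux_gen}, not for the underlying contradiction. (Note, however, that part (i) is genuinely used in Lemma \ref{lem:ex_size}, the lower bound on $S$, rather than in constructing the sequences of Examples \ref{ex:exhaust2} and \ref{ex:exhaust4} as you conjectured.) Net comparison: your argument is more economical and shows that Theorem \ref{thm:sweep} actually holds assuming only part (ii) of Definition \ref{def:exhaust}, whereas the paper's factoring through Lemma \ref{lem:crux_gen} is more modular and is reused verbatim under the variable-$c$ redefinition of exhaustivity in Remark \ref{rem:sequential_variable_c} (where your argument would also go through, in fact even more simply since the Lemma \ref{lem:traj_check} step becomes unnecessary).
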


To prove Lemma \ref{lem:key_gen_sweep}, we identify the words arising in the entries of the products ${\mathbf A}_S^{\ell_S} {\mathbf B}_S \dots {\mathbf A}_1^{\ell_1}{\mathbf B}_1$, show that they are not allowed, and then apply Lemma \ref{lem:notallowed}.

\begin{lemma} \label{lem:crux_gen}
Assume that $\{ \sigma_n \}_{1 \leq n \leq S}$ is exhaustive. Then, no word  $w = ( w_1, \dots, w_n)$ satisfying
\beq\label{eq:ass1}
w_1 \succeq_{j_1} w_2 \succeq_{j_2} w_3 \succeq_{j_3} \dots \succeq_{j_{n-1}} w_{n},
\eeq
where $(j_i)_{1 \leq i \leq n-1}$ is a \emph{surjective} sequence 
 with values in $\{1, \dots, S\}$ is allowed.
\end{lemma}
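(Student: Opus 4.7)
The plan is to argue by contradiction, combining Lemma \ref{lem:traj_check} (allowed words in the constant-coefficient checkerboard case have monotonic coordinate maps) with Definition \ref{def:exhaust}(ii) (exhaustive orderings always contain a sweep compatible with any such monotonic sequence) and the surjectivity hypothesis on $\{j_i\}$.

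Suppose for contradiction that $w=(w_1,\ldots,w_n)\in\mathcal W$ is allowed and satisfies
$$
w_1\succeq_{j_1}w_2\succeq_{j_2}\cdots\succeq_{j_{n-1}}w_n
$$
with $\{j_i\}_{1\leq i\leq n-1}$ surjective onto $\{1,\ldots,S\}$. Since $\coeffc\equiv 1$ and $\{\Omega_j\}_{j=1}^N$ is a checkerboard, Lemma \ref{lem:traj_check} ensures that for each $1\leq\ell\leq\mathfrak d$ the map $i\mapsto c_\ell(w_i)$ is monotonic. The sequence $\{w_i\}_{1\leq i\leq n}$ therefore satisfies the hypothesis of Definition \ref{def:exhaust}(ii), and exhaustiveness of $\{\sigma_n\}$ yields some $s\in\{1,\ldots,S\}$ such that
$$
w_1\preceq_s w_2\preceq_s\cdots\preceq_s w_n.
$$

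By surjectivity of $\{j_i\}$ there exists $k\in\{1,\ldots,n-1\}$ with $j_k=s$, and the standing hypothesis on $w$ then gives $w_k\succeq_s w_{k+1}$. Combined with $w_k\preceq_s w_{k+1}$ from the preceding display, and using that $\preceq_s$ is a total order on $\{1,\ldots,N\}$, this forces $w_k=w_{k+1}$, contradicting the defining property of $\mathcal W$ that consecutive letters are distinct.

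The proof is short and direct; there is no genuine obstacle once the definitions are unpacked. The only real content is the observation that the exhaustive property of the sequence of orderings is tailor-made to dovetail with Lemma \ref{lem:traj_check}, while the surjectivity of $\{j_i\}$ is precisely what converts the ``monotone in some sweep $\sigma_s$'' conclusion into a head-on collision with the $\succeq_{j_k}$ relations forced by the product $\mathbf A_S^{\ell_S}\mathbf B_S\cdots\mathbf A_1^{\ell_1}\mathbf B_1$ (where each $\mathbf B_n$ contributes at least one $\succeq_n$-step, and so $\{j_i\}$ indeed hits every sweep index).
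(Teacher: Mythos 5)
Your proof is correct and takes essentially the same route as the paper's: invoke Lemma \ref{lem:traj_check} to get monotone coordinate maps, apply the exhaustiveness definition to produce a sweep $\sigma_s$ with $w_1\preceq_s\cdots\preceq_s w_n$, and use surjectivity of $\{j_i\}$ to find an index $k$ with $j_k=s$, yielding $w_k\preceq_s w_{k+1}$ and $w_k\succeq_s w_{k+1}$, hence $w_k=w_{k+1}$, contradicting the definition of a word. The argument and contradiction are identical to the paper's.
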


\begin{proof}[Proof of Lemma \ref{lem:key_gen_sweep} assuming Lemma \ref{lem:crux_gen}]
Let
$$
\mathbf M :=  {\mathbf A}_S^{\ell_S} {\mathbf B}_S \dots {\mathbf A}_1^{\ell_1}{\mathbf B}_1.
$$
Observe that
\beq\label{eq:multB}
(\mathbf B_{m} \mathbf X)_{i,j}  = \sum_{\ell=1}^N (\mathbf B_{m})_{i,\ell} (\mathbf X)_{\ell,j} = \sum_{\ell \geq \sigma_{m}^{-1}(i)} (\mathbf T)_{i, \sigma_{m}(\ell)} (\mathbf X)_{\ell,j}, 
\eeq
and
\beq\label{eq:multA}
(\mathbf A_{m} \mathbf X)_{i,j}  = \sum_{\ell=1}^N (\mathbf A_{m})_{i,\ell} (\mathbf X)_{\ell,j} = \sum_{\ell \leq \sigma_{m}^{-1}(i)} (\mathbf T)_{i, \sigma_{m}(\ell)} (\mathbf X)_{\ell,j}.
\eeq

Now, any product of $\mathbf A_{\ell}$ and $\mathbf B_{\ell'}$, $1\leq\ell, \ell'\leq  N$, can be understood as the sum of $\mathcal{T}_{w'}$ for suitable words $w'$.
Recall from the definition of $\mathcal{T}_{w'}$ \eqref{eq:Tw} that the last letter in $w$ appears in the left-most operator in the composition defining $\mathcal{T}_{w'}$.
The equation \eqref{eq:multB} therefore implies that left-multiplying such a product by $\mathbf B_m$ 
results in a linear combination of operators $\mathcal{T}_{w}$ 
 such that the last letter of $w$ (corresponding to $i$ in \eqref{eq:multB}) is $\preceq_m$ 
the previous letter  (corresponding to $\sigma_{m}(\ell)$ in \eqref{eq:multB}). 
 Similarly, \eqref{eq:multA} implies that  left-multiplying a product of $\mathbf A_{\ell}$ and $\mathbf B_{\ell'}$, $1\leq\ell, \ell'\leq  N$, by $\mathbf A_m$ 
 results in a linear combination of operators $\mathcal{T}_{w}$  such that the last letter of $w$ (corresponding to $i$ in \eqref{eq:multA}) is $\succeq_m$ the previous letter (corresponding to $\sigma_{m}(\ell)$ in \eqref{eq:multA}). 

By induction, it follows that the entries of $\mathbf M$ are linear combinations of operators of the form $\mathcal T_w$, where $w$ is a word of size $|w| := n \geq S$ that can be written as the concatenation
$$
w = w^0 w^1  \dots  w^S,
$$
where 
$$
|w^0| = 1,
$$
and each word $w^m$ for $ 1 \leq m \leq S$ is of size $|w^m| = n_m := 1 + \ell_m $ and verifies
$$
\begin{cases}
w^m =  (\sigma_m(i_1^m), \dots, \sigma_m(i_{n_m}^m)),\\
i_1^m \leq \sigma^{-1}_{m}(w_{n_{m-1}}^{m-1}), \\
i_{k}^m \geq \sigma^{-1}_{m}(w_{k-1}^m)  \tfa 2 \leq k \leq n_{m}.
\end{cases}
$$
Observe that, by definition of $\succeq_m$,
for any $1 \leq m \leq S$
$$
w_1^m \preceq_m w_{n_{m-1}}^{m-1} \hspace{1cm} \text{ and }\hspace{1cm} w_{k}^m \succeq_m w_{k-1}^m  \tfa 2 \leq k \leq n_{m}.
$$
On the other hand, by the definition of exhaustive, for any $1 \leq m \leq S$, there is $1 \leq \overline m \leq S$ such that $\preceq_m \; = \; \succeq_{ \overline m}$. We thus have, for any  $1 \leq m \leq S$
$$
w_{n_{m-1}}^{m-1} \succeq_m w_1^m  \succeq_{\overline m} w_2^m \succeq_{\overline m} \dots \succeq_{\overline m} w_{n_m}^m.
$$
It follows that, denoting $w = ( w_1, \dots, w_n)$, there is a \emph{surjective} sequence 
$(j_i)_{1 \leq i \leq n-1}$ with values in $\{1, \dots, S\}$ so that
$$
w_1 \succeq_{j_1} w_2 \succeq_{j_2} w_3 \succeq_{j_3} \dots \succeq_{j_{n-1}} w_{n}.
$$
Hence $w$ is not allowed by Lemma \ref{lem:crux_gen}, and Lemma \ref{lem:notallowed} gives the result.
\end{proof}

\begin{proof}[Proof of Lemma \ref{lem:crux_gen}]
Seeking a contradiction, assume that such a word $w$ is allowed. By Lemma \ref{lem:traj_check}, all the coordinates maps $i \mapsto c_\ell(w_i)$ are monotonic. As a consequence, 
since $\{ \sigma_n \}$ is exhaustive, 
there exists an $1\leq s \leq S$ so that 
\beq
\label{eq:final1}
w_1 \preceq_{s} w_2 \preceq_{s} w_3 \preceq_{s} \dots \preceq_{s} w_{n}.
\eeq
However,  there exists $1 \leq i \leq n-1$ such that $j_i = s$. Therefore, by the assumption \eqref{eq:ass1} and \eqref{eq:final1}
$$
w_{i}  \succeq_{j_i} w_{i+1} \quad \text{ and }\quad w_{i}  \preceq_{j_i} w_{i+1},
$$
hence $w_{i} = w_{i+1}$, which contradicts the definition of a word.
\end{proof}

\begin{remark}[The analogue of Theorem \ref{thm:sweep} when $c\not\equiv1$]\label{rem:sequential_variable_c}
In the general case when $c\not\equiv1$, we define 
an ordering $\{\sigma_n\}_{1\leq n \leq S}$ to be \emph{exhaustive} if 
(i) for any $1\leq m \leq S$, there is $1\leq \overline m \leq S$ such that 
$\sigma_m(i) = \sigma_{\overline m}(N-i+1)$, and (ii)
for any allowed word $w$ of size $n \geq S+1$, there exists an $1\leq s \leq S$ so that
$$
w_1 \preceq_s \cdots \preceq_s w_{n}.
$$
The proofs of Lemma \ref{lem:key_gen_sweep} and Lemma \ref{lem:crux_gen} still hold, and this therefore
leads immediately to the generalisation of Theorem \ref{thm:sweep} when $c\not \equiv1$ with the result that, if $\{ \sigma_n \}$ is exhaustive (as defined above) and of size $S$ then
$$
\Vert u^S - u \Vert_{H^s_k(\Omega)} \leq k^{-M}\Vert u^0 - u \Vert_{H^1_k(\Omega)}.
$$
For such orderings to always exist, one would need in principle to define the sequential method in a more general way by not imposing the $\sigma_n$ to be bijections, allowing to visit a subdomain multiple times during a single sweep -- as would a non-straight trajectory visiting the same subdomain multiple times.
Constructing such orderings is, however, difficult, since it depends on the specific dynamic of the Hamilton flow imposed by the non-constant $\coeffc$, and we have therefore not pursued this further in this paper.
\end{remark}

\section{Numerical experiments}\label{sec:numerical}

\subsection{The overlapping Schwarz methods on the discrete level}\label{sec:num1}

Given a finite-dimensional subspace of $H^1_0(\Omega)$, which we denote by $V_h(\Omega)$, the discrete analogues of the parallel and sequential methods described in \S\ref{sec:parallel} and \S\ref{sec:sequential} (respectively) compute approximations to the solution of \eqref{eq:PDE} in $V_h(\Omega)$ using  Galerkin approximations 
to the solutions of the local problems \eqref{eq:localprob} and \eqref{eq:general_sweeping} in appropriate subsets of $V_h(\Omega)$. 

We now describe this process in detail for the discretisation of the parallel overlapping method in \S\ref{sec:parallel}, showing that it gives a natural PML-variant of the well-known RAS (restricted additive Schwarz) method (introduced in \cite{CaSa:99}).

\paragraph{The Galerkin approximation of  the PDE \eqref{eq:PDE}.}

Let $\{\mathcal{T}_h\}$ be a shape-regular conforming sequence of meshes  for $\Omega$. We assume that the edges of $\mathcal{T}_h$ resolve the boundaries of $\Omega$, $\Omega_{\rm int}$, $\Omega_{\rm int, j}$, and $\Omega_j$, for all $j=1,\ldots,N$; since all these domains are unions of hyperrectangles, this is straightforward to achieve in practice.

Let $V_h(\Omega)\subset H_0^1(\Omega)$ be the standard finite-element space of continuous piecewise-polynomial functions of degree $\leq p$ on $\mathcal{T}_h$. 
Given $f \in H^{-1}(\Omega)$, the Galerkin approximation to the solution of \eqref{eq:PDE} is defined as the solution of the problem:~find $u_h\in V_h(\Omega)$ such that 
\begin{equation}\label{eq:global_vari}
a(u_h,v_h) = \langle f, v_h\rangle_{H^{-1}(\Omega) \times H^1(\Omega)} \quad \tfa\,  v_h\in V_h(\Omega), 
\end{equation}
where the sesquilinear form $a(\cdot,\cdot)$ is obtained by multiplying the PDE 
\eqref{eq:PDE} by a test function and integrating by parts; i.e., when $d=2$, 
\begin{equation}\label{eq:sesqui}
a(u,v):=\int_{\Omega_j} \bigg(k^{-2}\big((D\nabla u) \cdot \nabla \overline{v}  - (\beta\cdot \nabla u)\overline{v} \big)-  c^{-2}u \overline{v} \bigg) dx\quad \tfor u,v\in  H_0^1(\Omega), 
\end{equation}
where 
\beqs
D :=\begin{pmatrix}\frac{1}{\gamma_{1}^2(x_1)}&0\\
0& \frac{1}{\gamma_{2}^2(x_2) }\end{pmatrix}
\quad
\tand\quad
\beta := \left(\frac{\gamma_{1}'(x_1)}{\gamma_{1}^3(x_1)},
 \frac{\gamma_{2}'(x_2)}{\gamma_{2}^3(x_2)} \right)^T
 \eeqs
with $\gamma_{1}(x_1) := 1+ig_{1}'(x_1)$ and $\gamma_{2}(x_2) := 1+ig_{x_2}'(x_2)$.

We abbreviate $V_h(\Omega)$ to $V_h$, define  $\cA_h:V_h\to V_h'$   
 by 
\beq\label{eq:morning0}
(\cA_h u_h)(v_h) =a(u_h,v_h)\quad \tfa u_h,v_h \in V_h 
\eeq
and denote the functional on the right-hand side of \eqref{eq:global_vari} by $f_h \in V_h'$,  
The variational problem \eqref{eq:global_vari} then becomes $\cA_h u_h= f_h$, and is equivalent to 
a linear system in $\mathbb{C}^m$, where $m$ is the dimension of $V_h$.

\paragraph{The Galerkin approximations of the local problems.}

Let $a_j(\cdot,\cdot)$ be the sesquilinear form obtained by multiplying $P_\newtheta^j$ be a test function and integrating by parts; when $d=2$ the definition of $a_j(\cdot,\cdot)$ is the same as the definition of $a(\cdot,\cdot)$ \eqref{eq:sesqui} except that  $g_\ell'$, $\ell=1,2,$ is replaced by $g_{\ell,j}'$.

Let $V_{h,j}: = \{ v_h\vert_{\Omega_j}: v_h \in V_h\} \cap H_0^1(\Omega_j)$ and let $\cA_{h,j}: V_{h,j} \to V_{h,j}'$ be the operator associated with $a_j(\cdot,\cdot)$; i.e.,
\beq\label{eq:morning1}
(\cA_{h,j} u_{h,j})(v_{h,j}) =a_j(u_{h,j},v_{h,j})\quad \tfa u_{h,j},v_{h,j} \in V_{h,j}.
\eeq
We assume that that  all the operators $\cA_h: V_h \rightarrow V_h'$ and $\cA_{h,j}: V_{h,j} \to V_{h,j}',$ $j=1,\ldots, N$ are  invertible. 
When the Helmholtz problem on $\Omega$ is nontrapping,
 we expect this to be true when $(hk)^{2p} k$ is sufficiently small; this threshold was famously identified for the 1-d Helmholtz $h$-FEM in \cite{IhBa:97}, and the latest results for $d\geq 2$ proving existence of the Galerkin solution (along with error bounds) under this threshold are given in \cite{GS3}. 
The results of \cite{GS3}, however, are not immediately applicable when $\Omega$ is a hyperrectangle because of the low regularity of $\partial \Omega$. 

\paragraph{Restriction and prolongation operators.}

We define the prolongation operator $\cR_{h,j}^T: V_{h,j}\to V_h$ by, for $v_h \in V_{h,j}$,
$$
\cR_{h,j}^T v_{h,j}(x) = \left\{
\begin{array}{ll}
v_{h,j}(x), &\text{ when $x$ is a node of $\Omega_j$},\\
0, &\text{ when $x$ is a node of $\Omega$ that is not a node of $\Omega_j$}.
\end{array}
\right.
$$
Let $\cR_{h,j}:V_h' \to  V_{h,j}' $ be the adjoint of $\cR_{h,j}^\top$, i.e.,
\beq\label{eq:morning2}
\langle \cR_{h,j} f, v_{h,j}\rangle_{V_{h,j}' \times V_{h,j}} = \langle f, \cR_{h,j}^T v_{h,j}\rangle_{V_{h}' \times V_{h}}
\quad \tfa f\in V_h'  \,\, \text{and} \,\, v_{h,j}\in V_{h,j}.
\eeq
We also define a weighted prolongation operator $\widetilde{\cR}_{h,j}^T: V_{h,j}\to V_h$,  involving the partition-of-unity function $\chi_j$,  by
$$
\widetilde{\cR}_{h,j}^T v_{h,j} =  {\cR}_{h,j}^T \big( \Pi_h (\chi_jv_{h,j})\big)\quad \tfa  v_{h,j} \in V_{h,j},
$$
where $\Pi_h$ is nodal interpolation onto $V_h$.  
Then, for all $w_h \in V_h$ and all finite element nodes $x \in \Omega$, 
  $$   \sum_{j=1}^N \widetilde{\cR}^T_{h,j} (w_h\vert_{\Omega_j}) (x)  = \sum_{j: x \in \Omega_j}   \big( \Pi_h
  (\chi_j w_h \vert_{\Omega_j})\big)(x) = \sum_{j=1}^N    \chi_j(x) w_{h}(x)  = w_{h}(x);   $$
i.e.,
\begin{align} \label{eq:prores}
 \sum_{j=1}^N \widetilde{\cR}^T_{h,j} (w_h\vert_{\Omega_j})  =  w_h  \quad\tfa  w_h \in V_h. 
 \end{align} 

\paragraph{The discrete parallel overlapping Schwarz method.}

Let $\mathfrak c_{h,j}^n$ be the finite-element approximation to the local corrector $\mathfrak c_j^n$ \eqref{eq:local_corrector}; i.e.,
given $u^n_h\in V_h$, $\mathfrak c_{h,j}^n \in V_{h,j}$ is the solution of 
\beqs
a_j \big(\mathfrak c_{h,j}^n, v_{h,j} \big) = \big\langle f, \cR^T_{h,j} v_{h,j}\big\rangle - a\big(u^n_h, \cR^T_{h,j} v_{h,j} \big) \quad\tfa v_{h,j}\in V_{h,j}.
\eeqs
Thus, by \eqref{eq:morning0}, \eqref{eq:morning1}, and \eqref{eq:morning2},
\beqs
\cA_{h,j} \mathfrak c_{h,j}^n = \cR_{h,j} (f - \cA_h u_h^n) \in V_{h,j}'
\quad\text{ and so } \quad
\mathfrak c_{h,j}^n =\cA_{h,j}^{-1} \cR_{h,j} (f - \cA_h u_h^n) \in V_{h,j}.
\eeqs
In analogy with the second equation in \eqref{eq:local_corrector} and recalling \eqref{eq:prores}, we then define
$$
u^{n+1}_h := u^n_h + \sum_{j=1}^N \widetilde{\cR}_j^T\cA_{h,j}^{-1}\cR_j (f - \cA_h u^n_h)\in V_h;
$$
i.e., 
the method 
 is the preconditioned Richardson iteration
\begin{align} \label{eq:PC}   u^{n+1}_{h} := u^n_{h} + \cB_h^{-1} (f - \cA_h u^n_h)\in V_h
 \quad \text{ with } \quad \cB_h^{-1} :=  \sum_j \widetilde{\cR}_{h,j}^\top \cA_{h,j}^{-1}\cR_{h,j}.
\end{align}
The preconditioner $\cB_h^{-1}: V_h'\to V_h$ is of the form of the RAS preconditioner  \cite{CaSa:99}, with the crucial fact that  
it involves the weighted prolongation operator $\widetilde{\cR}_{h,j}^T$ instead of ${\cR}_{h,j}^T$ (compare, e.g., \cite[Definitions 1.12 and 1.13]{DoJoNa:15}). 
The classical `Optimised' RAS (known as ORAS \cite{StGaTh:07}, \cite[\S2.3.2]{DoJoNa:15}) for Helmholtz uses the impedance boundary condition on the subdomain boundaries (see, e.g., \cite{GGGLS1, GoGrSp:23}), but here we have PML at the subdomain boundaries. We note that numerical results on this version of RAS when the PML scaling function is proportional to $k^{-1}$ 
were given in \cite{BoBoDoTo:22}.

\paragraph{GMRES.}

The main  results of the paper concern the overlapping Schwarz methods described in \S\ref{sec:parallel} and \S\ref{sec:sequential} as fixed-point iterations (on the continuous level).
We also present numerical results for the  generalised minimal residual (GMRES) algorithm \cite{SaSc:86} acting as an acceleration of the parallel fixed point iteration in \eqref{eq:PC}. 
Since the iteration counts of the sequential method used as a fixed-point iteration are relatively low, we do not consider GMRES acceleration for this iteration here.

We therefore briefly recall the definition of GMRES applied to the abstract  linear system 
$\matrixC \bx = \bfd$ where $\matrixC$ is an nonsingular complex matrix. For GMRES applied to the parallel method above, $\matrixC$ is the matrix corresponding to the operator $\cB_h^{-1} \cA_h: V_h\to V_h$ and $\bfd$ is the vector corresponding to $\cB_h^{-1} f_h \in V_h$.
Given initial guess $\bfx^0$, 
let $\bfr^0 := \bfd- \matrixC \bfx^0$ and 
$$  \mathcal{K}^n(\matrixC, \bfr^0) := \mathrm{span}\big\{\matrixC^j \bfr^0 : j = 0, \ldots, n-1\big\}.$$
For $n\geq 1$, define   $\bfx^n$  to be  the unique element of $\cK^n$ satisfying  the   
 minimal residual  property: 
$$ \ \Vert \bfr^n \Vert_2 := \Vert \bfd - \matrixC \bfx^n \Vert_2 \ = \ \min_{\bfx \in \mathcal{K}^n(C, \br^0)} \Vert {\bfd} - {\matrixC} {\bfx} \Vert_2. $$

\subsection{Common set-up for  the experiments}

\paragraph{The Helmholtz problem.}

We work in $\mathbb{R}^2$ with 
$\Omega_{\rm int} :=(0,1)^2$
and PML width $\kappa = 1/40$, so that $\Omega:= (-1/40, 1+ 1/40)^2$. 
Note that, for $k=100,150,200, 250, 300, 350$, the number of wavelengths in the PML is $(1/40)/(2 \pi/k)  = 0.40 , 0.60, 0.80, 0.99, 1.19, 1.39$, respectively.
We take the PML scaling function as $\fPML(x) = 5000 x^3/3$, and the right-hand side of the PDE \eqref{eq:PDE} to be $f(x) = J_0(k \vert x - x_0\vert )$
when $x_0=(0.5,0.5)$. Note that taking the right-hand side to be a Helmholtz solution, similar to for a scattering problem, ensures that all of the mass of the solution in phase space is oscillating at frequency approximately $k$, ensuring that the solution propagates.

\paragraph{The domain decomposition.}

We consider both strip and checkerboard decompositions of $\Omega$.
The overlapping decomposition $\{\Omega_{{\rm int},j}\}_{j=1}^N$ \eqref{eq:Omegaintj} is taken to have overlap $\delta=1/40$, with then each overlapping subdomain extended by PML layers of width $\kappa=1/40$.

The smooth  partition of unity functions are constructed with respect to a (still overlapping) cover of $\Omega_{{\rm int}} $ consisting of subsets of $\Omega_{{\rm int},j}$,  so that each POU function $\chi_j$ vanishes in a neighbourhood of $\Omega_{{\rm int},j}$. In more detail, 
for strip decompositions with $\Omega_{{\rm int}, j} = (a^j,b^j)\times(0,1)$, we let
$$
\widetilde{\chi}_j(x) =
\begin{cases}
 e^{-\frac{(b^j-a^j)^2}{4(x-a^j-0.3\delta)(b^j - 0.3\delta -x)}},& x \in(a^j+0.3\delta,b^j -0.3\delta),\\
 0, & \text{elsewhere},
\end{cases}
\quad j=2,\ldots, N-1,
$$
and 
$$
\widetilde{\chi}_1(x) =
\begin{cases}
 e^{-\frac{b^1-a^1}{2(b^1 - 0.3\delta -x)}},& x <b^1 -0.3\delta,\\
 0, & \text{elsewhere}.
\end{cases}
\quad 
\widetilde{\chi}_N(x) =
\begin{cases}
 e^{-\frac{b^1-a^1}{2(x-a^N - 0.3\delta )}},& x >a^N +0.3\delta,\\
 0, & \text{elsewhere}.
\end{cases}
$$
The PoU functions $\chi_j$ are then defined by 
$
\chi_j := \widetilde{\chi}_j/(\sum_{j=1}^N\widetilde{\chi}_j).
$
For checkerboard decompositions, the PoU functions are created by normalising the Cartesian product of $\widetilde{\chi}_j(x)\widetilde{\chi}_\ell(y)$.

We use the abbreviations  RAS-PML and RMS-PML to denote the parallel (additive) and sequential (multiplicative) 
methods, respectively.

\paragraph{The finite-element method.}

The finite-dimensional subspace $V_h$ is taken to be standard Lagrange finite elements of degree $p=2$ on uniform meshes of $\Omega$, 
with $h$ chosen as the largest number $\leq k^{-1.25}$ such that $1/h$ is an integer (recall from the discussion immediately below \eqref{eq:morning1} 
that we expect the relative-error of the finite-element solutions to be bounded uniformly in $k$ under this choice of $h$). 
We write the resulting linear system as 
$\mathsf{A u =f}.$

\subsection{Experiment I:~$\coeffc\equiv 1$}
\label{subsec:ExptI} 

Figure \ref{fig:convergence_rate} plots the rate that the residual decreases as a function of $k$ for 
\bit
\item a strip decomposition, with RAS-PML (Plot (a) of the figure) and RMS-PML with forward-backward sweeping (Plot (b)),
\item a checkerboard decomposition, with RAS-PML (Plot (c)) and RMS-PML with the exhaustive sequence of four orderings constructed via Example \ref{ex:exhaust2} (Plot (d)). 
\eit
In all cases the rate of residual reduction increases as $k$ increases (as  expected from Theorems \ref{thm:strip}-\ref{thm:sweep} and \ref{thm:gen}).

\begin{figure}[h!]
    \centering
           \begin{subfigure}[t]{0.5\textwidth}
        \centering
        \includegraphics[width=.95\textwidth]{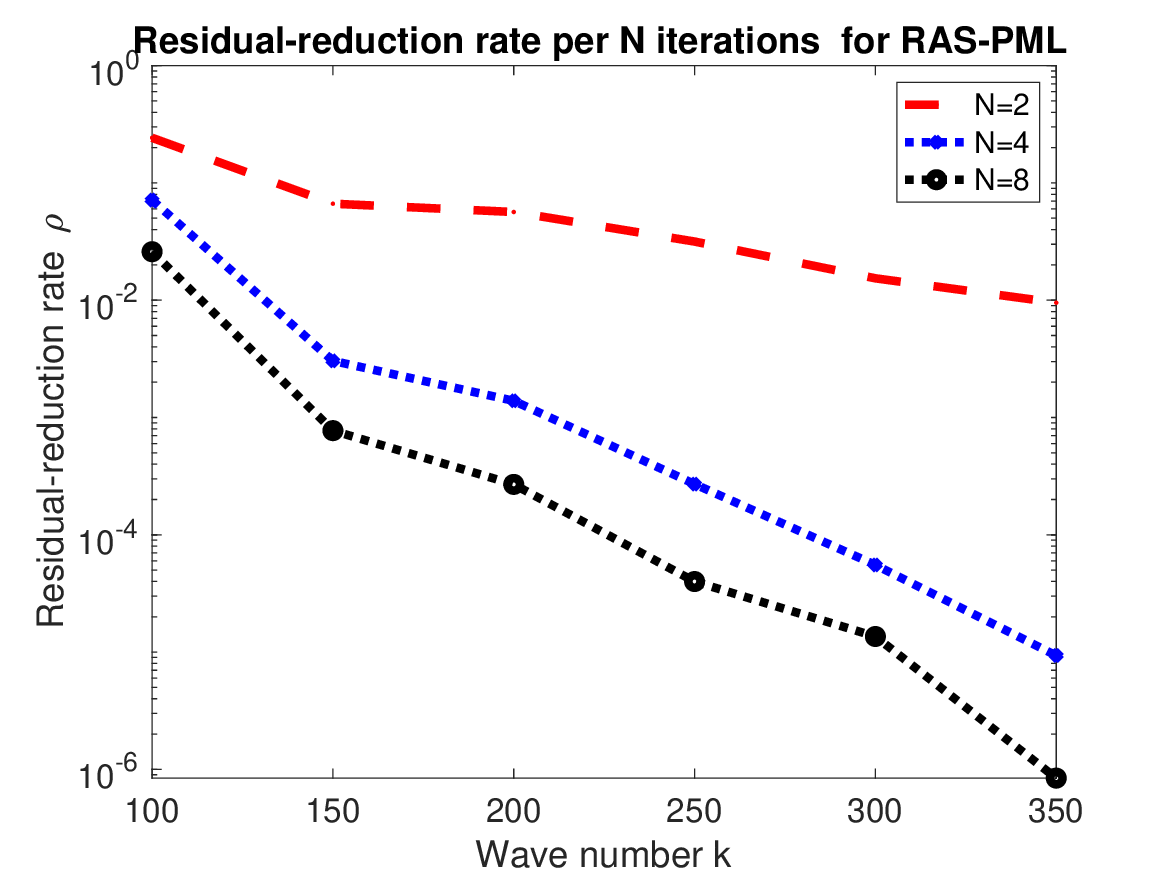}
        \caption{strip, RAS-PML: $\rho=\frac{\|\mathsf{A(u-u^N)}\|_{\ell^2}}{\|\mathsf{A(u-u^0)}\|_{\ell^2}}$}
         \end{subfigure}%
     \hfill
    \begin{subfigure}[t]{0.5\textwidth}
        \centering
        \includegraphics[width=.95\textwidth]{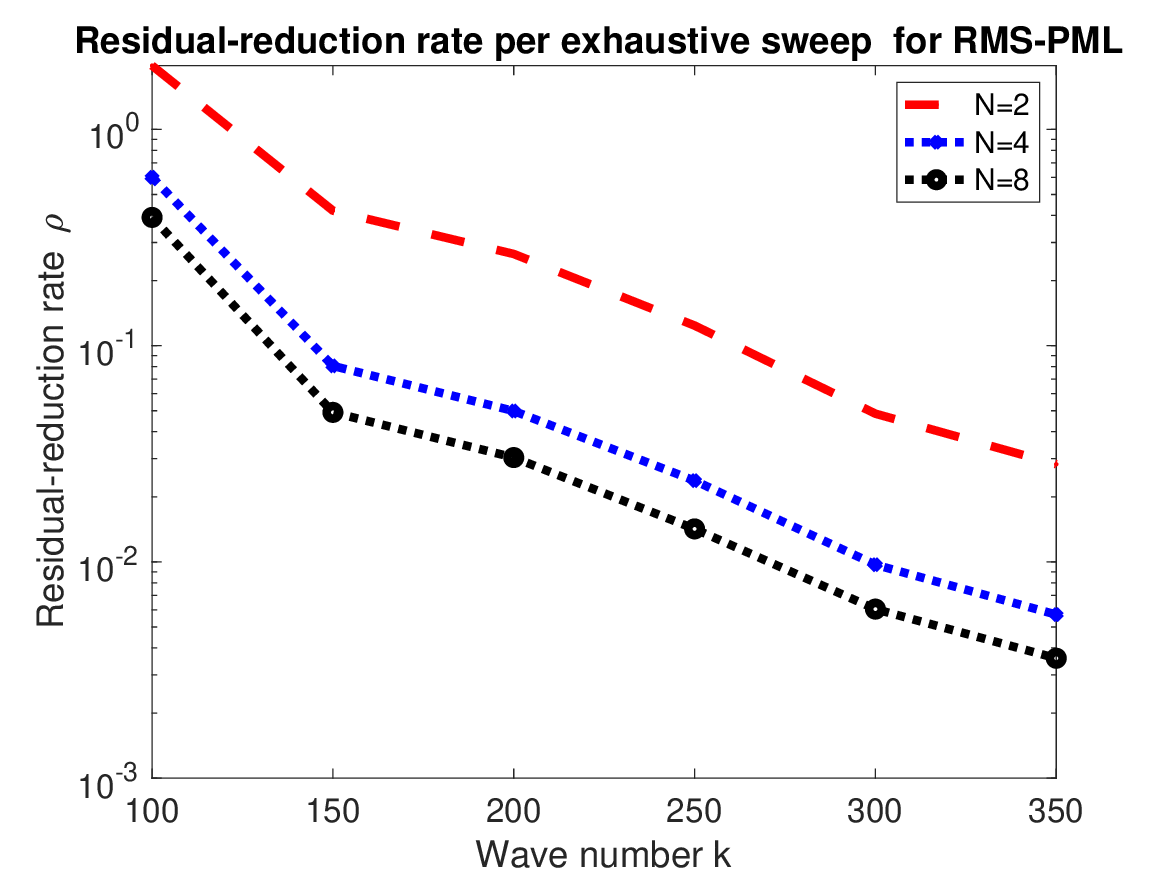}
        \caption{strip, RMS-PML: $\rho = \frac{\|\mathsf{A(u-u^2)}\|_{\ell^2}}{\|\mathsf{A(u-u^0)}\|_{\ell^2}}$}
    \end{subfigure}%
         \hfill
    \begin{subfigure}[t]{0.5\textwidth}
        \centering
        \includegraphics[width=.95\textwidth]{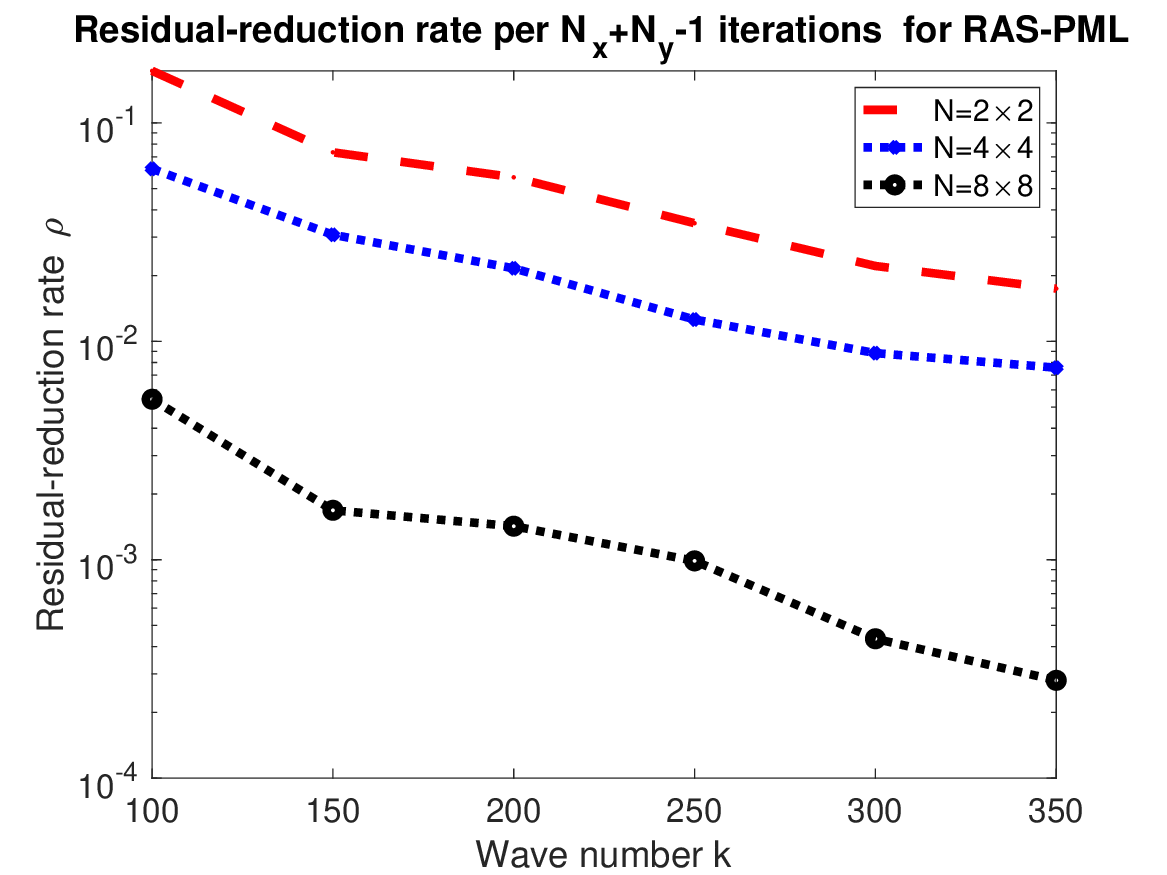}
        \caption{\footnotesize checkerboard, RAS-PML:  $\rho=\frac
        {\|\mathsf{A(u-u^{N_x+N_y-1})}\|_{\ell^2}}{\|\mathsf{A(u-u^0)}
        \|_{\ell^2}}$}
    \end{subfigure}%
         \hfill
    \begin{subfigure}[t]{0.5\textwidth}
        \centering
        \includegraphics[width=.95\textwidth]{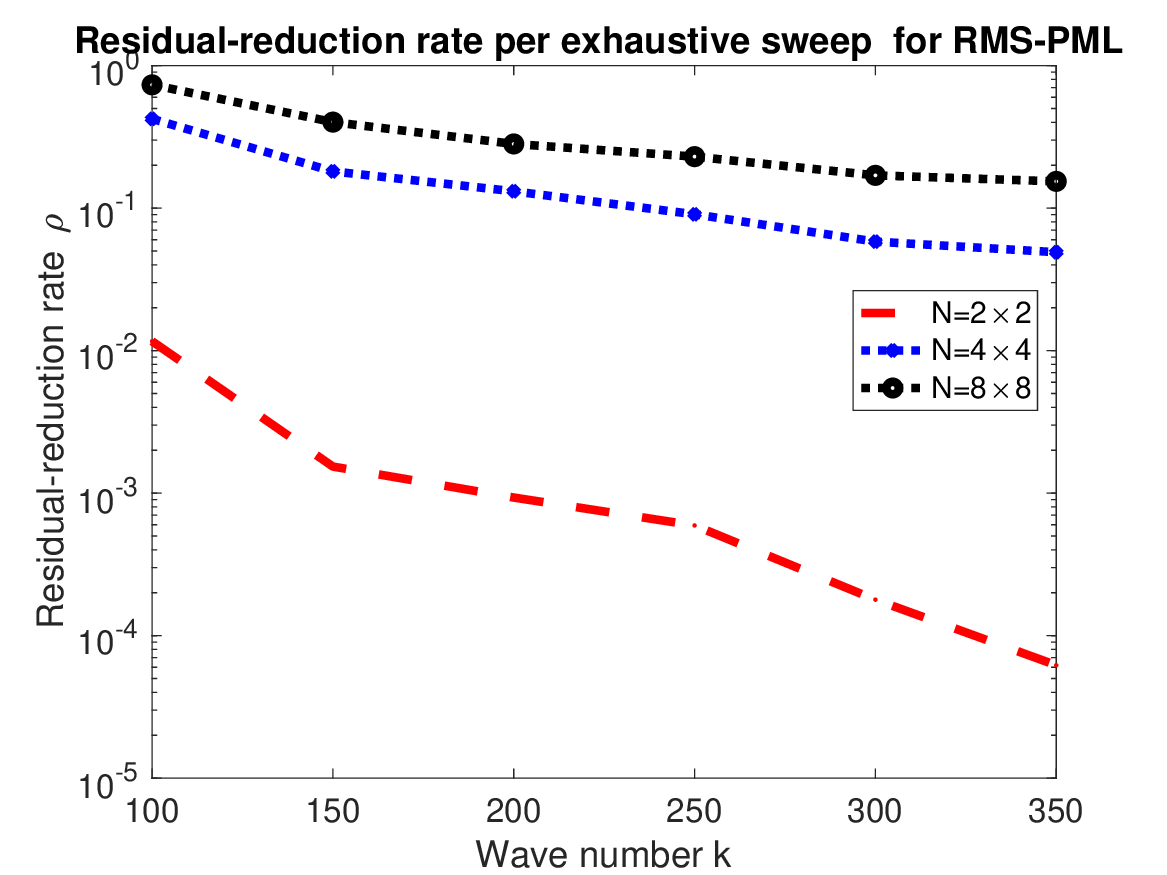}
        \caption{checkerboard, RMS-PML: $\rho=\frac
        {\|\mathsf{A(u-u^4)}\|_{\ell^2}}{\|\mathsf{A(u-u^0)}\|_{\ell^2}}$}
    \end{subfigure}%
   \caption{ The rate of reduction of the residual against $k$ }\label{fig:convergence_rate}
\end{figure}

Figures \ref{fig:his_ras_centre}, \ref{fig:his_rms},  \ref{fig:his_ras_checker_centre}, and \ref{fig:his_rms_checker} present the convergence histories for RAS-PML and RMS-PML for different $k$s and different domain decompositions. 
We highlight that, for the strip decompositions with $N=4$ and $N=8$, 
Figure \ref{fig:his_ras_centre} shows that the residual for RAS-PML on a strip undergoes a significant reduction after every $N$ iterations, as expected from Theorem \ref{thm:strip}.

\begin{figure}[h!]
    \centering
           \begin{subfigure}[t]{0.33\textwidth}
        \centering
        \includegraphics[width=.95\textwidth]{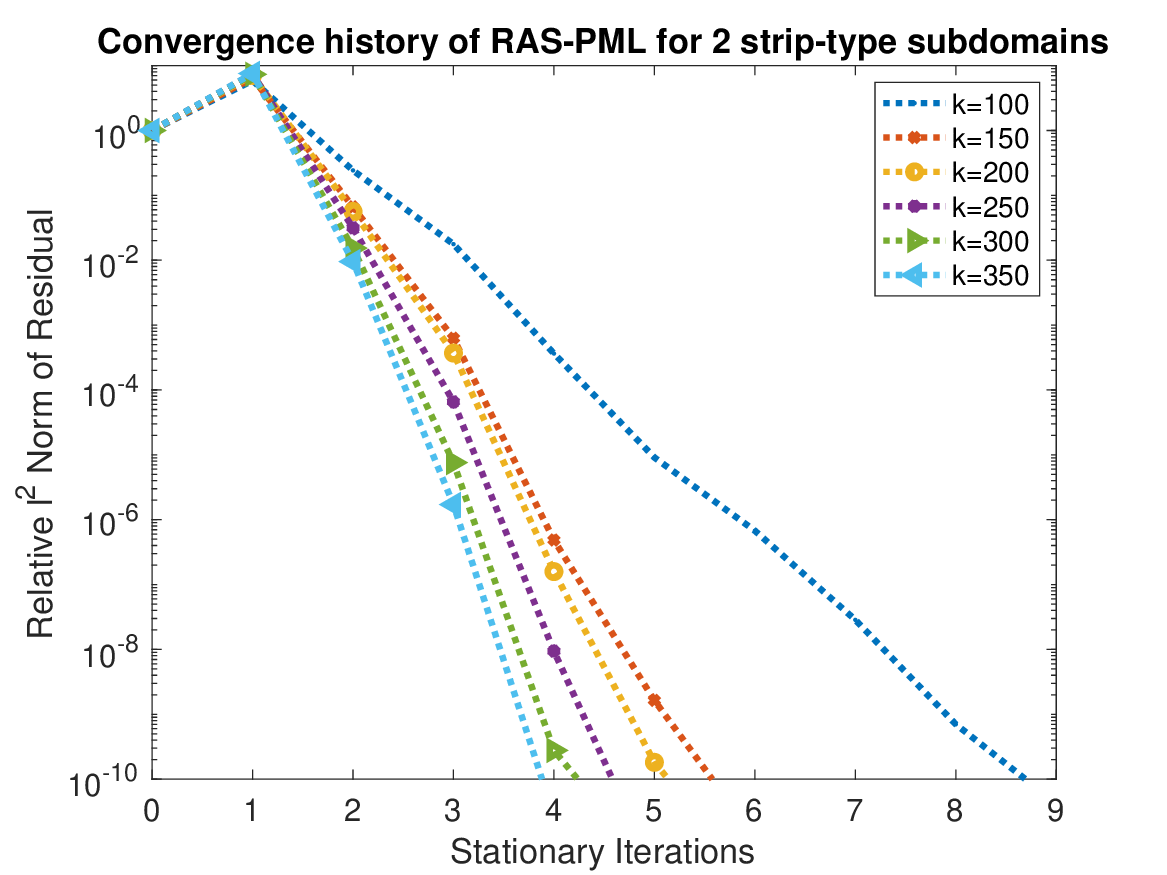}
        \caption{2 strip-type subdomains}
         \end{subfigure}%
     \hfill
    \begin{subfigure}[t]{0.33\textwidth}
        \centering
        \includegraphics[width=.95\textwidth]{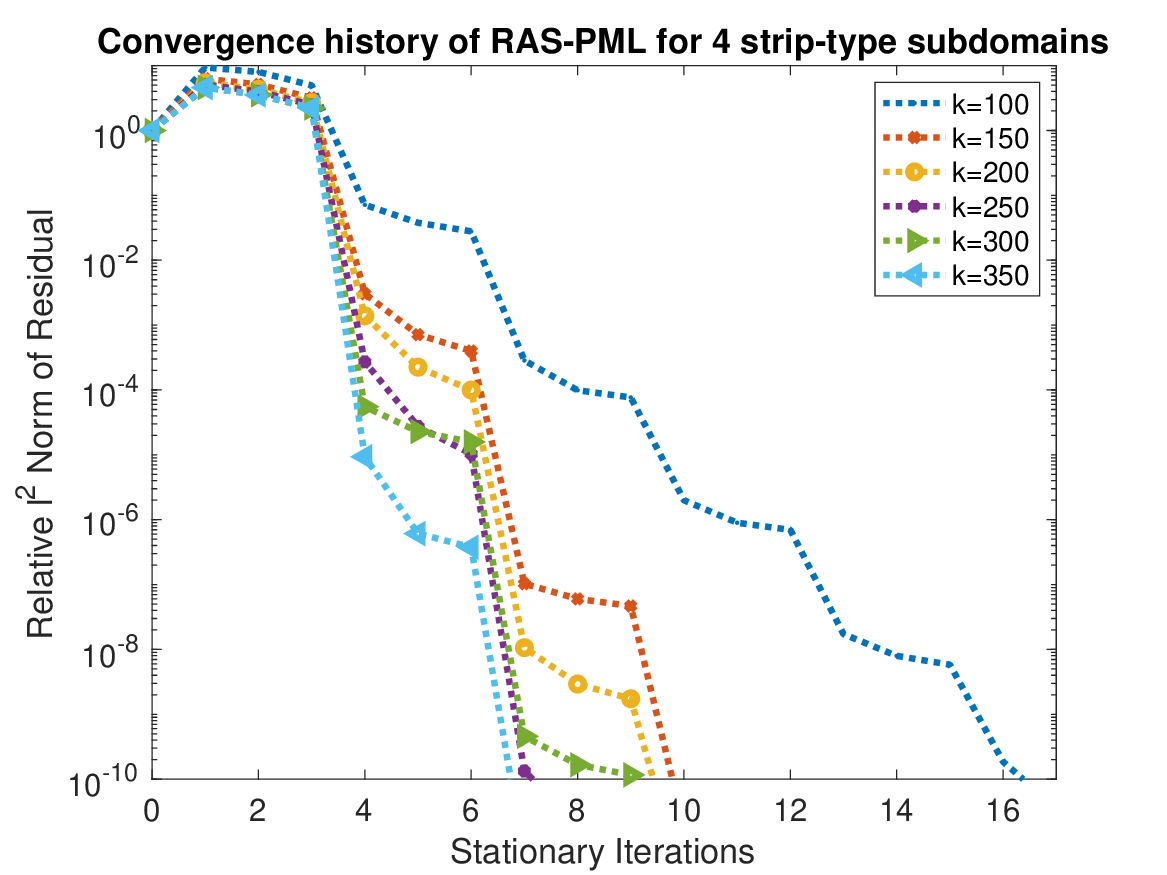}
        \caption{4 strip-type subdomains}
    \end{subfigure}%
         \hfill
        \begin{subfigure}[t]{0.33\textwidth}
        \centering
        \includegraphics[width=.95\textwidth]{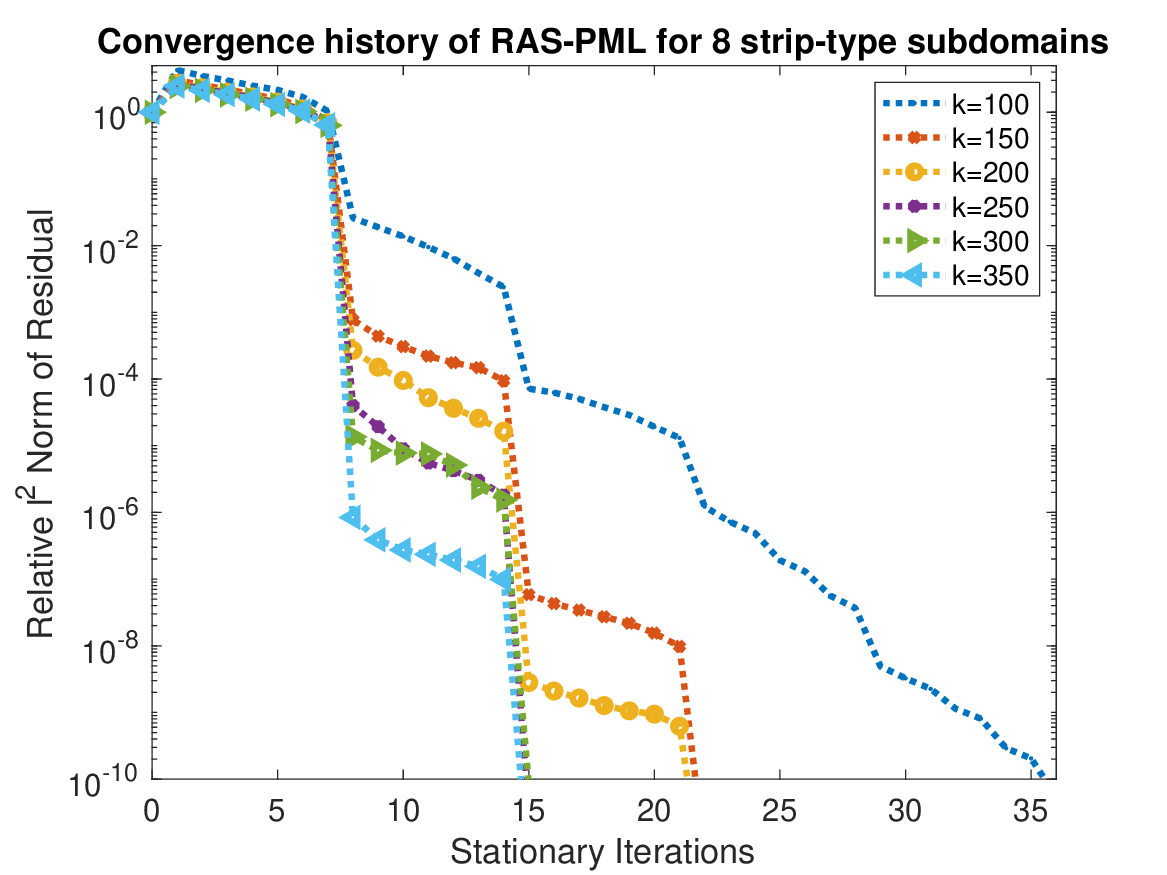}
        \caption{8 strip-type subdomains}
    \end{subfigure}%
   \caption{ The convergence histories of RAS-PML for strip decompositions}\label{fig:his_ras_centre}
\end{figure}

\begin{figure}[h!]
    \centering
           \begin{subfigure}[t]{0.33\textwidth}
        \centering
        \includegraphics[width=.95\textwidth]{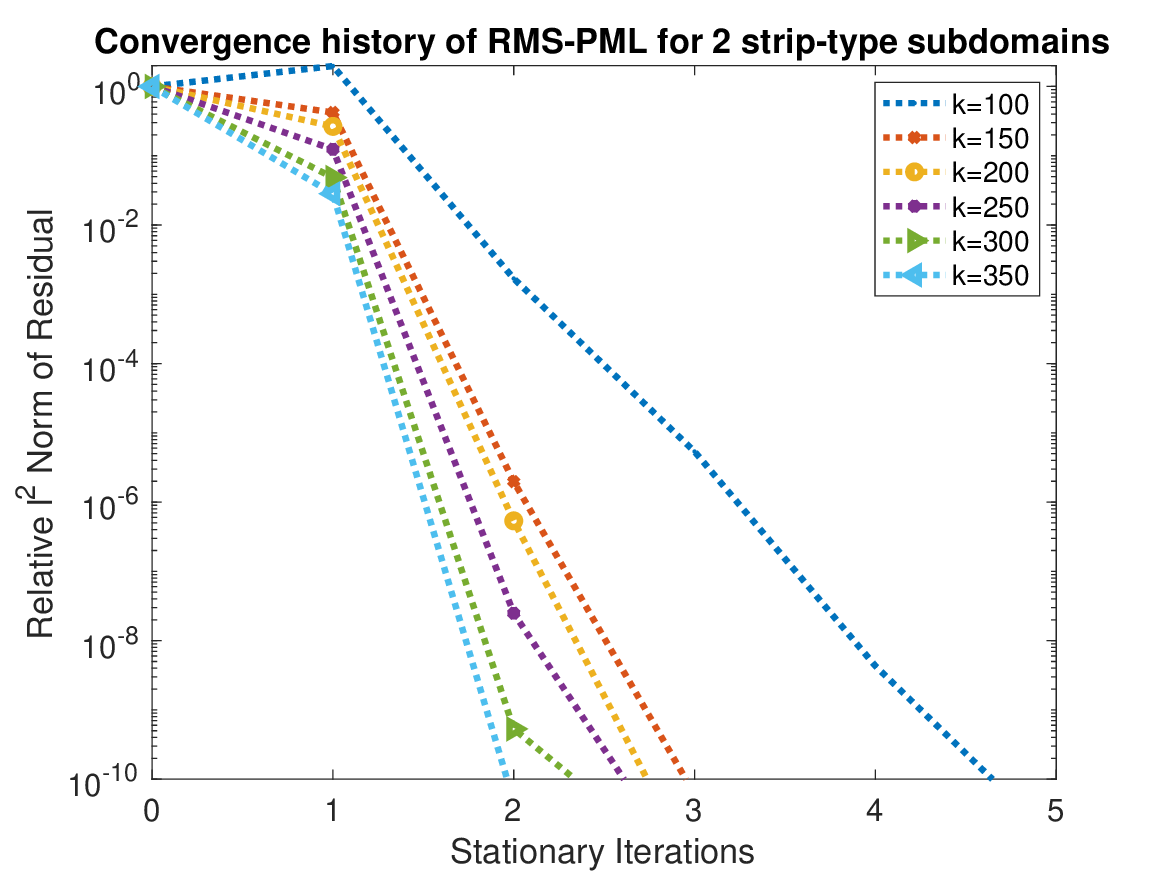}
        \caption{2 strip-type subdomains}
         \end{subfigure}%
     \hfill
    \begin{subfigure}[t]{0.33\textwidth}
        \centering
        \includegraphics[width=.95\textwidth]{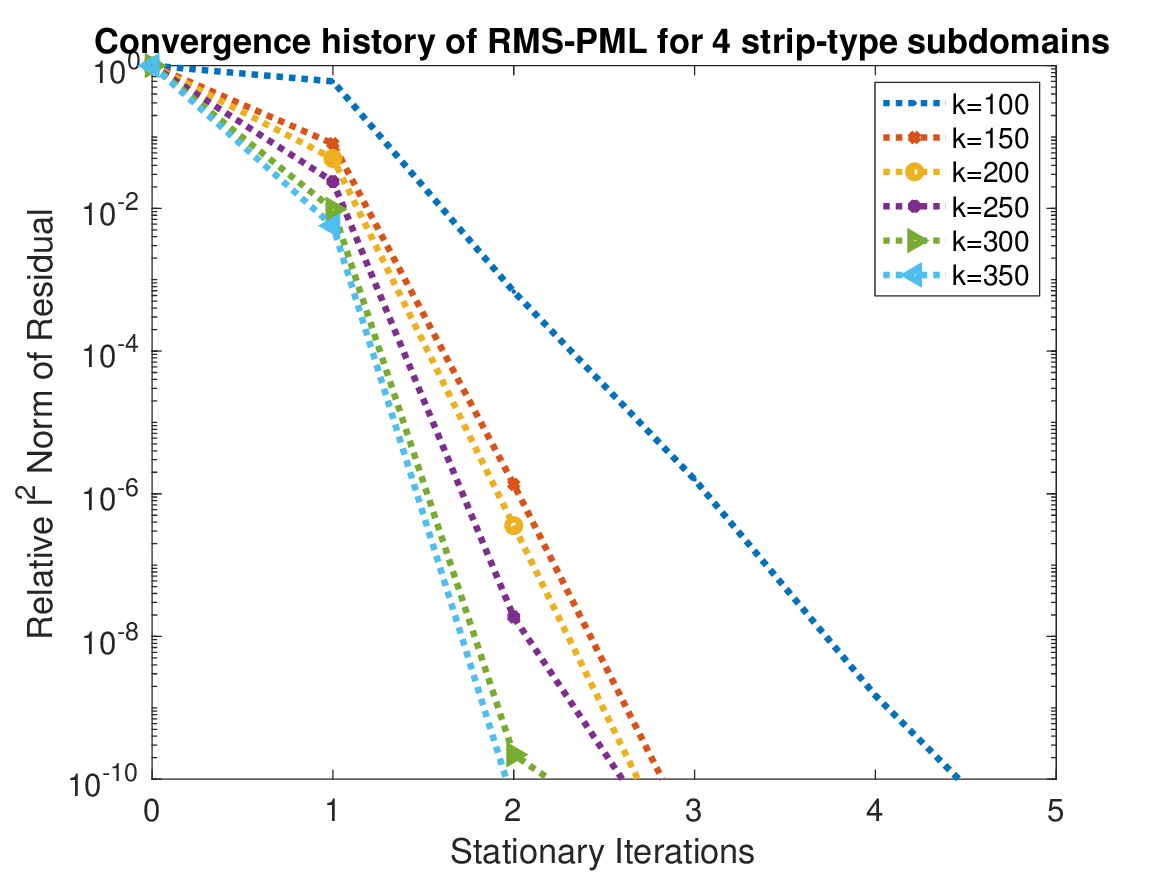}
        \caption{4 strip-type subdomains}
    \end{subfigure}%
         \hfill
        \begin{subfigure}[t]{0.33\textwidth}
        \centering
        \includegraphics[width=.95\textwidth]{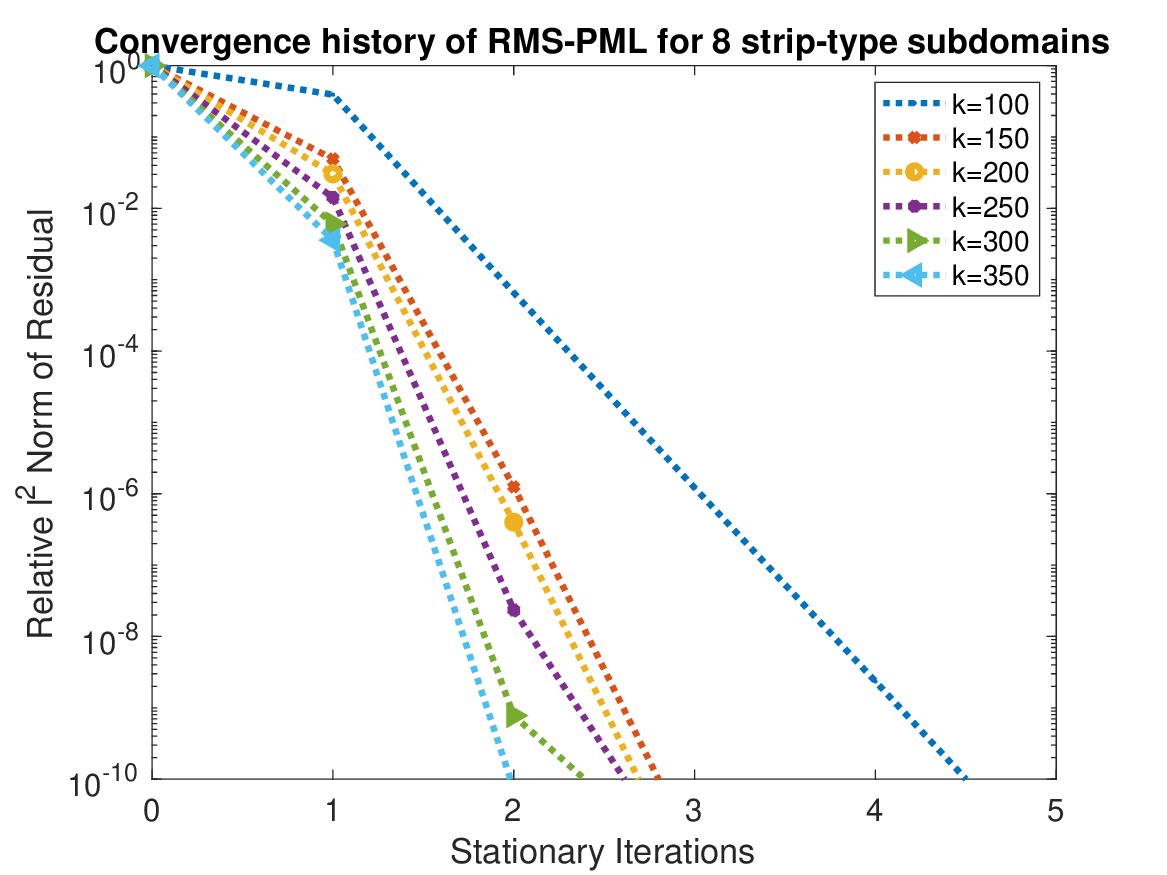}
        \caption{8 strip-type subdomains}
    \end{subfigure}%
   \caption{ The convergence histories of RMS-PML for strip decompositions}\label{fig:his_rms}
\end{figure}

\begin{figure}[h!]
    \centering
           \begin{subfigure}[t]{0.33\textwidth}
        \centering
        \includegraphics[width=.95\textwidth]{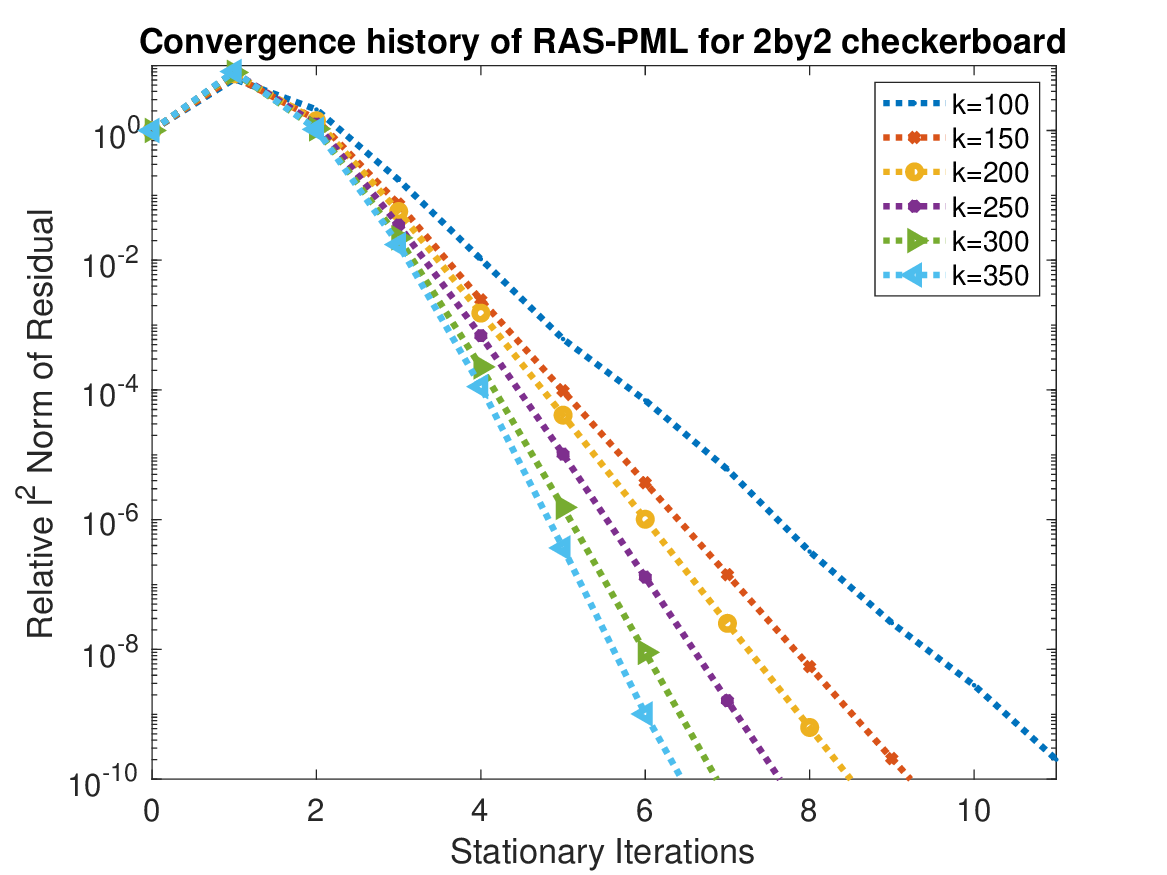}
        \caption{2by2 subdomains}
         \end{subfigure}%
     \hfill
    \begin{subfigure}[t]{0.33\textwidth}
        \centering
        \includegraphics[width=.95\textwidth]{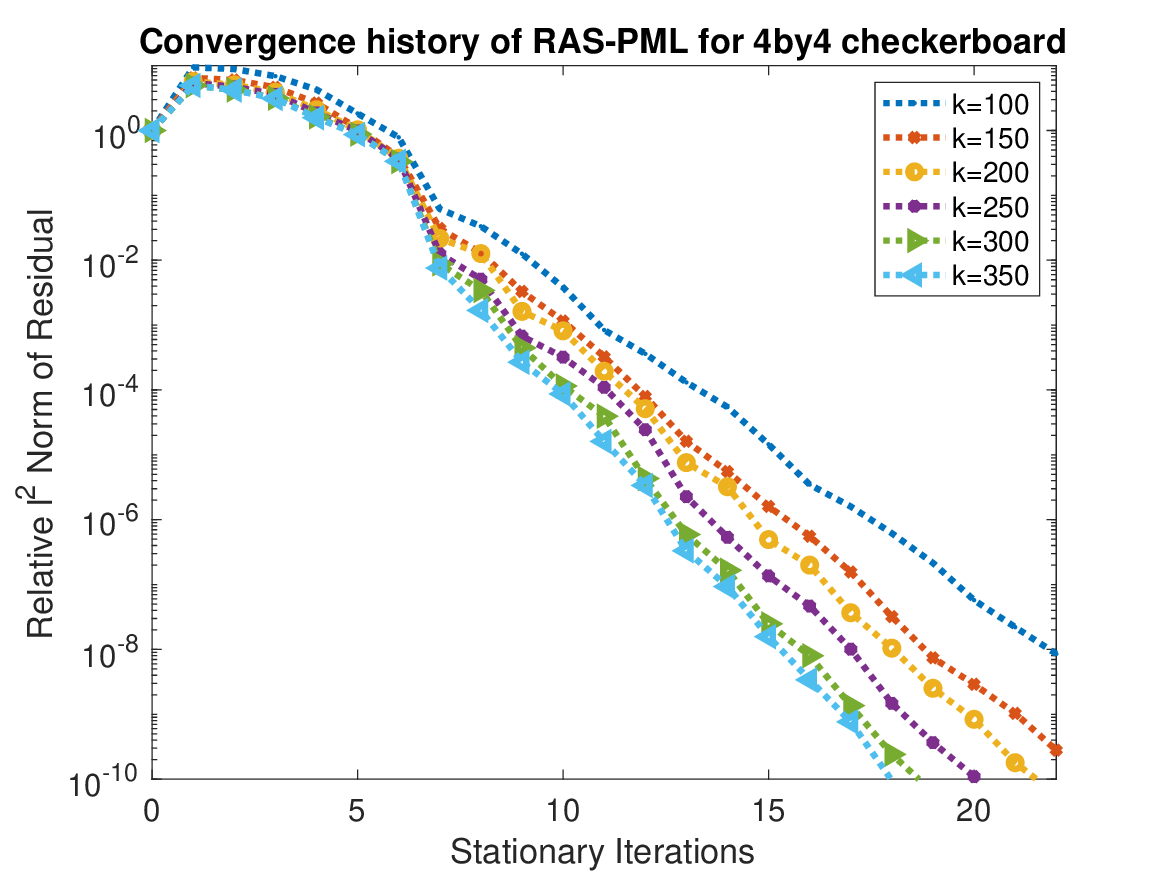}
        \caption{4by4 subdomains}
    \end{subfigure}%
         \hfill
        \begin{subfigure}[t]{0.33\textwidth}
        \centering
        \includegraphics[width=.95\textwidth]{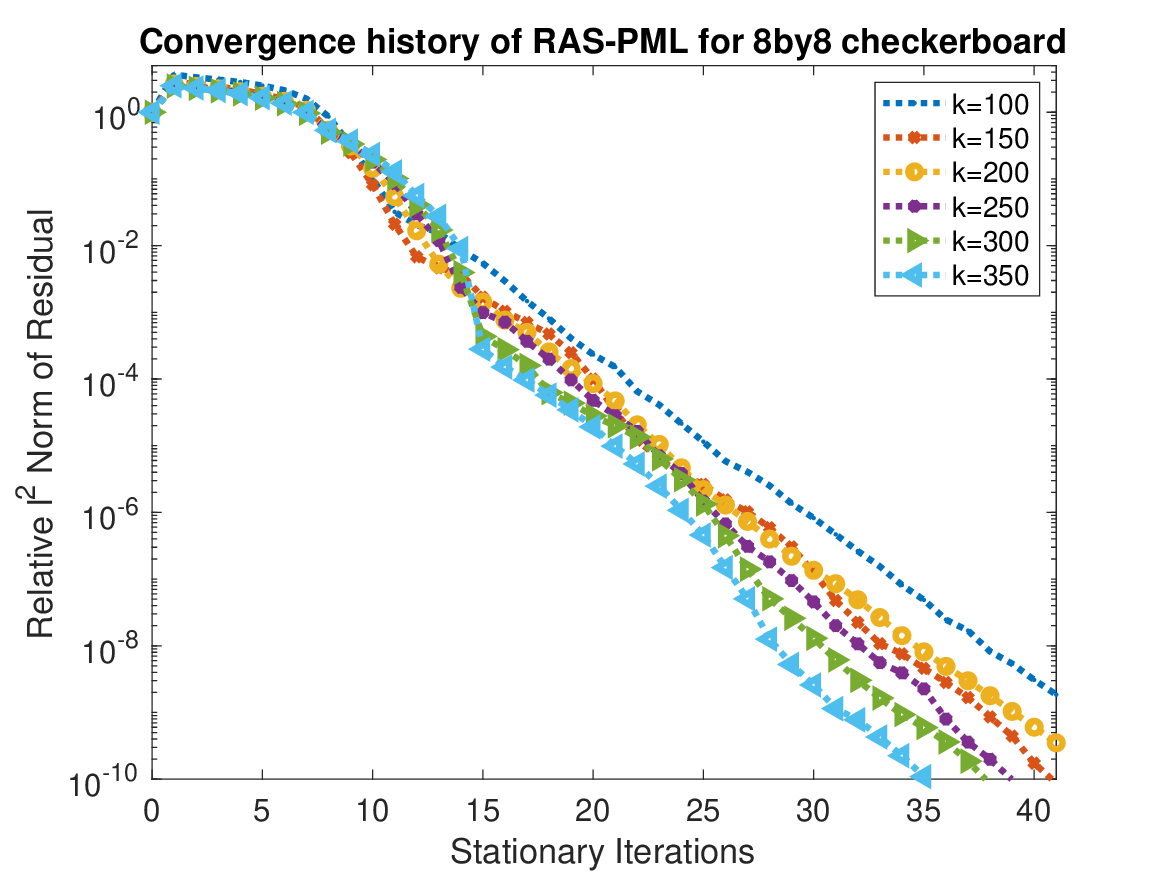}
        \caption{8by8 subdomains}
    \end{subfigure}%
   \caption{ The convergence histories of RAS-PML for checkerboard decompositions}\label{fig:his_ras_checker_centre}
\end{figure}

\begin{figure}[h!]
    \centering
           \begin{subfigure}[t]{0.33\textwidth}
        \centering
        \includegraphics[width=.95\textwidth]{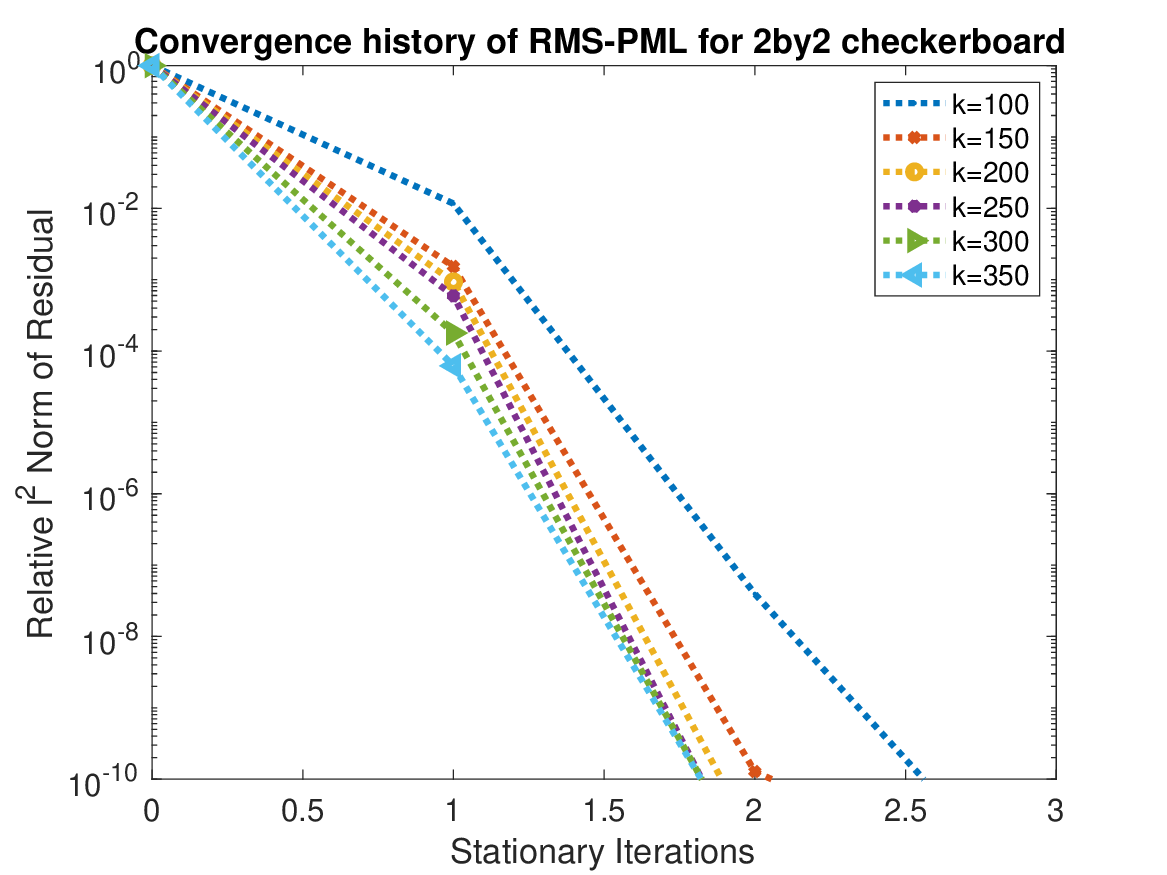}
        \caption{2by2 subdomains}
         \end{subfigure}%
     \hfill
    \begin{subfigure}[t]{0.33\textwidth}
        \centering
        \includegraphics[width=.95\textwidth]{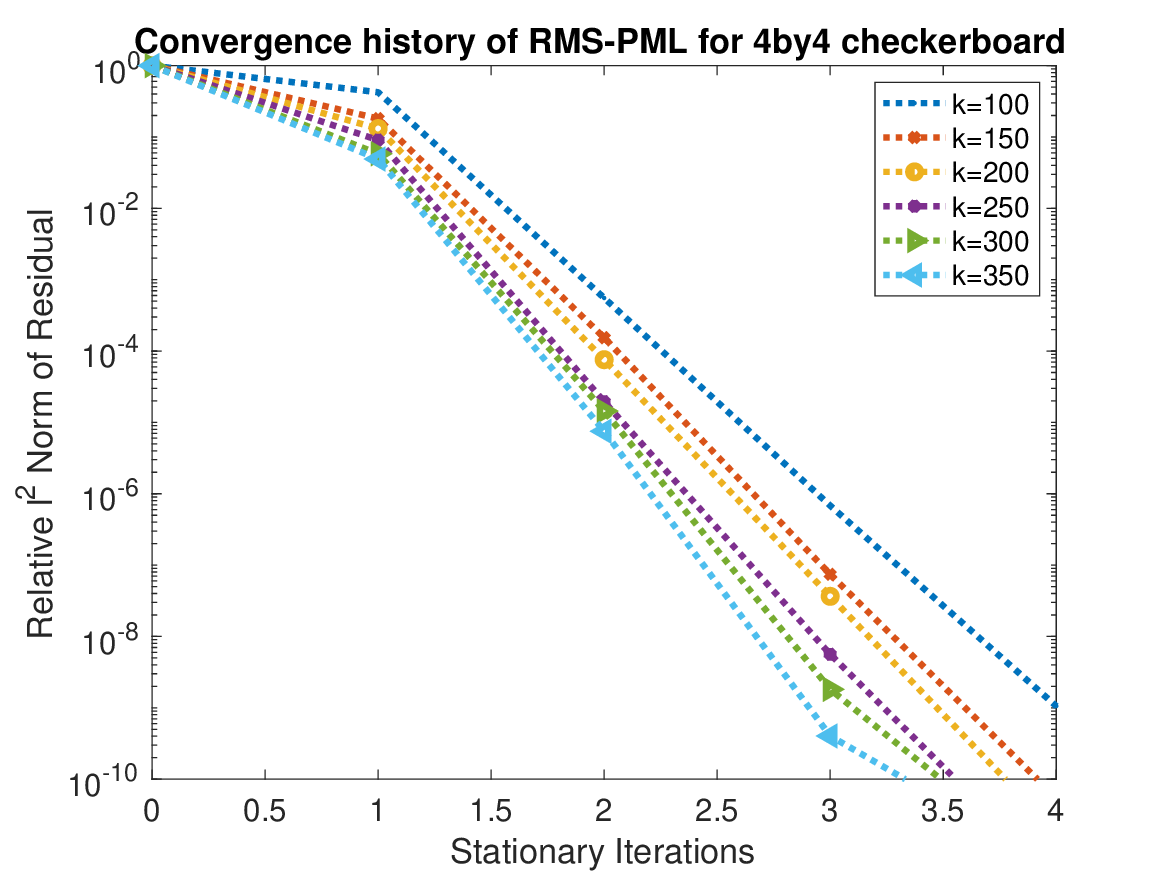}
        \caption{4by4 subdomains}
    \end{subfigure}%
         \hfill
        \begin{subfigure}[t]{0.33\textwidth}
        \centering
        \includegraphics[width=.95\textwidth]{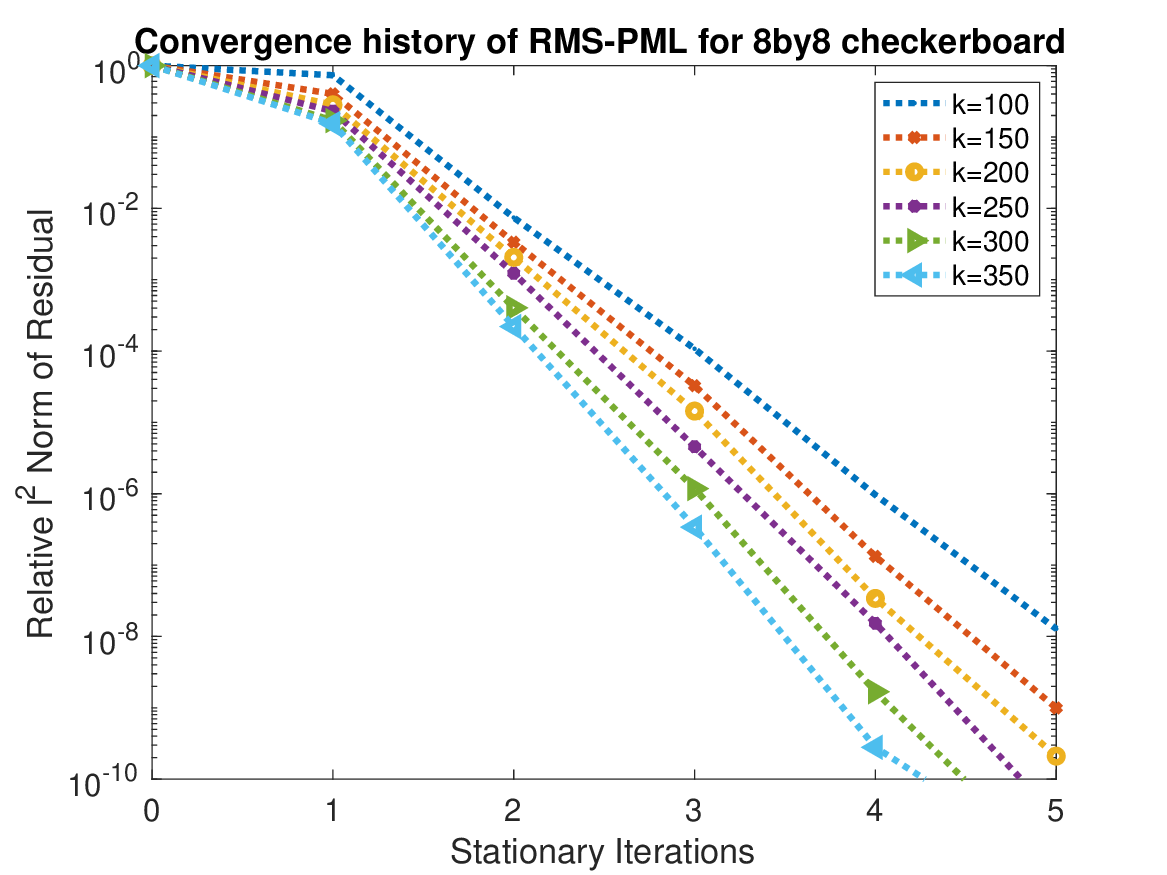}
        \caption{8by8 subdomains}
    \end{subfigure}%
   \caption{ The convergence histories of RMS-PML for checkerboard decompositions}\label{fig:his_rms_checker}
\end{figure}

\newpage

\subsection{Experiment II:~variable $\coeffc$}
\label{subsec:ExptII}

We consider three cases
\begin{enumerate}
\item[Case 1:] $c^{-2}=1$, 
\item[Case 2:]  $c^{-2}\in [2,1]$ decreases linearly from the center to $|x|=0.4$ and equals 1 for $|x|>0.4,$ 
\item[Case 3:]  $c^{-2}\in [0.5,1]$ increases linearly from the center to $|x|=0.4$ and equals 1 for $|x|>0.4.$ 
\end{enumerate}
When $c$ decreases, i.e., $c^{-1}$ increases, as a function of $|x|$, then the problem is nontrapping (see, e.g., \cite[Theorem 2.5]{GrPeSp:19}, \cite[\S7]{GrPeSp:19}). If $c$ increases quickly enough as a function of $|x|$, then the problem is trapping \cite{Ra:71}, but the growth of the solution operator 
through an increasing sequence of $k$s is very sensitive to the particular value of $k$ \cite{LaSpWu:19}, and depends on the data \cite{DaVa:12a}. 
Therefore, in these experiments, we do not seek to solve any trapping problems where the solution operator grows faster with $k$ than for nontrapping problems.

Experiment I showed that, in Case 1 ($c\equiv 1$), the rate of residual reduction  increases with $k$ for all the methods considered. In this experiment, we compare Cases 1-3 by listing 
the fixed-point iteration counts (in brackets, GMRES iterations) to obtain the following relative-residual reduction
  \begin{align} 
  \label{eq:resred} 
\frac  {\|\mathsf{A(u-u^n)}\|_{\ell^2}}{\|\mathsf{A(u-u^0)}\Vert_{\ell^2}}
  < 10^{-6},
  \end{align}  
with Table \ref{tab:variablecRAS-PML1} considering RAS-PML on strips, Table \ref{tab:variablecRMS-PML1} considering RMS-PML on strips, Table \ref{tab:variablecRAS-PML2}
considering RAS-PML on checkerboards, and Table \ref{tab:variablecRMS-PML2} considering RMS-PML on checkerboards.
In all scenarios the number of iterations are very similar for each of Cases 1, 2, and 3.

\begin{table}[h!]
\begin{center}
\begin{tabular}{c|ccc|ccc|ccc}
\hline
Variable $c^{-2}$                         & \multicolumn{3}{c|}{Case 1} & \multicolumn{3}{c|}{Case 2} & \multicolumn{3}{c}{Case 3} \\ \hline
$k$\textbackslash{}$N$ & 2    & 4   & 8   & 2    & 4   & 8   & $2$    & $4$   & $8$  \\ \hline
100                       & 6(6)    & 11(11)       & 23(22)     & 6(6)   & 13(12)& 26(23)& 7(6)& 11(10)& 22(22)\\
150                        & 4(4)& 7(7)& 15(15)& 5(4)& 10(10)& 18(18)& 5(4)&10(8)& 17(15)\\
200                        & 4(4)& 7(7)& 15(15)& 4(4)& 10(10)& 18(15)& 4(4)& 10(8)& 15(15)\\
250                      & 4(4)& 7(7)& 15(15)&4(4)& 10(8)& 18(15)& 4(4)& 8(7)& 14(14)\\
300                        & 4(4)& 7(7)& 15(15)& 4(4)& 10(8)& 15(15)& 4(4)& 8(7)& 15(15)\\
350                      & 4(4)& 5(5)& 8(8)&4(4)& 10(7)& 13(13)& 4(4)& 7(7)& 13(13)\\ \hline
\end{tabular}
\caption{Number of fixed-point iterations (in brackets, GMRES iterations) to obtain the residual reduction \eqref{eq:resred} for  RAS-PML with a strip decomposition}
\label{tab:variablecRAS-PML1}
\end{center}
\end{table}

\begin{table}[h!]
\begin{center}
\begin{tabular}{c|ccc|ccc|ccc}
\hline
Variable $c^{-2}$                         & \multicolumn{3}{c|}{Case 1} & \multicolumn{3}{c|}{Case 2} & \multicolumn{3}{c}{Case 3} \\ \hline
$k$\textbackslash{}$N$ & 2    & 4   & 8   & 2    & 4   & 8   & $2$    & $4$   & $8$  \\ \hline
100                       & 4& 4& 4&4& 3& 3& 4& 4& 4\\
150                        & 3& 3& 3& 3& 3&3& 3&3& 3\\
200                        & 2& 2& 2& 3& 3& 2& 3& 2& 3\\
250                      & 2& 2& 2& 2& 2& 2& 2& 2& 2\\ 300                        & 2& 2& 2& 2& 2& 2& 2& 2& 2\\
350                      & 2& 2& 2& 2& 2& 2& 2& 2& 2\\ \hline
\end{tabular}
\caption{Number of fixed-point iterations to obtain the residual reduction\eqref{eq:resred}
    for  RMS-PML with a strip decomposition, where one iteration equals one forward plus one backward sweep}
\label{tab:variablecRMS-PML1}
\end{center}
\end{table}

\begin{table}[h!]
\begin{center}
\begin{tabular}{c|ccc|ccc|ccc}
\hline
Variable $c^{-2}$                         & \multicolumn{3}{c|}{Case 1} & \multicolumn{3}{c|}{Case 2} & \multicolumn{3}{c}{Case 3} \\ \hline
$k$\textbackslash{}$N$ & $2\times2$    & $4\times4$  & $8\times8$ &$2\times2$    & $4\times4$  & $8\times8$ &$2\times2$    & $4\times4$  & $8\times8$  \\ \hline
100                       & 8(7)& 18(16)& 30(27)& 8(7)& 20(18)& 34(30)& 8(7)& 17(15)& 28(27)\\
150                        & 7(6)& 16(15)& 28(25)& 7(6)& 17(16)& 30(27)& 7(6)&15(14)& 27(24)\\
200                        & 7(6)& 15(14)& 27(25)& 7(6)& 16(15)& 29(26)& 7(6)& 14(14)& 25(24)\\
250                      & 6(5)& 14(13)& 26(24)&6(6)& 15(14)& 28(26)& 7(6)& 13(13)& 25(23)\\
300                        & 6(5)& 13(13)& 26(23)& 6(6)& 15(14)& 26(25)& 6(6)& 13(12)& 25(22)\\
350                      & 6(5)& 13(12)& 25(23)&6(6)& 15(14)& 25(24)& 6(6)& 12(12)& 23(21)\\ \hline
\end{tabular}
\caption{Iteration counts of fixed-point iteration (in brackets, GMRES iterations) to obtain the residual reduction \eqref{eq:resred} for   RAS-PML  with checkerboard decompositions, where one iteration consists of one exhaustive sequence of four orderings.
}
\label{tab:variablecRAS-PML2}
\end{center}
\end{table}

\begin{table}[h!]
\begin{center}
\begin{tabular}{c|ccc|ccc|ccc}
\hline
Variable $c^{-2}$                         & \multicolumn{3}{c|}{Case 1} & \multicolumn{3}{c|}{Case 2} & \multicolumn{3}{c}{Case 3} \\ \hline
$k$\textbackslash{}$N$ & $2\times2$    & $4\times4$  & $8\times8$ &$2\times2$    & $4\times4$  & $8\times8$ &$2\times2$    & $4\times4$  & $8\times8$ \\ \hline
100                       & 2& 3& 4&2& 4& 5& 2& 3& 4\\
150                        & 2& 3& 4& 2& 3&4& 2&3& 4\\
200                        & 2& 3& 4& 2& 3& 4& 2& 3& 4\\
250                      & 2& 3& 4& 2& 3& 4& 2& 3& 4\\ 300                        & 2& 3& 4& 2& 3& 3& 2& 3& 3\\
350                      & 2& 3& 3& 2& 3& 3& 2& 3& 3\\ \hline
\end{tabular}
\caption{Iteration counts of fixed-point iteration  to obtain the residual reduction \eqref{eq:resred} for  RMS-PML with checkerboard decompositions,
where one iteration consists of one exhaustive sequence of four orderings.}
 \label{tab:variablecRMS-PML2}
\end{center}
\end{table}

\subsection{Experiment III: variable coefficient in the differential operator}\label{sec:num_A}

Remark \ref{rem:A} above described how the results in Theorems \ref{thm:strip}-\ref{thm:sweep} and \ref{thm:gen} do not hold for the problem 
\beq\label{eq:APDE2}
k^{-2}\nabla_\newtheta \cdot( A \nabla_\newtheta u) + c^{-2}u = -f \quad\tin \Rea^d, \qquad u=0 \quad \ton \partial\Omega,
\eeq
where 
\beqs
\nabla_\newtheta := 
\left(
\begin{array}{c}
{\displaystyle\frac{1}{1+ig_{1}'(x_1)} \partial_{x_1} }\\
\vdots
\\
{\displaystyle\frac{1}{1+ig_{d}'(x_d)} \partial_{x_d}}
\end{array}
\right)
\eeqs
and $A$ is a general symmetric-positive-definite-matrix-valued function with $\supp(I-A)\subset \Omega_{\rm int}$.
This experiment demonstrates that the parallel Schwarz method diverges when applied to \eqref{eq:APDE2}, where the local problems involve the operator \eqref{eq:APDEj}.

In common with the previous two experiments, $\Omega_{\rm int}$ is the unit square in 2-d, now centred at the origin to make the expressions for $A$ simpler. 
We let $\Omega = (-0.5-\kappa, 0.5+\kappa)\times (-0.5-\kappa, 0.5+\kappa)$ and 
\beq\label{eq:badA}
A(x) = 
	\begin{pmatrix}
	1& 0\\0&1
	\end{pmatrix} +
	\begin{pmatrix}
	0&1\\1&0
	\end{pmatrix} \Big(\big(0.4 - |x|\big) +\big| 0.4 - |x|\big|\Big);
\eeq
i.e., $A$ transitions from being equal to $\begin{pmatrix} 1 & 0.8\\0.8&1\end{pmatrix}$ at the origin to being equal to $\begin{pmatrix} 1 & 0\\0&1\end{pmatrix}$ near the start of the PML.
The variational formulation of \eqref{eq:APDE2} is \eqref{eq:global_vari} with the sesquilinear form now 
\begin{equation*}
a(u,v):=\int_{\Omega_j} \bigg(k^{-2}\big(\widetilde{A} \nabla u) \cdot \nabla \overline{v}  - (\widetilde\beta\cdot \nabla u)\overline{v} \big)-  c^{-2}u \overline{v} \bigg) dx\quad \tfor u,v\in  H_0^1(\Omega), 
\end{equation*}
where 
$$\widetilde{A}(x) =\begin{pmatrix}\frac{1}{\gamma_{1}(x_1)}&0\\
0& \frac{1}{\gamma_{2}(x_2)} \end{pmatrix} 
A(x)
\begin{pmatrix}\frac{1}{\gamma_{1}(x_1)}&0\\
0& \frac{1}{\gamma_{2}(x_2)} \end{pmatrix} 
\quad \text{and}\quad
\widetilde{\beta}(x) =
\begin{pmatrix}\frac{1}{\gamma_{1}(x_1)}&0\\
0& \frac{1}{\gamma_{2}(x_2)} \end{pmatrix} A(x)
\begin{pmatrix}\frac{\gamma_{1}'(x_1)}{\gamma_{1}^2(x_1)}\\
 \frac{\gamma_{2}'(x_2)}{\gamma_{2}^2(x_2)} \end{pmatrix}
$$
with $\gamma_{1}(x_1) := 1+ig_{1}'(x_1)$ and $\gamma_{2}(x_2) := 1+ig_{2}'(x_2)$ as before.
The local sesquilinear forms $a_j(\cdot,\cdot)$ are defined analogously, with $g_\ell'$, $\ell=1,2,$ replaced by $g_{\ell,j}'$.

We consider a strip decomposition with two subdomains and overlap $\delta=1/40$. We display the results for PML width $\kappa =1/40, 1/20, 1/10$.
The number of wavelengths in the PML at $k=350$ are then $1.39, 2.78,$ and $5.56$ respectively.

Table \ref{tab:variable-diffusion} displays 
the number of fixed-point iterations (in brackets, GMRES iterations) required to achieve 
the residual reduction \eqref{eq:resred}, both for the coefficient $A$ given by \eqref{eq:badA} and (for comparison) 
$A\equiv I$. Table \ref{tab:variable-diffusion} shows that the parallel Schwarz method applied to \eqref{eq:APDE2} with $A$ given by \eqref{eq:badA} converges when $\kappa =1/40$ (with iteration counts almost identical to the case $A\equiv I$), but diverges when $\kappa =1/20$ and when $\kappa=1/10$. 
  
When PML is used to approximate the radiation condition, the accuracy increases with the width of the PML (see 
\cite[Theorem 5.5]{KiPa:10}, \cite[Theorem 5.7]{BrPa:13}, \cite[Theorems 1.2 and 1.5]{GLS2}).
The fact the the behaviour of the parallel Schwarz method with $A\not\equiv I$ gets worse as the PML width increases is consistent 
with the fact, highlighted in Remark \ref{rem:A}, that PML is not intended to be used when $A\not\equiv I$ (in the sense that the imaginary part of the principal symbol is not single-signed).

\begin{table}[h!]
\begin{center}
\begin{tabular}{c|ccc|ccc}
\hline
& \multicolumn{3}{c|}{$A$ given by \eqref{eq:badA}}    &\multicolumn{3}{c}{$A\equiv I$} \\ \hline
$k$\textbackslash{}$\kappa$&	\multicolumn{1}{c}{$\frac{1}{40}$}&	\multicolumn{1}{c}{$\frac{1}{20}$}&	\multicolumn{1}{c|}{$\frac{1}{10}$}&	\multicolumn{1}{c}{$\frac{1}{40}$}&	\multicolumn{1}{c}{$\frac{1}{20}$}&	\multicolumn{1}{c}{$\frac{1}{10}$}  \\
\hline
100                        & 7(7) &diverges(14)&diverges(24) &6(6)  &4(4) &7(7)\\
150                        & 5(5)   &diverges($>100$)&diverges($>100$) &4(4)  &3(3) &5(5)  \\
200                       & 5(5)  &diverges($>100$)&diverges($>100$)   &4(4)   &3(3)&4(4)\\
250                        & 4(5) &diverges($>100$)&diverges($>100$)   &4(4)   &3(3)&3(3)  \\ 
300                        & 4(5) &diverges($>100$)&diverges($>100$)    &4(4)  &3(3) &3(3) \\
350                        & 4(4) &diverges($>100$)&diverges($>100$)    &3(4) &3(3)  &3(3)\\ \hline
\end{tabular}
\caption{Iteration counts (GMRES)(tol=1e-6) for RAS-PML using two subdomains for the problem \eqref{eq:APDE2} with $A$ given by \eqref{eq:badA}.}
\label{tab:variable-diffusion}
\end{center}
\end{table}

\appendix

\section{Extending the main results to more general complex absorbing-layer operators}\label{sec:general}

A careful examination of the assumptions used in the proof of Theorems \ref{thm:strip}-\ref{thm:gen} shows that these results hold under the following abstract assumptions, in which $P_{\mathcal A}$ and $P_{\mathcal A}^j$ ($\mathcal A$ standing for ``absorbing'') play the role of (and generalise)  $P_{\newtheta}$ and $P_{\newtheta}^j$, respectively.

\begin{theorem} \label{thm:abstract}
Let $\{\Omega_{j}\}_{j=1}^N$ be an overlapping decomposition of $\Omega$ associated with the
partition of unity $\{ \chi_j \}_{1\leq j \leq N}$. Let $P, P_{\mathcal A}, P_{\mathcal A}^j \in \Psi^2_\hbar(\mathbb R^d)$ be second-order semiclassical pseudodifferential operators (in the sense of \S\ref{sec:pseudo}) with $p, p_{\mathcal A},p_{\mathcal A}^j$ denoting their semiclassical principal symbols and $\Phi_t,
\Phi^{\mathcal A}_t, \Phi_t^{\mathcal A, j}$ denoting the Hamiltonian flows associated to $\operatorname{Re}p$, $\operatorname{Re}p_{\mathcal A}$ and $\operatorname{Re}p_{\mathcal A}^j$ respectively (i.e. defined by \eqref{eq:Hamilton} with $p$ replaced by the appropriate symbol).
Assume further that $P, P_{\mathcal A},$ and $P_{\mathcal A}^j$ satisfy the following.
\begin{enumerate}
\item \label{it:gen_cc} \emph{(Constant coefficients near $\partial \Omega$)} $P_{\mathcal A}$ and $P^j_{\mathcal A}$ can be written near $\partial \Omega$ as second-order differential  operators with constant coefficients and zero first-order coefficients.
\item \label{it:gen_la} 
\emph{(Agreement of $P_{\mathcal A}$ and $P_{\mathcal A}^j$)} $P_{\mathcal A} = P_{\mathcal A}^j$ on $\Omega_{{\rm int}, j}$.

\item \emph{(Ellipticity)}
\begin{enumerate}
\item \label{it:gen_ee} \emph{(Escape to ellipticity (cf.~Lemma \ref{lem:go_to_elliptic}))} For any $\rho \in T^*\widetilde{\Omega}_j$ not trapped backward in time, there exists $\tau \leq 0$ so that
$$
\gamma_{[\tau, 0]}(\rho) \subset T^*\widetilde{\Omega}_j, \quad \text{ and } \quad  \Phi_{\tau}(\rho) \in \big\{ |p_{\mathcal A}^j(x,\xi)|> 0\big\} .
$$
Furthermore, the same property holds replacing $\widetilde{\Omega}_j$ by $\mathbb R^d$.
\item \label{it:gen_ei} \emph{(Ellipticity at infinity (cf.~Lemma \ref{lem:comp_symbol}))} For any compact $K$, there exists $C>0$ such that if $x \in K$ and $|\xi| \geq C$ then $|p_{\mathcal A}^j(x,\xi)|\geq C^{-1}$. Furthermore the same property holds for $p_{\mathcal A}$.
\end{enumerate}
\item \emph{(Respecting propagation)}
\begin{enumerate}
\item \label{it:gen_rp1} On $\{ p_{{\mathcal A}}^j = 0\}$, $\Phi_t^{{\mathcal A}, j}  = \Phi_t$ for any $t\in \mathbb R$; and on $\{ p_{{\mathcal A}} = 0\}$, $\Phi_t^{{\mathcal A}}  = \Phi_t$ for any $t\in \mathbb R$ (cf.~Lemma \ref{lem:traj_energy_surf}).
\item \label{it:gen_rp2} $\operatorname{Im} p_{{\mathcal A}}^j \leq 0$ near $\{ p_{\mathcal A}^j = 0\}$, and $\operatorname{Im} p_{\mathcal A} \leq 0$ near $\{ p_{\mathcal A} = 0\}$.
\end{enumerate}
\end{enumerate}
Then the results of Theorems \ref{thm:strip}, \ref{thm:sweep_strip}, \ref{thm:check}, \ref{thm:sweep}, \ref{thm:gen} hold (replacing $(P_{\newtheta}, P_{\newtheta}^j)$ by $(P_{\mathcal A}, P_{\mathcal A}^j)$ in the definition of the iterates).
\end{theorem}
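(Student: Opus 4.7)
The plan is to show that every ingredient used in the proofs of Theorems \ref{thm:strip}--\ref{thm:gen} carries over verbatim with $(P_\newtheta, P_\newtheta^j)$ replaced by $(P_{\mathcal A}, P_{\mathcal A}^j)$ once the abstract hypotheses \ref{it:gen_cc}--\ref{it:gen_rp2} are assumed. Concretely, I would trace through each intermediate lemma and identify which assumption supplies the fact that was previously proved by hand for PML, so that the skeletons of \S\ref{s:sec_prop}--\S\ref{sec:checkerboard} survive essentially unchanged. The definition of $\supp(P_{\mathcal A}^j - P_{\mathcal A})$ (which plays the same role it did for PML in the definitions of the partition of unity and the map $\mathcal T_j$) is made meaningful by \ref{it:gen_la}, which guarantees $\supp(P_{\mathcal A}^j - P_{\mathcal A}) \subset \Omega_j \setminus \Omega_{{\rm int},j}$.

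The first key step is the extension lemma, Lemma \ref{lem:ext_h}. Its PML proof only used that the coefficients of $P_\newtheta^j$ are constant in a neighbourhood of $\partial\Omega_j \cap \partial\Omega$ (so that the odd reflection commutes with the operator) and that the first-order terms vanish (so that the extended PDE is still satisfied). Assumption \ref{it:gen_cc} provides precisely this, so the extension from $\Omega_j$ to $\widetilde\Omega_j$ goes through. Second, the ellipticity lemmas:~Lemma \ref{lem:comp_symbol} is replaced by \ref{it:gen_ei}, and the directional-ellipticity Lemma \ref{lem:ell_unidri} (which was used only to prove the escape-to-ellipticity Lemma \ref{lem:go_to_elliptic}) is replaced directly by \ref{it:gen_ee}. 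Third, the flow-agreement Lemma \ref{lem:traj_energy_surf} is replaced by \ref{it:gen_rp1}, and the non-positivity of $\operatorname{Im} p_{\mathcal A}^j$ on the characteristic set, needed to invoke the forward propagation-of-singularities Lemma \ref{lem:FPR_app}, is supplied by \ref{it:gen_rp2}.

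With these replacements in hand, the forward propagation from elliptic points (Lemma \ref{lem:FPR}) is proved by exactly the same argument:~start from the given point, use \ref{it:gen_rp1} to deduce that the $\Phi^{\mathcal A,j}_t$ and $\Phi_t$ trajectories coincide on $\{p_{\mathcal A}^j=0\}$; use continuity of $\Phi^{\mathcal A,j}_t$ to push slightly past the last time the trajectory leaves $\{p_{\mathcal A}^j=0\}$, landing at an elliptic point without meeting the wavefront set of the source; apply Lemma \ref{lem:ell_wf} there; and conclude by forward propagation in the flow of $P_{\mathcal A}^j$ using \ref{it:gen_rp2}. The main propagation result (Lemma \ref{lem:key_prop}) then follows by combining Lemma \ref{lem:FPR} (abstract version) with Lemma \ref{lem:go_to_elliptic} (replaced by \ref{it:gen_ee}), just as in \S\ref{ss:proof_key}. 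The a priori bound Lemma \ref{lem:res_est} is proved identically, using \ref{it:gen_ei} for ellipticity at infinity, extension via Lemma \ref{lem:ext_h} (abstract version), the abstract Lemma \ref{lem:FPR}, and the abstract escape-to-ellipticity assumption \ref{it:gen_ee} globally on $\mathbb R^d$ (which is why \ref{it:gen_ee} is also stated with $\widetilde\Omega_j$ replaced by $\mathbb R^d$).

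Once these intermediate results are in place, the algebraic error-propagation identities (Lemmas \ref{lem:model_err_prop}, \ref{lem:model_seq_err_prop}, \ref{lem:model_seq_err_prop_gen}) need only be reread with the new operators; they depend solely on the partition-of-unity construction and the definitions of $\mathcal T_j$ and $\mathbf T$, neither of which uses any specific PML property. Likewise the iteration of Lemma \ref{lem:key_prop} to obtain Lemma \ref{lem:key_prop2}, the deduction of Lemma \ref{lem:notallowed}, and the combinatorial arguments of \S\ref{sec:checkerboard} (characterising allowed words on strips and checkerboards, and exhaustive orderings) all depend only on the flow associated with $P$, which is unchanged. The deductions of Theorems \ref{thm:strip}, \ref{thm:sweep_strip}, \ref{thm:check}, \ref{thm:sweep}, and \ref{thm:gen} from these ingredients then proceed verbatim. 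The main obstacle I anticipate is making sure the extension step (Lemma \ref{lem:ext_h}) is genuinely available in this generality:~one needs to check that on the thin shell $\widetilde\Omega_j \setminus \Omega_j$ the coefficients of $P_{\mathcal A}^j$ really do extend so that $P_{\mathcal A}^j (S_j u) = S_j(P_{\mathcal A}^j u)$ and so that the ``right-hand side rewriting'' \eqref{eq:rhsweird} still holds after extension; assumption \ref{it:gen_cc} (including the vanishing first-order terms) is precisely calibrated to guarantee both.
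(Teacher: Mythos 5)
Your proposal is correct and follows essentially the same approach as the paper's proof: both walk the dependency tree from Theorems \ref{thm:strip}--\ref{thm:gen} back through Lemmas \ref{lem:notallowed}, \ref{lem:key_prop2}, \ref{lem:res_est}, \ref{lem:key_prop}, \ref{lem:FPR}, \ref{lem:go_to_elliptic}, and \ref{lem:ext_h}, matching each PML-specific fact to the abstract hypothesis (\ref{it:gen_cc} for the extension and the rewriting \eqref{eq:rhsweird}, \ref{it:gen_la} for the support/partition-of-unity property, \ref{it:gen_ee} and \ref{it:gen_ei} for ellipticity, \ref{it:gen_rp1} and \ref{it:gen_rp2} for propagation) that replaces it. Your additional care about \ref{it:gen_ee} on $\mathbb{R}^d$ (needed in the proof of Lemma \ref{lem:res_est}) and about the extension step is consistent with the paper's dependency list.
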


\begin{proof}[Proof of Theorem \ref{thm:abstract}]
As discussed in \S\ref{sec:3_ingredients},
the proofs of Theorems \ref{thm:strip}, \ref{thm:sweep_strip}, \ref{thm:check}, \ref{thm:sweep}, \ref{thm:gen} are the consequences of 
\ben
\item[(i)] Algebra of the error propagation, showing which words appear in the products $\mathcal T_w$.
\item[(ii)] Semiclassical analysis, showing that $\mathcal T_w = O(\hbar^\infty)$ if $w$ is not allowed (Lemma \ref{lem:notallowed}).
\item[(iii)] Properties of the trajectories of the flow associated with $P$, dictating which $w$ are allowed.
\end{enumerate}
By Point \ref{it:gen_la},
 the regions $P^j_{\mathcal A}- P_{\mathcal A}$ are unchanged, so that, in particular,
 \beq\label{eq:Tuesday1}
\chi_j \equiv 0 \quad\ton \quad \supp(P_{\mathcal A}- P_{\mathcal A}^j).
\eeq
This implies that (i) above remains unchanged. 
We define \emph{both} the notion of following a word \emph{and} the quantity $\mathcal{N}$ 
exactly as before (i.e., by Definition \ref{def:follow} and \eqref{eq:mathcalN}, respectively).

For (ii), we observe that 
the proof of Lemma \ref{lem:notallowed} uses only the following properties of $P$, $P_{\newtheta}$ and $P_{\newtheta}^j$: 
\begin{itemize}
\item Point \ref{it:symb_ell_infnt} of Lemma \ref{lem:comp_symbol} (ellipticity at infinity):~this is Point \ref{it:gen_ei}, and
\item Lemma \ref{lem:key_prop2} (a composite map follows the associated word).
\end{itemize}
The proof of Lemma \ref{lem:key_prop2} in turn uses only:
\begin{itemize}
\item the fact that in the extended region the PML is linear and the PDE coefficients are constant (this is used to obtain the expression \eqref{eq:rhsweird} involving the extension operator $S_j$ of Definition \ref{def:ext}): this is a consequence of Point \ref{it:gen_cc},
\item the property \eqref{eq:Tuesday1}, and
\item  Lemma \ref{lem:res_est} (PML solution-operator bound) and  Lemma \ref{lem:key_prop} (key propagation lemma).
\end{itemize}
The proofs of Lemma \ref{lem:res_est} and Lemma \ref{lem:key_prop} use only the following:~for both 
Lemma \ref{lem:res_est} and Lemma \ref{lem:key_prop}:
\begin{itemize}
\item Lemma \ref{lem:go_to_elliptic} (escape to ellipticity): this is Point \ref{it:gen_ee},
\item Lemma \ref{lem:FPR} (forward propagation of regularity from an elliptic point), the proof of which uses only Points \ref{it:gen_rp1} and
 \ref{it:gen_rp2},
   \item Lemma \ref{lem:ext_h} (the homogeneous Dirichlet extension of a solution is a solution on the extended subdomain), the proof of which only uses Point \ref{it:gen_cc}.  \end{itemize}
\end{proof}

\begin{theorem}\mythmname{Approximation of outgoing Helmholtz solutions}\label{thm:CAP}
Assume that $P$ and $P_{\mathcal A}$ satisfy the assumptions of Theorem \ref{thm:abstract},
and, in addition, that 
\begin{enumerate}
\item $P = -\hbar^2\nabla \cdot A \nabla  - c^{-2}$, with $A$ smooth and positive definite, $c$ smooth and strictly positive, and $(A, c) = (\operatorname{Id}, 1)$ in a neighbourhood of $\mathbb R^d \backslash \Omega_{\rm int}$.
\item $P=P_{\mathcal A}$ in $\Omega_{\rm int}$.
\item \emph{Either} (i) $P$ is non-trapping, \emph{or} (ii) the solution operators for $P_{\mathcal A}$ and $P$, as operators on,  respectively, $H^1_0(\Omega)$ and $H^1(\Rea^d)$, are bounded polynomially in $\hbar^{-1}$.
\end{enumerate}
Let $f\in H^{-1}(\Omega_{\rm int})$, and let $u$ and $v$ satisfy
$$
\begin{cases}
P_{\mathcal A} u = f \text{ in }\Omega, \\
u = 0  \text{ on }\partial\Omega,
\end{cases}
\hspace{1.5cm}
\begin{cases}
P v = f \text{ in }\mathbb R^d, \\
v \text{ satisfies the Sommerfeld radiation condition}.
\end{cases}
$$
Then
$$
\tfa s>0 \text{ and } \chi \in C^\infty_c(\Omega_{\rm int}), \quad \Vert \chi(u - v) \Vert_{H^s_\hbar} = O(\hbar^\infty) \N{f}_{H^{-1}_\hbar(\Omega_{\rm int})}.
$$
\end{theorem}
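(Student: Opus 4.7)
The plan is to follow the template of the proof of Theorem \ref{thm:outgoing_approx}, modifying it for the CAP setting in which the PML's exact conjugation $u_\infty|_{\Omega_{\rm int}} = v|_{\Omega_{\rm int}}$ is unavailable and must be replaced by an approximate microlocal statement.

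First I would normalise so that $\N{f}_{H^{-1}_\hbar(\Omega_{\rm int})} = 1$ and introduce $u_\infty$, the unique $\hbar$-tempered solution on $\Rea^d$ of $P_{\mathcal A} u_\infty = f$; existence follows from Hypothesis 3 of the theorem (non-trapping of $P$ combined with a Lemma \ref{lem:res_est}-type estimate for $P_{\mathcal A}$ on $\Rea^d$, or directly from the assumed polynomial resolvent bounds). The key step, replacing the exact identity used in the PML proof, is to establish that $u_\infty$ has trivial wavefront set outside $\Omega_{\rm int}$: for any $\psi \in C^\infty_c(\Rea^d \setminus \overline{\Omega_{\rm int}})$ and any $s>0$, $\N{\psi u_\infty}_{H^s_\hbar} = O(\hbar^\infty)$. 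I would prove this by the same microlocal recipe as in the proof of Theorem \ref{thm:outgoing_approx}: combine semiclassical ellipticity (Lemma \ref{lem:ell_wf}) with the $\Rea^d$-version of Lemma \ref{lem:key_prop}, invoking escape-to-ellipticity (assumption \ref{it:gen_ee} on $\Rea^d$), forward propagation of regularity (Lemma \ref{lem:FPR}, whose abstract version holds under \ref{it:gen_rp1}--\ref{it:gen_rp2}), and the fact that $\operatorname{WF}_\hbar(f) \subset T^*\Omega_{\rm int}$ lies away from $\operatorname{supp}\psi$.

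Given this microlocal triviality, the remainder of the proof is structural. The function $w := u_\infty - v$ satisfies $P w = -(P_{\mathcal A}-P) u_\infty$ on $\Rea^d$; the right-hand side is supported outside $\Omega_{\rm int}$ (as $P_{\mathcal A}=P$ there by Hypothesis 2) and of $H^{s-2}_\hbar$-norm $O(\hbar^\infty)$ by the previous step. Since $v$ is Sommerfeld-outgoing and $u_\infty$ is damped at infinity by the absorption encoded in \ref{it:gen_rp2}, $w$ is the outgoing solution to its equation, so the polynomial-in-$\hbar^{-1}$ outgoing resolvent bound for $P$ (Hypothesis 3) yields $\N{\chi w}_{L^2}=O(\hbar^\infty)$, upgraded to $H^s_\hbar$ via Lemma \ref{lem:osci}. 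Independently, setting $u_\epsilon := u - u_\infty|_\Omega$ one gets $P_{\mathcal A} u_\epsilon = 0$ in $\Omega$ with $u_\epsilon|_{\partial\Omega} = -u_\infty|_{\partial\Omega}$ of $H^s_\hbar$-norm $O(\hbar^\infty)$ by the trace theorem applied to the microlocal triviality; the analog of Lemma \ref{lem:res_est} for $P_{\mathcal A}$, valid under the abstract assumptions of Theorem \ref{thm:abstract} by the same proof, then gives $\N{u_\epsilon}_{H^s_\hbar(\Omega)}=O(\hbar^\infty)$. Writing $\chi(u-v) = \chi u_\epsilon + \chi w$ for $\chi \in C^\infty_c(\Omega_{\rm int})$ finishes the proof.

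The main obstacle is the microlocal-triviality step. In the PML case this followed readily from the rigid directional structure of the PML-characteristic set (Lemma \ref{lem:ell_unidri}): any backward trajectory from $T^*(\Rea^d \setminus \overline{\Omega_{\rm int}})$ was forced to stay parallel to a PML edge and thus escape to infinity without crossing $\operatorname{WF}_\hbar(f)$. No such directional structure is available for a generic complex-absorption operator, so backward trajectories from outside $\Omega_{\rm int}$ could a priori re-enter $\Omega_{\rm int}$ and meet $\operatorname{WF}_\hbar(f)$. The cleanest case is when $\operatorname{Im} p_{\mathcal A} < 0$ strictly on $T^*(\Rea^d\setminus\overline{\Omega_{\rm int}})$, as for the complex absorbing potentials of Example \ref{ex:CAP}, in which event $p_{\mathcal A}$ is elliptic everywhere outside $\Omega_{\rm int}$ and the microlocal triviality reduces to a direct application of Lemma \ref{lem:ell_wf}; for a more general $P_{\mathcal A}$, one would need to carefully combine escape-to-ellipticity within $T^*(\Rea^d\setminus\overline{\Omega_{\rm int}})$ with the sign condition \ref{it:gen_rp2} to rule out backward trajectories re-entering the source region.
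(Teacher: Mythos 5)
Your proposal diverges from the paper's proof at the very first step and contains a genuine gap, which you flag yourself but do not resolve under the theorem's stated hypotheses. You introduce a whole-space solution $u_\infty$ of $P_{\mathcal A} u_\infty = f$ in $\Rea^d$ and assert that $\N{\psi u_\infty}_{H^s_\hbar} = O(\hbar^\infty)$ for every $\psi \in C^\infty_c(\Rea^d\setminus\overline{\Omega_{\rm int}})$. This was available in the PML case because of two PML-specific facts: the exact conjugation \eqref{eq:agreement}, and the directional ellipticity of Lemma \ref{lem:ell_unidri}, which confines the characteristic set outside $\Omega_{\rm int}$ so that backward trajectories escape to infinity without revisiting $\supp f$. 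The abstract escape-to-ellipticity assumption \ref{it:gen_ee} is much weaker: it only says that each backward trajectory eventually reaches an elliptic point of $p_{\mathcal A}$, and does not prevent it from first passing through $T^*(\supp f)$. Unless $p_{\mathcal A}$ is elliptic on all of $T^*(\Rea^d\setminus\overline{\Omega_{\rm int}})$ --- extra structure that Theorem \ref{thm:abstract} does not supply --- your microlocal-triviality claim is unestablished. You would additionally need well-posedness of $P_{\mathcal A}$ on $\Rea^d$ (not one of the abstract hypotheses) and an outgoing or decay property of $u_\infty$ at infinity (needed for $w = u_\infty - v$ to be outgoing), neither of which you justify.

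The paper's proof never introduces $u_\infty$. It sets $z := u - \psi v$ with $\psi \in C^\infty_c(\Omega)$, $\psi \equiv 1$ near $\Omega_{\rm int}$, so that $P_{\mathcal A} z = -[P_{\mathcal A},\psi]v - \psi(P_{\mathcal A}-P)v =: F$ in $\Omega$ with $z|_{\partial\Omega}=0$. The decisive input is the classical outgoing relation $\operatorname{WF}_\hbar v \subset \{\rho : \exists\, t<0,\ \Phi_t(\rho)\in T^*(\supp f)\}$ (Rellich uniqueness, outgoing free resolvent, propagation), which forces $\operatorname{WF}_\hbar F$ into the forward flow-out of $\supp f$ intersected with $T^*\big(\supp \psi(P_{\mathcal A}-P)\big)\cup T^*(\supp\nabla\psi)$, a set disjoint from $T^*\Omega_{\rm int}$. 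Lemma \ref{lem:key_prop}, valid for $P_{\mathcal A}$ under the abstract assumptions, then constrains $\operatorname{WF}_\hbar(\chi z)$ to points $\rho \in T^*\supp\chi$ whose backward trajectory hits $\operatorname{WF}_\hbar F$ and, even earlier, $\supp f$. Since $(A,c)=(\operatorname{Id},1)$ outside $\Omega_{\rm int}$, the $P$-trajectories are straight lines there, and no straight line can leave $\Omega_{\rm int}$, reach $\supp(P_{\mathcal A}-P)$ or $\supp\nabla\psi$, and return to $\supp\chi\subset\Omega_{\rm int}$; hence $\operatorname{WF}_\hbar(\chi z)=\emptyset$, and Lemma \ref{lem:osci} finishes. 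This ``straight lines cannot make round trips'' observation is exactly what replaces the PML structure your argument needs but cannot obtain from the hypotheses, and it is why the paper's decomposition avoids $u_\infty$ entirely rather than estimating $u-u_\infty$ and $u_\infty-v$ separately.
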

\begin{proof}
Recall from the proof of Theorem \ref{thm:abstract} that, under the assumptions of  Theorem \ref{thm:abstract}, 
Lemma \ref{lem:key_prop} and Lemma \ref{lem:res_est} hold with $P_{\rm s}$ replaced with $P_{\mathcal A}$.

By dividing $u$ and $v$ by $\|f\|_{H_\hbar^{-1}(\Omega_{\rm int})}$, we can assume, without loss of generality, that $\|f\|_{H_\hbar^{-1}(\Omega_{\rm int})}=1$. 
When Point 3.(ii) holds, both $u$ and $v$ are tempered directly by assumption. 
When Point 3.(i) holds, $v$ is tempered by the non-trapping solution-operator bound for $P$ 
(from \cite{MeSj:82} together with either \cite[Theorem 3]{Va:75}/ \cite[Chapter 10, Theorem 2]{Va:89} or \cite{LaPh:89}, or \cite[Theorem 1.3 and \S3]{Bu:02})
and $u$ is tempered by Lemma \ref{lem:res_est} (with $P_{\rm s}$ replaced with $P_{\mathcal A}$).

Let $\psi \in C^\infty_c(\Omega)$ be such that $\psi \equiv 1$ near $\Omega_{\rm int}$. Let
$$
z := u - \psi v,
$$
and observe that 
$$
\begin{cases}
P_{\mathcal A} z = - [P_{\mathcal A}, \psi]v - \psi(P_{\mathcal A} - P)v =: F \text{ in }\Omega, \\
z = 0  \text{ on }\partial\Omega.
\end{cases}
$$
Since $v$ satisfies the Sommerfeld radiation condition,
$$
\operatorname{WF}_\hbar v \subset \Big\{ \rho, \; \exists t <0, \Phi_t(\rho) \in T^*\big(\operatorname{supp} f\big)\Big\};
$$
indeed, this is a classical consequence of Rellich's uniqueness theorem, the outgoing properties of the free resolvent, and propagation of singularities; see e.g., \cite[Proof of Lemma 3.4]{GLS1}. 
Therefore
$$
\operatorname{WF}_\hbar F \subset \Big\{ \rho, \; \exists t <0, \Phi_t(\rho) \in T^*\big(\operatorname{supp} f\big)\Big\} \cap \Big(T^*\big(\operatorname{supp} \psi(P_{\mathcal A} - P)\big) \cup T^* \big(\supp \nabla \psi\big) \Big).
$$
Let $\chi \in C^\infty_c(\Omega_{\rm int})$;
we aim to show that 
\beq\label{eq:noWFz}
\operatorname{WF}_\hbar (\chi z) = \emptyset,
\eeq
from which Lemma \ref{lem:osci}, Part \ref{it:osci1}, yields the result.

Lemma \ref{lem:key_prop} with $P_{\rm s}$ replaced with $P_{\mathcal A}$ gives that
$$
\operatorname{WF}_\hbar (\chi z) \subset
\Big\{ \rho \in T^*\big( \operatorname{supp} \chi\big)\hspace{0.3cm}\text{s.t.} \hspace{0.3cm} \exists t \leq 0, \;  \Phi_t(\rho) \in \operatorname{WF}_\hbar (F) \Big\},
$$
hence
\begin{multline*}
\operatorname{WF}_\hbar (\chi z) \subset
\Big\{ \rho \in T^*\big( \operatorname{supp} \chi\big)\hspace{0.3cm}\text{s.t.} \hspace{0.3cm} \exists t_2 \leq t_1 \leq 0, \\  \Phi_{t_1}(\rho) \in T^*\big(\operatorname{supp} \psi(P_{\mathcal A} - P) \big)\cup T^*\big( \supp \nabla \psi \big)\; \text{ and }\; \Phi_{t_2}(\rho) \in T^*\big( \supp f\big)   \Big\}.
\end{multline*}
The trajectories for $P$ are straight lines in a neighbourhood of $\mathbb R^d \backslash\Omega_{\rm int}$, but the support properties of $\chi$, $\psi(P_{\mathcal A} - P)$, and $f$, imply that no smooth curve going from $T^*(\operatorname{supp} \chi)$ to $T^*(\operatorname{supp} \psi(P_{\mathcal A} - P))\cup T^*( \supp \nabla \psi)$ then back to $T^* (\supp f)$ can possibly be straight in a neighbourhood of $\mathbb R^d \backslash\Omega_{\rm int}$. We therefore obtain \eqref{eq:noWFz} and the result follows.
\end{proof}

\bre[Comparing Theorems \ref{thm:outgoing_approx} and \ref{thm:CAP}]
Comparing Theorems \ref{thm:outgoing_approx} and \ref{thm:CAP}, we see that the advantage of PML truncation is that it controls the error on the whole of $\Omega_{\rm int}$, whereas the more general approximation considered in Theorem \ref{thm:CAP} only controls the error in compact subsets of $\Omega_{\rm int}$. This difference is because of the property of PML that the scaled solution without truncation is precisely the scattering solution (i.e., \eqref{eq:agreement} below), thanks to analyticity.
\ere

\begin{example}[Complex absorbing potential]\label{ex:CAP}
Let $A \in C^\infty(\mathbb R^d;\mathbb R^{d\times d}))$ be such that $A(x)$ is symmetric positive definite for every $x\in \Rea^d$, $\coeffc\in C^\infty(\mathbb R^d, \mathbb R)$ be such that $\coeffc>0$, and $(A, \coeffc) = (\operatorname{Id}, 1)$ near $\partial \Omega$. Define
$$
P := -k^{-2} \nabla \cdot (A\nabla ) - \coeffc^{-2},
$$
\beqs
P^j_{\mathcal A} := -k^{-2} \nabla \cdot (A\nabla ) - \coeffc^{-2} - i V_j, \quad P_{\mathcal A} := -k^{-2} \nabla \cdot (A\nabla ) - \coeffc^{-2} - i V,
\eeqs
where $V_j, V \in C^\infty(\mathbb R^d, \mathbb R)$ are such that $V, V_j \geq 0$, $\{V_j > 0 \} = \widetilde{\mathcal L}_j$, $\{V > 0 \} = \widetilde{\mathcal L}$
(in the notation of \S\ref{ss:def_geo}), and $V$ and $V_j$ are constant near $\partial \Omega$. 
Then, the assumptions of Theorem \ref{thm:abstract} are satisfied. In particular, the results of Theorems \ref{thm:strip}, \ref{thm:sweep_strip}, \ref{thm:check}, \ref{thm:sweep}, \ref{thm:gen} hold (replacing $(P_{\newtheta}, P_{\newtheta}^j)$ by $(P_{\mathcal A}, P_{\mathcal A}^j)$ in the definition of the iterates).
\end{example}
\begin{proof}
We check the assumptions of Theorem \ref{thm:abstract}. Point \ref{it:gen_cc} holds because $V$ and $V_j$ are constant near $\partial \Omega$. Point \ref{it:gen_la} holds
by the support properties of $V$ and $V_j$. 
 Point \ref{it:gen_ee} holds because any trajectory starting from $\rho$ goes to $T^*\widetilde{\mathcal L}_j$, but $\{ |p_{\mathcal A}^j| > 0\} \supset \{ |\operatorname{Im}p_{\mathcal A}^j| > 0\} = \{ V_j > 0\} = T^*\widetilde{\mathcal L}_j$. Point \ref{it:gen_ei} holds because   $|p_{\mathcal A}^j| > |\operatorname{Re} p_{\mathcal A}^j|$ and $A$ is symmetric positive definite. Point \ref{it:gen_rp1} holds because $\operatorname{Re} p_{\mathcal A}^j = \operatorname{Re} p_{\mathcal A} = \operatorname{Re} p$, and Point \ref{it:gen_rp2} holds because
$\operatorname{Im} p_{\mathcal A}^j = - V_j \leq 0$ and $\operatorname{Im} p_{\mathcal A} = - V \leq 0$. 
\end{proof}

\begin{example}[PML on the main domain, CAP on subsets of the subdomains]\label{ex:CAP+PML}
Let $A \in C^\infty(\mathbb R^d;\mathbb R^{d\times d}))$ be such that $A(x)$ is symmetric positive definite for every $x\in \Rea^d$, $\coeffc\in C^\infty(\mathbb R^d, \mathbb R)$ be such that $\coeffc>0$, and $(A, \coeffc) = (\operatorname{Id}, 1)$ near $\partial \Omega$. Define
$$
P := -k^{-2} \nabla \cdot (A\nabla ) - \coeffc^{-2},
$$
$$
 P_{\mathcal A} := -k^{-2} \nabla_{\rm s} \cdot (A\nabla_{\rm s} ) - \coeffc^{-2}, \quad (\nabla_{\rm s})_\ell := \frac{1}{1+ig'_{\ell}(x_\ell)}\partial_{x_\ell},
$$
$$
P^j_{\mathcal A} := -k^{-2} \nabla_{\rm s} \cdot (A\nabla_{\rm s} ) - \coeffc^{-2} - i V_j, 
$$
where $V_j\in C^\infty(\mathbb R^d, \mathbb R)$ are such that $V_j\geq 0$ and (in the notation of \S\ref{ss:def_geo})
$$\supp V_j=\bigcup_{\mathfrak e \in \mathcal Z^j \setminus \mathcal Z^j_{\partial \Omega}}
\widetilde{\mathfrak e}
$$
(in the PML setting of the rest of the paper, the region on the right-hand side of this last equality is $\bigcup_{\ell=1}^d \overline{\big\{ x \in \Rea^d\,:\, g_{\ell,j}(x_\ell) \neq g_{\ell}(x_\ell)\big\}}$).

Then, the assumptions of Theorem \ref{thm:abstract} are satisfied. In particular, the results of Theorems \ref{thm:strip}, \ref{thm:sweep_strip}, \ref{thm:check}, \ref{thm:sweep}, \ref{thm:gen} hold (replacing $(P_{\newtheta}, P_{\newtheta}^j)$ by $(P_{\mathcal A}, P_{\mathcal A}^j)$ in the definition of the iterates).
\end{example}
\begin{proof}
The assumptions of Theorem \ref{thm:abstract} are verified for $P_{\mathcal A}$ by remarking that the results of \S\ref{s:preli} and \S\ref{s:sec_prop} (in particular, Lemma \ref{lem:comp_symbol}, Lemma \ref{lem:traj_energy_surf} and Lemma \ref{lem:FPR}),  still hold for $P_{\mathcal A}$ instead of $P_{\rm s}$ 
using the fact that $A = \operatorname{Id}$ near $\widetilde {\mathcal L}$. These results also hold for $P^j_{\mathcal A}$ by this same reasoning combined with the arguments in the proof of Example \ref{ex:CAP}. Observe that the key point is that $A = \operatorname{Id}$ whenever any PML-scaling is active.
\end{proof}
\section{Recap of results from semiclassical analysis} \label{app:sc}

We recap here some standard definitions and results of semiclassical analysis that we use throughout
the paper. The two main tools that we use are, as highlighted in \S\ref{sec:sketch_prop}, semiclassical ellipticity and forward propagation of regularity, in the forms stated below as Lemma \ref{lem:ell_wf}  and Lemma \ref{lem:FPR_app}, respectively.

\subsection{Weighted Sobolev spaces}\label{sec:SC1}

Recall that $\hbar:= k^{-1}$. 
The \emph{semiclassical Fourier transform} is defined by 
$$
(\mathcal F_{\hbar}u)(\xi) := \int_{\mathbb R^d} \exp\big( -i x \cdot \xi/\hbar\big)
u(x) \, d x,
$$
with inverse
\beqs
(\mathcal F^{-1}_{\hbar}u)(x) := (2\pi \hbar)^{-d} \int_{\mathbb R^d} \exp\big( i x \cdot \xi/\hbar\big)
u(\xi)\, d \xi;
\eeqs
see \cite[\S3.3]{Zw:12}; i.e., the semiclassical Fourier transform is just the usual Fourier transform with the transform variable scaled by $\hbar$. 
These definitions imply that, with $D:= -i \partial$,
\beq\label{eq:FTelement}
\cF_h \big( (\hbar D)^\alpha) u\big) = \xi^\alpha \cF_\hbar u \quad \tand\quad \N{u}_{L^2(\Rea^d)} = \frac{1}{(2\pi \hbar)^{d/2}}\N{\cF_\hbar u}_{L^2(\Rea^d)}; 
\eeq
see, e.g., \cite[Theorem 3.8]{Zw:12}.
Let 
\beqs%\label{eq:Hsk}
H_\hbar^s(\Rea^d):= \Big\{ u\in \mathcal{S}'(\Rea^d) \,\text{ such that }\, \langle \xi\rangle^s (\cF_\hbar u) \in L^2(\Rea^d) \Big\},
\eeqs
where $\langle \xi \rangle := (1+|\xi|^2)^{1/2}$, $\mathcal{S}(\Rea^d)$ is the Schwartz space (see, e.g., \cite[Page 72]{Mc:00}), and $\mathcal{S}'(\Rea^d)$ its dual.
Define the norm
\beqs%\label{eq:Hhnorm}
\vertiii{u}_{H_\hbar^m(\Rea^d)} ^2 = \frac{1}{(2\pi \hbar)^{d}} \int_{\Rea^d} \langle \xi \rangle^{2m}
 |\mathcal F_\hbar u(\xi)|^2 \, d \xi.
\eeqs
The properties \eqref{eq:FTelement} imply that the space $H_h^s(\Rea^d)$ is the standard Sobolev space $H^s(\Rea^d)$ with each derivative in the norm weighted by $\hbar:=k^{-1}$. The norm on $H_\hbar^s(\Omega)$ for $\Omega\subset \Rea^d$ is then defined by 
\beqs
\vertiii{v}_{H^s_\hbar(\Omega)}:= \inf_{V|_\Omega =v, V \in H^s_\hbar(\Rea^d)} 
\vertiii{V}_{H^s_\hbar(\Rea^d)};
\eeqs
see, e.g., \cite[Page 77]{Mc:00}.
When working with the wavenumber $k$ (instead of $\hbar$) we write $H_\hbar^s(\Rea^d)$ as 
$H_k^s(\Rea^d)$ and $H_k^s(\Omega)$ for $H^s_\hbar(\Omega)$, and similarly for the norms, to avoid the cumbersome notation 
$H_{k^{-1}}^s(\Rea^d)$ and $H_{k^{-1}}^s(\Omega)$.

When $\Omega$ is Lipschitz and $s\geq 1$, 
the norm $\vertiii{\cdot}_{H_k^s(\Omega)}$ is equivalent (with the constants in the norm equivalence independent of $k$) to the norm $\N{\cdot}_{H^s_k(\Omega)}$ defined by \eqref{eq:weighted_norm} for $s\in \mathbb{Z}^+$ and by interpolation for $s>1$; see, e.g., \cite[\S4]{ChHeMo:15}. For simplicity, from here on we drop the ``triple bar'' notation and use only $\N{\cdot}_{H^s_\hsc}$ for norms on $H^s_\hsc$.

\subsection{Semiclassical pseudodifferential operators}\label{sec:pseudo}

The set of all possible positions $x$ and momenta (i.e.~Fourier variables) $\xi$ is denoted by $T^*\Rea^d$; this is known informally as ``phase space". Strictly, $T^*\Rea^d :=\Rea^d \times (\Rea^d)^*$, but 
for our purposes, we can consider $T^*\Rea^d$ as $\{(x,\xi) : x\in \Rea^d, \xi\in\Rea^d\}$.

\begin{definition}[Symbols]
 We say that $a\in C^\infty(T^*\mathbb{R}^d)$ is a symbol of order $m$ (and write $a\in S^m(T^*\mathbb{R}^d)$) if, given multiindices $\alpha$ and $\beta$, there exists $C_{\alpha,\beta}$ such that, for all $(x,\xi)\in T^*\mathbb{R}^d$, 
$$
|\partial_x^\alpha \partial_\xi^\beta a(x,\xi)|\leq C_{\alpha,\beta}\langle \xi\rangle^m.
$$
\end{definition}

\begin{definition}[Semiclassical pseudodifferential operators]
Fix $\chi_0\in C_c^\infty(\mathbb{R})$ to be identically 1 near 0.  
We say that an operator $A:C_c^\infty(\mathbb{R}^d)\to \mathcal{D}'(\mathbb{R}^d)$ 
is a \emph{semiclassical pseudodifferential operator} of order $m$, and write $A\in \Psi_\hbar^m(\mathbb{R}^d)$, if $A$ can be written as
\begin{equation}
\label{e:basicPseudo}
Au(x)=\frac{1}{(2\pi \hbar)^d}\int_{\Rea^d} e^{\frac{i}{\hbar}\langle x-y,\xi\rangle}a(x,\xi)\chi_0(|x-y|)
u(y)
dyd\xi + O(\hbar^\infty)_{\Psi^{-\infty}},
\end{equation}
where $a\in S^m(T^*\mathbb{R}^d)$ and $E=O(\hbar^\infty)_{\Psi^{-\infty}}$, if for all $N>0$ there exists $C_N>0$ such that
$$
\|E\|_{H_\hbar^{-N}(\mathbb{R}^d)\to H_\hbar^N(\mathbb{R}^d)}\leq C_N\hbar^N. 
$$
We use the notation $\operatorname{Op}_\hbar a$ for the operator $A$ in~\eqref{e:basicPseudo}  with $E=0$, and define
$$
\Psi^{-\infty}:=\bigcap_m \Psi^m,\qquad S^{-\infty}:=\bigcap_m S^m,\qquad  \Psi^\infty:=\bigcup_m \Psi^m, \qquad S^\infty:=\bigcup_m S^m.
$$
\end{definition}

\begin{theorem}\mythmname{\cite[Propositions E.17 and E.19]{DyZw:19}} 
\label{thm:basic}
If $A\in \Psi_{\hbar}^{m_1}$ and $B  \in \Psi_{\hbar}^{m_2}$, then
\begin{itemize}
\item[(i)]  $AB \in \Psi_{\hbar}^{m_1+m_2}$,
\item[(ii)]  For any $s \in \mathbb R$, $A$ is bounded uniformly in $\hbar$ as an operator from $H_\hbar^s$ to $H_\hbar^{s-m_1}$.
\end {itemize}
\end{theorem}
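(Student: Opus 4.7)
The plan is to prove the two parts in order: (i) by writing out the composition kernel and reducing to an asymptotic expansion of the composition symbol, and (ii) by reducing via (i) to the central case $s=0$, $m_1=0$, and then invoking Calderón--Vaillancourt in the semiclassical setting.

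For (i), absorbing the $O(\hbar^\infty)_{\Psi^{-\infty}}$ remainders in $A$ and $B$ (which contribute smoothing terms also in the target class), I would start from the defining formula~\eqref{e:basicPseudo} for $A=\operatorname{Op}_\hbar(a)$ and $B=\operatorname{Op}_\hbar(b)$ and write $ABu(x)$ as a four-fold oscillatory integral in $(y,\xi,z,\eta)$. Substituting $u$ by its semiclassical Fourier representation, one gets an oscillatory integral with phase $\hbar^{-1}(\langle x-y,\xi\rangle+\langle y,\eta\rangle)$; integration in $y$ produces a $\delta$-function in $\xi-\eta$ to leading order, and one formally obtains $AB = \operatorname{Op}_\hbar(a\#_\hbar b)$ where
\[
a\#_\hbar b(x,\xi) \sim \sum_\alpha \frac{(-i\hbar)^{|\alpha|}}{\alpha!}\partial_\xi^\alpha a(x,\xi)\,\partial_x^\alpha b(x,\xi).
\]
The standard stationary-phase argument applied to the amplitude $a(x,\xi)b(y,\eta)$ — first changing variables to exploit the non-degenerate phase in $(y,\eta)$, then Taylor-expanding $b$ around $(x,\xi)$ — shows that every truncation of this series agrees with $a\#_\hbar b$ modulo $\hbar^N S^{m_1+m_2-N}$. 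The leading term $ab\in S^{m_1+m_2}$ and the symbol estimates on each $\partial_\xi^\alpha a \cdot \partial_x^\alpha b$ ensure the full expansion lies in $S^{m_1+m_2}$; the residue is $O(\hbar^\infty)_{\Psi^{-\infty}}$. The cut-off $\chi_0(|x-y|)$ is harmless because off-diagonal contributions give non-stationary phase and hence rapidly decaying errors.

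For (ii), the reduction trick is to note that $\langle \hbar D\rangle^t \in \Psi_\hbar^t$ for any real $t$ (its symbol $\langle\xi\rangle^t$ lies in $S^t$), and that these operators are isometric isomorphisms $H^{s+t}_\hbar\to H^s_\hbar$ by the very definition of the $H^s_\hbar$ norm. Using (i), $\langle \hbar D\rangle^{-(s-m_1)} A \langle\hbar D\rangle^{s} \in \Psi_\hbar^0$, and conjugating this way reduces the claimed boundedness to $L^2\to L^2$ boundedness of operators in $\Psi_\hbar^0$. For the latter, I would use the Calderón--Vaillancourt strategy: localize $a$ in phase space to unit-sized boxes via a partition of unity $\{\psi_j(x,\xi)\}$, so that $A=\sum_j \operatorname{Op}_\hbar(\psi_j a)$, estimate each $\operatorname{Op}_\hbar(\psi_j a)$ on $L^2$ using its Schwartz kernel, and use Cotlar--Stein orthogonality — the products $A_j^*A_k$ and $A_jA_k^*$ gain large negative powers of the distance between the $\psi_j$ and $\psi_k$ supports, via repeated integration by parts in the oscillatory integral for the composition — to sum to a uniform bound.

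The main obstacle is part (ii): getting Calderón--Vaillancourt with the correct uniformity in $\hbar$ requires careful tracking through the Cotlar--Stein bookkeeping and the integration-by-parts arguments that give rapid decay of the off-diagonal pieces. Part (i) is mostly an exercise in the method of stationary phase, with the only delicate point being a bookkeeping argument showing the remainder terms really do lie in $\hbar^\infty \Psi^{-\infty}$ rather than merely $\hbar^\infty \Psi^{m_1+m_2-\infty}$; this is handled by using the ellipticity of the phase in the off-diagonal directions.
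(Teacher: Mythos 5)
The paper does not prove Theorem \ref{thm:basic}; it is cited directly from \cite[Propositions E.17 and E.19]{DyZw:19}, and your proposal is a correct sketch of the standard proof used in that reference (composition via stationary phase for (i); reduction to $L^2\to L^2$ by Bessel-potential conjugation plus Calder\'on--Vaillancourt for (ii)). One small slip in part (ii): you wrote $\langle\hbar D\rangle^{-(s-m_1)}A\langle\hbar D\rangle^{s}\in\Psi^0_\hbar$, but that operator has order $-(s-m_1)+m_1+s=2m_1$; the order-zero conjugate you want is $\langle\hbar D\rangle^{s-m_1}A\langle\hbar D\rangle^{-s}$. With the signs corrected, the claimed mapping property follows by sandwiching this bounded $L^2$-operator between the isometries $\langle\hbar D\rangle^{s}:H^s_\hbar\to L^2$ and $\langle\hbar D\rangle^{-(s-m_1)}:L^2\to H^{s-m_1}_\hbar$.
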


\paragraph{\textbf{Principal symbol.}}  There exists a map 
$$
\sigma^m_\hbar:\Psi^m \to S^m/hS^{m-1}
$$
called the \emph{principal symbol map} such that the sequence 
$$
0\to hS^{m-1}\overset{\operatorname{Op}_\hbar}{\rightarrow} \Psi^{m}\overset{\sigma^m_\hbar}{\rightarrow} S^m/hS^{m-1}\to 0
$$
is exact and, for $a\in S^m$,
\beqs%\label{eq:symbolone}
\sigma_\hbar^m\big(\operatorname{Op}_\hbar(a)\big) = a \quad\text{ mod } \hsc S^{m-1};
\eeqs
see \cite[Page 213]{Zw:12}, \cite[Proposition E.14]{DyZw:19}. 
When applying the map $\sigma^m_{\hsc}$ to 
elements of $\Psi^m_\hsc$, we denote it by $\sigma_{\hsc}$ (i.e.~we omit the $m$ dependence) and we use $\sigma_{\hsc}(A)$ to denote one of the representatives
in $S^m$ (with the results we use then independent of the choice of representative). By \cite[Proposition E.17]{DyZw:19},
\beq\label{eq:mult_symbol}
\sigma_\hsc(A B) = \sigma_\hsc(A) \sigma_\hsc(B).
\eeq

\paragraph{\textbf{Compactification at infinity.}} 
To deal with the behaviour of
functions on phase space uniformly near $\xi=\infty$ (so-called \emph{fiber infinity}), we consider the \emph{radial
  compactification} in the $\xi$ variable of $T^*\Rea^d$. This is defined by
$$
\overline{T ^* \mathbb R^d}:= \mathbb R^d \times B^d,
$$
where $B^d$ denotes the closed unit ball, considered as the closure of the
image of $\mathbb R^d$ under the radial compactification map 
$$\RC: \xi \mapsto \xi/(1+\langle \xi
\rangle);$$
see \cite[\S E.1.3]{DyZw:19}.
Near the boundary of the
ball, $\lvert \xi\rvert^{-1}\circ \RC^{-1}$ is a smooth function, vanishing to
first order at the boundary, with $(\lvert \xi\rvert^{-1}\circ \RC^{-1}, \widehat\xi\circ\RC^{-1})$
thus giving local coordinates on the ball near its boundary.  The boundary of the
ball should be considered as a sphere at infinity consisting of all
possible \emph{directions} of the momentum variable. 
When appropriate (e.g., in dealing with finite values of $\xi$ only), we abuse notation by dropping the composition with $\RC$ from our
notation and simply identifying $\mathbb R^d$ with the interior of $B^d$.

\begin{definition}[Operator wavefront set]
We say that a point $(x_0,\xi_0)\in \overline{T^*\mathbb{R}^d}$ is not in the wavefront set of an operator $A\in \Psi^m$, and write $(x_0,\xi_0)\notin \operatorname{WF}A$, if there exists a neighbourhood $U$ of $(x_0,\xi_0)$ such that $A$ can be written as in~\eqref{e:basicPseudo} with 
$$
\sup_{(x,\xi)\in U} | \partial^\alpha_x \partial_\xi^\beta a(x,\xi)\langle \xi\rangle^N|\leq C_{\alpha, \beta N} \hbar^N.
$$
\end{definition}
By \cite[Equation E.2.5]{DyZw:19},
\beq\label{eq:mult_WF}
\operatorname{WF}_\hsc(AB) \subset \operatorname{WF}_\hsc(A) \cap \operatorname{WF}_\hsc(B).
\eeq

\subsection{Tempered distribution}

\begin{definition}[Tempered distributions]\label{def:tempered}
An $\hbar$-dependent family of distributions $v_\hbar \in \mathcal D'(\mathbb R^2)$ is \emph{tempered} if
$$
\tfa\,  \chi \in C^\infty_c(\mathbb R^2) \text{ there exist } C, K>0 \text{ such that } \Vert \chi v_\hbar \Vert_{H^{-K}_\hbar} \leq C \hbar^{-K}.
$$
\end{definition}

\begin{definition}[Semiclassical wavefront set]\label{def:WF}
Let $v$ be a $\hbar$-family of tempered distributions on $\mathbb R^d$. For any $s, M\in \mathbb R$, the strict semiclassical wavefront set of $v$ at regularity $(s, M)$, denoted $\operatorname{WF}_\hbar^{s, M}v$, is defined as follows:~$\rho \in T^* \mathbb R^d$ is \emph{not} in $\operatorname{WF}_\hbar^{s, M}v$ if for any $A \in \Psi_\hbar^\infty(\mathbb R^d)$ so that $\operatorname{WF}_\hbar A$ is a sufficiently close neighbourhood of $\rho$,
$$
\hbar^{-M} \Vert A v \Vert_{H^s_\hbar} \rightarrow 0.
$$
In addition
$$
\operatorname{WF}_\hbar^{s, \infty}v := \bigcap_{M\geq 0} \operatorname{WF}_\hbar^{s, M}v, \hspace{0.5cm}\operatorname{WF}_\hbar^{\infty, \infty}v :=
\bigcap_{s\geq 0}\operatorname{WF}_\hbar^{s, \infty}v,
$$
and $\operatorname{WF}_\hbar v := \operatorname{WF}_\hbar^{\infty, \infty}v$.
\end{definition}

By \cite[Equation E.2.18]{DyZw:19}, if $v$ is tempered (in the sense of Definition \ref{def:tempered}) then
\beq\label{eq:mult_WF1}
\operatorname{WF}_\hsc(Av) \subset \operatorname{WF}_\hsc(A) \cap \operatorname{WF}_\hsc(v).
\eeq

\subsection{Semiclassical ellipticity}\label{sec:elliptic}

We say that $A\in \Psi^m_\hbar(\Rea^d)$ is \emph{elliptic} at $(x_0,\xi_0)\in \overline{T^*\mathbb{R}^d}$ if there exists a neighbourhood $U$ of $(x_0,\xi_0)$ such that 
$$
\inf_{(x,\xi)\in U} |\sigma_\hsc(A)\langle \xi\rangle^{-m}|\geq c >0.
$$

\begin{lemma}\mythmname{Elliptic parametrix \cite[Proposition E.32]{DyZw:19}} \label{lem:EP}
Let $A \in \Psi_\hsc^{m}$ and $B \in \Psi_\hsc^{\ell}$ be such 
that $B$ is elliptic on $\operatorname{WF_\hsc}A$.
Then there exist $E, E' \in \Psi_\hsc^{m-\ell}$ such that
$$
A = BE + O(\hsc^\infty)_{\Psi^{-\infty}} = E'B + O(\hsc^\infty)_{\Psi^{-\infty}}.
$$ 
\end{lemma}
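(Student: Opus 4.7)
The plan is to run the standard semiclassical parametrix construction, adapted to the microlocal setting. Since $B$ is elliptic on $\operatorname{WF}_\hsc A$ and ellipticity is an open condition on $\overline{T^*\Rea^d}$, I can pick a symbol $\chi \in S^0$ equal to $1$ on a neighbourhood $U_0$ of $\operatorname{WF}_\hsc A$ and supported in a larger neighbourhood $U_1$ on which $|\sigma_\hsc(B)|\geq c\langle\xi\rangle^\ell$. Then $e_0 := \chi \,\sigma_\hsc(A)/\sigma_\hsc(B)$ is a well-defined element of $S^{m-\ell}$, and I set $E_0 := \operatorname{Op}_\hsc(e_0) \in \Psi^{m-\ell}_\hsc$.

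Next I would compute $A - BE_0$. By the composition formula \eqref{eq:mult_symbol}, its principal symbol is $(1-\chi)\sigma_\hsc(A)$, which vanishes on $U_0$ and therefore on $\operatorname{WF}_\hsc A$. Using that $\operatorname{WF}_\hsc(PQ)\subset \operatorname{WF}_\hsc P\cap \operatorname{WF}_\hsc Q$, this part can be absorbed into an operator in $\hsc^\infty \Psi^{-\infty}$ (indeed, anything microlocally supported away from $\operatorname{WF}_\hsc A$ can, after subtracting an $O(\hsc^\infty)_{\Psi^{-\infty}}$ correction, be taken with wavefront set inside $U_0$). Hence $A - BE_0 = \hsc R_1 + O(\hsc^\infty)_{\Psi^{-\infty}}$ with $R_1 \in \Psi^{m-1}_\hsc$ and $\operatorname{WF}_\hsc R_1 \subset U_0$. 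Iterating the same construction on $R_1$ (dividing its principal symbol by $\sigma_\hsc(B)$ against the same cutoff $\chi$) produces $E_j \in \Psi^{m-\ell-j}_\hsc$ such that
\[
A - B(E_0 + \hsc E_1 + \cdots + \hsc^N E_N) = \hsc^{N+1} R_{N+1} + O(\hsc^\infty)_{\Psi^{-\infty}},
\qquad N\geq 0,
\]
with each $R_j$ microlocalised in $U_0$.

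Finally, a Borel-type asymptotic summation (\cite[Theorem 4.15]{Zw:12}, \cite[Proposition E.3]{DyZw:19}) produces a single symbol $e \in S^{m-\ell}$ with $e \sim \sum_{j\geq 0} \hsc^j e_j$, and then $E := \operatorname{Op}_\hsc(e)$ satisfies $A = BE + O(\hsc^\infty)_{\Psi^{-\infty}}$. The left parametrix $E'$ is obtained by applying the identical construction to the formal adjoint problem, or equivalently by running the same iteration with compositions on the left rather than the right.

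The main obstacle is not conceptual but organisational: at every step of the iteration the new remainder must remain microlocalised inside the elliptic region of $B$, so that division by $\sigma_\hsc(B)$ against $\chi$ stays meaningful. This is guaranteed by the choice of nested cutoffs and the wavefront containment property for compositions noted above, together with the observation that contributions with wavefront set outside $\operatorname{WF}_\hsc A$ are negligible up to $O(\hsc^\infty)_{\Psi^{-\infty}}$ and may be discarded at each step.
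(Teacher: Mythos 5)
The paper does not prove this lemma at all: it is stated as Lemma \ref{lem:EP} with a direct citation to \cite[Proposition E.32]{DyZw:19}, and the authors treat it as a standard black-box result from the semiclassical calculus. So there is no ``paper's own proof'' to compare against --- your task here was really to reconstruct the cited reference's argument.

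That said, your reconstruction is the correct standard construction and would be accepted as a proof, modulo two small points worth tightening. First, when you argue that the operator with principal symbol $(1-\chi)\sigma_\hsc(A)$ can be ``absorbed into $O(\hsc^\infty)_{\Psi^{-\infty}}$'', the invocation of $\operatorname{WF}_\hsc(PQ)\subset \operatorname{WF}_\hsc P\cap\operatorname{WF}_\hsc Q$ is not the right tool: what you actually need is the \emph{definition} of the operator wavefront set (as recalled in \S\ref{sec:pseudo}), which says that the \emph{full} symbol $a$ of $A$ is $O(\hsc^\infty)$ in $S^{-\infty}$ on any compact set disjoint from $\operatorname{WF}_\hsc A$. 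Since $\operatorname{supp}(1-\chi)$ is such a set, $(1-\chi)a = O(\hsc^\infty)$, and \emph{that} is why the leading remainder drops to $\hsc\Psi^{m-1}$ plus negligible. The wavefront-set-of-products bound \eqref{eq:mult_WF} alone does not produce an $O(\hsc^\infty)_{\Psi^{-\infty}}$ error. Second, iterating ``against the same cutoff $\chi$'' needs a little care: after the first step the remainder $R_1$ has wavefront set in $\operatorname{supp}\chi$ (where $BE_0$ lives), not just in $U_0$ where $\chi\equiv 1$, so $(1-\chi)\sigma_\hsc(R_1)$ is not obviously negligible. The standard fix --- which you allude to with ``nested cutoffs'' but do not quite carry out --- is to take a fixed slightly larger cutoff $\chi^{>}\succ\chi$ still supported in the elliptic region of $B$, and divide against $\chi^{>}$ at every subsequent step, or equivalently to track that all remainders stay microlocalised in $\operatorname{supp}\chi$, which remains inside the elliptic set so the division is still valid. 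Neither issue is a conceptual gap, but as written the iteration step would not quite close; with those two corrections the argument is exactly the Dyatlov--Zworski proof the paper cites.
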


We use the two following applications of Lemma \ref{lem:EP}.

\begin{lemma}[Ellipticity at infinity in the $\xi$ variable] \label{lem:osci}
Let $Q\in \Psi^m_h(\mathbb R^d)$ be such that,  given a compact set $K\subset \Rea^d$, there exists $C>0$ such that for all $x\in K$
\beq\label{eq:Qsymbol}
\text{ if }\quad |\xi| \geq C, \quad \text{ then }\quad |\sigma_\hbar(Q)(x, \xi)| \geq C^{-1} |\xi|^m.
\eeq
Then, for any $(s, M) \in [0, +\infty]^2$, and any $\chi, \widetilde \chi \in C^\infty_c(\mathbb R^2)$ such that $\widetilde \chi = 1$ near $\operatorname{supp} \chi$, the following holds.
\begin{enumerate}
\item \label{it:osci1} For any $\hbar$-family of tempered distributions ${v_\hbar}$ such that $\operatorname{WF}^{s,M}_\hbar(\chi v) = \emptyset$ and $\Vert \widetilde \chi Q v\Vert_{H^{s-m}_\hbar} = o(\hbar^M)$,
$$
 \Vert \chi v \Vert_{H^s_\hbar} = o(\hbar^M).
$$
\item \label{it:osci2}  Given $s>0$ there exists $C>0$ such that, for any $\hbar$-family of tempered distributions ${v_\hbar}$,
$$
\Vert \chi v \Vert_{H^{s}_\hbar} \leq C\Big( \Vert \chi v \Vert_{L^2} + \Vert \widetilde \chi Q v \Vert_{H^{s-m}_\hbar} + O(\hbar^\infty)\Big).
$$
\end{enumerate}
\end{lemma}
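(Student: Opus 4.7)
The plan is to reduce both parts to an application of the elliptic parametrix (Lemma \ref{lem:EP}), since the hypothesis on $Q$ says precisely that $Q$ is elliptic at fiber infinity uniformly on compact sets. I would begin by choosing $\phi \in C^\infty(\Rea^d)$ with $\phi(\xi) = 0$ for $|\xi| \leq C_0$ and $\phi(\xi) = 1$ for $|\xi| \geq 2 C_0$, where $C_0$ is large enough that, by the hypothesis and the compact support of $\widetilde\chi$, $|\sigma_\hbar(Q)(x,\xi)| \geq c\langle\xi\rangle^m$ on $\supp\widetilde\chi \times \{|\xi| \geq C_0\}$. Let $\Phi := \Op_\hbar(\phi) \in \Psi_\hbar^0$. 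The operator $\chi\Phi \in \Psi_\hbar^0$ has $\WF_\hbar(\chi\Phi) \subset \supp\chi \times \{|\xi| \geq C_0\}$, on which $\widetilde\chi Q$ is elliptic (using $\widetilde\chi \equiv 1$ on $\supp\chi$). Lemma \ref{lem:EP} then yields an $E \in \Psi_\hbar^{-m}$ with
\[
\chi\Phi = E\,(\widetilde\chi Q) + O(\hbar^\infty)_{\Psi^{-\infty}}.
\]

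With this parametrix in hand, I would decompose $\chi v = \chi\Phi v + \chi(I-\Phi) v = E(\widetilde\chi Q v) + \chi(I-\Phi) v + O(\hbar^\infty)v$. The first term is estimated in $H_\hbar^s$ by $C\|\widetilde\chi Q v\|_{H_\hbar^{s-m}}$ using Theorem \ref{thm:basic} (which gives $E : H_\hbar^{s-m} \to H_\hbar^s$ boundedly uniformly in $\hbar$). The remainder is $O(\hbar^\infty)$ in any fixed norm of $v$, which is harmless because $v$ is tempered. The core of the argument is thus to control $\chi(I-\Phi)v$. The symbol of $I-\Phi$ lies in $C_c^\infty(\Rea^d_\xi)$, so $I-\Phi$ maps $L^2 \to H_\hbar^s$ uniformly in $\hbar$ for every $s$, and $\chi(I-\Phi)$ is $\Psi^{-\infty}_\hbar$ with compactly supported symbol.

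For Part \ref{it:osci2}, I would move the cutoff through $I-\Phi$ using $\chi(I-\Phi) = (I-\Phi)\chi + [\chi,\Phi]$, where the commutator lies in $\hbar\Psi_\hbar^{-\infty}$ because $\chi(x)$ and $\phi(\xi)$ depend on disjoint variable sets; iterating the commutator expansion turns every successive remainder into $\hbar^N\Psi^{-N}$ for arbitrary $N$, and at each stage one can further insert $\widetilde\chi$ (using $\chi\widetilde\chi = \chi$ and pseudolocality) to kill terms away from $\supp\chi$. The result is $\chi(I-\Phi) v = (I-\Phi)(\chi v) + O(\hbar^\infty)_{\Psi^{-\infty}}v$, so $\|\chi(I-\Phi)v\|_{H_\hbar^s} \leq C\|\chi v\|_{L^2} + O(\hbar^\infty)$, which yields the claimed bound.

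For Part \ref{it:osci1}, the second (high-frequency) term $E\widetilde\chi Q v$ is $o(\hbar^M)$ in $H_\hbar^s$ directly from the hypothesis $\|\widetilde\chi Q v\|_{H_\hbar^{s-m}} = o(\hbar^M)$. For the low-frequency term $\chi(I-\Phi)v$, I would invoke the hypothesis $\WF_\hbar^{s,M}(\chi v) = \emptyset$: the set $\supp\chi \times \{|\xi| \leq 2C_0\}$ is compact in $T^*\Rea^d$, so one can cover it by finitely many neighbourhoods on each of which the wavefront definition guarantees $\hbar^{-M}\|A(\chi v)\|_{H_\hbar^s} \to 0$ for admissible test operators $A$; a subordinate partition of unity together with the commutator manipulations from Part \ref{it:osci2} then transfers this control into $\hbar^{-M}\|\chi(I-\Phi) v\|_{H_\hbar^s} \to 0$. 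The main technical obstacle will be the bookkeeping in this last step: keeping the commutator remainders in $\hbar^\infty \Psi^{-\infty}$ genuinely $O(\hbar^\infty)$ after applying them to the merely tempered $v$, which requires that the operator norms gain powers of $\hbar$ faster than $\|v\|_{H_\hbar^{-K}}$ can grow — a feature built into the semiclassical calculus but needing care to verify.
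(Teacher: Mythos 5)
Your proposal is correct and follows essentially the same route as the paper: both decompose $\chi v$ into high- and low-frequency parts using a Fourier cutoff, apply the elliptic parametrix of Lemma \ref{lem:EP} to the high-frequency piece, and control the low-frequency piece via the wavefront assumption (for Part \ref{it:osci1}) or the $L^2$ bound together with the $\Psi^{-\infty}_\hbar$ mapping property (for Part \ref{it:osci2}). The only difference is bookkeeping: you place the spatial cutoff to the left of the Fourier multiplier, writing $\chi(I-\Phi)$, which forces the commutator argument you describe, whereas the paper writes $\psi(|\hbar D|)\chi$ so that the low-frequency bound follows immediately from the $L^2\to H^s_\hbar$ boundedness of $\psi(|\hbar D|)$ without any commutation.
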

\begin{proof}
We begin with Point \ref{it:osci1}.
We do the proof for $M<+\infty$, the proof for $M = +\infty$ is similar.
Let $\psi \in C^\infty_c(\mathbb R)$ be such that $\psi = 1$ near $B(0,A)$. Then, on the one hand, by definition of the wavefront-set,
$$
\hbar^{-M}\Vert \psi(|hD|) \chi v \Vert_{H^s_\hbar}\rightarrow 0.
$$
On the other hand, by the assumption \eqref{eq:Qsymbol} and the properties \eqref{eq:mult_symbol} and \eqref{eq:mult_WF},  
$\widetilde \chi Q$ is semiclassicaly elliptic near $\operatorname{WF}_\hbar (1-\psi(|\hbar D|)) \chi$; hence
$$
(1-\psi(|\hbar D|))\chi = E\widetilde \chi Q + O(\hbar^\infty)_{\psi^{-\infty}_\hbar},
$$
with $E \in \Psi^{-m}_\hbar(\mathbb R^d)$. Applying $v$ to the above gives
$$
\hbar^{-M} \Vert (1-\psi(|hD|)) \chi v \Vert_{H^s_\hbar} \rightarrow 0,
$$
and the result follows.
The proof of Point \ref{it:osci2} is very similar to the proof of Point \ref{it:osci1} using that  given $s>0$ there exists $C>0$ such that $\Vert \psi(|\hbar D|) \chi v \Vert_{H^s_\hbar} \leq C \Vert \chi v \Vert_{L^2}$; this follows from Part (ii) of Theorem \ref{thm:basic} since $ \psi(|\hbar D|)\in \Psi^{-\infty}_\hbar(\Rea^d)$.
\end{proof}

\begin{lemma}[$\operatorname{WF}$ statement from symbolic ellipticity] \label{lem:ell_wf}
Let $Q\in \Psi^m_h(\mathbb R^d)$. Then, for 
 any $\hbar$-tempered family of distributions ${v_\hbar}$, any $\chi, \widetilde \chi \in C^\infty_c(\mathbb R^d)$ such that $\widetilde \chi = 1$
on $\operatorname{supp} \chi $ and any $(s,M) \in (-\infty, +\infty]^2$,
\begin{equation*}
\operatorname{WF}^{s,M}_\hbar(\chi v) \subset \operatorname{WF}^{s-m,M}_\hbar(\widetilde \chi Q v) \cup \big\{ \sigma_\hbar(Q) = 0\big\}.
\end{equation*}
\end{lemma}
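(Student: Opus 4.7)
The plan is to verify the definition of $\operatorname{WF}^{s,M}_\hbar$ directly, showing that for any $A\in\Psi^0_\hbar$ with $\operatorname{WF}_\hbar(A)$ in a sufficiently small neighborhood $U$ of a fixed $\rho_0=(x_0,\xi_0)$ satisfying $\sigma_\hbar(Q)(\rho_0)\neq 0$ and $\rho_0\notin\operatorname{WF}^{s-m,M}_\hbar(\widetilde\chi Qv)$, one has $\hbar^{-M}\|A\chi v\|_{H^s_\hbar}\to 0$. First dispose of the trivial case $x_0\notin\operatorname{supp}\chi$ (then $\chi v\equiv 0$ near $x_0$). Otherwise $x_0\in\operatorname{supp}\chi$, and interpreting the hypothesis $\widetilde\chi=1$ on $\operatorname{supp}\chi$ in the natural sense gives $\widetilde\chi\equiv 1$ on an open neighborhood $V$ of $x_0$. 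Then choose $U$ small enough that (i) $Q$ is elliptic on $U$ (possible by continuity of $\sigma_\hbar(Q)$) and (ii) $\pi_x(U)\subset V$.

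By \eqref{eq:mult_WF}, $\operatorname{WF}_\hbar(A\chi)\subset U$, so the elliptic parametrix (Lemma \ref{lem:EP}) applied to $A\chi\in\Psi^0_\hbar$ and $Q$ gives $E\in\Psi^{-m}_\hbar$ and $R=O(\hbar^\infty)_{\Psi^{-\infty}}$ with $A\chi=EQ+R$. A routine microlocal cut-off (composing with $\psi\in\Psi^0_\hbar$ that is $\equiv 1$ microlocally on $\operatorname{WF}_\hbar(A\chi)$ and has $\operatorname{WF}_\hbar(\psi)\subset U$) allows one to assume $\operatorname{WF}_\hbar(E)\subset U$. Since $\pi_x(\operatorname{WF}_\hbar(E))\subset V\subset\{\widetilde\chi\equiv 1\}$, the operator $E(1-\widetilde\chi)$ has empty wavefront set, hence belongs to $O(\hbar^\infty)_{\Psi^{-\infty}}$, so
\begin{equation*}
A\chi v = EQv + Rv = E\widetilde\chi Qv + O(\hbar^\infty)_{\Psi^{-\infty}}Qv + Rv.
\end{equation*}

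Finally, microlocalize using the wavefront hypothesis: pick $B\in\Psi^0_\hbar$ with $B\equiv 1$ microlocally on $\operatorname{WF}_\hbar(E)$ and $\operatorname{WF}_\hbar(B)$ a small enough neighborhood of $\rho_0$ that $\hbar^{-M}\|B\widetilde\chi Qv\|_{H^{s-m}_\hbar}\to 0$ (allowed because $\rho_0\notin\operatorname{WF}^{s-m,M}_\hbar(\widetilde\chi Qv)$). Splitting $E\widetilde\chi Qv=EB(\widetilde\chi Qv)+E(1-B)(\widetilde\chi Qv)$, the second piece is $O(\hbar^\infty)$ in $H^s_\hbar$ (disjoint wavefronts plus temperedness of $\widetilde\chi Qv$), while for the first piece Theorem \ref{thm:basic} gives
\begin{equation*}
\|EB(\widetilde\chi Qv)\|_{H^s_\hbar}\leq \|E\|_{H^{s-m}_\hbar\to H^s_\hbar}\|B\widetilde\chi Qv\|_{H^{s-m}_\hbar}=O(1)\cdot o(\hbar^M).
\end{equation*}
Temperedness of $v$ and $Qv$ turns the remaining $O(\hbar^\infty)_{\Psi^{-\infty}}$ terms into $O(\hbar^\infty)$ in $H^s_\hbar$, yielding $\hbar^{-M}\|A\chi v\|_{H^s_\hbar}\to 0$, as required.

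The main technical point is the simultaneous wavefront-set bookkeeping for $E$ and $B$: one needs $\operatorname{WF}_\hbar(E)$ small enough (in $\pi_x^{-1}(V)$) to eliminate the gap between $Qv$ and $\widetilde\chi Qv$, and also $B\equiv 1$ on $\operatorname{WF}_\hbar(E)$ while $\operatorname{WF}_\hbar(B)$ remains small enough to exploit the hypothesis on $\widetilde\chi Qv$. Both are achieved by shrinking $U$ appropriately; once this is set up, the remainder is a standard application of elliptic microlocal regularity, with no obstacle involving propagation or sign conditions on $\operatorname{Im}\sigma_\hbar(Q)$ (unlike Lemma \ref{lem:FPR}).
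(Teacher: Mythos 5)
Your proof is correct and takes essentially the same approach as the paper: an elliptic-parametrix construction followed by wavefront-set bookkeeping, with no need for propagation or sign conditions on $\operatorname{Im}\sigma_\hbar(Q)$. The only (cosmetic) difference is that the paper applies Lemma \ref{lem:EP} directly to $\widetilde\chi Q$ --- which is elliptic on $\operatorname{supp}b\cap T^*\operatorname{supp}\chi$ because $\widetilde\chi\equiv 1$ there --- so that the term $E\widetilde\chi Q$ appears in a single step, whereas you invert $Q$ alone and then reinsert $\widetilde\chi$ afterwards via the observation $\pi_x(\operatorname{WF}_\hbar E)\subset\{\widetilde\chi\equiv 1\}$; both routes are valid and require the same shrinking of the microlocal neighbourhood to exploit the hypothesis on $\operatorname{WF}^{s-m,M}_\hbar(\widetilde\chi Qv)$.
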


If $Q$ is elliptic, i.e., $\{ \sigma_\hbar(Q) = 0\}=\emptyset$, then Lemma \ref{lem:ell_wf} states that an elliptic pseudodifferential operator does not ``move around" mass in phase space; i.e., in any compact set in space, the wave front set of $v$ is contained inside the wavefront set of $Qv$.

\begin{proof}[Proof of Lemma \ref{lem:ell_wf}]
Let  $\rho \notin \operatorname{WF}^{s,M}_\hbar(\widetilde \chi Q v) \cup \big\{ \sigma_\hbar(Q) = 0\big\}$,
and $b \in C^\infty_c(T^* \mathbb R^2)$ with $b = 1$ near $\rho$ be such that
$$
\operatorname{supp} b \cap \operatorname{WF}^{s-m,M}_\hbar(\widetilde \chi Q v) = \emptyset, \hspace{0.3cm}
\operatorname{supp }b \cap \big\{ \sigma_\hbar(Q) = 0\big\} = \emptyset.
$$ 
Then, by \eqref{eq:mult_symbol}, $\widetilde \chi Q$ is semiclassically elliptic on $\operatorname{supp}b \cap T^* \operatorname{supp} \chi$. Therefore, by the elliptic parametrix construction (Lemma \ref{lem:EP}), 
$$
(\operatorname{Op}_\hbar b)\chi = E \widetilde \chi Q + O(\hbar^\infty)_{\Psi_\hbar^{-\infty}},
$$
with $E \in \Psi_\hbar^{-m}$ and $\operatorname{WF}_\hbar E \subset \operatorname{supp} b.$ 
Therefore $\operatorname{WF}_\hbar E \cap \operatorname{WF}^{s-m, M}_\hbar  (\widetilde \chi Q v) = \emptyset$
and applying $v$ to the last displayed equation we see that $\rho \notin \operatorname{WF}^{s,M}_\hbar(\chi v)$, which implies the result.
\end{proof}

\subsection{Propagation of singularities}
\begin{lemma}[Propagation of singularities estimate, {\cite[Theorem E.47]{DyZw:19}}] \label{lem:FPE}
Let $Q\in\Psi^m_\hbar(\mathbb R^d)$, of semiclassical principal symbol $q$. Let $\varphi_t$ be the Hamiltonian flow associated to $\operatorname{Re} q$ (i.e. defined by \eqref{eq:Hamilton} with $p$ replaced by $\operatorname{Re}q$), and $A, B, B_1 \in \Psi_h^0(\mathbb R^d)$ be so that
$$
\operatorname{Im} q \leq 0 \text{ on } \operatorname{WF}_\hbar B_1,
$$
and
$$
\text{ there exists } T \geq 0\, \text{ s.t. }\, \varphi_{-T}(x,\xi) \in \operatorname{ell}_\hbar B, \text{ and for all } t \in [0, T], \; \varphi_{-t}(x,\xi) \in \operatorname{ell}_\hbar B_1.
$$
Then, for any $s>0$ and any family ${v_\hbar}$ of $\hbar$-tempered distributions (in the sense of Definition \ref{def:tempered})
$$
\Vert A  v \Vert_{H_\hbar^s} \leq C \Vert B  v \Vert_{H_\hbar^s} + C \hbar^{-1} \Vert
B_1 Q  v \Vert_{H_\hbar^{s-m+1}} +  O(\hbar^\infty). 
$$
\end{lemma}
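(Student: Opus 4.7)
The plan is to prove this via a positive commutator argument, which is the standard approach to propagation of singularities going back to Hörmander, adapted to the semiclassical setting in the manner of Zworski's text and Dyatlov--Zworski. The core idea is to construct a suitable nonnegative propagation symbol $e \in S^0_{\rm comp}$ and then compute the pairing $\langle [Q, E^*E]v, v \rangle$, where $E = \operatorname{Op}_\hbar(e)$, in two ways: symbolically, using the Poisson-bracket computation and sharp Gårding to extract $\|Av\|_{H^s_\hbar}$; and algebraically, bounding the commutator by $\|Bv\|_{H^s_\hbar}$ and $\hbar^{-1}\|B_1 Q v\|_{H^{s-m+1}_\hbar}$.

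First I would construct the propagation symbol. The hypothesis says that every $(x_0,\xi_0) \in \operatorname{ell}_\hbar A$ is reached from $\operatorname{ell}_\hbar B$ in backward time at most $T$ under $\varphi_{-t}$, remaining in $\operatorname{ell}_\hbar B_1$ throughout. Openness of $\operatorname{ell}_\hbar B$ and $\operatorname{ell}_\hbar B_1$ together with compactness of the relevant trajectory segments allows me to build a nonnegative $e \in S^0_{\rm comp}$ with $\operatorname{supp} e \subset \operatorname{ell}_\hbar B_1$, $e$ bounded below by a positive constant on $\operatorname{WF}_\hbar A$, and the pointwise bound
\[
H_{\operatorname{Re} q}(e^2) \leq - a^2 + b^2
\]
with $a$ elliptic on $\operatorname{WF}_\hbar A$ and $\operatorname{supp} b \subset \operatorname{ell}_\hbar B$. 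The natural construction is $e(\rho) := \varphi(\tau(\rho))$, where $\varphi$ is a monotone cutoff on $\mathbb{R}$ with $\varphi' \leq 0$ and $\tau$ is a ``time-since-leaving-$B$'' function, chosen so that $H_{\operatorname{Re} q}\tau = 1$ in a tubular neighbourhood of the relevant trajectory segments (possible because those segments are non-radial and of finite length).

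Next I would quantize and compute. Writing $E := \operatorname{Op}_\hbar(e) \in \Psi^0_\hbar$ and using the standard identity $2i \operatorname{Im}\langle Qv, E^*E v\rangle = \langle (QE^*E - E^*E Q^*)v, v\rangle$, I obtain
\[
2 \operatorname{Im}\langle Qv, E^*Ev\rangle = \langle [Q, E^*E] v, v\rangle + \langle (Q - Q^*) E^*E v, v\rangle.
\]
The principal symbol of $\hbar^{-1}\cdot i [Q, E^*E]$ is $\{q, e^2\} = H_{\operatorname{Re} q}(e^2) + i H_{\operatorname{Im} q}(e^2)$, and that of $i \hbar^{-1}(Q - Q^*)$ is $-2 \hbar^{-1}\operatorname{Im} q$. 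The key point is that on $\operatorname{WF}_\hbar B_1 \supset \operatorname{supp} e$ the assumption $\operatorname{Im} q \leq 0$ turns $-2 \hbar^{-1}\operatorname{Im} q \cdot e^2$ into a \emph{nonnegative} symbol, to which sharp Gårding applies to give a lower bound $-C\hbar \|Ev\|^2$. Combined with $H_{\operatorname{Re} q}(e^2) \leq -a^2 + b^2$ and sharp Gårding applied to $a^2$ and $b^2$, the algebraic side becomes
\[
c \|Av\|^2 \leq C\|Bv\|^2 + C |\langle Qv, E^*Ev\rangle| + O(\hbar) \|v\|_{H^s_\hbar}^2 + O(\hbar^\infty).
\]
Cauchy--Schwarz applied to the middle term, combined with the elliptic parametrix (since $B_1$ is elliptic on $\operatorname{supp} e$) to insert $B_1$ next to $Qv$, gives the factor $\hbar^{-1}\|B_1 Q v\|_{H^{s-m+1}_\hbar}$ with the stated power of $\hbar$.

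The main obstacle is bookkeeping at two levels. First, near fibre infinity on $\overline{T^*\Rea^d}$ the symbol $q$ is of order $m$, so the sharp Gårding step must be performed on renormalised symbols of order zero; a clean way is to reduce to the case $m=1$, $s=0$ by conjugating with $\langle \hbar D \rangle^{s}$ and factoring $Q = \langle \hbar D\rangle^{m-1} \widetilde Q$ with $\widetilde Q \in \Psi^1_\hbar$, using the elliptic parametrix to absorb the commutator errors. Second, the estimate must be bootstrapped: the positive commutator argument a priori only yields an estimate modulo $\|Ev\|_{H^{s-1/2}_\hbar}$ or similar lower-order norms, which one must then absorb using an induction on regularity, where temperedness of $v$ provides the base case and the $O(\hbar^\infty)$ error is collected at each step.
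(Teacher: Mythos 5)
The paper does not supply a proof of this lemma; it is quoted verbatim from \cite[Theorem E.47]{DyZw:19}. Your positive-commutator argument --- building a nonnegative escape function $e$ supported in $\operatorname{ell}_\hbar B_1$ with $H_{\operatorname{Re}q}(e^2)\leq -a^2+b^2$, quantizing, invoking sharp G\aa rding together with the sign condition $\operatorname{Im}q\leq 0$, and bootstrapping in regularity using temperedness to collect the $O(\hbar^\infty)$ error --- is precisely the standard proof of that theorem, so your route matches the cited source (modulo the sign and normalisation bookkeeping you already flag, e.g.\ $[Q,E^*E]=-[E^*E,Q]$ and the missing $i$ in $2i\operatorname{Im}\langle Qv,E^*Ev\rangle$).
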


We use the following an application of  Lemma \ref{lem:FPE}.
\begin{lemma}[Forward propagation of regularity] \label{lem:FPR_app} 
Let $Q \in \Psi^m_\hbar(\mathbb R^d)$ with semiclassical principal symbol $q$. Let $\varphi_t$ denote the Hamiltonian flow associated with $\operatorname{Re}q$ (i.e. defined by \eqref{eq:Hamilton} with $p$ replaced by $\operatorname{Re}q$), and assume that
$\operatorname{Im} q \leq 0$. Let ${v_\hbar}$ be a family of $\hbar$-tempered distributions (in the sense of Definition \ref{def:tempered}). Let 
 $(x_0, \xi_0) \in T^* \mathbb R^d$ and $(s, M) \in [0, +\infty] \times [0, +\infty]$. If $(x_0, \xi_0) \notin \operatorname{WF}^{s,M}_\hbar ( v)$ and there exists
$t_0 > 0$  so that
$$
\Big\{ \varphi_t(x_0, \xi_0), \; t\in [0, t_0] \Big\} \cap \operatorname{WF}^{s-m+1, M}_\hbar (\hbar^{-1}Q v) = \emptyset,
$$
then $\varphi_{t_0}(x_0, \xi_0) \notin \operatorname{WF}^{s,M}_\hbar ( v)$.
\end{lemma}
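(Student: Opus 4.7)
The goal is to show $\varphi_{t_0}(x_0,\xi_0)\notin\operatorname{WF}^{s,M}_\hbar(v)$. The plan is to reduce to the quantitative propagation of singularities estimate in Lemma \ref{lem:FPE} by constructing suitable operators $A$, $B$, $B_1\in\Psi^0_\hbar$. The global hypothesis $\operatorname{Im}q\leq 0$ makes the sign condition in Lemma \ref{lem:FPE} automatic on any $\operatorname{WF}_\hbar B_1$, so the main task is choosing the operators so that the right-hand side of the propagation estimate becomes $o(\hbar^M)$.

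First, I would use the hypothesis $(x_0,\xi_0)\notin\operatorname{WF}^{s,M}_\hbar(v)$ together with Definition \ref{def:WF} to select $B\in\Psi^0_\hbar$ with $\operatorname{WF}_\hbar B$ a sufficiently small neighbourhood of $(x_0,\xi_0)$ (inside which $B$ is elliptic at $(x_0,\xi_0)$) such that $\hbar^{-M}\|Bv\|_{H^s_\hbar}\to 0$. Next, since the trajectory $\gamma:=\{\varphi_t(x_0,\xi_0):t\in[0,t_0]\}$ is compact and the set $\operatorname{WF}^{s-m+1,M}_\hbar(\hbar^{-1}Qv)$ is closed and (by hypothesis) disjoint from $\gamma$, there is an open neighbourhood $\mathcal U$ of $\gamma$ whose closure is disjoint from $\operatorname{WF}^{s-m+1,M}_\hbar(\hbar^{-1}Qv)$. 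By the definition of the wavefront set, every $\rho\in\gamma$ admits a neighbourhood $\mathcal U_\rho\subset\mathcal U$ such that any operator with wavefront set in $\mathcal U_\rho$ yields the required $o(\hbar^M)$ bound; I would cover $\gamma$ by finitely many such $\mathcal U_\rho$, form a subordinate partition of unity, quantise it, and assemble a single $B_1\in\Psi^0_\hbar$ with $\operatorname{WF}_\hbar B_1\subset\mathcal U$, elliptic on a neighbourhood $\mathcal V$ of $\gamma$, and satisfying
\[
\hbar^{-M}\|B_1\,\hbar^{-1}Qv\|_{H^{s-m+1}_\hbar}\longrightarrow 0.
\]

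With $B$ and $B_1$ in hand, I would use continuity of $\varphi_t$ to pick an open neighbourhood $W$ of $\varphi_{t_0}(x_0,\xi_0)$ small enough that $\varphi_{-t_0}(W)\subset\operatorname{ell}_\hbar B$ and $\varphi_{-t}(W)\subset\mathcal V\subset\operatorname{ell}_\hbar B_1$ for every $t\in[0,t_0]$, and then choose $A\in\Psi^0_\hbar$ elliptic at $\varphi_{t_0}(x_0,\xi_0)$ with $\operatorname{WF}_\hbar A\subset W$. Lemma \ref{lem:FPE} applied with $T=t_0$ then gives
\[
\|Av\|_{H^s_\hbar}\leq C\|Bv\|_{H^s_\hbar}+C\hbar^{-1}\|B_1 Qv\|_{H^{s-m+1}_\hbar}+O(\hbar^\infty),
\]
so multiplying by $\hbar^{-M}$ and letting $\hbar\to 0$ yields $\hbar^{-M}\|Av\|_{H^s_\hbar}\to 0$, which by Definition \ref{def:WF} means $\varphi_{t_0}(x_0,\xi_0)\notin\operatorname{WF}^{s,M}_\hbar(v)$.

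The case $s=+\infty$ or $M=+\infty$ reduces to the finite case by taking intersections over the monotone family in Definition \ref{def:WF}. The main obstacle I anticipate is the partition-of-unity construction of $B_1$: one has to arrange simultaneously (i) that $\operatorname{WF}_\hbar B_1$ lies in the ``good'' region where the estimate for $\hbar^{-1}Qv$ holds, (ii) that $B_1$ is elliptic on a neighbourhood of the whole trajectory $\gamma$, and (iii) that the aggregated $o(\hbar^M)$ bound survives the finite sum coming from the cover—each of these is standard but they must be combined carefully because the operator whose quantisation of the partition of unity is used must be chosen uniformly in $\hbar$ before the limit is taken.
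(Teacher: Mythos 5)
Your proposal is correct and follows essentially the same route as the paper: choose $A$, $B$, $B_1 \in \Psi^0_\hbar$ satisfying the hypotheses of Lemma \ref{lem:FPE} via continuity of the flow and the wavefront-set assumptions, apply the propagation estimate, multiply by $\hbar^{-M}$, and conclude. The paper compresses the construction of $B_1$ and the shrinking of $\operatorname{WF}_\hbar B$ into the phrase ``by continuity of the flow and the assumptions in the lemma''; your compactness-plus-finite-cover construction of $B_1$ is exactly the detail being elided there.
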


Lemma \ref{lem:FPR_app} is a rigorous statement of Point 1 in \S\ref{sec:sketch_prop}; i.e., that the absence of mass in phase space of the solution of $Q u=f$ propagates along the flow defined by $Q$ as long as the trajectory does not intersect the data $f$.

\begin{proof}[Proof of Lemma \ref{lem:FPR_app}]
By continuity of the flow and the assumptions in the lemma, there exist $A, B, B_1 \in \Psi^0_\hbar$ such that $A$ is elliptic near $\varphi_{t_0}(x_0, \xi_0)$, $\operatorname{WF}_\hbar B$ is an arbitrarily-small neighbourhood of $(x_0, \xi_0)$, and, for any $(x,\xi) \in \operatorname{WF}_\hbar A$,
$$
\text{ there exists } T \geq 0\, \text{ s.t. }\, \varphi_{-T}(x,\xi) \in \operatorname{ell}_\hbar B, \text{ and for all } t \in [0, T], \; \varphi_{-t}(x,\xi) \in \operatorname{ell}_\hbar B_1,
$$
with 
$$
\operatorname{WF}_\hbar B_1 \cap \operatorname{WF}^{s-m+1, M}_\hbar(\hbar^{-1}Q  v) = \emptyset.
$$
Hence, by the propagation of singularities estimate (Lemma \ref{lem:FPE}), 
for any $s>0$,
$$
\Vert A  v \Vert_{H_\hbar^s} \leq C \Vert B  v \Vert_{H_\hbar^s} + C \hbar^{-1} \Vert
B_1 Q  v \Vert_{H_\hbar^{s-m+1}} +  O(\hbar^\infty), 
$$
and the result follows taking $\operatorname{WF}_\hbar B$ small enough.
\end{proof}

\begin{remark}
Working in $\mathbb R^d$, as opposed to on a general manifold defined by coordinate charts, allows us to remove the proper-support assumption appearing in \cite[Proposition E.32]{DyZw:19} and {\cite[Theorem E.47]{DyZw:19}}.
\end{remark}

\section*{Acknowledgements}

JG was supported by EPSRC grant EP/V001760/1.
SG was supported by the National Natural Science Foundation of China (Grant number 12201535) and the Guangdong Basic and Applied Basic Research Foundation (Grant number 2023A1515011651).
DL was supported by INSMI (CNRS) through a PEPS JCJC grant 2023. EAS was supported by EPSRC grant EP/R005591/1.

\footnotesize{
\bibliographystyle{plain}
\bibliography{combined.bib}

\begin{thebibliography}{10}

\bibitem{BaSa:00}
I.~M. Babu\v{s}ka and S.~A. Sauter.
\newblock Is the pollution effect of the {FEM} avoidable for the {H}elmholtz
  equation considering high wave numbers?
\newblock {\em SIAM Review}, pages 451--484, 2000.

\bibitem{BeCaMa:22}
T.~Beck, Y.~Canzani, and J.~L. Marzuola.
\newblock {Quantitative bounds on Impedance-to-Impedance operators with
  applications to fast direct solvers for PDEs}.
\newblock {\em Pure and Applied Analysis}, 4(2):225--256, 2022.

\bibitem{Be:94}
J.-P. Berenger.
\newblock A perfectly matched layer for the absorption of electromagnetic
  waves.
\newblock {\em J.~Comp.~Phys.}, 114:185--200, October 1994.

\bibitem{BoBoDoTo:22}
N.~Bootland, S.~Borzooei, V.~Dolean, and P.-H. Tournier.
\newblock {Numerical assessment of PML transmission conditions in a domain
  decomposition method for the Helmholtz equation}.
\newblock In {\em International Conference on Domain Decomposition Methods},
  pages 445--453. Springer, 2022.

\bibitem{BoNaTo:23}
N.~Bouziani, F.~Nataf, and P.-H. Tournier.
\newblock {A Unified Framework for Double Sweep Methods for the Helmholtz
  Equation}.
\newblock {\em Journal of Computational Physics}, page 112305, 2023.

\bibitem{BrPa:13}
J.~H. Bramble and J.~E. Pasciak.
\newblock {Analysis of a Cartesian PML approximation to acoustic scattering
  problems in $\mathbb{R}^2$ and $\mathbb{R}^3$}.
\newblock {\em Journal of Computational and Applied Mathematics}, 247:209--230,
  2013.

\bibitem{Bu:02}
N.~Burq.
\newblock Semi-classical estimates for the resolvent in nontrapping geometries.
\newblock {\em International Mathematics Research Notices}, 2002(5):221--241,
  2002.

\bibitem{CaSa:99}
X-C. Cai and M.~Sarkis.
\newblock {A restricted additive Schwarz preconditioner for general sparse
  linear systems}.
\newblock {\em SIAM J.~Sci.~Comp.}, 21(2):792--797, 1999.

\bibitem{ChHeMo:15}
S.~N. Chandler-Wilde, D.~P. Hewett, and A.~Moiola.
\newblock Interpolation of {H}ilbert and {S}obolev spaces: quantitative
  estimates and counterexamples.
\newblock {\em Mathematika}, 61:414--443, 2015.

\bibitem{ChXi:13}
Z.~Chen and X.~Xiang.
\newblock A source transfer domain decomposition method for {H}elmholtz
  equations in unbounded domain.
\newblock {\em SIAM J.~Num.~Anal.}, 51(4):2331--2356, 2013.

\bibitem{DaVa:12a}
K.~Datchev and A.~Vasy.
\newblock Semiclassical resolvent estimates at trapped sets.
\newblock {\em Annales de l'Institut Fourier}, 62(6):2379--2384, 2012.

\bibitem{DoJoNa:15}
V.~Dolean, P.~Jolivet, and F.~Nataf.
\newblock {\em An introduction to domain decomposition methods: algorithms,
  theory, and parallel implementation}.
\newblock SIAM, 2015.

\bibitem{DuWu:20}
Y.~Du and H.~Wu.
\newblock A pure source transfer domain decomposition method for {H}elmholtz
  equations in unbounded domain.
\newblock {\em Journal of Scientific Computing}, DOI:
  10.1007/s10915-020-01249-2, 2020.

\bibitem{DyZw:19}
S.~Dyatlov and M.~Zworski.
\newblock {\em Mathematical theory of scattering resonances}, volume 200.
\newblock American Mathematical Soc., 2019.

\bibitem{EnYi:11c}
B.~Engquist and L.~Ying.
\newblock Sweeping preconditioner for the {H}elmholtz equation: moving
  perfectly matched layers.
\newblock {\em Multiscale. Model. Simul.}, 9:686--710, 2011.

\bibitem{EnZh:98}
B.~Engquist and H.-K. Zhao.
\newblock Absorbing boundary conditions for domain decomposition.
\newblock {\em Applied numerical mathematics}, 27(4):341--365, 1998.

\bibitem{Er:08}
Y.~A. Erlangga.
\newblock Advances in iterative methods and preconditioners for the {H}elmholtz
  equation.
\newblock {\em Archives of Computational Methods in Engineering}, 15(1):37--66,
  2008.

\bibitem{ErGa:12}
O.~G. Ernst and M.~J. Gander.
\newblock Why it is difficult to solve {H}elmholtz problems with classical
  iterative methods.
\newblock In I.~G. Graham, T.~Y. Hou, O.~Lakkis, and R.~Scheichl, editors, {\em
  Numerical Analysis of Multiscale Problems}, volume~83 of {\em Lecture Notes
  in Computational Science and Engineering}, pages 325--363. Springer, 2012.

\bibitem{GLS1}
J.~Galkowski, D.~Lafontaine, and E.~A. Spence.
\newblock {Local absorbing boundary conditions on fixed domains give order-one
  errors for high-frequency waves}.
\newblock {\em IMA J. Num. Anal.}, 2023.

\bibitem{GLS2}
J.~Galkowski, D.~Lafontaine, and E.~A. Spence.
\newblock Perfectly-matched-layer truncation is exponentially accurate at high
  frequency.
\newblock {\em SIAM J. Math. Anal.}, 55(4):3344--3394, 2023.

\bibitem{GS3}
J.~Galkowski and E.~A. Spence.
\newblock {Sharp preasymptotic error bounds for the Helmholtz $h$-FEM}.
\newblock {\em arXiv 2301.03574}, 2023.

\bibitem{Ga:06}
M.~J. Gander.
\newblock {Optimized Schwarz methods}.
\newblock {\em SIAM J. Numer. Anal.}, 44:699--731, 2006.

\bibitem{GaZh:22}
M.~J. Gander and H.~Zhang.
\newblock {Schwarz methods by domain truncation}.
\newblock {\em Acta Numerica}, 2022.

\bibitem{GaZh:18}
M.J. Gander and H.~Zhang.
\newblock Restrictions on the use of sweeping type preconditioners for
  helmholtz problems.
\newblock In {\em Domain Decomposition Methods in Science and Engineering
  XXIV}, Lecture Notes in Computational Science and Engineering, pages
  321--332. Springer-Verlag, 2018.

\bibitem{GaZh:19}
M.J. Gander and H.~Zhang.
\newblock {A class of iterative solvers for the Helmholtz equation:
  factorizations, sweeping preconditioners, source transfer, single layer
  potentials, polarized traces, and optimized Schwarz methods}.
\newblock {\em SIAM Review}, 61(1):3--76, 2019.

\bibitem{GGGLS1}
S.~Gong, M.~J. Gander, I.~G. Graham, D.~Lafontaine, and E.~A. Spence.
\newblock {Convergence of parallel overlapping domain decomposition methods for
  the Helmholtz equation}.
\newblock {\em Numerische Mathematik}, 152(2):259--306, 2022.

\bibitem{GoGrSp:23}
S.~Gong, I.~G. Graham, and E.~A Spence.
\newblock {Convergence of Restricted Additive Schwarz with impedance
  transmission conditions for discretised Helmholtz problems}.
\newblock {\em Mathematics of Computation}, 92(339):175--215, 2023.

\bibitem{GrPeSp:19}
I.~G. Graham, O.~R. Pembery, and E.~A. Spence.
\newblock {The Helmholtz equation in heterogeneous media: a priori bounds,
  well-posedness, and resonances}.
\newblock {\em Journal of Differential Equations}, 266(6):2869--2923, 2019.

\bibitem{IhBa:97}
F.~Ihlenburg and I.~Babuska.
\newblock {Finite element solution of the Helmholtz equation with high wave
  number part II: the $hp$ version of the FEM}.
\newblock {\em SIAM J. Numer. Anal.}, 34(1):315--358, 1997.

\bibitem{KiPa:10}
S.~Kim and J.~E. Pasciak.
\newblock {Analysis of a Cartesian PML approximation to acoustic scattering
  problems in $\mathbb{R}^2$}.
\newblock {\em Journal of Mathematical Analysis and Applications},
  370(1):168--186, 2010.

\bibitem{LS1}
D.~Lafontaine and E.~A. Spence.
\newblock {Sharp bounds on Helmholtz impedance-to-impedance maps and
  application to overlapping domain decomposition}.
\newblock {\em Pure and Applied Analysis}, 5(4):927--972, 2023.

\bibitem{LaSpWu:19}
D.~Lafontaine, E.~A. Spence, and J.~Wunsch.
\newblock For most frequencies, strong trapping has a weak effect in
  frequency-domain scattering.
\newblock {\em arXiv preprint arXiv:1903.12172}, 2019.

\bibitem{LaTaVu:17}
D.~Lahaye, J.~Tang, and K.~Vuik, editors.
\newblock {\em Modern solvers for Helmholtz problems}.
\newblock Birkhauser series in Geosystems Mathematics, 2017.

\bibitem{LaPh:89}
P.~D. Lax and R.~S. Phillips.
\newblock {\em Scattering theory}, volume~26 of {\em Pure and Applied
  Mathematics}.
\newblock Academic Press Inc., Boston, MA, second edition, 1989.

\bibitem{LeJu:19}
W.~Leng and L.~Ju.
\newblock An additive overlapping domain decomposition method for the
  {H}elmholtz equation.
\newblock {\em SIAM Journal on Scientific Computing}, 41(2):A1252--A1277, 2019.

\bibitem{LeJu:21}
W.~Leng and L.~Ju.
\newblock A diagonal sweeping domain decomposition method with source transfer
  for the {H}elmholtz equation.
\newblock {\em Commun. Comput. Phys.}, 29:357--398, 2021.

\bibitem{LeJu:22}
W.~Leng and L.~Ju.
\newblock {Trace transfer-based diagonal sweeping domain decomposition method
  for the Helmholtz equation: Algorithms and convergence analysis}.
\newblock {\em Journal of Computational Physics}, 455:110980, 2022.

\bibitem{Mc:00}
W.~McLean.
\newblock {\em Strongly Elliptic Systems and Boundary Integral Equations}.
\newblock Cambridge University Press, 2000.

\bibitem{MeSj:82}
R.~B. Melrose and J.~Sj{\"o}strand.
\newblock Singularities of boundary value problems. ii.
\newblock {\em Communications on Pure and Applied Mathematics}, 35(2):129--168,
  1982.

\bibitem{Mi:00}
L.~Miller.
\newblock Refraction of high-frequency waves density by sharp interfaces and
  semiclassical measures at the boundary.
\newblock {\em Journal de math{\'e}matiques pures et appliqu{\'e}es},
  79(3):227--269, 2000.

\bibitem{Mu:04}
J.~G. Muga, J.~P. Palao, B.~Navarro, and I.~L. Egusquiza.
\newblock Complex absorbing potentials.
\newblock {\em Physics Reports}, 395(6):357--426, 2004.

\bibitem{NaRoSt:94}
F.~Nataf, F.~Rogier, and E.~de~Sturler.
\newblock Optimal interface conditions for domain decomposition methods.
\newblock {\em Technical Report 301}, 1994.

\bibitem{Ne:01}
J.~C. N{\'e}d{\'e}lec.
\newblock {\em {Acoustic and electromagnetic equations: integral
  representations for harmonic problems}}.
\newblock Springer Verlag, 2001.

\bibitem{Ra:71}
J.~V. Ralston.
\newblock {T}rapped {R}ays in {S}pherically {S}ymmetric {M}edia and {P}oles of
  the {S}cattering {M}atrix.
\newblock {\em Comm. Pure Appl. Math.}, 24(4):571--582, 1971.

\bibitem{RiMe:93}
U.~V. Riss and H.-D. Meyer.
\newblock Calculation of resonance energies and widths using the complex
  absorbing potential method.
\newblock {\em Journal of Physics B: Atomic, Molecular and Optical Physics},
  26(23):4503, 1993.

\bibitem{RiMe:95}
U.~V. Riss and H.-D. Meyer.
\newblock Reflection-free complex absorbing potentials.
\newblock {\em Journal of Physics B: Atomic, Molecular and Optical Physics},
  28(8):1475, 1995.

\bibitem{RoGeBeMo:22}
A.~Royer, C.~Geuzaine, E.~B{\'e}chet, and A.~Modave.
\newblock {A non-overlapping domain decomposition method with perfectly matched
  layer transmission conditions for the Helmholtz equation}.
\newblock {\em Computer Methods in Applied Mechanics and Engineering},
  395:115006, 2022.

\bibitem{SaSc:86}
Y.~Saad and M.~H. Schultz.
\newblock {GMRES: A generalized minimal residual algorithm for solving
  nonsymmetric linear systems}.
\newblock {\em SIAM Journal on Scientific and Statistical Computing},
  7(3):856--869, 1986.

\bibitem{SeMi:92}
T.~Seideman and W.~H. Miller.
\newblock Calculation of the cumulative reaction probability via a discrete
  variable representation with absorbing boundary conditions.
\newblock {\em The Journal of chemical physics}, 96(6):4412--4422, 1992.

\bibitem{StGaTh:07}
A.~St-Cyr, M.~J. Gander, and S.~J. Thomas.
\newblock {Optimized multiplicative, additive, and restricted additive Schwarz
  preconditioning}.
\newblock {\em SIAM Journal on Scientific Computing}, 29(6):2402--2425, 2007.

\bibitem{St:05}
P.~Stefanov.
\newblock Approximating resonances with the complex absorbing potential method.
\newblock {\em Communications in Partial Differential Equations},
  30(12):1843--1862, 2005.

\bibitem{St:13}
C.~C. Stolk.
\newblock A rapidly converging domain decomposition method for the {H}elmholtz
  equation.
\newblock {\em J.~Comp.~Phys.}, 241:240--252, 2013.

\bibitem{TaZeHeDe:19}
M.~Taus, L.~Zepeda-N{\'u}{\~n}ez, R.~J. Hewett, and L.~Demanet.
\newblock {L-Sweeps: A scalable, parallel preconditioner for the high-frequency
  Helmholtz equation}.
\newblock {\em J. Comp. Phys.}, 420:109706, 2019.

\bibitem{To:98}
A.~Toselli.
\newblock {Overlapping method with perfectly matched layers for the solution of
  the Helmholtz equation}.
\newblock In {\em Proceedings of the 11th International Conference on Domain
  Decomposition Methods}, pages 551--557, 1998.
\newblock \url{http://www.ddm.org/DD11/DD11proc.pdf}.

\bibitem{Va:75}
B.~R. Vainberg.
\newblock {On the short wave asymptotic behaviour of solutions of stationary
  problems and the asymptotic behaviour as $t\rightarrow \infty$ of solutions
  of non-stationary problems}.
\newblock {\em Russian Mathematical Surveys}, 30:1, 1975.

\bibitem{Va:89}
B.~R. Vainberg.
\newblock {\em Asymptotic methods in equations of mathematical physics}.
\newblock Gordon \& Breach Science Publishers, New York, 1989.
\newblock Translated from the Russian by E. Primrose.

\bibitem{Va:04}
A.~Vasy.
\newblock Propagation of singularities for the wave equation on manifolds with
  corners.
\newblock {\em S{\'e}minaire {\'E}quations aux d{\'e}riv{\'e}es partielles
  (Polytechnique)}, pages 1--16, 2004.

\bibitem{ZeDe:16}
L.~Zepeda-N{\'u}{\~n}ez and L.~Demanet.
\newblock The method of polarized traces for the 2{D} {H}elmholtz equation.
\newblock {\em J. Comput. Phys.}, 308:347--388, 2016.

\bibitem{Zw:12}
M.~Zworski.
\newblock {\em Semiclassical analysis}.
\newblock American Mathematical Society Providence, RI, 2012.

\end{thebibliography}
}

\end{document}